\providerobustcmd*{\bigcupdot}{%
  \mathop{%
    \mathpalette\bigop@dot\bigcup
  }%
}
\newrobustcmd*{\bigop@dot}[2]{%
  \setbox0=\hbox{$\m@th#1#2$}%
  \vbox{%
    \lineskiplimit=\maxdimen
    \lineskip=-0.7\dimexpr\ht0+\dp0\relax
    \ialign{%
      \hfil##\hfil\cr
      $\m@th\cdot$\cr
      \box0\cr
    }%
  }%
}
\newcommand{\mystackrel}[3][T]{\stackrel{\eqmakebox[#1]{\scriptsize#2}}{#3}}
\tikzset{
  schraffiert/.style={pattern=horizontal lines,pattern color=#1},
  schraffiert/.default=black
}
\newcommand\numbereq{%
  \ifmeasuring@\else
    \refstepcounter{equation}%
  \fi
  \tag{\theequation}%
}
\DeclareMathOperator{\Aut}{Aut}
\DeclareMathOperator{\Conf}{Conf}
\DeclareMathOperator{\PConf}{PConf}
\DeclareMathOperator{\MapOp}{Map}
\DeclareMathOperator{\PMapOp}{PMap}
\DeclareMathOperator{\HomeoOp}{Homeo}
\DeclareMathOperator{\PHomeoOp}{PHomeo}
\DeclareMathOperator{\Z}{Z}
\DeclareMathOperator{\B}{B}
\DeclareMathOperator{\Sym}{S}
\DeclareMathOperator{\PZ}{PZ}
\DeclareMathOperator{\PB}{PB}
\DeclareMathOperator{\ev}{ev}
\DeclareMathOperator{\id}{\mathrm{id}}
\DeclareMathOperator{\piOrb}{\pi_1^{orb}}
\title{Braid groups and mapping class groups \\
for 2-orbifolds} 
\author{Jonas Flechsig}
\date{May 7, 2023}
\def\l@section{\@tocline{1}{0,2pt}{2pc}{8mm}{\ \ }} 
\def\l@subsection{\@tocline{1}{0,2pt}{2pc}{8mm}{\ \ }} 
\renewcommand{\maketitle}{
    \begin{center}

        \phantom{.}  

        {\LARGE \bf \@title\par}
        \vspace{0.5cm}

    \end{center}
}\makeatother
\begin{document}
\newtheorem*{theorem*}{Theorem}
\newtheorem{theorem}{Theorem}[section]
\newtheorem{corollary}[theorem]{Corollary}
\newtheorem{lemma}[theorem]{Lemma}
\newtheorem{fact}[theorem]{Fact}
\newtheorem*{fact*}{Fact}
\newtheorem{proposition}[theorem]{Proposition}
\newtheorem{thmletter}{Theorem}
\newtheorem{observation}[theorem]{Observation}
\newtheorem{notation}[theorem]{Notation}
\renewcommand*{\thethmletter}{\Alph{thmletter}}
\theoremstyle{definition}
\newtheorem{example}[theorem]{Example}
\newtheorem{question}[theorem]{Question}
\newtheorem{definition}[theorem]{Definition}
\newtheorem{construction}[theorem]{Construction}
\theoremstyle{remark}
\newtheorem{remark}[theorem]{Remark}
\newtheorem{conjecture}[theorem]{Conjecture}
\newtheorem{case}{Case}
\newtheorem{claim}{Claim}
\newtheorem*{claim*}{Claim}
\newtheorem{step}{Step}
\counterwithin{case}{theorem}
\renewcommand{\thecase}{\arabic{case}}
\counterwithin{claim}{theorem}
\renewcommand{\theclaim}{\arabic{claim}}
\counterwithin{step}{theorem}
\renewcommand{\thestep}{\arabic{step}}

\newenvironment{intermediate}[1][\unskip]{%
\vspace*{5pt}
\par
\noindent
\textit{#1.}}
{}
\vspace*{5pt}

\newcommand{\doubletable}[1]{\begin{tabular}[l]{@{}l@{}}#1\end{tabular}}
\newcommand{\set}[1][ ]{\ensuremath{ \lbrace #1 \rbrace}}
\newcommand{\bsl}{\ensuremath{\setminus}}
\newcommand{\grep}[2]{\ensuremath{\left\langle #1 | #2\right\rangle}}
\renewcommand{\ll}{\left\langle}
\newcommand{\rr}{\right\rangle}
\newcommand{\cpxbrmn}{B(\TheOrder,\TheOrder,\TheStrand)}
\newcommand{\cpxbr}[2]{B(#1,#1,#2)}
\newcommand{\Map}[2]{\MapOp_{#1}\left({#2}\right)}
\newcommand{\PMap}[2]{\PMapOp_{#1}\left({#2}\right)}
\newcommand{\MapOrb}[2]{\MapOp_{#1}^{orb}\left({#2}\right)}
\newcommand{\PMapOrb}[2]{\PMapOp_{#1}^{orb}\left({#2}\right)}
\newcommand{\MapId}[2]{\MapOp_{#1}^{\id}\left({#2}\right)}
\newcommand{\PMapId}[2]{\PMapOp_{#1}^{\id}\left({#2}\right)}
\newcommand{\MapIdOrb}[2]{\MapOp_{#1}^{\id,orb}\left({#2}\right)}
\newcommand{\PMapIdOrb}[2]{\PMapOp_{#1}^{\id,orb}\left({#2}\right)}
\newcommand{\HomeoId}[2]{\HomeoOp_{#1}^{\id}({#2})}
\newcommand{\PHomeoId}[2]{\PHomeoOp_{#1}^{\id}({#2})}
\newcommand{\Homeo}[2]{\HomeoOp_{#1}({#2})}
\newcommand{\PHomeo}[2]{\PHomeoOp_{#1}({#2})}
\newcommand{\HomeoIdOrb}[2]{\HomeoOp_{#1}^{\id,orb}({#2})}
\newcommand{\PHomeoIdOrb}[2]{\PHomeoOp_{#1}^{\id,orb}({#2})}
\newcommand{\HomeoOrb}[2]{\HomeoOp_{#1}^{orb}({#2})}
\newcommand{\PHomeoOrb}[2]{\PHomeoOp_{#1}^{orb}({#2})}
\newcommand{\PHomeoOrbt}[3]{\PHomeoOp_{#1}^{orb}({#2})}

\newcommand{\omicron}{o}
\newcommand{\cp}{c}
\newcommand{\pct}{p}
\newcommand{\TheCone}{N}
\newcommand{\ThePct}{L}
\newcommand{\Pc}{\theta}
\newcommand{\Pct}{\iota}
\newcommand{\NPct}{\lambda}
\newcommand{\TheOrder}{m}
\newcommand{\Ord}{t}
\newcommand{\TheStrand}{n}
\newcommand{\Order}{l}
\newcommand{\Strr}{h}
\newcommand{\Str}{i}
\newcommand{\Strand}{j}
\newcommand{\NStrand}{k}
\newcommand{\NNStrand}{l}
\newcommand{\pStrand}{p}
\newcommand{\qStrand}{q}
\newcommand{\sStrand}{s}
\newcommand{\tStrand}{t}
\newcommand{\Subdivision}{i}
\newcommand{\TheSubdivision}{p}
\newcommand{\Dim}{i}
\newcommand{\NDim}{j}
\newcommand{\TheDim}{k}
\newcommand{\Subdiv}{i}
\newcommand{\TheSubdiv}{p}
\newcommand{\NSubdiv}{j}
\newcommand{\TheNSubdiv}{q}
\newcommand{\NNSubdiv}{k}
\newcommand{\TheNNSubdiv}{r}
\newcommand{\NNNSubdiv}{l}
\newcommand{\TheFrac}{\frac{2\pi}{\TheOrder}}
\newcommand{\HalfFrac}{\frac{\pi}{\TheOrder}}
\newcommand{\neigh}{\lambda}
\newcommand{\htwC}{h_1^\twsC}
\newcommand{\htw}{h_{\TheStrand-1}^{\twsC'}}
\newcommand{\col}{blue}
\newcommand{\colo}{olive}
\newcommand{\short}{red}
\newcommand{\mult}{orange}
\newcommand{\red}{red}
\newcommand{\green}{olive}
\newcommand{\gre}{green}
\newcommand{\blue}{blue}
\newcommand{\SGpct}{\Sigma_\freeprod(\ThePct,\TheCone)}
\newcommand{\SG}{\Sigma_\freeprod(\ThePct)}
\newcommand{\Spct}{\Sigma(\ThePct,\TheCone)}
\renewcommand{\S}{\Sigma(\ThePct)}
\newcommand{\Sk}[1]{\Sigma(#1)}
\newcommand{\orbtwo}{\Sigma_{\freeprodtwo}}
\newcommand{\G}{G}
\newcommand{\g}{g}
\newcommand{\freeprod}{\Gamma}
\newcommand{\freeprodtwo}{\cycm\ast\cyc{\TheOrder'}}
\newcommand{\freeprodex}{\Gamma_{\TheOrder_\nu}^\TheCone}
\newcommand{\freeprodexk}[1]{\Gamma_{\TheOrder_\nu}^{#1}}
\newcommand{\freegrp}[1]{F_{#1}}
\newcommand{\free}[1]{F^{(#1)}}
\newcommand{\cycm}{\ZZ_\TheOrder}
\newcommand{\cyc}[1]{\ZZ_{#1}}
\newcommand{\kernel}{K}
\newcommand{\inter}[1]{{#1}^{\circ}}
\newcommand{\interext}[1]{{#1}^{\circ,\text{ext}}}

\newcommand{\Twist}{A}
\newcommand{\TwistP}{B}
\newcommand{\TwistC}{C}
\newcommand{\TwP}[1]{T_{#1}}
\newcommand{\TwC}[1]{U_{#1}}
\newcommand{\twist}{a}
\newcommand{\twistP}{b}
\newcommand{\twistC}{c}
\newcommand{\twP}[1]{t_{#1}}
\newcommand{\twC}[1]{u_{#1}}
\newcommand{\twsP}{t}
\newcommand{\twsC}{u}
\newcommand{\Twistn}[1]{X_{#1}}
\newcommand{\TwistnP}[1]{Y_{#1}}
\newcommand{\TwistnC}[1]{Z_{#1}}
\newcommand{\twistn}[1]{x_{#1}}
\newcommand{\twistnP}[1]{y_{#1}}
\newcommand{\twistnC}[1]{z_{#1}}
\newcommand{\twistnsC}{z}
\newcommand{\TwsP}{T}
\newcommand{\TwsC}{U}
\newcommand{\FD}{F}

\newcommand{\MA}{\ensuremath{\mathcal{MA}_\TheStrand}}
\newcommand{\pMA}{\ensuremath{\mathcal{MA}_\TheStrand(F_\freeprod)}}

\newcommand{\HA}{\ensuremath{\mathcal{HA}_\TheStrand}}

\newcommand{\MAo}{\ensuremath{\mathcal{MA}_{\TheStrand}(\Sigma_\freeprod)}}
\newcommand{\pMAo}{\ensuremath{\mathcal{MA}_{\TheStrand}(\Sigma_\freeprod^\ast)}}

\newcommand{\MAoZ}{\ensuremath{\mathcal{MA}_{\TheStrand}(D_{\ZZ_\TheOrder})}}
\newcommand{\MAoD}{\ensuremath{\mathcal{MA}_{\TheStrand}(\CC_{D_\TheOrder})}}
\newcommand{\tMAo}{\ensuremath{\tilde{\mathcal{MA}}_{\TheStrand}(\Sigma_\freeprod)}}

\newcommand{\HAo}{\ensuremath{\mathcal{HA}_{\TheStrand}(\Sigma_\freeprod)}}

\newcommand{\bpHAk}[1]{\ensuremath{\mathcal{HA}_{\TheStrand,\TheStrand',#1}}}
\newcommand{\bpHAo}{\ensuremath{\mathcal{HA}_{\TheStrand,\TheStrand'}(\Sigma_\freeprod)}}
\newcommand{\bpHAok}[1]{\ensuremath{\mathcal{HA}_{\TheStrand,\TheStrand',#1}(\Sigma_\freeprod)}}
\newcommand{\bpMAo}{\ensuremath{\mathcal{MA}_{\TheStrand,\TheStrand'}(\Sigma_\freeprod)}}
\newcommand{\bpMAok}[1]{\ensuremath{\mathcal{MA}_{\TheStrand,\TheStrand',#1}(\Sigma_\freeprod)}}
\newcommand{\bpMAoF}{\ensuremath{\mathcal{MA}_{\TheStrand,\TheStrand',\ThePct}^{F_\freeprod}(\Sigma_\freeprod)}}

\newcommand{\pbpMAo}{\ensuremath{\mathcal{MA}_{\TheStrand,\TheStrand'}(\Sigma_\freeprod^\ast)}}
\newcommand{\tbpMAo}{\ensuremath{\mathcal{MA}_{\TheStrand,\TheStrand'}(\Sigma_\freeprod)}}
\newcommand{\bpMAos}{\ensuremath{\mathcal{MA}_{\TheStrand,\TheStrand'}^{sim}(\Sigma_\freeprod)}}
\newcommand{\bpMA}{\ensuremath{\mathcal{MA}_{\TheStrand,\TheStrand'}}}
\newcommand{\bpMAk}[2]{\ensuremath{\mathcal{MA}_{\TheStrand,\TheStrand',{#1}}}(#2)}
\newcommand{\pbpMA}{\ensuremath{\mathcal{MA}_{\TheStrand,\TheStrand'}(F_\freeprod^\ast)}}
\newcommand{\tbpMA}{\ensuremath{\mathcal{MA}_{\TheStrand,\TheStrand'}(\Sigma_\freeprod)}}

\newcommand{\bpM}{\ensuremath{\mathcal{M}_{\TheStrand,\TheStrand',k}}}
\newcommand{\seg}{s}
\newcommand{\st}{\mathrm{st}}
\newcommand{\map}{\rho}
\newcommand{\Rep}{T}

\newcommand{\GL}[2][\TheRank]{\ensuremath{\operatorname{GL_{#1}}(#2)}}
\newcommand{\hmu}[2]{h_{#2}^{\tau_{#1}}}
\newcommand{\Stab}{\operatorname{Stab}}
\newcommand{\CC}{\mathbb{C}}
\newcommand{\RR}{\mathbb{R}}
\newcommand{\ZZ}{\mathbb{Z}}
\newcommand{\NN}{\mathbb{N}}
\newcommand{\PP}{\mathbb{P}}
\newcommand{\HH}{\mathbb{H}}
\newcommand{\SSS}{\mathbb{S}}
\newcommand{\iotaPMap}{\iota_{\PMap_\TheStrand}}
\newcommand{\piPMap}{\pi_{\PMap_\TheStrand}}
\newcommand{\iotaMapast}{\iota_{\PMap_\TheStrand^\ast}}
\newcommand{\piMapast}{\pi_{\PMap_\TheStrand^\ast}}
\newcommand{\iotaPZ}{\iota_{\PZ_\TheStrand}}
\newcommand{\piPZ}{\pi_{\PZ_\TheStrand}}
\newcommand{\sPZ}{\mathrm{s}_{\PZ_\TheStrand}}
\newcommand{\iotaPZast}{\iota_{\PZ_\TheStrand^\ast}}
\newcommand{\piPZast}{\pi_{\PZ_\TheStrand^\ast}}
\newcommand{\iotaPZpct}{\iota_{\PZ_\TheStrand^\ast}}
\newcommand{\piPZpct}{\pi_{\PZ_\TheStrand^\ast}}
\newcommand{\Push}{\textup{Push}}
\newcommand{\Forget}{\textup{Forget}}
\newcommand{\PushPMap}{\textup{Push}_{\textup{PMap}_\TheStrand}}
\newcommand{\ForgetPMap}{\textup{Forget}_{\textup{PMap}_\TheStrand}}
\newcommand{\piMap}{\pi_{\textup{Map}}}
\newcommand{\new}[1]{\color{black} #1 \color{black}}
\newcommand{\enew}[1]{\color{black} #1 \color{black}}

\author{J. Flechsig}
\address{Jonas Flechsig: Fakult\"at f\"ur Mathematik, Universit\"at Bielefeld, D-33501 Bielefeld, Germany}

\maketitle
\begin{center}
Jonas Flechsig
\\[5pt]
May 7, 2023
\\[10pt]
\textbf{Abstract} 
\end{center}

\enew{The main result of this article is that pure orbifold braid groups fit into an exact sequence 
\[
1\rightarrow\kernel\rightarrow\pi_1^{orb}(\Sigma_\freeprod(\TheStrand-1+\ThePct))\xrightarrow{\iotaPZ}\PZ_\TheStrand(\Sigma_\freeprod(\ThePct))\xrightarrow{\piPZ}\PZ_{\TheStrand-1}(\Sigma_\freeprod(\ThePct))\rightarrow1. 
\]
In particular, we observe that the kernel $\kernel$ of $\iotaPZ$ is non-trivial. This corrects Theorem 2.14 in \cite{Roushon2021}. Moreover, we use the presentation of the pure orbifold mapping class group $\PMapIdOrb{\TheStrand}{\Sigma_\freeprod(\ThePct)}$ from \cite{Flechsig2023mcg} to determine $\kernel$. Comparing these orbifold mapping class groups with the orbifold braid groups, reveals a surprising behavior: in contrast to the classical case, the orbifold braid group is a proper quotient of the orbifold mapping class group. This yields a presentation of the pure orbifold braid group which allows us to read off the kernel $\kernel$.} 

\section{Introduction}

Orbifold braid groups are analogs of Artin braid groups or, more generally, surface braid groups. Instead of considering braids moving inside a disk or a surface, orbifold braids move inside a $2$-dimensional orbifold. Orbifold braid groups have attracted interest since some of them contain spherical and affine Artin groups of type $D_\TheStrand,\tilde{B}_\TheStrand$ and $\tilde{D}_\TheStrand$ as finite index subgroups by work of Allcock \cite{Allcock2002}. \new{For these Artin groups,} the orbifold braid groups provide us with braid pictures. Roushon published several articles on the structure of orbifold braid groups \cite{Roushon2021,Roushon2022b,Roushon2023,Roushon2022a} and the contained Artin groups \cite{Roushon2021a}. \new{Further,} Crisp--Paris~\cite{CrispParis2005} studied the outer automorphism group of the orbifold braid group. The present article also contributes to the study of the structure of orbifold braid groups. 

As in the work of Roushon \cite{Roushon2021}, we consider braid groups on orbifolds with finitely many punctures and cone points (of possibly different orders). The underlying orbifolds are defined using the following data: Let $\freeprod$ be a free product of finitely many finite cyclic groups. \new{As such,} $\freeprod$ acts on a planar, contractible surface $\Sigma$ (with boundary), obtained by thickening the Bass--Serre tree (see Example \ref{ex:good_orb_free_prod} for details). If we add $\ThePct$ punctures, we obtain a similar orbifold as studied by Roushon, which we denote by $\Sigma_\freeprod(\ThePct)$. \new{In contrast to his article,} we consider orbifolds with non-empty boundary but this does not affect the structure of the orbifold braid groups. The only singular points in the orbifold $\Sigma_\freeprod(\ThePct)$ are cone points that correspond to the finite cyclic factors of the free product $\freeprod$. 

The elements of orbifold braid groups $\Z_\TheStrand(\Sigma_\freeprod(\ThePct))$ are represented by braid diagrams (see Figure \ref{fig:braid_diagram_intro} for an example) with $\TheStrand$ strands (drawn in black), $\TheCone$ cone point bars (drawn in red with a cone at the top) and $\ThePct$ bars that represent the punctures (drawn in blue with a diamond at the top). The composition of these diagrams works as in Artin braid groups. 

\begin{figure}[H]
\import{Grafiken/introduction/}{braid_intro_wc.pdf_tex}
\caption{Example of an orbifold braid in $\Z_\TheStrand(\Sigma_\freeprod(\ThePct))$.}
\label{fig:braid_diagram_intro}
\end{figure}

\enew{Orbifold braid diagrams further allow us to observe that $\Z_\TheStrand(\Sigma_\freeprod(\ThePct))$ is generated by the braids 
\[
h_1,...,h_{\TheStrand-1},\twP{1},...,\twP{\ThePct},\twC{1},...,\twC{\TheCone}
\]
as pictured in Figure \ref{fig:generators_intro}. This recovers a result that is \new{implicitly contained} in \cite[Theorems 1.1 and 1.2]{Allcock2002} and~\cite[Section 4]{Roushon2021}. \new{Later,} we also deduce a finite presentation of $\Z_\TheStrand(\Sigma_\freeprod(\ThePct))$ in terms of these generators.}

\begin{figure}[H]
\centerline{
\import{Grafiken/introduction/}{generators_intro_wc.pdf_tex}
}
\caption{Generators of $\Z_\TheStrand(\Sigma_\freeprod(\ThePct))$.}
\label{fig:generators_intro}
\end{figure}

The key difference to the Artin braid groups is that a braid as in Figure \ref{fig:fin_order_el} that encircles a single cone point bar but no other strands or punctures represents an element of finite order (namely the order of the cyclic factor $\cyc{\TheOrder_\nu}$ associated to the encircled cone point), see Remark \ref{rem:braid_fin_order} for further details. 

\begin{figure}[H]
\import{Grafiken/introduction/}{fin_order_el_wc.pdf_tex}
\caption{A finite order element in $\Z_\TheStrand(\Sigma_\freeprod(\ThePct))$.}
\label{fig:fin_order_el}
\end{figure}

\new{As in the case of the Artin braids,} each orbifold braid induces a permutation of the $\TheStrand$ strands. This yields an epimorphism $\Z_\TheStrand(\Sigma_\freeprod(\ThePct))\rightarrow\Sym_\TheStrand$ whose kernel is called the pure orbifold braid group, denoted by $\PZ_\TheStrand(\Sigma_\freeprod(\ThePct))$. As for the group $\Z_\TheStrand(\Sigma_\freeprod(\ThePct))$, the diagrams allow us to establish a finite generating set of the pure subgroup. This recovers a result of Roushon \cite[Lemma 4.1]{Roushon2021}. 

\enew{The orbifold braid group $\Z_\TheStrand(\Sigma_\freeprod(\ThePct))$ is deeply connected to the orbifold mapping class group of the punctured orbifold $\Sigma_\freeprod(\ThePct)$ with $\TheStrand$ marked points. This group, denoted by $\MapOrb{\TheStrand}{\Sigma_\freeprod(\ThePct)}$, is studied in \cite{Flechsig2023mcg}. A mapping class of $\Sigma_\freeprod(\ThePct)$ is represented by a $\freeprod$-equivariant homeomorphism of $\Sigma(\ThePct)$ that fixes cone points and the boundary~$\partial\Sigma(\ThePct)$ pointwise. Such a homeomorphism respects the $\TheStrand$ marked points if it preserves the $\freeprod$-orbit of the $\TheStrand$ marked points as a set. The equivalence relation is induced by $\freeprod$-equivariant ambient isotopies fixing cone points, marked points and the boundary. 

These orbifold mapping class groups admit a homomorphism \[
\Forget_\TheStrand^{orb}:\MapOrb{\TheStrand}{\Sigma_\freeprod(\ThePct)}\rightarrow\MapOrb{}{\Sigma_\freeprod(\ThePct)} 
\]
by forgetting the marked points. Let $\MapIdOrb{\TheStrand}{\Sigma_\freeprod(\ThePct)}$ be the kernel of $\Forget_\TheStrand^{orb}$.} 

\subsection*{Main results}

\enew{Concerning the relation between $\Z_\TheStrand(\Sigma_\freeprod(\ThePct))$ and $\MapIdOrb{\TheStrand}{\Sigma_\freeprod(\ThePct)}$, we observe the following: Evaluating a certain ambient isotopy at the marked points $p_1,...,p_\TheStrand$ yields a homomorphism 
\[
\ev:\MapIdOrb{\TheStrand}{\Sigma_\freeprod(\ThePct)}\rightarrow\Z_\TheStrand(\Sigma_\freeprod(\ThePct)),  
\]
see Section \ref{subsec:orb_braid_quot_orb_mcg} for details. In contrast to the classical situation, this evaluation map is not an isomorphism. However, the kernel of $\ev$ can be described in terms of $\frac{2\pi}{\TheOrder_\nu}$-twists $\TwC{\nu}$ and $\TwistC_{\NStrand\nu}$ of a marked point around the cone point $\cp_\nu$ for $1\leq\nu\leq\TheCone$. Locally, $\TwC{\nu}$ and $\TwistC_{\NStrand\nu}$ twist around the cone point as described in Figure \ref{fig:homeo_twist_marked_pts_cp_clockwise}. For further information about the embedding of the twisted disks, we refer to Section~\ref{sec:basics_mcg_orb}. 

\begin{figure}[H]
\import{Grafiken/introduction/}{homeo_twist_marked_pts_cp_clockwise_intro.pdf_tex}
\caption{A $\frac{2\pi}{\TheOrder_\nu}$-twist for $\TheOrder_\nu=3$.}
\label{fig:homeo_twist_marked_pts_cp_clockwise}
\end{figure}

\begin{thmletter}
\label{thm-intro:kernel_ev}
The kernel of $\ev$ is the normal closure of $\{\TwC{\nu}^{\TheOrder_\nu}\mid1\leq\nu\leq\TheCone\}$ in $\MapIdOrb{\TheStrand}{\Sigma_\freeprod(\ThePct)}$. The kernel of the restricted map $\ev\vert_{\PMapIdOrb{\TheStrand}{\Sigma_\freeprod(\ThePct)}}$ is the normal closure of $\{\TwistC_{\NStrand\nu}^{\TheOrder_\nu}\mid1\leq\nu\leq \TheCone, 1\leq\NStrand\leq\TheStrand\}$ in $\PMapIdOrb{\TheStrand}{\Sigma_\freeprod(\ThePct)}$. 
\end{thmletter}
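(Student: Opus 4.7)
The plan is to prove each of the two equalities by establishing both containments separately, treating the non-pure statement first and then deducing the pure one by restriction.

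The easy containment --- that the normal closure is contained in $\ker(\ev)$ --- follows from a direct geometric calculation. By construction, the mapping class $\TwC{\nu}$ moves a single marked point once along the short $\frac{2\pi}{\TheOrder_\nu}$-arc encircling the cone point $\cp_\nu$, so $\TwC{\nu}^{\TheOrder_\nu}$ sends that marked point once all the way around $\cp_\nu$ and leaves every other marked point alone. Its image under $\ev$ is therefore an orbifold braid whose only non-trivial strand represents the loop $\gamma_\nu^{\TheOrder_\nu}$, where $\gamma_\nu$ generates the cyclic factor $\cyc{\TheOrder_\nu}$ of $\freeprod$ inside $\piOrb(\Sigma_\freeprod(\ThePct))$. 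Since $\gamma_\nu^{\TheOrder_\nu}=1$ in that orbifold fundamental group, and $\Z_\TheStrand(\Sigma_\freeprod(\ThePct))$ is modelled on the orbifold fundamental group of the associated (unordered) configuration orbifold, one concludes $\ev(\TwC{\nu}^{\TheOrder_\nu})=1$.

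For the reverse containment, the strategy is to compare presentations. Using the finite presentation of $\MapIdOrb{\TheStrand}{\Sigma_\freeprod(\ThePct)}$ established in~\cite{Flechsig2023mcg} and the generating set pictured in Figure~\ref{fig:generators_intro}, I would first check surjectivity of $\ev$ by matching generators: each of $h_\Str$, $\twP{\mu}$ and $\twC{\nu}$ is the $\ev$-image of a named generator of the mapping class group. Let $N$ denote the normal closure of $\{\TwC{\nu}^{\TheOrder_\nu}\mid 1\leq\nu\leq\TheCone\}$. The easy containment yields a factorization
\[
\overline{\ev}\colon \MapIdOrb{\TheStrand}{\Sigma_\freeprod(\ThePct)}/N\twoheadrightarrow \Z_\TheStrand(\Sigma_\freeprod(\ThePct)).
\]
To see that $\overline{\ev}$ is injective, I would verify that each defining relation of the orbifold braid group lifts to a relation already present in $\MapIdOrb{\TheStrand}{\Sigma_\freeprod(\ThePct)}/N$. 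The presentation of the mapping class group from~\cite{Flechsig2023mcg} supplies all braid-type relations among the $h_\Str$, $\twP{\mu}$, $\twC{\nu}$, and adjoining the single family $\twC{\nu}^{\TheOrder_\nu}=1$ produces the anticipated finite presentation of $\Z_\TheStrand(\Sigma_\freeprod(\ThePct))$; this supplies the inverse homomorphism, proving equality.

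For the pure statement, I would restrict the above analysis to the preimage of $1\in\Sym_\TheStrand$. Every element of $\ker(\ev)\cap \PMapIdOrb{\TheStrand}{\Sigma_\freeprod(\ThePct)}$ is a product of conjugates of the $\TwC{\nu}^{\TheOrder_\nu}$, and a rewriting argument --- using that $\TwistC_{\NStrand\nu}$ is the strand-$\NStrand$ analogue of $\TwC{\nu}$ and that conjugation by a pure mapping class is strand-preserving --- re-expresses such products as products of conjugates of the $\TwistC_{\NStrand\nu}^{\TheOrder_\nu}$ inside $\PMapIdOrb{\TheStrand}{\Sigma_\freeprod(\ThePct)}$. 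The main obstacle, and the delicate part of the whole argument, lies in the comparison step: one must confirm that $\MapIdOrb{\TheStrand}{\Sigma_\freeprod(\ThePct)}/N$ is not accidentally larger than $\Z_\TheStrand(\Sigma_\freeprod(\ThePct))$, which demands a careful accounting of how twist generators commute with the $\TwC{\nu}$ and, conceptually, a Birman-type exact sequence that isolates precisely the cone-point relations as the obstruction to $\ev$ being an isomorphism.
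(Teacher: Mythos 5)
Your easy containment is fine and matches the paper's argument (it is exactly the observation of Remark \ref{rem:braid_fin_order}), and the surjectivity of $\ev$ on generators is also unproblematic. The gap is in the reverse containment, and it is a genuine one: your injectivity argument for $\overline{\ev}\colon \MapIdOrb{\TheStrand}{\Sigma_\freeprod(\ThePct)}/N\to \Z_\TheStrand(\Sigma_\freeprod(\ThePct))$ is circular. To build an inverse you invoke ``the anticipated finite presentation of $\Z_\TheStrand(\Sigma_\freeprod(\ThePct))$'' obtained by adjoining $\twC{\nu}^{\TheOrder_\nu}=1$ to the mapping class group presentation --- but that presentation is Theorem \ref{thm-intro:pres_orb_braid_free_prod}, which in this paper is a \emph{consequence} of the kernel computation you are trying to prove, and it is not available from any independent source (Allcock and Roushon only give generating sets). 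What is known a priori (Lemma \ref{lem:gens_sat_rels_free_prod}) is that the braid generators satisfy those relations; this only re-proves that $\overline{\ev}$ is a well-defined surjection, which is automatic from the easy containment. The substantive claim --- that no further relations hold, equivalently that $\ker(\ev)$ is no larger than $N$ --- is untouched. Your closing appeal to ``a Birman-type exact sequence that isolates precisely the cone-point relations'' names the desired conclusion rather than a tool: the Birman exact sequence of \cite{Flechsig2023mcg} identifies $\MapIdOrb{\TheStrand}{\Sigma_\freeprod(\ThePct)}$ with $\Z_\TheStrand(\Sigma_\freeprod(\ThePct,\TheCone))$ (the cone points removed), i.e.\ it shows $\ev^\ast$ is an isomorphism, but it says nothing about the kernel of the filling-in map $\Z_\TheStrand(\Sigma_\freeprod(\ThePct,\TheCone))\to\Z_\TheStrand(\Sigma_\freeprod(\ThePct))$, which is exactly where the content lies.

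The paper closes this gap by a direct geometric argument you would need to supply: take $H\in\ker(\ev)$ written as a word in the generators; its image braid is null-homotopic, and the null-homotopy can be realized by piecewise-linear $\Delta$-moves, pushed into the fundamental domain, and made generic so that it meets cone points transversely in finitely many instants. One then inducts on the number of cone-point crossings: after localizing each crossing so that only the relevant strand moves, the two sides of the crossing differ by the orbifold Reidemeister move of Figure \ref{fig:orb-Reidemeister-move}, which on the level of words is insertion or deletion of a conjugate of $\TwistC_{\NStrand\nu}^{\pm\TheOrder_\nu}$; the base case (no crossings) is settled by the isomorphism $\ev^\ast$ of Corollary \ref{cor:ev-map_iso}. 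Some such analysis of homotopies through cone points is unavoidable, and your proposal does not contain it. As for the pure statement, deducing it by restriction is workable in principle, but note that $\langle\langle S\rangle\rangle_{G}\cap P$ is in general the $P$-normal closure of the set of $G$-conjugates of $S$, so you must identify the conjugates of $\TwC{\nu}^{\TheOrder_\nu}$ by non-pure coset representatives with $\PMapIdOrb{\TheStrand}{\Sigma_\freeprod(\ThePct)}$-conjugates of the $\TwistC_{\NStrand\nu}^{\TheOrder_\nu}$; the paper instead extracts the pure statement directly from the same crossing-by-crossing argument.
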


By \cite[Proposition 4.22]{Flechsig2023mcg}, we have a presentation of $\MapIdOrb{\TheStrand}{\Sigma_\freeprod(\ThePct)}$. \new{Together with Theorem \ref{thm-intro:kernel_ev},} this allows us to deduce the following presentation of $\Z_\TheStrand(\Sigma_\freeprod(\ThePct))$ in terms of the braids from Figure \ref{fig:generators_intro}: 

\begin{thmletter}
\label{thm-intro:pres_orb_braid_free_prod}
The orbifold braid group $\Z_\TheStrand(\Sigma_\freeprod(\ThePct))$ is presented by generators 
\[
h_1,...,h_{\TheStrand-1},\twP{1},...,\twP{\ThePct},\twC{1},...,\twC{\TheCone}
\]
and the following defining relations for $2\leq\Strand<\TheStrand$, $1\leq\Pc,\NPct\leq\ThePct$ with $\Pc<\NPct$ and $1\leq\mu,\nu\leq\TheCone$ with $\mu<\nu$: 
\begin{enumerate}
\item \label{thm-intro:pres_orb_braid_free_prod_rel1} 
$\twC{\nu}^{\TheOrder_\nu}=1$, 
\item \label{thm-intro:pres_orb_braid_free_prod_rel2} 
braid and commutator relations for the generators $h_1,...,h_{\TheStrand-1}$, 
\item \label{thm-intro:pres_orb_braid_free_prod_rel3} 
$[\twP{\NPct},h_\Strand]=1$ \; and \; 
$[\twC{\nu},h_\Strand]=1$, 
\item \label{thm-intro:pres_orb_braid_free_prod_rel4} 
$[h_1\twP{\NPct}h_1,\twP{\NPct}]=1$ \; and \; 
$[h_1\twC{\nu}h_1,\twC{\nu}]=1$, 
\item \label{thm-intro:pres_orb_braid_free_prod_rel5}
$[\twP{\Pc},\twistP_{2\NPct}]=1$, $[\twC{\mu},\twistC_{2\nu}]=1$ \; and \; $[\twP{\NPct},\twistC_{2\nu}]=1$
\\
with \; $\twistP_{2\NPct}=h_1^{-1}\twP{\NPct}h_1$ \; and \; $\twistC_{2\nu}=h_1^{-1}\twC{\nu} h_1$. 
\end{enumerate}
\end{thmletter}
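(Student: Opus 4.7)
The plan is to combine the surjective evaluation homomorphism $\ev$ with Theorem~\ref{thm-intro:kernel_ev} to obtain an isomorphism
\[
\Z_\TheStrand(\Sigma_\freeprod(\ThePct)) \;\cong\; \MapIdOrb{\TheStrand}{\Sigma_\freeprod(\ThePct)}\big/\langle\!\langle \TwC{\nu}^{\TheOrder_\nu} \mid 1\leq\nu\leq\TheCone \rangle\!\rangle,
\]
and then to read off a presentation of the right-hand side from the finite presentation of $\MapIdOrb{\TheStrand}{\Sigma_\freeprod(\ThePct)}$ established in \cite[Proposition~4.22]{Flechsig2023mcg}. Surjectivity of $\ev$ should be straightforward: an orbifold braid is the evaluation at the marked points of an ambient isotopy of $\Sigma_\freeprod(\ThePct)$ supported in a neighborhood of its strands, and this isotopy represents a preimage in $\MapIdOrb{\TheStrand}{\Sigma_\freeprod(\ThePct)}$.

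\textbf{Matching generators and relations.} By construction the half-twist and point-pushing generators of $\MapIdOrb{\TheStrand}{\Sigma_\freeprod(\ThePct)}$ used in \cite[Proposition~4.22]{Flechsig2023mcg} are sent by $\ev$ to the braid generators $h_\Strand$, $\twP{\Pc}$ and $\twC{\nu}$ depicted in Figure~\ref{fig:generators_intro}, so the generating sets correspond without modification. The defining relations in \cite[Proposition~4.22]{Flechsig2023mcg} are supported in disk configurations whose combinatorics (which disks are disjoint, which share a boundary arc, which are nested) is identical to the combinatorics of the corresponding braid strands; hence applying $\ev$ transports those relations term-for-term onto the braid relations \eqref{thm-intro:pres_orb_braid_free_prod_rel2}--\eqref{thm-intro:pres_orb_braid_free_prod_rel5}. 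Finally, the kernel computed in Theorem~\ref{thm-intro:kernel_ev} contributes precisely the $\TheCone$ new relations $\twC{\nu}^{\TheOrder_\nu}=1$ in \eqref{thm-intro:pres_orb_braid_free_prod_rel1}: since we are quotienting by a \emph{normal} closure, it is enough to adjoin the listed generating elements themselves rather than all of their conjugates.

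\textbf{Main obstacle.} The technical core lies in the explicit translation of the defining relations from \cite[Proposition~4.22]{Flechsig2023mcg} into the language of the braids in Figure~\ref{fig:generators_intro}. Some care is needed where the cited presentation employs auxiliary twist generators tied to specific disk subdivisions of $\Sigma_\freeprod(\ThePct)$, such as conjugates that push a marked point successively around several punctures or cone points; these have to be rewritten in terms of $h_\Strand$, $\twP{\Pc}$, $\twC{\nu}$ and the conjugates $\twistP_{2\NPct}=h_1^{-1}\twP{\NPct}h_1$ and $\twistC_{2\nu}=h_1^{-1}\twC{\nu}h_1$ featured in \eqref{thm-intro:pres_orb_braid_free_prod_rel5}, possibly invoking the already-verified relations \eqref{thm-intro:pres_orb_braid_free_prod_rel2}--\eqref{thm-intro:pres_orb_braid_free_prod_rel4} to simplify. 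No genuinely new geometric input is required beyond surjectivity of $\ev$ and Theorem~\ref{thm-intro:kernel_ev}; the remaining work is a finite, diagrammatic bookkeeping exercise.
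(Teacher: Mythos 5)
Your proposal is correct and is essentially the paper's own argument: the paper obtains Theorem \ref{thm-intro:pres_orb_braid_free_prod} (as Corollary \ref{cor:pres_orb_braid_free_prod}) by combining the surjectivity of $\ev$ (via the generating set of Corollary \ref{cor:gen_set_Z_n}), the kernel computation of Proposition \ref{prop:orb_br_grp_quot} (Theorem \ref{thm-intro:kernel_ev}), and the presentation of $\MapIdOrb{\TheStrand}{\Sigma_\freeprod(\ThePct)}$ from Proposition \ref{prop:pres_map_kcp}, adjoining the relations $\twC{\nu}^{\TheOrder_\nu}=1$. The ``main obstacle'' you anticipate is essentially vacuous, since that presentation is already stated in the generators $H_\Strand,\TwP{\NPct},\TwC{\nu}$ (and the conjugates $\TwistP_{2\NPct}=H_1^{-1}\TwP{\NPct}H_1$, $\TwistC_{2\nu}=H_1^{-1}\TwC{\nu}H_1$), so its relations correspond verbatim under $\ev$ to \eqref{thm-intro:pres_orb_braid_free_prod_rel2}--\eqref{thm-intro:pres_orb_braid_free_prod_rel5}.
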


All the relations from Theorem \ref{thm-intro:pres_orb_braid_free_prod} follow from geometric observations: The finite order relation \ref{thm-intro:pres_orb_braid_free_prod}\eqref{thm-intro:pres_orb_braid_free_prod_rel1} follows as described in Remark \ref{rem:braid_fin_order}, the relations \ref{thm-intro:pres_orb_braid_free_prod}\eqref{thm-intro:pres_orb_braid_free_prod_rel2} are well-known and the commutator relations \ref{thm-intro:pres_orb_braid_free_prod}\eqref{thm-intro:pres_orb_braid_free_prod_rel3}-\ref{thm-intro:pres_orb_braid_free_prod}\eqref{thm-intro:pres_orb_braid_free_prod_rel5} follow from the braid pictures in Figure \ref{fig:gens_satisfy_rels_free_prod_comm_rel}. Moreover, a similar presentation can also be deduced for the pure orbifold braid group $\PZ_\TheStrand(\Sigma_\freeprod(\ThePct))$ (see Corollary \ref{cor:pres_pure_free_prod}).}

The main contribution of this article is a result on the structure of pure orbifold braid groups: Let $\iotaPZ$ be the map that considers elements of the orbifold fundamental group of the punctured orbifold $\Sigma_\freeprod(\TheStrand-1+\ThePct)$ as braids in $\PZ_\TheStrand(\Sigma_\freeprod(\ThePct))$ that only move the $\TheStrand$-th strand. \new{Moreover,} let $\piPZ:\PZ_\TheStrand(\Sigma_\freeprod(\ThePct))\rightarrow\PZ_{\TheStrand-1}(\Sigma_\freeprod(\ThePct))$ be the map that forgets the $\TheStrand$-th strand. Using Theorems \ref{thm-intro:kernel_ev} and \ref{thm-intro:pres_orb_braid_free_prod}, we obtain: 

\begin{thmletter}
\label{thm-intro:kernel_ex_seq}
The pure orbifold braid group $\PZ_\TheStrand(\Sigma_\freeprod(\ThePct))$ fits into the exact sequence 
\[
1\rightarrow\kernel\rightarrow\pi_1^{orb}\left(\Sigma_\freeprod(\TheStrand-1+\ThePct)\right)\xrightarrow{\iotaPZ}\PZ_\TheStrand(\Sigma_\freeprod(\ThePct))\xrightarrow{\piPZ}\PZ_{\TheStrand-1}(\Sigma_\freeprod(\ThePct))\rightarrow1 
\]
where $\kernel$ is the normal closure of 
\[
\textup{PC}\left(\{(\twistn{\Strand}\twistnC{\nu})^{\TheOrder_\nu}(\twistn{\Strand}^{-1}\twistnC{\nu}^{-1})^{\TheOrder_\nu}\mid 1\leq\Strand<\TheStrand,1\leq\nu\leq\TheCone\}\right)
\]
in $\pi_1^{orb}\left(\Sigma_\freeprod(\TheStrand-1+\ThePct)\right)$ where $\textup{PC}(S)$ is the set of partial conjugates of elements in~$S$ (see Section \textup{\ref{subsec:ex_seq_pure_orb_braid}}, Steps \textup{\ref{it:det_ker_1}} and \textup{\ref{it:det_ker_2}} for further details). 
\end{thmletter}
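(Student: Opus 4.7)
The plan is to build the exact sequence by combining a forgetful surjection $\piPZ$ with an explicit description of $\ker(\iotaPZ)$ coming from the interplay between the pure orbifold mapping class group and $\ev$. First, I would establish the right half of the sequence. Geometrically, $\piPZ$ is induced by forgetting the $\TheStrand$-th strand, and its surjectivity follows by extending any $(\TheStrand-1)$-strand pure braid with a trivial $\TheStrand$-th strand. A braid in $\ker(\piPZ)$ has its first $\TheStrand-1$ strands constant, hence corresponds to a loop of the $\TheStrand$-th strand in the orbifold $\Sigma_\freeprod(\TheStrand-1+\ThePct)$ obtained by treating the other $\TheStrand-1$ marked points as punctures. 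This gives a well-defined surjection $\iotaPZ:\pi_1^{orb}(\Sigma_\freeprod(\TheStrand-1+\ThePct))\twoheadrightarrow\ker(\piPZ)$, reducing the problem to identifying the kernel $\kernel$ of $\iotaPZ$.

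For the computation of $\kernel$, I would set up the commutative ladder whose top row is the Birman-type exact sequence for the pure orbifold mapping class group proved in \cite{Flechsig2023mcg},
\[
1\to\pi_1^{orb}(\Sigma_\freeprod(\TheStrand-1+\ThePct))\to\PMapIdOrb{\TheStrand}{\Sigma_\freeprod(\ThePct)}\to\PMapIdOrb{\TheStrand-1}{\Sigma_\freeprod(\ThePct)}\to 1,
\]
whose bottom row is the braid group sequence above, and whose vertical maps are $\iotaPZ$, $\ev_\TheStrand$ and $\ev_{\TheStrand-1}$ (the latter two coming from Theorem~\ref{thm-intro:kernel_ev}). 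By that theorem, $\ev_\TheStrand$ and $\ev_{\TheStrand-1}$ are surjective with kernels the normal closures of the sets $\{\TwistC_{k,\nu}^{\TheOrder_\nu}\}$ for the appropriate ranges of $k$. Each generator $\TwistC_{k,\nu}^{\TheOrder_\nu}$ with $k\leq\TheStrand-1$ lifts naturally from $\PMapIdOrb{\TheStrand-1}{\Sigma_\freeprod(\ThePct)}$ to $\PMapIdOrb{\TheStrand}{\Sigma_\freeprod(\ThePct)}$ by adding a constant $\TheStrand$-th strand, so the induced map $\ker(\ev_\TheStrand)\to\ker(\ev_{\TheStrand-1})$ is surjective. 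The Snake Lemma then yields an exact sequence
\[
1\to\kernel\to\ker(\ev_\TheStrand)\to\ker(\ev_{\TheStrand-1})\to 1,
\]
identifying $\kernel$ with the preimage under the Birman injection of the "new" relations $\TwistC_{\TheStrand,\nu}^{\TheOrder_\nu}$, $1\leq\nu\leq\TheCone$, modulo the image of $\ker(\ev_{\TheStrand-1})$.

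To obtain the explicit generating set, I would use the pure analogue of Theorem~\ref{thm-intro:pres_orb_braid_free_prod} together with the presentation of $\PMapIdOrb{\TheStrand}{\Sigma_\freeprod(\ThePct)}$ from \cite{Flechsig2023mcg} to compute the preimage of $\TwistC_{\TheStrand,\nu}^{\TheOrder_\nu}$ in the standard generators of $\pi_1^{orb}(\Sigma_\freeprod(\TheStrand-1+\ThePct))$. The element $\TwistC_{\TheStrand,\nu}^{\TheOrder_\nu}$ represents a full $2\pi$ twist of the $\TheStrand$-th marked point around the $\nu$-th cone point; decomposing this through the push map and applying the orbifold relation $\twistnC{\nu}^{\TheOrder_\nu}=1$ rewrites it in the form $(\twistn{\Strand}\twistnC{\nu})^{\TheOrder_\nu}(\twistn{\Strand}^{-1}\twistnC{\nu}^{-1})^{\TheOrder_\nu}$. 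Taking normal closures in the mapping class group then translates, via the Birman injection, into the normal closure in $\pi_1^{orb}$ of all \emph{partial} conjugates $\textup{PC}(\cdot)$, because conjugation in the mapping class group by braids involving other strands acts on $\pi_1^{orb}$ as precisely the partial conjugation operation.

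The main obstacle will be this last step: tracing the finite-order relations $\TwistC_{\TheStrand,\nu}^{\TheOrder_\nu}$ through the Birman injection to identify their exact words in the generators of $\pi_1^{orb}(\Sigma_\freeprod(\TheStrand-1+\ThePct))$, and verifying that closing under \emph{partial} rather than full conjugation exactly captures the kernel $\kernel$. This reflects a subtle feature of the orbifold setting: conjugation by motions of strands $1,\dots,\TheStrand-1$ corresponds to automorphisms of $\pi_1^{orb}$ which need not be inner, producing genuinely new relations not visible from the single cone-point relation alone, and one has to book-keep carefully which of these conjugating motions are realizable as partial conjugations and which give redundant relations.
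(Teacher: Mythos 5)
There is a genuine gap, and it sits at the center of your argument. The top row of your commutative ladder is not the Birman sequence that \cite{Flechsig2023mcg} provides: by Corollary \ref{cor:pure_orb_mcg_ses}, the kernel of $\ForgetPMap^{orb}:\PMapIdOrb{\TheStrand}{\Sigma_\freeprod(\ThePct)}\rightarrow\PMapIdOrb{\TheStrand-1}{\Sigma_\freeprod(\ThePct)}$ is the \emph{free} group $\freegrp{\TheStrand-1+\ThePct+\TheCone}$, not $\piOrb(\Sigma_\freeprod(\TheStrand-1+\ThePct))\cong\freegrp{\TheStrand-1+\ThePct}\ast\freeprod$. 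A point-pushing map from the latter into the mapping class group cannot exist as you describe it, because $\twistnC{\nu}$ has order $\TheOrder_\nu$ while its would-be image $\TwistC_{\TheStrand\nu}$ has infinite order in $\PMapIdOrb{\TheStrand}{\Sigma_\freeprod(\ThePct)}$ (that $\TwistC_{\TheStrand\nu}^{\TheOrder_\nu}\neq1$ there is precisely the content of Theorem \ref{thm-intro:kernel_ev}). One can repair the ladder by putting $\freegrp{\TheStrand-1+\ThePct+\TheCone}$ on top, but then the snake-lemma term is not $\kernel$ itself; it is the preimage of $\kernel$ under the quotient $\freegrp{\TheStrand-1+\ThePct+\TheCone}\rightarrow\freegrp{\TheStrand-1+\ThePct}\ast\freeprod$ killing the $\twistnC{\nu}^{\TheOrder_\nu}$, equivalently the intersection of $\langle\langle\TwistC_{\NStrand\nu}^{\TheOrder_\nu}\rangle\rangle_{\PMapIdOrb{\TheStrand}{\Sigma_\freeprod(\ThePct)}}$ with the point-pushing subgroup, and computing that intersection is exactly the hard part which your sketch leaves open. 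Relatedly, your account of where the generators of $\kernel$ come from is wrong: $\TwistC_{\TheStrand\nu}^{\TheOrder_\nu}$ corresponds to $\twistnC{\nu}^{\TheOrder_\nu}$, which is already trivial in $\piOrb(\Sigma_\freeprod(\TheStrand-1+\ThePct))$ and produces nothing. The elements $(\twistn{\NStrand}\twistnC{\nu})^{\TheOrder_\nu}(\twistn{\NStrand}^{-1}\twistnC{\nu}^{-1})^{\TheOrder_\nu}$ arise instead from the relations $\twistC_{\NStrand\nu}^{\TheOrder_\nu}=1$ for the \emph{other} strands $\NStrand<\TheStrand$, which do not lie in the point-pushing subgroup but act on it by conjugation; this is the computation of Lemma \ref{lem:it_conj} and equations \eqref{eq:rel_ker}--\eqref{eq:non-trivial_ker} in the paper (Step \ref{it:det_ker_1}).

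Beyond the set-up, the decisive step is missing in both directions. The inclusion ``$\kernel$ contains the partial conjugates'' needs the explicit conjugation identities; and the reverse inclusion, that nothing beyond the normal closure of $\textup{PC}(\{(\twistn{\NStrand}\twistnC{\nu})^{\TheOrder_\nu}(\twistn{\NStrand}^{-1}\twistnC{\nu}^{-1})^{\TheOrder_\nu}\})$ is killed, is not delivered by surjectivity of $\ker(\ev_\TheStrand)\rightarrow\ker(\ev_{\TheStrand-1})$ plus a snake lemma. The paper obtains it by exhibiting a second presentation of $\PZ_\TheStrand(\Sigma_\freeprod(\ThePct))$ (Proposition \ref{prop:pure_br_semidir_pres}) whose relations split into $R$, $S$ and conjugation relations $C$, and verifying via Lemma \ref{lem:semidir_prod_pres} that this exhibits $\PZ_\TheStrand(\Sigma_\freeprod(\ThePct))$ as $\langle X\mid R\rangle\rtimes\PZ_{\TheStrand-1}(\Sigma_\freeprod(\ThePct))$; the invariance of the $\textup{PC}$-set under partial conjugation and the identities of Lemma \ref{lem:it_conj} are exactly what make the verification work, and then $(\freegrp{\TheStrand-1+\ThePct}\ast\freeprod)/\kernel\cong\langle X\mid R\rangle$ identifies $\kernel$. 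Finally, note that the paper does not even assume exactness of the bottom row at $\PZ_\TheStrand(\Sigma_\freeprod(\ThePct))$ from a fibration-style argument (your ``the first $\TheStrand-1$ strands are constant'' assertion); it proves $\ker(\piPZ)=\im(\iotaPZ)$ algebraically from the presentation and the section $\sPZ$, which you would also need to justify in the orbifold setting.
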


\enew{This corrects Theorem 2.14 in \cite{Roushon2021} which claims that the kernel $\kernel$ is trivial. Moreover, Theorem \ref{thm-intro:kernel_ex_seq} implies that contrary to \cite[Proposition 4.1]{Roushon2021a} the natural homomorphism 
\[
\omega:\Z_\TheStrand(\Sigma_\freeprod(\ThePct))\rightarrow\Z_{\TheStrand+\ThePct}(\Sigma_\freeprod)
\]
which maps punctures to fixed strands is not injective for $\ThePct\geq1$ (see Proposition~\ref{prop:emb_orb_braid_grps_not_inj} for details).} 

\subsection*{Overview}

We introduce the group $\Z_\TheStrand(\Sigma_\freeprod(\ThePct))$ as the orbifold fundamental group of an orbifold configuration space. The relevant concepts are summarized in Section~\ref{sec:intro_orb}. Since the $\freeprod$-action on $\Sigma$ has a fundamental domain, we may reinterpret the elements in this group in terms of braid diagrams as in Figures \ref{fig:braid_diagram_intro}, \ref{fig:generators_intro} and \ref{fig:fin_order_el}. This description is the subject of Section \ref{sec:braid_grp_orb}. \new{In Section \ref{sec:basics_mcg_orb}, we give a brief overview about orbifold mapping class groups which summarizes the results from \cite{Flechsig2023mcg}. 
Section \ref{sec:braid_and_mcg} is the main achievement of this article: There we deduce Theorem \ref{thm-intro:kernel_ev} which yields a presentation for $\Z_\TheStrand(\Sigma_\freeprod(\ThePct))$ and $\PZ_\TheStrand(\Sigma_\freeprod(\ThePct))$.} The latter presentation allows us to compute the non-trivial kernel~$\kernel$ (Theorem \ref{thm-intro:kernel_ex_seq}). 

\section*{Acknowledgments} 

\new{I would like to thank my adviser Kai-Uwe Bux for his support and many helpful discussions. Many thanks are also due to José Pedro Quintanilha and Xiaolei Wu for their helpful advice at different points of this project. Moreover, I am grateful to José Pedro Quintanilha for his comments on a draft of this text.}

The author was funded by the German
Research Foundation (Deutsche Forschungsgemeinschaft, DFG) – 426561549. Further, the author was partially supported by Bielefelder Nachwuchsfonds. 

\newpage

\section{Orbifolds and their fundamental groups}
\label{sec:intro_orb}

\setlist[enumerate,1]{label=\textup{(\arabic*)}, ref=\thetheorem(\arabic*)}
\setlist[enumerate,2]{label=\textup{\alph*)}, ref=\alph*)}

In this article we only consider orbifolds that are given as the quotient of a manifold (typically a surface) by a proper group action. Recall that an action 
\[
\phi:\G\rightarrow\Homeo{}{M},\g\mapsto\phi_\g 
\]
on a manifold $M$ is \textit{proper} if for each compact set $K\subseteq M$ the set 
\[
\{\g\in\G\mid\phi_\g(K)\cap K\neq\emptyset\} 
\]
is compact. Since we endow $\G$ with the discrete topology, i.e.\ the above set is finite. Orbifolds that appear as proper quotients of manifolds are called \textit{developable} in the terminology of Bridson--Haefliger \cite{BridsonHaefliger2011} and \textit{good} in the terminology of Thurston \cite{Thurston1979}. 

\new{Above and in the following, all manifolds are orientable and all homeomorphisms are orientation preserving. }

\begin{definition}[{Orbifolds}, {\cite[Chapter III.G, 1.3]{BridsonHaefliger2011}}]
\label{def:good_orb}
Let $M$ be a 
manifold, possibly with boundary, 
and $\G$ a group with a monomorphism 
\[
\phi:\G\rightarrow\Homeo{}{M} 
\]
such that $\G$ acts properly on $M$. Under these conditions the $3$-tuple $(M,\G,\varphi)$ is called an \textit{orbifold}, which we denote by $M_\G$. If $\Stab_\G(\cp)\neq\{1\}$ for a point $\cp\in M$, the point $\cp$ is called a \textit{singular point} of $M_\G$. If $\Stab_\G(\cp)$ for a point $\cp\in M$ is cyclic of finite order $\TheOrder$, the point $\cp$ is called a \textit{cone point} of $M_\G$ of order~$\TheOrder$. 
\end{definition}

A first example of an orbifold is the following: 

\begin{example}
\label{ex:good_orb_D_cyc_m}
Let $\cycm$ be a cyclic group of order $\TheOrder$. The group $\cycm$ acts on a disk~$D$ by rotations around its center. The action is via isometries and the acting group is finite, i.e.\ the action is proper. Consequently, $D_{\cycm}$ is an orbifold with exactly one singular point in the center of $D$, which is a cone point. 
\end{example}

Example \ref{ex:good_orb_D_cyc_m} motivates a more general construction for a free product of finitely many finite cyclic groups which we describe briefly in the following. For further details, we refer to the author's PhD thesis \cite[Section 2.1]{Flechsig2023}. We will consider this generalization of the orbifold $D_{\cycm}$ throughout the article. 

\begin{example}
\label{ex:good_orb_free_prod}
Let $\freeprod$ be a free product of finite cyclic groups $\cyc{\TheOrder_1},...,\cyc{\TheOrder_\TheCone}$. The group $\freeprod$ is the fundamental group of the following graph of groups with trivial edge groups  
\vspace*{3mm}
\begin{center}
\begingroup%
  \makeatletter%
  \providecommand\color[2][]{%
    \errmessage{(Inkscape) Color is used for the text in Inkscape, but the package 'color.sty' is not loaded}%
    \renewcommand\color[2][]{}%
  }%
  \providecommand\transparent[1]{%
    \errmessage{(Inkscape) Transparency is used (non-zero) for the text in Inkscape, but the package 'transparent.sty' is not loaded}%
    \renewcommand\transparent[1]{}%
  }%
  \providecommand\rotatebox[2]{#2}%
  \newcommand*\fsize{\dimexpr\f@size pt\relax}%
  \newcommand*\lineheight[1]{\fontsize{\fsize}{#1\fsize}\selectfont}%
  \ifx\svgwidth\undefined%
    \setlength{\unitlength}{127.35202975bp}%
    \ifx\svgscale\undefined%
      \relax%
    \else%
      \setlength{\unitlength}{\unitlength * \real{\svgscale}}%
    \fi%
  \else%
    \setlength{\unitlength}{\svgwidth}%
  \fi%
  \global\let\svgwidth\undefined%
  \global\let\svgscale\undefined%
  \makeatother%
  \begin{picture}(1,0.10366749)%
    \lineheight{1}%
    \setlength\tabcolsep{0pt}%
    \put(0,0){\includegraphics[width=\unitlength,page=1]{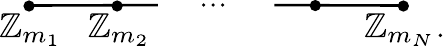}}%
  \end{picture}%
\endgroup%

\end{center}
\new{As such,} $\freeprod$ acts on its Bass--Serre tree $T$. The fundamental domain of this action is a path with $N-1$ edges. 
The action is free on edges and the vertex stabilizers are conjugates $\gamma\cyc{\TheOrder_\nu}\gamma^{-1}$ with $\gamma\in\freeprod$ and $1\leq\nu\leq\TheCone$. By the choice of a generator $\gamma_\nu$ for each $\cyc{\TheOrder_\nu}$ with $1\leq\nu\leq\TheCone$, the link of each vertex carries a cyclic ordering. 

Let us consider a proper embedding of the Bass--Serre tree~$T$ into $\CC$ that respects the local cyclic order on each link. If we choose a regular neighborhood of $T$ inside~$\CC$, we obtain a planar, contractible surface $\Sigma$ (with boundary), see Figure \textup{\ref{fig:constr_fund_domain}} for an example.

This surface $\Sigma$ inherits a proper $\freeprod$-action from the Bass--Serre tree such that vertex stabilizers act with respect to the cyclic order on the link of the stabilized vertex. Moreover, the action admits a fundamental domain corresponding to the fundamental domain in $T$. 
In particular, we obtain an orbifold structure $\Sigma_\freeprod$. 

\begin{figure}[H]
\import{Grafiken/basics_orb_fundamental_grp/}{constr_fund_domain_wc.pdf_tex}
\caption{Thickened Bass--Serre tree for $\freeprod=\cyc{3}\ast\cyc{2}\ast\cyc{4}$ with fundamental domain shaded in gray. The red $\textcolor{red}{\bullet}$, blue~$\textcolor{blue}{\blacklozenge}$ and yellow $\textcolor{yellow}{\blacktriangle}$ are conjugates of the free factors $\cyc{3}$, $\cyc{2}$ and $\cyc{4}$, respectively.}
\label{fig:constr_fund_domain}
\end{figure}

A point in $\Sigma_\freeprod$ is a singular point if and only if it corresponds to a vertex of $T$. Hence, the singular points in $\Sigma_\freeprod$ are all cone points and decompose into $\TheCone$ orbits. The quotient $\Sigma/\freeprod$ is a disk with $\TheCone$ distinguished points that correspond to the orbits of the cone points. 

In general, we may choose a fundamental domain $\FD$ that is a disk as pictured in Figure \textup{\ref{fig:fund_domain}} and contains exactly $\TheCone$ cone points $\cp_1,...,\cp_\TheCone$ that lie on the boundary \new{such that each has exactly two adjacent boundary arcs that lie in the same $\freeprod$-orbit.} 
\begin{figure}[H]
\import{Grafiken/basics_orb_fundamental_grp/}{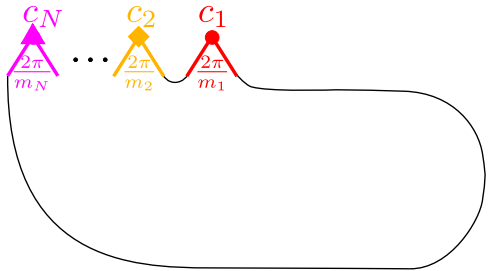}
\caption{The fundamental domain $\FD$.}
\label{fig:fund_domain}
\end{figure}

If we remove the boundary of $\Sigma$, the quotient $\Sigma^\circ/\freeprod$ is homeomorphic to the complex plane with $\TheCone$ distinguished points and associated cyclic groups $\cyc{\TheOrder_\nu}$ for $1\leq\nu\leq\TheCone$. Adding $\freeprod$-orbits of punctures $\freeprod(r_\NPct)$ for $1\leq\NPct\leq\ThePct$ to $\Sigma$ such that $\freeprod(r_\Pc)\neq\freeprod(r_\NPct)$ for $1\leq\Pc,\NPct\leq\ThePct, \Pc\neq\NPct$, we obtain the orbifold called 
\[
\CC(\ThePct,\TheCone,\textbf{\TheOrder}) \text{ with } \textbf{\TheOrder}=(\TheOrder_1,...,\TheOrder_\TheCone)
\]
in \cite{Roushon2021}. In \cite{Allcock2002}, Allcock studied braids on these orbifolds for 
\[
(\ThePct,\TheCone,\textbf{\TheOrder})=(0,2,(2,2)), (0,1,(2)) \text{ and } (1,1,(2)). 
\]
\end{example}

Since we also want to study mapping class groups, which requires to fix the boundary, we will consider the orbifold $\Sigma_\freeprod(\ThePct)$ with boundary. \new{Moreover, we use the notation $\Sigma_\freeprod(\ThePct)$ for the orbifold with underlying surface (with boundary) 
\begin{equation}
\label{eq:Sigma_minus_pct}
\Sigma(\ThePct):=\Sigma\setminus\freeprod(\{r_1,...,r_\ThePct\}). 
\end{equation}}


We will consider orbifold fundamental groups using a concept of paths introduced in \cite[Chapter III.G, 3]{BridsonHaefliger2011}. There an orbifold is considered more generally as an \textit{\'etale groupoid} $(\mathcal{G},X)$, see \cite[Chapter III.G, 2]{BridsonHaefliger2011}. If $M_\G$ is an orbifold in the sense of Definition \ref{def:good_orb}, the associated \'etale groupoid is given by 
\[
(\mathcal{G},X)=(\G\times M,M). 
\]

\new{In the following,} we will simplify the notation using $\G$ instead of $\mathcal{G}=\G\times M$. In particular, we introduce $\G$-paths. These are the $\mathcal{G}$-paths in \cite{BridsonHaefliger2011}.  

\begin{definition}[{$\G$-path}, {\cite[Chapter III.G, 3.1]{BridsonHaefliger2011}}]
\label{def:G-path}
A \textit{$\G$-path} $\xi=(\g_0,c_1,\g_1,...,c_\TheSubdivision,\g_\TheSubdivision)$ in $M_\G$ with initial point $x\in M$ and terminal point $y\in M$ over a subdivision $a=t_0\leq...\leq t_\TheSubdivision=b$ of the interval $[a,b]$ consists of
\begin{enumerate}
\item continuous maps $c_\Subdivision:[t_{\Subdivision-1},t_\Subdivision]\rightarrow M$ for $1\leq\Subdivision\leq\TheSubdivision$ and
\item group elements $\g_\Subdivision\in\G$ such that $\g_0(c_1(t_0))=x$, $\g_\Subdivision(c_{\Subdivision+1}(t_\Subdivision))=c_\Subdivision(t_\Subdivision)$ for $1\leq\Subdivision<\TheSubdivision$ and $\g_\TheSubdivision(y)=c_\TheSubdivision(t_\TheSubdivision)$ (see Figure \ref{fig:G-path}). 
\end{enumerate}
We call $\xi$ a \textit{$\G$-loop based at $x$}, if the initial point $x\in M$ is also the terminal point. If an element $\g_\Subdiv$ is non-trivial, we say that $\xi$ contains a \textit{$\G$-leap} at time $t_\Subdiv$. 

For brevity, we write $(\g_0,c_1,\g_1,...,c_\TheSubdiv)$ for $(\g_0,c_1,\g_1,...,c_\TheSubdiv,\g_\TheSubdiv)$ if $\g_\TheSubdiv=1$. We say a $\G$-path is \textit{continuous} if it is of the form $(\g,c)$. 
\begin{figure}[H]
\import{Grafiken/basics_orb_fundamental_grp/}{G-path.pdf_tex}
\caption{A $\G$-path.}
\label{fig:G-path}
\end{figure}
\end{definition} 

The following equivalence relation 
identifies certain $\G$-paths whose continuous pieces have the same $\G$-orbits. 

\begin{definition}[{Equivalence of $\G$-paths}, {\cite[Chapter III.G, 3.2]{BridsonHaefliger2011}}]
\label{def:equiv_paths}
Let 
\[
\xi=(\g_0,c_1,\g_1,...,c_\TheSubdivision,\g_\TheSubdivision) 
\]
be a $\G$-path over 
$a=t_0\leq...\leq t_\TheSubdivision=b$. 
\begin{enumerate}
\item
\label{def:equiv_paths_subdiv} 
A \textit{subdivision} of $\xi$ is a $\G$-path obtained from $\xi$ by choosing $t'\in[t_{\Subdivision-1},t_\Subdivision]$ for some $1\leq\Subdiv\leq\TheSubdiv$  and replacing the entry $c_\Subdiv$ with the sequence 
\[
(c_\Subdivision\vert_{[t_{\Subdivision-1},t']},1,c_\Subdivision\vert_{[t',t_\Subdivision]}). 
\] 
\item
\label{def:equiv_paths_shift} 
A \textit{shift} of $\xi$ is a $\G$-path obtained from $\xi$ by choosing $h\in\G$ and replacing a subsequence $(\g_{\Subdiv-1},c_\Subdiv,\g_\Subdiv)$ for some $1\leq\Subdiv\leq\TheSubdiv$ with 
\[
(\g_{\Subdiv-1}h^{-1},h\cdot c_\Subdiv,h\g_\Subdiv). 
\] 
\end{enumerate}
We say that two $\G$-paths are \textit{equivalent} if one can be obtained from the other by a sequence of subdivisions, inverses of subdivisions and shifts. 
\end{definition}
\begin{figure}[H]
\import{Grafiken/basics_orb_fundamental_grp/}{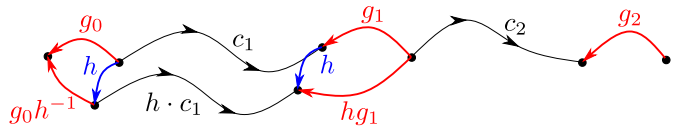}
\caption{Two $\G$-paths equivalent by a shift.}
\label{fig:equivalent_G-paths}
\end{figure}

Using this equivalence relation, we mimic the homotopy relation for paths in topological spaces for $\G$-paths. 

\begin{definition}[{Homotopy of $\G$-paths}, {\cite[Chapter III.G, 3.5]{BridsonHaefliger2011}}]
\label{def:homo_G-paths}
An \textit{elementary homotopy} between two $\G$-paths $\xi$ and $\tilde{\xi}$ is a family of $\G$-paths $\xi_s=(\g_0,c_1^s,...,\g_\TheSubdivision)$ over the subdivisions $0=t_0\leq t_1\leq...\leq t_\TheSubdivision=1$. The family $\xi_s$ is parametrized by $s\in[s_0,s_1]$ such that $c_\Subdivision^s$ depends continuously on the parameter and $\xi^{s_0}=\xi$, $\xi^{s_1}=\tilde{\xi}$. 

\begin{figure}[H]
\import{Grafiken/basics_orb_fundamental_grp/}{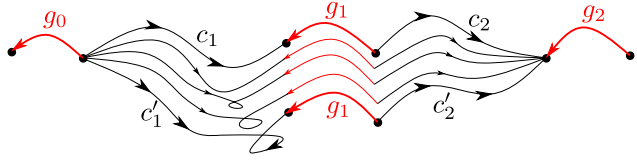}
\caption{An elementary homotopy of $\G$-paths.}
\label{fig:el_homo_G-paths}
\end{figure}

Two $\G$-paths are \textit{homotopic (relative to their endpoints)} if one can pass from the first to the second by a sequence of the following operations: 
\begin{enumerate}
\item equivalence of $\G$-paths, 
\item elementary homotopies. 
\end{enumerate}
\end{definition}

\begin{definition}[{Orbifold fundamental group}, {\cite[Chapter III.G, 3.6]{BridsonHaefliger2011}}]
\label{def:fundam_grp_orb}
Let $x_0$ be a non-singular point in $M_\G$. On the set of homotopy classes of $\G$-loops based at $x_0$ one easily defines a composition, see \cite[Chapter III.G, 3.4]{BridsonHaefliger2011} for details. With this composition the set of homotopy classes of $\G$-loops has a group structure. This group is called the \textit{orbifold fundamental group} $\piOrb(M_\G,x_0)$ of $M_\G$. The neutral element is represented by the constant $\G$-loop based at $x_0$. 
\end{definition}

Throughout the article we restrict to orbifolds $M_\G$ such that each two points are connected by a $\G$-path, i.e.\ $M_\G$ is \textit{$\G$-path-connected}. As in the case of the fundamental group of a path-connected topological space, the choice of base point does not affect the fundamental group of a $\G$-path-connected orbifold (up to isomorphism), see \cite[Chapter III.G, Proposition 3.7]{BridsonHaefliger2011}. \new{Hence,} we shorten our notation $\piOrb(M_\G,x_0)$ to $\piOrb(M_\G)$ whenever the base point does not matter. 


We finish the section with two observations that relate orbifold fundamental groups to fundamental groups of 
topological spaces. In particular, we determine a presentation of $\piOrb(\Sigma_\freeprod)$. This sets the foundation to identify a semidirect product structure 
of pure orbifold braid groups on $\Sigma_\freeprod$. 

For a fixed index $1\leq\NSubdiv\leq\TheSubdiv$, the shift defined in Definition \ref{def:equiv_paths} allows for the following choice: \new{The element $h$ shifts $c_\NSubdiv$ to $c_\NSubdiv':=h\cdot c_\NSubdiv$, $\g_{\NSubdiv-1}'=\g_{\NSubdiv-1}h^{-1}$ and $\g_\NSubdiv'=h\g_\NSubdiv$. Thus, the path $\xi$ is equivalent to} 
\begin{equation}
\label{eq:restricted_shift}
\xi'=(\g_0,c_1,\g_1,...c_{\NSubdiv-1},\g_{\NSubdiv-1}',c_\NSubdiv',\g_\NSubdiv',c_{\NSubdiv+1},...,c_\TheSubdivision,\g_\TheSubdivision). 
\end{equation}
If we choose $h=\g_\NSubdiv^{-1}$, the element $\g_\NSubdiv'$ is trivial. \new{Replacing $c_\NSubdiv',1,c_{\NSubdiv+1}$ by $c_\NSubdiv'\cup c_{\NSubdiv+1}$,} we obtain the path 
\begin{equation}
\label{eq:inverted_subdiv}
\tilde{\xi}'=(\g_0,c_1,\g_1,...c_{\NSubdiv-1},\g_{\NSubdiv-1}',c_\NSubdiv'\cup c_{\NSubdiv+1},\g_{\NSubdiv+1},c_{\NSubdiv+1},\g_{\NSubdiv+2},...,c_\TheSubdivision,\g_\TheSubdivision)
\end{equation}
which is equivalent to $\xi$ and has shorter subdivision length. 

\new{For proofs of the following Lemma \ref{lem:cont_path_homo} and Corollary \ref{cor:fund_grp_orb_ses}, we refer to Lemma~2.12 and Corollary~2.13 in \cite{Flechsig2023}.} 

\begin{lemma}[{\cite[Chapter III.G, 3.9(1)]{BridsonHaefliger2011}}]
\label{lem:cont_path_homo}
\leavevmode
\begin{enumerate}
\item \label{lem:cont_path_homo_it1}
Every $\G$-path connecting $x$ to $y$ in $M_\G$ is equivalent to a unique continuous $\G$-path $(\g,c)$ with $c:I\rightarrow M$ connecting $\g^{-1}(x)$ to $y$. 
\item \label{lem:cont_path_homo_it2}
Let $(\g,c)$ and $(\g',c')$ be two $\G$-loops based at a non-singular point $x_0$ in $M_\G$. Then these $\G$-loops represent the same element of $\piOrb(M_\G,x_0)$ if and only if $\g=\g'$ and $c$ is homotopic to $c'$. 
\end{enumerate}
\end{lemma}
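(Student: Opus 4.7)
The plan is to reduce an arbitrary $\G$-path to a continuous one via the operations already suggested by equations~(\ref{eq:restricted_shift}) and~(\ref{eq:inverted_subdiv}), and then to prove uniqueness and the homotopy statement by identifying two invariants of the equivalence relation. Let $\xi=(\g_0,c_1,\g_1,\ldots,c_\TheSubdiv,\g_\TheSubdiv)$ be a $\G$-path from $x$ to $y$. For the existence in part~(\ref{lem:cont_path_homo_it1}), I would induct on the subdivision length~$\TheSubdiv$. In the base case $\TheSubdiv=1$, a single shift with $h=\g_1^{-1}$ turns $(\g_0,c_1,\g_1)$ into the continuous $\G$-path $(\g_0\g_1,\g_1^{-1}c_1,1)$. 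For $\TheSubdiv\geq2$, I would pick any interior index $\NSubdiv$, apply a shift with $h=\g_\NSubdiv^{-1}$ as in~(\ref{eq:restricted_shift}) to kill the intermediate leap, and then perform an inverse subdivision to merge the two adjacent continuous pieces as in~(\ref{eq:inverted_subdiv}). The subdivision length strictly decreases, so iterating terminates at a continuous representative.

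For uniqueness in part~(\ref{lem:cont_path_homo_it1}), I would check that two quantities descend to invariants of the equivalence relation. First, the total product $\g_0\g_1\cdots\g_\TheSubdiv$ is preserved by every shift, since $(\g_{\Subdiv-1}h^{-1})(h\g_\Subdiv)=\g_{\Subdiv-1}\g_\Subdiv$, and by every subdivision, which only inserts the trivial factor~$1$. Second, the projection $\pi\circ\xi\colon[a,b]\to M/\G$ obtained by composing each $c_\Subdiv$ with the quotient map $\pi\colon M\to M/\G$ and concatenating is invariant, because shifts act by elements of~$\G$ which $\pi$ quotients out, and subdivisions only refine the parametrization. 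For a continuous $\G$-path $(\g,c)$ these invariants read off as $\g$ and $\pi\circ c$ respectively, so any two continuous representatives $(\g,c)$ and $(\g',c')$ of $\xi$ satisfy $\g=\g'$ and $\pi\circ c=\pi\circ c'$, and both start at $\g^{-1}(x)$. Unique path lifting for the proper action of $\G$ on $M$, valid also across cone points by continuity and the local $\cyc{\TheOrder_\nu}$-structure of the quotient, then forces $c=c'$.

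For part~(\ref{lem:cont_path_homo_it2}), the $(\Leftarrow)$ direction is immediate: given $\g=\g'$ and a path homotopy $c^s$ from $c$ to $c'$ relative to endpoints, the family $(\g,c^s)$ is by definition an elementary homotopy of $\G$-loops. For $(\Rightarrow)$, I would propagate the invariants through each move of a homotopy chain joining $(\g,c)$ to $(\g',c')$. Each equivalence move preserves the continuous representative supplied by part~(\ref{lem:cont_path_homo_it1}). Under an elementary homotopy $\xi_s=(\g_0,c_1^s,\g_1,\ldots,c_\TheSubdiv^s,\g_\TheSubdiv)$ the group elements are constant in~$s$, so running the reduction algorithm uniformly in~$s$ produces a continuous representative $(\g,c^s)$ with $\g$ independent of~$s$ and $c^s$ continuous in~$s$. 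Concatenating these homotopies along the chain yields the required path homotopy $c\simeq c'$, while constancy of the total group element gives $\g=\g'$. The main obstacle in this plan is the uniqueness step in part~(\ref{lem:cont_path_homo_it1}): the path lifting it relies on is not standard covering theory at cone points, and one must justify unique lifting there using the local quotient structure of the proper $\G$-action, as well as check that the reduction algorithm in the $s$-family truly depends continuously on the parameter.
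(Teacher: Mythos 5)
Your reduction to a continuous representative (existence in (\ref{lem:cont_path_homo_it1})) and the ($\Leftarrow$) direction of (\ref{lem:cont_path_homo_it2}) are fine, but the uniqueness argument has a genuine gap, and it sits exactly where you suspected. The two invariants you use --- the total product $\g_0\g_1\cdots\g_\TheSubdiv$ and the projected path in $M/\G$ --- do \emph{not} determine the continuous representative once the path meets a singular point, and unique path lifting genuinely fails there; no appeal to the local $\cyc{\TheOrder}$-structure can repair it. Concretely, in the model orbifold $D_{\cycm}$ of Example \ref{ex:good_orb_D_cyc_m}, let $c$ run along a ray into the cone point and back out along a ray $R$, and let $c'$ run in along the same ray but out along a nontrivial rotate of $R$. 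Then $(1,c)$ and $(1,c')$ have the same initial point, the same image in $D/\cycm$, and the same total group element $1$, yet $c\neq c'$; so your invariants cannot distinguish them, even though these two continuous $\G$-paths are in fact inequivalent. Thus the step ``$\g=\g'$ and $\pi\circ c=\pi\circ c'$ implies $c=c'$'' is false as stated, and uniqueness is not established.

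The standard fix (this is the argument behind \cite[Chapter III.G, 3.9(1)]{BridsonHaefliger2011}, to which the paper defers rather than reproving the lemma) is to replace the projection by the \emph{developed path}: for $\xi=(\g_0,c_1,\g_1,\dots,c_\TheSubdiv,\g_\TheSubdiv)$ set $D(\xi)(t):=\g_0\g_1\cdots\g_{\Subdiv-1}\cdot c_\Subdiv(t)$ for $t\in[t_{\Subdiv-1},t_\Subdiv]$. The compatibility conditions make $D(\xi)$ a continuous path in $M$ from $x$ to $(\g_0\cdots\g_\TheSubdiv)(y)$, and one checks directly that $D(\xi)$ and the total product are unchanged by subdivisions and by shifts. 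For a normal form $(\g,c)$ one has $D=\g\cdot c$ and total product $\g$, so equivalence of $(\g,c)$ and $(\g',c')$ forces $\g=\g'$ and $c=\g^{-1}D=c'$, which is the uniqueness you need. The same invariant streamlines (\ref{lem:cont_path_homo_it2})($\Rightarrow$): in an elementary homotopy the group elements are constant, so the total product is constant and $D$ varies continuously with fixed endpoints; concatenating along the chain of moves gives a path homotopy $\g c\simeq \g'c'$ and $\g=\g'$, hence $c\simeq c'$, without having to argue that your reduction algorithm depends continuously on the parameter.
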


\begin{corollary}[{\cite[Chapter III.G, 3.9(1)]{BridsonHaefliger2011}}]
\label{cor:fund_grp_orb_ses}
Let $M_0$ be the path-component of a point $x_0\in M$. Then $\G_0=\{\g\in\G\mid \g^{-1}(x_0)\in M_0\}$ is a subgroup of $\G$ and every $\G$-loop at $x_0$ is equivalent to a unique $\G$-loop of the form $(\g,c)$ where $c$ is a path connecting $\g^{-1}(x_0)$ and $x_0$; 
therefore $\g\in\G_0$. Hence, we have a short exact sequence
\[
1\rightarrow\pi_1(M_0)\stackrel{i}\rightarrow\piOrb(M_\G)\stackrel{p}\rightarrow\G_0\rightarrow1. 
\]
In particular, the orbifold fundamental group of $\Sigma_\freeprod$ from Example \textup{\ref{ex:good_orb_free_prod}} is isomorphic to $\freeprod$. 
\end{corollary}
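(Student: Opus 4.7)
The plan is to invoke Lemma \ref{lem:cont_path_homo} to build both maps of the sequence and then verify exactness term by term. First I would check that $\G_0\subseteq\G$ is a subgroup. For $\g,h\in\G_0$, fix paths $\alpha\colon\g^{-1}(x_0)\leadsto x_0$ and $\beta\colon h^{-1}(x_0)\leadsto x_0$ lying in $M_0$. The concatenation of $h^{-1}\circ\alpha$ (which runs from $(\g h)^{-1}(x_0)$ to $h^{-1}(x_0)$) with $\beta$ is a path in $M_0$ from $(\g h)^{-1}(x_0)$ to $x_0$, so $\g h\in\G_0$; and $\g\circ\alpha$ yields a path from $x_0$ to $\g(x_0)$, so $\g^{-1}\in\G_0$.

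Next I would define the two maps. The map $i$ sends $[c]\in\pi_1(M_0,x_0)$ to the class of the continuous $\G$-loop $(1,c)$. To define $p$, I use Lemma \ref{lem:cont_path_homo}\eqref{lem:cont_path_homo_it1}: every $\G$-loop at $x_0$ has a unique equivalent continuous representative $(\g,c)$ with $c\colon I\to M$ connecting $\g^{-1}(x_0)$ to $x_0$. Since the image of $c$ lies in a single path-component of $M$ and terminates at $x_0\in M_0$, the point $\g^{-1}(x_0)$ belongs to $M_0$, hence $\g\in\G_0$. I set $p[\xi]:=\g$. Well-definedness on homotopy classes follows from Lemma \ref{lem:cont_path_homo}\eqref{lem:cont_path_homo_it2}, and $p$ is a homomorphism because concatenating two $\G$-loops multiplies their $\G$-components.

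For exactness I would check each position. Surjectivity of $p$: given $\g\in\G_0$, any path $c$ in $M_0$ from $\g^{-1}(x_0)$ to $x_0$ yields the $\G$-loop $(\g,c)$ with $p[(\g,c)]=\g$. Injectivity of $i$: by Lemma \ref{lem:cont_path_homo}\eqref{lem:cont_path_homo_it2}, two continuous $\G$-loops $(1,c)$ and $(1,c')$ represent the same element of $\piOrb(M_\G,x_0)$ iff $c$ and $c'$ are homotopic rel endpoints in $M$; since a homotopy of loops at $x_0$ stays in the path-component $M_0$, this coincides with homotopy in $M_0$. Exactness at the middle: $p\circ i=1$ is immediate, and conversely if $p[\xi]=1$ then the canonical representative is $(1,c)$ with $c$ a loop at $x_0$, which lies entirely in $M_0$, so $[\xi]=i[c]$.

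Finally, for the statement about $\Sigma_\freeprod$, the surface $\Sigma$ of Example \ref{ex:good_orb_free_prod} is path-connected and contractible, so $M_0=\Sigma$, $\G_0=\freeprod$, and $\pi_1(M_0)=1$; the exact sequence then collapses to $\piOrb(\Sigma_\freeprod)\cong\freeprod$. The only step that requires genuine care is the passage to a canonical continuous representative and the verification that its $\G$-component lies in $\G_0$; both are already packaged in Lemma \ref{lem:cont_path_homo}\eqref{lem:cont_path_homo_it1}, so everything else amounts to formal bookkeeping.
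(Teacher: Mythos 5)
Your proof is correct, and it follows exactly the route the paper intends: the paper gives no in-text proof but defers to Bridson--Haefliger and \cite[Corollary 2.13]{Flechsig2023}, whose argument is precisely this reduction to the unique continuous representative $(\g,c)$ from Lemma \ref{lem:cont_path_homo}, with $p$ reading off $\g\in\G_0$ and $i$ induced by $c\mapsto(1,c)$. The exactness checks and the specialization to $\Sigma_\freeprod$ (contractible, path-connected, so $\pi_1(M_0)=1$ and $\G_0=\freeprod$) are all as in the cited proof.
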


If the $\G$-action on $M$ is free, the space $M/\G$ also admits the structure of a manifold. The following is well known: 

\begin{lemma}
\label{lem:fund_grp_free_act}
Let $M$ be a manifold with a proper, free $\G$-action. 
If the quotient space $M/\G$ is path-connected, then $\piOrb(M_\G)\cong\pi_1(M/\G)$. 
\end{lemma}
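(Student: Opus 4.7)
The plan is to leverage Corollary \ref{cor:fund_grp_orb_ses} together with standard covering space theory. Since the $\G$-action is free and proper, the quotient map $\pi : M \to M/\G$ is a covering map, and I will construct a natural isomorphism $\Phi : \piOrb(M_\G) \to \pi_1(M/\G)$ by sending each continuous $\G$-loop $(\g, c)$ to the projected loop $\pi \circ c$.

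\textbf{Key steps.} First, I would invoke the standard fact that a proper, free action of a discrete group on a manifold makes the quotient map $\pi : M \to M/\G$ into a covering: properness and freeness yield $\G$-slices, so $\pi$ is a local homeomorphism with the path- and homotopy-lifting properties. Since $M/\G$ is path-connected by hypothesis and since $M_0$ (the path-component of the basepoint $x_0$) maps onto $M/\G$ via path lifting starting at $x_0$, the restriction $\pi|_{M_0} : M_0 \to M/\G$ is a covering as well.

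Second, by Lemma \ref{lem:cont_path_homo}\eqref{lem:cont_path_homo_it1} every $\G$-loop at $x_0$ is equivalent to a unique continuous $\G$-loop $(\g, c)$ with $c$ a path from $\g^{-1}(x_0)$ to $x_0$, and such a $c$ automatically lies in $M_0$, so $\g \in \G_0$. I would define
\[
\Phi : \piOrb(M_\G) \to \pi_1(M/\G,\pi(x_0)), \qquad [(\g, c)] \mapsto [\pi \circ c].
\]
Note that $\pi \circ c$ is a loop at $\pi(x_0)$ because $\pi(\g^{-1} x_0) = \pi(x_0)$. By Lemma \ref{lem:cont_path_homo}\eqref{lem:cont_path_homo_it2}, two continuous $\G$-loops represent the same element of $\piOrb(M_\G)$ exactly when they agree in the $\G$-component and their path components are homotopic rel endpoints; since homotopies in $M$ push forward to homotopies in $M/\G$, the map $\Phi$ is well-defined. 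It is a homomorphism because concatenation of $\G$-loops projects to concatenation of loops in $M/\G$.

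Third, I would establish bijectivity via lifting. For surjectivity: given a loop $\alpha$ in $M/\G$ at $\pi(x_0)$, lift it to a path $\tilde{\alpha}$ in $M$ starting at $x_0$; its endpoint lies in the fiber $\pi^{-1}(\pi(x_0)) \cap M_0$, hence equals $\g^{-1}(x_0)$ for a unique $\g \in \G_0$, and reversing $\tilde{\alpha}$ produces a continuous $\G$-loop $(\g, c)$ with $\Phi([(\g,c)]) = [\alpha]$. For injectivity: if $\Phi([(\g, c)])$ is trivial, then $\pi \circ c$ admits a null-homotopy, which lifts through $\pi$ to a null-homotopy of $c$ rel its endpoints; this forces $\g^{-1}(x_0) = x_0$ and, by freeness of the action, $\g = 1$, together with $c$ being null-homotopic in $M_0$, so $[(\g, c)]$ is trivial in $\piOrb(M_\G)$.

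\textbf{Main obstacle.} The technical care lies in the lifting arguments for bijectivity and, more pedantically, in checking that $\Phi$ is truly independent of the choice of representative (including the subdivision and shift moves from Definition~\ref{def:equiv_paths}); shifts replace $c$ by a $\G$-translate $h \cdot c$, which is invisible after applying $\pi$, so this is not serious. An alternative route would be to compare the sequence $1 \to \pi_1(M_0) \to \piOrb(M_\G) \to \G_0 \to 1$ from Corollary \ref{cor:fund_grp_orb_ses} with the regular-covering short exact sequence
\[
1 \to \pi_1(M_0) \to \pi_1(M/\G) \to \G_0 \to 1
\]
for $\pi|_{M_0}$ with deck group $\G_0$, and conclude by the Five Lemma, but the direct construction of $\Phi$ above already delivers the isomorphism explicitly.
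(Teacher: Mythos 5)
Your proof is correct. Note that the paper does not prove Lemma \ref{lem:fund_grp_free_act} in the text at all — it only refers to \cite[Lemma 2.14]{Flechsig2023} — so there is no in-paper argument to compare against line by line; your direct covering-space construction is entirely in the spirit of the surrounding material, resting exactly on the tools the paper does supply (Lemma \ref{lem:cont_path_homo} for reduction to continuous $\G$-loops and well-definedness, and the observation that shifts from Definition \ref{def:equiv_paths} become invisible after projecting to $M/\G$). Two small remarks, neither of which is a gap. First, it is worth one explicit sentence that "proper" in the paper's compact-sets sense, together with freeness of the action of the discrete group $\G$ on the locally compact Hausdorff space $M$, yields proper discontinuity, which is what makes $\pi\colon M\rightarrow M/\G$ a covering. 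Second, in your surjectivity step the continuous $\G$-loop $(\g,c)$ obtained by reversing the lift $\tilde{\alpha}$ projects to the reversed loop, so $\Phi([(\g,c)])=[\alpha]^{-1}$ rather than $[\alpha]$; either lift the reversed loop $\bar{\alpha}$ instead, or observe that realizing every inverse class already gives surjectivity. With these cosmetic adjustments the injectivity argument (lifting the null-homotopy, using discreteness of the fiber and freeness to force $\g=1$) and the rest of the proof go through as written, and your alternative route via Corollary \ref{cor:fund_grp_orb_ses} and the deck-group exact sequence of the regular covering $M_0\rightarrow M/\G$ would work equally well.
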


\new{For instance, a proof is presented in \cite[Lemma 2.14]{Flechsig2023}.} 

\section{Orbifold braid groups}
\label{sec:braid_grp_orb}

\renewcommand{\Twist}[2]{A_{{#1}{#2}}}
\renewcommand{\TwistP}[2]{B_{{#1}{#2}}}
\renewcommand{\TwistC}[2]{C_{{#1}{#2}}}
\renewcommand{\twist}[2]{a_{{#1}{#2}}}
\renewcommand{\twistP}[2]{b_{{#1}{#2}}}
\renewcommand{\twistC}[2]{c_{{#1}{#2}}}

In this section we introduce orbifold braid groups. For the orbifolds $\SG$, we explain how elements in these groups are encoded as orbifold braid diagrams. Similar braid diagrams were considered by Allcock and Roushon for the orbifolds $\CC(\ThePct,\TheCone,\mathbf{\TheOrder})=\inter{\Sigma}_{\freeprod}(\ThePct)$ with $\mathbf{\TheOrder}=(\TheOrder_1,...,\TheOrder_\TheCone)\in\NN_\geq2^\TheCone$, see \cite{Allcock2002} and \cite{Roushon2021}.  

\subsection{Artin braid groups}
\label{subsec:Artin_braids}

Before we get into the details of the definition of orbifold braid groups, we recall the geometry 
of Artin braids. In particular, we discuss how these three-dimensional braids are encoded as two-dimensional Artin braid diagrams. 
\new{For additional information on Artin braid groups, we refer to \cite[Section 1]{KasselTuraev2008}}. 

\begin{definition}[{Geometric braids and Artin braid group}, {\cite[Section 1.2.1]{KasselTuraev2008}}]
\label{def:geom_braid}
Fix once and for all $\TheStrand$ distinct points $p_1,...,p_\TheStrand$ in the interior of a compact disks $D$. A \textit{geometric braid} is a set $b\subseteq\inter{D}\times I$ formed by $\TheStrand$ disjoint topological intervals $b_\Strand,1\leq\Strand\leq\TheStrand$, called the \textit{strands} of $b$, such that the natural projection $D\times I\rightarrow I$ maps each strand homeomorphically onto $I$ and 
\[
b\cap(D\times\{0\})=\{(p_1,0),...,(p_\TheStrand,0)\} \text{ and } b\cap(D\times\{1\})=\{(p_1,1),...,(p_\TheStrand,1)\}. 
\]
The above conditions imply that for each $\Strand$ the strand~$b_\Strand$ meets each disk $D\times\{t\}$ at exactly one point and connects $(p_\Strand,0)$ to $(p_{\sigma(\Strand)},1)$ for some $\sigma\in\Sym_\TheStrand$. Two geometric braids $b$ and $b'$ are \textit{isotopic} if $b$ can be continuously deformed into $b'$ inside the class of geometric braids.  
The operation of stacking braids along the $I$-factor of $D\times I$ descends to isotopy classes, giving a group structure on the set $\B_\TheStrand$ of isotopy classes of braids with $\TheStrand$ strands. $\B_\TheStrand$ is called the \textit{Artin braid group} on $\TheStrand$ strands. 
\end{definition} 

A geometric braid is pictured in a cylinder with the disk $D\times\{0\}$ at the top
and $D\times\{1\}$ at its bottom. By definition, the interval factor of $D\times I$ parametrizes each strand of the braid. So we typically think of the strands as oriented arcs $b_\Strand:I\rightarrow D\times I$ traversing the cylinder from top to bottom 
(see Figure \ref{fig:geom_braid_and_diagram}, left).

\begin{figure}[H]
\import{Grafiken/orb_braids/}{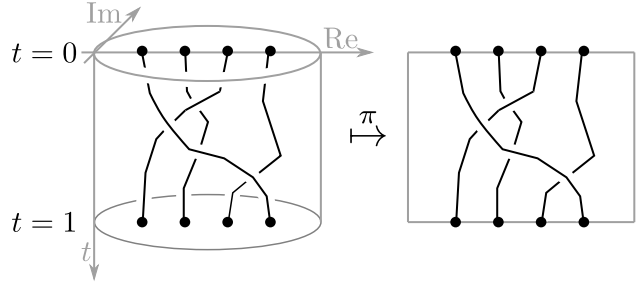}
\caption{A geometric braid and its Artin braid diagram.}
\label{fig:geom_braid_and_diagram}
\end{figure}

A braid inside its ambient cylinder is a three-dimensional object. 
An Artin braid diagram 
is designed to capture the information of this 
object in a two dimensional picture. 

\begin{definition}[{Artin braid diagrams}, {\cite[Section 1.2.2]{KasselTuraev2008}}]
\label{def:braid_diagram}
Assume that $D\subseteq\CC$ is the compact disk centered at $\frac{\TheStrand+1}{2}$ with radius $\frac{\TheStrand+1}{2}$ and $p_\Strand:=\Strand$ for each $1\leq\Strand\leq\TheStrand$ (see Figure \ref{fig:embedding_disk}). 
\begin{figure}[H]
\import{Grafiken/orb_braids/}{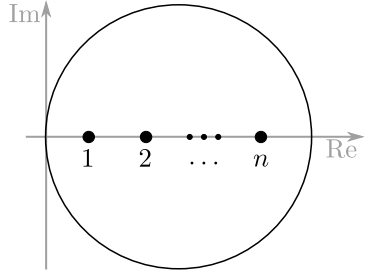}
\caption{The embedding of the disk $D$.}
\label{fig:embedding_disk}
\end{figure}
Moreover, let $u_\Strand:I\rightarrow D$ be the continuous map such that the $\Strand$-th strand $b_\Strand$ meets $D\times\{t\}$ at the point $(u_\Strand(t),t)$ for each $t\in I$. 
If the map $u_\Strand$ is piecewise linear for each~$\Strand$, the corresponding geometric braid is called \textit{piecewise linear}. 
Define 
\[
\pi:D\times I\rightarrow[0,\TheStrand+1]\times I, (z,t)\mapsto(\text{Re}(z),t). 
\]
The image $\pi(b)$ of a geometric braid $b$ is called the \textit{projection} of $b$. 

Let $x,y\in b, x\neq y$ be such that $\pi(x)=\pi(y)=:(p,t)$. Then $x$ and $y$ are in distinct strands $b_\Str$ and $b_\Strand$, respectively. 

In this case, the point $(p,t)$ is called a \textit{crossing} at height $t$ in $\pi(b)$. 
The crossing is called \textit{transverse} if there is a neighborhood $U$ of $p$ in $[0,\TheStrand+1]\times I$ such that the pair $(U,\pi(b)\cap U)$ is locally homeomorphic to $(\RR^2,\RR\times\{0\}\cup\{0\}\times\RR)$ via a homeomorphism identifying $\pi(b_\Str)$ with $\RR\times\{0\}$ and $\pi(b_\Strand)$ with $\{0\}\times\RR$. 

\new{In the braid $b$,} the strand $b_\Str$ \textit{crosses over} $b_\Strand$ if $\textup{Im}(u_\Str(t))<\textup{Im}(u_\Strand(t))$. 
Otherwise $b_\Str$ \textit{crosses under} $b_\Strand$. 

We will consider the projection for those geometric braids $b$ which satisfy the following conditions: 
\begin{enumerate}
\item \label{def:braid_diagram_it1} 
$b$ is piecewise linear, 
\item \label{def:braid_diagram_it2}
at most one pair of strands crosses at a height and
\item \label{def:braid_diagram_it3} 
the strands cross transversely in each crossing. 
\end{enumerate}

In this case, the projection $\pi(b)$ together with the data of which strand crosses over (resp. under) is called an \textit{Artin braid diagram} for $b$. If we draw an Artin braid diagram, an under-crossing strand is indicated by a line that is broken near the crossing; an over-crossing strand is represented by a continued line (see Figure~\ref{fig:geom_braid_and_diagram}, right). 
\end{definition}

\begin{observation}[Generating the Artin braid group $\B_\TheStrand$]
\label{obs:gen_set_braid_grp_and_Conf}
Given an arbitrary geometric braid $b$, there exists an isotopic braid $\tilde{b}$ such that $\pi(\tilde{b})$ with the data of which strand crosses over (resp. under) at every crossing is an Artin braid diagram. 

Further, the conditions \textup{\ref{def:braid_diagram_it1}}-\textup{\ref{def:braid_diagram_it3}} allow us to decompose $b$ into pieces $b\cap(D\times[t_{\Subdiv-1},t_\Subdiv])$ such that each piece 
contains exactly one crossing. While the first piece starts at $p_1,...,p_\TheStrand$ and the last piece ends in these points, the other pieces a priori neither \new{start at or end in} the points $p_1,...,p_\TheStrand$. However, an isotopy that pulls back the endpoints of every piece (see Figure \textup{\ref{fig:geometric_braid_iso}}) allows us to assume that each piece connects $p_1,...,p_\TheStrand$ to $p_{\sigma(1)},...,p_{\sigma(\TheStrand)}$ for a permutation $\sigma\in\Sym_\TheStrand$ depending on the piece. Since each piece contains only one crossing, the crossing strands are adjacent. Consequently, each of these pieces is isotopic to a braid from Figure \textup{\ref{fig:gens_B_n}} or an inverse. 
Hence the braids~$h_\Strand$ for $1\leq\Strand<\TheStrand$ generate the Artin braid group~$\B_\TheStrand$. 
\end{observation}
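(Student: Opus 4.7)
The plan is to establish the two claims of the observation in sequence: first that every geometric braid admits an isotopic representative whose projection is an Artin braid diagram, and second, building on this, that $h_1,\dots,h_{\TheStrand-1}$ generate $\B_\TheStrand$.

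For the first claim, starting from an arbitrary geometric braid $b$, I would apply a standard general position argument. Each strand is a topological arc in $D\times I$ meeting every level $D\times\{t\}$ transversely in exactly one point, so it can be approximated, relative to its endpoints and inside the class of such arcs, by a piecewise linear arc with the same property; doing this simultaneously on all strands produces an isotopic braid satisfying condition \ref{def:braid_diagram_it1}. A subsequent generic small perturbation of the break points of the piecewise linear strands can be arranged so that conditions \ref{def:braid_diagram_it2} and \ref{def:braid_diagram_it3} hold, since the loci where two strands project to a common point at a common height, and where a crossing is non-transverse, are of positive codimension in the parameter space of perturbations. The resulting braid $\tilde{b}$ is isotopic to $b$ and its projection is, by construction, an Artin braid diagram.

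For the second claim, I would fix such a diagram, label the crossings by their distinct heights $0<t_1<\cdots<t_N<1$, and choose subdivision values $0=s_0<s_1<\cdots<s_N=1$ with $t_\Subdiv\in(s_{\Subdiv-1},s_\Subdiv)$ so that each slice $b\cap(D\times[s_{\Subdiv-1},s_\Subdiv])$ contains exactly one crossing. The intermediate level sets $b\cap(D\times\{s_\Subdiv\})$ consist of $\TheStrand$ distinct points of $D$, which in general do not coincide with $\{p_1,\dots,p_\TheStrand\}$. To remedy this, I would construct an ambient isotopy of $D\times I$ supported in thin horizontal collars around the planes $D\times\{s_\Subdiv\}$ that drags these points to $\{p_1,\dots,p_\TheStrand\}$, while leaving a neighborhood of each crossing untouched. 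As in Figure \ref{fig:geometric_braid_iso}, such an isotopy can be built by an isotopy of $D$ extended trivially across the collars; since the collar supports are disjoint from the unique crossing in each slice, the procedure yields a braid, isotopic to $b$, whose every slice begins and ends in $\{p_1,\dots,p_\TheStrand\}$.

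After this normalization, each slice is a geometric braid on $\TheStrand$ strands with exactly one crossing and standard endpoint configuration at top and bottom. The two strands participating in the crossing must then have adjacent endpoint indices: any non-adjacent crossing would force at least one intermediate strand, which connects a standard point $p_\Strand$ to $p_\Strand$, to be crossed as well, contradicting the uniqueness of the crossing. Hence every slice is isotopic to one of the elementary braids $h_\Strand^{\pm1}$ of Figure \ref{fig:gens_B_n}, and stacking the slices along the $I$-direction expresses $b$ as a word in $h_1^{\pm1},\dots,h_{\TheStrand-1}^{\pm1}$. I expect the main obstacle to lie in the endpoint-pulling step: the collar isotopies must be chosen with disjoint supports bounded away from the crossing heights, and the pulling isotopies inside $D$ must be done in a way that does not introduce new crossings in the projection. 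A careful choice of thin collars and isotopies supported in small neighborhoods of the level-set points makes this a routine, if somewhat technical, verification.
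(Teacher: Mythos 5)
Your proposal is correct and follows essentially the same route as the paper's own argument: a general-position/piecewise-linear adjustment to obtain an Artin braid diagram, a decomposition into slices each containing a single crossing, an isotopy pulling the slice endpoints back to $p_1,\dots,p_\TheStrand$, and the adjacency argument identifying each slice with $h_\Strand^{\pm1}$. The extra detail you supply (collar-supported isotopies, the order-preservation argument for adjacency) is exactly the routine verification the paper leaves implicit.
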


\begin{figure}[H]
\import{Grafiken/orb_braids/}{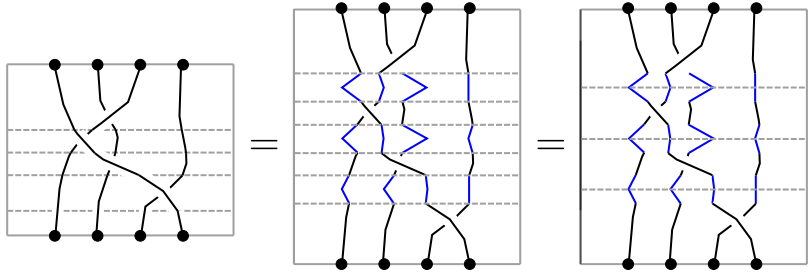}
\caption{A decomposition of a braid into generators.}
\label{fig:geometric_braid_iso}
\end{figure}

\begin{figure}[H]
\import{Grafiken/orb_braids/}{gens_B_n.pdf_tex}
\caption{Generators of $\B_\TheStrand$.}
\label{fig:gens_B_n}
\end{figure}

\begin{observation}[Geometric braids and configuration spaces]
A geometric braid~$b$ corresponds to a closed path with base point $\{p_1,...,p_\TheStrand\}$ in the configuration space 
\[
\Conf_\TheStrand(\inter{D}):=\{(x_1,...,x_\TheStrand)\in (\inter{D})^\TheStrand\mid x_\Str\neq x_\Strand,1\leq\Str,\Strand\leq\TheStrand,\Str\neq\Strand\}/\Sym_\TheStrand 
\]
mapping $b$ to $\{u_1(t),...,u_\TheStrand(t)\}$ and vice versa. Moreover, two geometric braids are isotopic if and only if the corresponding paths in the configuration space are homotopic. Hence $\B_\TheStrand$ is isomorphic to $\pi_1(\Conf_\TheStrand(\inter{D}))$, see \cite[Section 1.4]{KasselTuraev2008} for further details. 
\end{observation}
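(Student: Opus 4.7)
The plan is to construct an explicit bijection between isotopy classes of geometric braids and homotopy classes of based loops in $\Conf_\TheStrand(\inter{D})$, and then verify this bijection is a group isomorphism.

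First, I would set up the forward map. Given a geometric braid $b$ with strands parametrized by $b_\Strand(t)=(u_\Strand(t),t)$ for $1\leq\Strand\leq\TheStrand$, define $\gamma_b:I\to\Conf_\TheStrand(\inter{D})$ by $\gamma_b(t):=\{u_1(t),\dots,u_\TheStrand(t)\}$. This is well-defined because the strands of $b$ are disjoint, so $u_\Str(t)\neq u_\Strand(t)$ whenever $\Str\neq\Strand$, and continuous because each $u_\Strand$ is continuous. Moreover $\gamma_b(0)=\gamma_b(1)=\{p_1,\dots,p_\TheStrand\}$, so it is a loop based at $\{p_1,\dots,p_\TheStrand\}$. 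A braid isotopy from $b$ to $b'$ yields a one-parameter family of braids, and applying the same construction fiberwise gives a based homotopy from $\gamma_b$ to $\gamma_{b'}$.

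Next, I would construct the inverse. The projection $(\inter{D})^\TheStrand\setminus\Delta\to\Conf_\TheStrand(\inter{D})$ (with $\Delta$ the big diagonal) is a regular covering with deck group $\Sym_\TheStrand$. Given a loop $\gamma:I\to\Conf_\TheStrand(\inter{D})$ based at $\{p_1,\dots,p_\TheStrand\}$, standard covering space theory lifts it uniquely to a path $\tilde\gamma=(u_1,\dots,u_\TheStrand):I\to(\inter{D})^\TheStrand\setminus\Delta$ starting at $(p_1,\dots,p_\TheStrand)$; the endpoint is then $(p_{\sigma(1)},\dots,p_{\sigma(\TheStrand)})$ for some $\sigma\in\Sym_\TheStrand$. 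Setting $b_\Strand(t):=(u_\Strand(t),t)\subseteq\inter{D}\times I$ produces a geometric braid $b_\gamma$. Covering-space homotopy lifting shows a based homotopy of loops lifts to an isotopy of braids, so the construction descends to homotopy classes. By construction the two assignments are mutually inverse on the level of classes, giving the claimed bijection.

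Finally, I would match the group operations. Stacking braids $b$ above $b'$ along the $I$-factor, after reparametrizing the $I$-factor linearly on $[0,\tfrac12]$ and $[\tfrac12,1]$, corresponds precisely to the concatenation $\gamma_b\cdot\gamma_{b'}$ of loops. Hence the bijection is a group homomorphism, and therefore an isomorphism $\B_\TheStrand\xrightarrow{\cong}\pi_1(\Conf_\TheStrand(\inter{D}),\{p_1,\dots,p_\TheStrand\})$.

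The only non-routine point is the homotopy lifting step: one must make sure that any based homotopy of loops in $\Conf_\TheStrand(\inter{D})$ lifts to a homotopy through loops in the ordered configuration space $\PConf_\TheStrand(\inter{D})=(\inter{D})^\TheStrand\setminus\Delta$, so that it produces an ambient isotopy of the three-dimensional braid rather than merely a homotopy of the unordered strand set. This is handled by the covering space property of $\PConf_\TheStrand(\inter{D})\to\Conf_\TheStrand(\inter{D})$, which ensures existence and uniqueness of lifts fixing the initial ordered tuple $(p_1,\dots,p_\TheStrand)$.
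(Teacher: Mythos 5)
Your argument is correct and is essentially the standard one: the paper gives no proof of this observation, deferring to Kassel--Turaev, and your construction (graph of the strands gives a based loop in $\Conf_\TheStrand(\inter{D})$, covering-space lifting along $\PConf_\TheStrand(\inter{D})\rightarrow\Conf_\TheStrand(\inter{D})$ gives the inverse, stacking corresponds to concatenation) is exactly the argument in that reference. One small remark: the paper's notion of isotopy of geometric braids is deformation through the class of geometric braids, not ambient isotopy, so your lifting step already yields precisely what is needed; the upgrade to an ambient isotopy, while true, is not required here.
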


\subsection{The definition of orbifold braid groups}
\label{subsec:def_orb_braid_grps}

The next goal is to establish a similar projection that induces braid diagrams for \textit{orbifold braids}. We begin with the definition of \textit{orbifold braid groups} as the orbifold fundamental group of an orbifold configuration space. In particular, the following definition is equivalent to the definition given in \cite{Allcock2002}. 

\newcommand{\ConfnOrb}{\Conf_\TheStrand^{\G}(M_\G)}
\newcommand{\PConfnOrb}{\PConf_\TheStrand^{\G}(M_\G)}

\begin{definition}[Orbifold braid group]
\label{def:orbi_braid_grp}
Let $M_\G$ be an orbifold. The orbifold 
\[
\PConfnOrb:=(M^\TheStrand\setminus\Delta_\TheStrand^\G(M))_{\G^\TheStrand} 
\]
with $\Delta_\TheStrand^\G(M)=\{(x_1,...,x_\TheStrand)\in M^\TheStrand\mid x_i=\g(x_j)$ for some $\g\in\G,i\neq j\}$ is called the \textit{$\TheStrand$-th pure configuration space} over $M_\G$. Since $\G$ acts properly on $M$, the coordinatewise action of $\G^\TheStrand$ on $(M^\TheStrand\setminus\Delta_\TheStrand^\G(M))$ is also proper. Hence $\PConfnOrb$ is an orbifold and its orbifold fundamental group $\piOrb(\PConfnOrb)$ is called the \textit{$\TheStrand$-th pure orbifold braid group}, denoted by $\PZ_\TheStrand(M_\G)$. The orbifold 
\[
\ConfnOrb:=(M^\TheStrand\setminus\Delta_\TheStrand^\G(M))_{\G^\TheStrand\rtimes\Sym_\TheStrand}
\]
is called the \textit{$\TheStrand$-th configuration space} over $M_\G$. As above, the normal subgroup~$\G^\TheStrand$ acts coordinatewise and $\Sym_\TheStrand$ acts (on $\G^\TheStrand$ and $M^\TheStrand\setminus\Delta_\TheStrand^\G(M)$) via permutation of coordinates. The $\G^\TheStrand\rtimes\Sym_\TheStrand$-action is also proper, i.e.\ $\ConfnOrb$ is an orbifold. Its orbifold fundamental group $\piOrb(\ConfnOrb)$ is called the \textit{$\TheStrand$-th orbifold braid group}, denoted by $\Z_\TheStrand(M_\G)$. 
\end{definition}

\subsection{A decomposition of orbifold braids into strands}
\label{subsec:decomp_orb_braids}

At first we observe that elements in orbifold braid groups decompose into \textit{strands}. 


\begin{observation}
\label{obs:closed_path_config_space}
A closed $\G^\TheStrand\rtimes\Sym_\TheStrand$-path $\xi$ in $\ConfnOrb$ is equivalent to a $\G^\TheStrand\rtimes~\Sym_\TheStrand$-path that corresponds to an $\TheStrand$-tuple 
$(\xi_1,...,\xi_\TheStrand)$ of $\G$-paths 
\[
\xi_\Strand=\left(\g_0^\Strand,c_1^\Strand,\g_1^\Strand,
...,c_\TheNSubdiv^\Strand,\g_\TheNSubdiv^\Strand\right) 
\]
in $M_\G$. The $\G$-paths $\xi_\Strand$ have the initial points $p_\Strand$ and terminal points $p_{\sigma(\Strand)}$ for $\sigma\in\Sym_\TheStrand$. Moreover, these $\G$-paths share a subdivision $0=t_0\leq...\leq~t_\TheNSubdiv=1$ and 
satisfy the condition 
\[
c_\Subdiv^\NStrand(t)\cap\G(c_{\Subdiv}^\NNStrand(t))=\emptyset 
\]
for all $t\in I$, $1\leq\NStrand,\NNStrand\leq\TheStrand,\NStrand\neq\NNStrand$ and a suitable $1\leq\Subdiv\leq\TheNSubdiv$ depending on $t$. 

For $\sigma=id_\TheStrand$, the $\G^\TheStrand\rtimes\Sym_\TheStrand$-path $\xi$ induces a $\G^\TheStrand$-path that represents an element in $\PZ_\TheStrand(M_\G)$. In particular, $\PZ_\TheStrand(M_\G)$ is a subgroup of $\Z_\TheStrand(M_\G)$. 
\end{observation}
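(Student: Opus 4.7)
My approach is to read the $\G^\TheStrand\rtimes\Sym_\TheStrand$-path coordinate-wise and use shifts (Definition \ref{def:equiv_paths}\eqref{def:equiv_paths_shift}) to convert the $\Sym_\TheStrand$-parts of the intermediate group elements into coordinate rearrangements of the continuous pieces, leaving behind a residual permutation that records the global reordering of the endpoints.

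In detail, I would first write each continuous piece $c_\Subdiv:[t_{\Subdiv-1},t_\Subdiv]\rightarrow M^\TheStrand\setminus\Delta_\TheStrand^\G(M)$ in coordinates as $c_\Subdiv=(c_\Subdiv^1,\ldots,c_\Subdiv^\TheStrand)$, and each group element as $\gamma_\Subdiv=(g_\Subdiv;\sigma_\Subdiv)\in\G^\TheStrand\rtimes\Sym_\TheStrand$ with $g_\Subdiv=(g_\Subdiv^1,\ldots,g_\Subdiv^\TheStrand)\in\G^\TheStrand$ and $\sigma_\Subdiv\in\Sym_\TheStrand$. The key step is a sequence of shifts by elements $(1;\tau)\in\G^\TheStrand\rtimes\Sym_\TheStrand$: such a shift at position $\Subdiv$ permutes the coordinates of $c_\Subdiv$ and modifies the neighboring $\gamma_{\Subdiv-1},\gamma_\Subdiv$ accordingly. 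Working right-to-left and choosing $\tau=\sigma_\Subdiv^{-1}$ at each step, I can produce an equivalent $\G^\TheStrand\rtimes\Sym_\TheStrand$-path in which every intermediate $\gamma_\Subdiv$ for $1\leq\Subdiv\leq\TheNSubdiv-1$ lies in the $\G^\TheStrand$-factor. The accumulated permutation piles up on one of the outer group elements and equals the global permutation $\sigma=\sigma_0\sigma_1\cdots\sigma_\TheNSubdiv$.

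Once this normalization is achieved, the $\TheStrand$-tuple
\[
\xi_\Strand=\left(g_0^\Strand,c_1^\Strand,g_1^\Strand,\ldots,c_\TheNSubdiv^\Strand,g_\TheNSubdiv^\Strand\right),\qquad 1\leq\Strand\leq\TheStrand,
\]
can be extracted coordinate by coordinate. These are well-defined $\G$-paths in $M_\G$ sharing the subdivision $0=t_0\leq\ldots\leq t_\TheNSubdiv=1$, and $\xi_\Strand$ runs from $p_\Strand$ to $p_{\sigma(\Strand)}$. The requirement $c_\Subdiv(t)\in M^\TheStrand\setminus\Delta_\TheStrand^\G(M)$ unfolds coordinate-wise into the disjointness condition $c_\Subdiv^\NStrand(t)\cap\G(c_\Subdiv^\NNStrand(t))=\emptyset$ for $\NStrand\neq\NNStrand$, which is therefore automatic. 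If $\sigma=\id$, no residual permutation remains: the normalized path is a genuine $\G^\TheStrand$-path, so it represents an element of $\PZ_\TheStrand(M_\G)$, yielding the inclusion $\PZ_\TheStrand(M_\G)\subseteq\Z_\TheStrand(M_\G)$.

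The main technical obstacle lies in bookkeeping the iterated shifts: inside the semidirect product $\G^\TheStrand\rtimes\Sym_\TheStrand$, the $\Sym_\TheStrand$-factor acts on $\G^\TheStrand$ by index permutation, so every shift by $(1;\tau)$ also reshuffles the $g_\Subdiv$-components of the adjacent group elements. A straightforward induction on the subdivision length $\TheNSubdiv$, using the shift formula recorded in \eqref{eq:restricted_shift}, confirms that after the full chain of shifts the components line up strand by strand, so that the coordinate-wise extraction really does produce $\G$-paths and not paths whose $\G$-leaps belong to mismatched coordinates.
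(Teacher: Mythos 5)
Your proposal is correct and follows essentially the route the paper intends: the statement is given as an observation without a written proof, and the implicit argument is exactly your normalization by shifts with elements $(1;\tau)$ to clear the $\Sym_\TheStrand$-parts of the intermediate group elements, followed by coordinate-wise extraction of the strands, with the disjointness condition coming for free from $c_\Subdiv(t)\in M^\TheStrand\setminus\Delta_\TheStrand^\G(M)$. The only quibbles are bookkeeping conventions (whether $\tau=\sigma_\Subdiv$ or $\sigma_\Subdiv^{-1}$, and the re-indexing of the final $\G$-leaps by the residual permutation), which you correctly flag and which do not affect the argument.
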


For a closed $\G^\TheStrand\rtimes\Sym_\TheStrand$-path $\xi$, the $\G$-paths $\xi_\Strand$ as described above are called the \textit{strands}. 
We fix the notation $\xi$ for a $\G^\TheStrand\rtimes\Sym_\TheStrand$-path with strands $\xi_\Strand$ of the same form as in Observation \ref{obs:closed_path_config_space} for the rest of the section. 

While the Artin braid diagrams keep track of the crossings of strands, we want orbifold braid diagrams to keep track of crossings and the $\freeprod$-leaps 
in the strands of a $\G^\TheStrand\rtimes\Sym_\TheStrand$-path. 

If $\xi_\Strand$ does not contain any $\freeprod$-leaps in $[a,b]$, we may consider $\xi_\Strand\vert_{[a,b]}:[a,b]\rightarrow M$ as a continuous function. If $\xi_\Strand$ contains a $\freeprod$-leap at time~$t_\Subdivision$, 
the group element $\g_\Subdivision^\Strand$ translates $c_{\Subdivision+1}^\Strand(t_\Subdivision)$ to $c_\Subdivision^\Strand(t_\Subdivision)$. In this case, we consider $\xi_\Strand$ as the following continuous function defined on the disjoint union $\bigcupdot_{\Subdiv=1}^{\TheNSubdiv}[t_{\Subdiv-1},t_\Subdiv]$: 
\[
\bigcupdot_{\Subdiv=1}^\TheNSubdiv[t_{\Subdiv-1},t_\Subdiv]\rightarrow M, s\mapsto c_\Subdivision^\Strand(s) \text{ for } s\in[t_{\Subdiv-1},t_\Subdiv]. 
\]
Further, let $\xi$ denote the union $\xi_\Strand\left(\bigcupdot_{\Subdiv=1}^\TheNSubdiv[t_{\Subdiv-1},t_\Subdiv]\right)$ inside $\ConfnOrb$. 

\subsection{Orbifold braids in $\Z_\TheStrand(\Sigma_\freeprod(\ThePct))$ and their braid diagrams}
\label{subsec:Z_n_Sigma_freeprod_braid_diagrams}

To specify elements of orbifold braid groups through braid diagrams, we want to establish a similar projection as in Figure \ref{fig:geom_braid_and_diagram}. Therefore, we restrict to the orbifolds~$\Sigma_\freeprod$ from Example~\ref{ex:good_orb_free_prod}. 
Even though in this case the underlying surface~$\Sigma$ 
embeds into the complex plane, it is not suitable for our purpose to consider a projection from $\Sigma\subseteq\CC$ to $\RR$ directly. Since a direct projection would not allow us to recover the projected orbifold braid group element, we will instead restrict to the fundamental domain $\FD$ before projecting. 


\new{Before we explain the projection, we endow the fundamental domain $\FD$ with a set of marked points $r_1,...,r_\ThePct$ in the interior of $\FD$ such that $\freeprod(r_\Pc)\neq\freeprod(r_\NPct)$ for all $1\leq\Pc,\NPct\leq\ThePct$ with $\Pc\neq\NPct$. Removing all the $\freeprod$-translates of these points, we obtain the surface 
\begin{equation}
\label{eq:Sigma_minus_pct}
\S:=\Sigma\setminus\freeprod(\{r_1,...,r_\ThePct\}) 
\end{equation}
with a proper $\freeprod$-action. The set $\FD(\ThePct):=\FD\cap\S$ is a fundamental domain of the $\freeprod$-action on $\S$. Let $\SG$ denote the induced orbifold structure on $\S$. 

If we further remove the cone points $\freeprod(\{\cp_1,...,\cp_\TheCone\})$ from $\S$, we denote the resulting surface by $\Spct$. The $\freeprod$-action on $\Spct$ has a fundamental domain $\FD(\ThePct,\TheCone):=\FD\cap\Spct$. Let $\SGpct$ denote the induced orbifold structure. 

Recalling the shape of the fundamental domain $\FD$ from Figure \ref{fig:fund_domain}, we can embed $\FD(\ThePct)$ (likewise $\FD(\ThePct,\TheCone)$) in $\CC$ as the disk of radius $\frac{\TheStrand+\ThePct+\TheCone+2}{2}$ centered at $\frac{\TheStrand-\ThePct-\TheCone}{2}$. For each $1\leq\nu\leq\TheCone$, let $\cp_\nu$ be the upper boundary point of $\partial \FD(\ThePct)$ with $\text{Re}(\cp_\nu)=-\ThePct-\nu$, for each $1\leq\NPct\leq\ThePct$ let $r_\NPct$ be the point $-\NPct\in\RR$ and for each $1\leq\Strand\leq\TheStrand$, let $p_\Strand$ be the point $\Strand$ in $\RR$ (see Figure \ref{fig:embedding_fund_domain}). Moreover, recall that each cone point in $\partial\FD(\ThePct)$ has two adjacent arcs that lie in $\partial\FD(\ThePct)$. For technical reasons, let us assume that the arcs adjacent to $\cp_\nu$ embed into $\partial\FD(\ThePct)$ as the boundary arcs with positive imaginary part and real part between $-\ThePct-\nu-\frac{1}{2}$ and $-\ThePct-\nu$ or $-\ThePct-\nu$ and $-\ThePct-\nu+\frac{1}{2}$, respectively. This is not needed in this section but will be helpful in Section \ref{sec:braid_and_mcg}.} 

\begin{figure}[H]
\import{Grafiken/orb_braids/}{embedding_fund_domain.pdf_tex}
\caption{The embedding of the fundamental domain $\FD(\ThePct)$ into $\CC$.}
\label{fig:embedding_fund_domain}
\end{figure}

\begin{proposition}[Reduction of $\freeprod^\TheStrand\rtimes\Sym_\TheStrand$-paths]
\label{prop:paths_rep_by_braids}
Every element in $\Z_\TheStrand(\Sigma_\freeprod(\ThePct))$ can be represented by a $\freeprod^\TheStrand\rtimes\Sym_\TheStrand$-path $\xi$ whose strands $\xi_\Strand=(\gamma_0^\Strand,c_1^\Strand,\gamma_1^\Strand,...,c_\TheSubdiv^\Strand,\gamma_\TheSubdiv^\Strand)$ satisfy the following conditions. For each $1\leq\Subdiv\leq\TheSubdiv$ and $1\leq\Strand\leq\TheStrand$, 
\begin{enumerate}
\item \label{it_simpl_paths_pw_geod} $c_\Subdiv^\Strand$ is piecewise linear with image in the interior of $\Sigma(\ThePct)$. 
\item \label{it_simpl_paths_pw_no_cone_pt} 
$c_\Subdiv^\Strand$ does not intersect any cone points. 
\item \label{it_simpl_paths_pw_red_fd} $c_\Subdivision^\Strand([t_{\Subdivision-1},t_\Subdivision])\subseteq \FD(\ThePct)$. 
\end{enumerate}

The same holds for every element in $\Z_\TheStrand(\Sigma_\freeprod(\ThePct,\TheCone))$. 
\end{proposition}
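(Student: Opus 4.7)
Start with any representative closed $\freeprod^\TheStrand\rtimes\Sym_\TheStrand$-path $\xi$ for a given element of $\Z_\TheStrand(\Sigma_\freeprod(\ThePct))$. By Observation~\ref{obs:closed_path_config_space}, after an equivalence we may assume $\xi$ is given by $\TheStrand$ strands
\[
\xi_\Strand=(\gamma_0^\Strand,c_1^\Strand,\gamma_1^\Strand,\ldots,c_\TheNSubdiv^\Strand,\gamma_\TheNSubdiv^\Strand)
\]
sharing a common subdivision $0=t_0\leq\cdots\leq t_\TheNSubdiv=1$. The plan is to first achieve \ref{it_simpl_paths_pw_geod} and \ref{it_simpl_paths_pw_no_cone_pt} by elementary homotopies inside $\S$, and then achieve \ref{it_simpl_paths_pw_red_fd} by subdivisions and shifts. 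Both types of operation preserve the class in $\Z_\TheStrand(\Sigma_\freeprod(\ThePct))$ (Definitions~\ref{def:equiv_paths} and~\ref{def:homo_G-paths}), provided the non-collision condition from Observation~\ref{obs:closed_path_config_space} is maintained throughout.

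To achieve \ref{it_simpl_paths_pw_geod}, apply an elementary homotopy to each continuous piece $c_\Subdiv^\Strand$ replacing it by a piecewise linear path with the same endpoints; then push any portion meeting $\partial\Sigma$ into $\inter{\Sigma(\ThePct)}$ via a collar-neighborhood homotopy and re-straighten. For \ref{it_simpl_paths_pw_no_cone_pt}, since the $\freeprod$-orbit of cone points is discrete in $\Sigma$, a generic small perturbation of each piecewise linear piece yields a piecewise linear path avoiding all cone points — a standard two-dimensional transversality argument.

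To achieve \ref{it_simpl_paths_pw_red_fd}, note that each continuous piece is now piecewise linear in $\S$, hence it meets only finitely many $\freeprod$-translates $h_1\FD(\ThePct),\ldots,h_k\FD(\ThePct)$ of the fundamental domain, crossing their piecewise linear boundaries only finitely often. Subdivide $[t_{\Subdiv-1},t_\Subdiv]$ at the finitely many wall crossings, and apply the shift operation from Definition~\ref{def:equiv_paths} with $h=h_\ell^{-1}$ to the $\ell$-th resulting subpiece; the outcome is an equivalent $\freeprod^\TheStrand\rtimes\Sym_\TheStrand$-path whose continuous pieces lie in $\FD(\ThePct)$, and neither piecewise linearity nor avoidance of cone points is disturbed. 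Doing this for every strand over a common refinement of subdivisions establishes \ref{it_simpl_paths_pw_red_fd}.

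The main obstacle is to maintain the non-collision condition $c_\Subdiv^\NStrand(t)\cap\freeprod(c_\Subdiv^\NNStrand(t))=\emptyset$ for $\NStrand\neq\NNStrand$ while modifying all $\TheStrand$ strands simultaneously. Since this condition is open and the starting configuration satisfies it on the compact interval $[0,1]$, there is a uniform positive lower bound on the pairwise distance of strands in $\Sigma/\freeprod$; choosing every elementary homotopy above with sufficiently small $C^0$-amplitude preserves non-collision. The same argument yields the result for $\Z_\TheStrand(\Sigma_\freeprod(\ThePct,\TheCone))$, and there \ref{it_simpl_paths_pw_no_cone_pt} is automatic since the cone points have been removed from the underlying surface.
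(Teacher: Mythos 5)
Your overall route is the same as the paper's: piecewise linear approximation inside the interior of $\Sigma(\ThePct)$, removal of cone point intersections by small homotopies, and then subdivision at the crossings with $\freeprod(\partial\FD(\ThePct))$ followed by shifts into the fundamental domain, with non-collision preserved because all modifications are kept $C^0$-small. However, your step for \ref{it_simpl_paths_pw_no_cone_pt} has a genuine gap. You perturb each continuous piece $c_\Subdiv^\Strand$ separately, and any such perturbation must respect the junction data $\gamma_\Subdiv^\Strand(c_{\Subdiv+1}^\Strand(t_\Subdiv))=c_\Subdiv^\Strand(t_\Subdiv)$, which ties the endpoint of one piece to the starting point of the next via a group element. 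A ``generic small perturbation of each piece'' therefore fails exactly when a cone point intersection occurs at a junction time: in particular when a strand is constant at a cone point over a subinterval, or when a $\freeprod$-leap occurs at a cone point (a natural configuration, since cone points are precisely the points with non-trivial stabilizer). In those cases no perturbation relative to endpoints removes the intersection, and moving the junction point $x$ forces a compatible change of the adjacent piece to end at $\gamma(x)$; this coordination is exactly what the paper's $\Delta$-moves ``possibly affected by a $\freeprod$-leap'' provide (see Figure \ref{fig:red_cone_pts}), and your argument does not supply it. A clean repair, which is the paper's actual route, is to first replace each strand by a single continuous $\freeprod$-arc using Lemma \ref{lem:cont_path_homo_it1}, so that the only constrained endpoints are the non-singular base points, and only then perturb off the discrete set of cone points.

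A secondary, minor omission: a piecewise linear piece can contain a segment lying inside a wall of the tiling by $\freeprod$-translates of $\FD(\ThePct)$, so ``crossing their boundaries only finitely often'' is not automatic; one needs the small adjustment the paper makes to ensure only finitely many intersections with $\freeprod(\partial\FD(\ThePct))$ before subdividing and shifting. With these two points addressed, your argument matches the paper's proof.
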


To adjust $\xi$, such that it satisfies the above properties, we use so called \textit{$\Delta$-moves}. 

\begin{definition}[{$\Delta$-move}, {\cite[p.\ 11]{KasselTuraev2008}}]
Let $y_0:=(x_0,t_0),y_1:=(x_1,t_1)$ and $y_2:=(x_2,t_2)$ be three points in $\Sigma(\ThePct)\times I$ with $t_0<t_1<t_2$ such that the linear $2$-simplex~$\Delta$ spanned by the points is contained in $\Sigma(\ThePct)\times I$. In particular, $\Delta$ does not contain any punctures. Further, let $\xi$ be a $\freeprod^\TheStrand\rtimes\Sym_\TheStrand$-path that represents an element in $\Z_\TheStrand(\Sigma_\freeprod(\ThePct))$. If the $\freeprod$-orbits of the strands of $\xi$ intersect $\Delta$ precisely along the linear segment $\overline{y_0y_2}$, we may replace $\overline{y_0y_2}$ in the orbifold braid $\xi$ by the concatenation $\overline{y_0y_1}\cup\overline{y_1y_2}$. Since $\Delta$ does not intersect any punctures or $\freeprod$-translates of further strands, the resulting $\freeprod^\TheStrand\rtimes\Sym_\TheStrand$-path is homotopic to $\xi$. Due to the bounded $2$-simplex, we call the above operation and its inverse \textit{$\Delta$-moves}. 
%
%
\end{definition}
\begin{proof}[Proof of Proposition \textup{\ref{prop:paths_rep_by_braids}}]
By Lemma \ref{lem:cont_path_homo_it1}, each strand $\xi_\Strand$ is equivalent to a unique continuous $\freeprod$-arc $(\gamma,c)$. Since the endpoints of $c$ lie in the interior of $\S$, it can be homotoped (relative endpoints) such that $c$ lies entirely in the interior of $\S$. Using piecewise linear approximation in $\CC$, we can find an approximation of $c$ that lies in the open subspace $\inter{\Sigma}(\ThePct)$ of $\CC$, i.e.\ in every strand the continuous parts $c_\Subdivision^\Strand$ are paths inside $\S$ which embeds into $\CC$, whence \ref{it_simpl_paths_pw_geod}. 

If $\xi$ represents an element in $\Z_\TheStrand(\Sigma_\freeprod(\ThePct,\TheCone))$, property \ref{it_simpl_paths_pw_no_cone_pt} is automatically satisfied. If $\xi$ represents an element in $\Z_\TheStrand(\Sigma_\freeprod(\ThePct))$, we begin with reducing the number of situations where a strand stays \new{at} a cone point for a period of time. \new{By subdivision of these intervals,} we can assume that $\freeprod$-leaps do not occur in the interior of these intervals. Now performing a $\Delta$-move on the constant pieces 
(see Figure \ref{fig:red_cone_pts}, top) allows us to assume that paths may intersect with but do not stay in cone points. Another $\Delta$-move (as indicated in the bottom half of Figure~\ref{fig:red_cone_pts} possibly affected by a $\freeprod$-leap) allows us to remove the remaining cone point intersections. 

\begin{figure}[H]
\centerline{\import{Grafiken/orb_braids/}{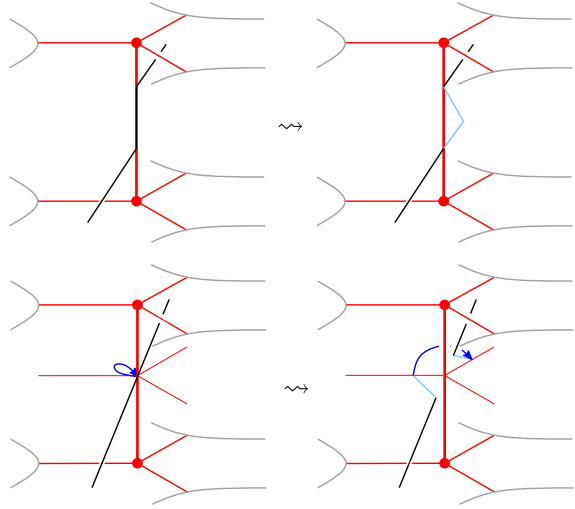}}
\caption{Removing cone point intersections.}
\label{fig:red_cone_pts}
\end{figure}

Since each strand of $\xi$ contains only piecewise linear paths, we can adjust the contained paths such that 
there are only finitely many intersections with $\freeprod(\partial \FD(\ThePct))$. Subdividing $\xi$ at all times with a boundary intersection, the application of suitable shifts reduces $\xi$ to the fundamental domain $\FD(\ThePct)$ as claimed in \ref{it_simpl_paths_pw_red_fd}. 

\new{For \ref{it_simpl_paths_pw_geod} and \ref{it_simpl_paths_pw_red_fd}, the same arguments apply if $\xi$ is a $\freeprod^\TheStrand\rtimes\Sym_\TheStrand$-path that represents an element in $\Z_\TheStrand(\Sigma_\freeprod(\ThePct,\TheCone))$.} 
\end{proof} 

\begin{corollary}
\label{cor:surject_Z_n_ast_to_Z_n}
The homomorphisms 
\[
\Z_\TheStrand(\SGpct)\rightarrow\Z_\TheStrand(\SG) \; \text{ and } \; \PZ_\TheStrand(\SGpct)\rightarrow\PZ_\TheStrand(\SG) 
\]
induced by the inclusion $\SGpct\hookrightarrow\SG$ are surjective. 
\end{corollary}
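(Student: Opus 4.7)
The plan is to leverage Proposition \ref{prop:paths_rep_by_braids} directly: the reduction procedure already produces, for any class in $\Z_\TheStrand(\SG)$, a representative whose continuous pieces avoid cone points altogether, and such a representative is manifestly in the image of the inclusion-induced map.

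More precisely, starting from a class $[\xi]\in\Z_\TheStrand(\SG)$, I would pick a representative $\freeprod^\TheStrand\rtimes\Sym_\TheStrand$-path $\xi$ and apply Proposition~\ref{prop:paths_rep_by_braids} to replace it by a homotopic path $\tilde{\xi}$ whose strands $\tilde{\xi}_\Strand=(\gamma_0^\Strand,c_1^\Strand,\gamma_1^\Strand,\ldots,c_\TheSubdiv^\Strand,\gamma_\TheSubdiv^\Strand)$ satisfy conditions \ref{it_simpl_paths_pw_geod}--\ref{it_simpl_paths_pw_red_fd}. The decisive property is \ref{it_simpl_paths_pw_no_cone_pt}: each $c_\Subdiv^\Strand$ avoids the cone points of $\SG$. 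Since the cone points of $\SG$ are precisely the extra punctures that distinguish $\Spct$ from $\S$, the continuous pieces $c_\Subdiv^\Strand$ lie entirely in $\Spct$, and the group elements $\gamma_\Subdiv^\Strand\in\freeprod$ are the same in both orbifolds. Hence $\tilde{\xi}$ may be regarded as a $\freeprod^\TheStrand\rtimes\Sym_\TheStrand$-path in $\SGpct$, defining a class $[\tilde{\xi}]\in\Z_\TheStrand(\SGpct)$ that maps to $[\xi]$ under the map induced by $\SGpct\hookrightarrow\SG$.

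For the pure braid statement, I would simply observe that the reduction in Proposition~\ref{prop:paths_rep_by_braids} does not alter the underlying permutation (all moves used — piecewise linear approximation, $\Delta$-moves removing cone point intersections, subdivisions at boundary crossings, and shifts into the fundamental domain — preserve the endpoints of each strand). So if $\xi$ represents a pure braid (permutation equal to the identity), then so does $\tilde{\xi}$, and the same argument produces a preimage in $\PZ_\TheStrand(\SGpct)$.

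I do not foresee a real obstacle here: the corollary is essentially a formal consequence of the avoidance property \ref{it_simpl_paths_pw_no_cone_pt} already secured by Proposition~\ref{prop:paths_rep_by_braids}. The only mild subtlety is checking that once a representative lives in $\Spct$, two such representatives that are homotopic in $\SG$ need not a priori be homotopic in $\SGpct$ — but the statement only asks for surjectivity, not injectivity, so this does not enter the argument.
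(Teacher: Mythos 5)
Your proposal is correct and matches the paper's intended argument: the corollary is stated as an immediate consequence of Proposition \ref{prop:paths_rep_by_braids}, whose condition \ref{it_simpl_paths_pw_no_cone_pt} yields a cone-point-avoiding representative that lifts along the inclusion $\SGpct\hookrightarrow\SG$, and the reduction preserves the induced permutation, giving the pure case. Your closing remark correctly notes that only surjectivity is claimed, so the (genuine, as Remark \ref{rem:braid_fin_order} shows) failure of injectivity is irrelevant here.
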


Recall from Definition \ref{def:geom_braid} that geometric braids are strands inside a cylinder. \new{Due to the reduction from Proposition \ref{prop:paths_rep_by_braids},} we have a similar picture for orbifold braids. In this case, the strands are contained in a cylinder with base $\FD(\ThePct)$ (see Figure~\ref{fig:orb_braid_and_diagram}, left). In contrast to Artin braids, the strands of orbifold braids may have finitely many discontinuity points. \new{At these points,} a $\freeprod$-leap compensates the gap 
between the adjacent pieces of the strand. 

\begin{figure}[H]
\import{Grafiken/orb_braids/}{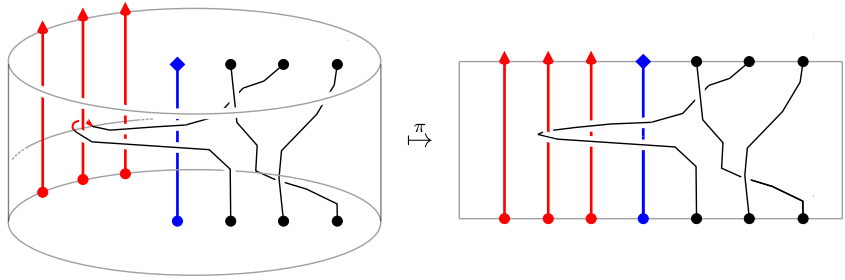}
\caption{An orbifold braid and its braid diagram.}
\label{fig:orb_braid_and_diagram}
\end{figure}

\new{As indicated in Figure \ref{fig:orb_braid_and_diagram},} this picture allows us to describe a similar projection as in Figure \ref{fig:geom_braid_and_diagram}. 

\begin{definition}[Orbifold braids and orbifold braid diagrams]
\label{def:braid_proj_cross}
A $\freeprod^\TheStrand\rtimes\Sym_\TheStrand$-path~$\xi$ that satisfies the properties \ref{it_simpl_paths_pw_geod}-\ref{it_simpl_paths_pw_red_fd} 
is called an \textit{orbifold braid} with strands $\xi_\Strand$ for $1\leq\Strand\leq\TheStrand$. 
Orbifold braids will be specified by a projection 
\begin{align*}
\pi:\FD(\ThePct)\times\bigcupdot_{\Subdiv=1}^\TheNSubdiv[t_{\Subdiv-1},t_\Subdiv]\rightarrow\RR\times\bigcupdot_{\Subdiv=1}^\TheNSubdiv[t_{\Subdiv-1},t_\Subdiv]
\end{align*}
where the $\FD(\ThePct)$-coordinate projects to the real part with respect to the chosen embedding of $\FD(\ThePct)$ into $\CC$ and the interval coordinate $t$ maps identically to $\bigcupdot_{\Subdiv=1}^\TheNSubdiv[t_{\Subdiv-1},t_\Subdiv]$. 
The image $\pi(\xi)$ of an orbifold braid is called the \textit{projection} of $\xi. $

Analogously to Definition~\ref{def:braid_diagram}, we fix the following notations. 
Let $x,y\in\xi, x\neq y$ be such that $\pi(x)=\pi(y)=:(p,t)$, then $x$ and $y$ are in distinct strands $\xi_\Str$ and $\xi_\Strand$, respectively. 

In this case, the point $(p,t)$ is called a \textit{crossing} at height $t$ in $\pi(\xi)$. The crossing is called \textit{transverse} if there is a neighborhood $U$ of $(p,t)$ in $[0,\TheStrand+1]\times[t_{\Subdiv-1},t_\Subdiv]$ such that the pair $(U,\pi(\xi)\cap U)$ is locally homeomorphic to $(\RR^2,\RR\times\{0\}\cup\{0\}\times\RR)$ via a homeomorphism identifying $\pi(\xi_\Str)$ with $\RR\times\{0\}$ and $\pi(\xi_\Strand)$ with $\{0\}\times\RR$. 

In the orbifold braid $\xi$ the strand $\xi_\Str$ \textit{crosses over} $\xi_\Strand$ if $\textup{Im}(\xi_\Str(t))<\textup{Im}(\xi_\Strand(t))$. Otherwise $\xi_\Str$ \textit{crosses under} $\xi_\Strand$. 

Similarly, we consider crossings with the punctures $r_\NPct$. For this purpose, let $x\in\xi_\Str$ be such that $\pi(x)=(-\NPct,t)$ for some $1\leq\NPct\leq\ThePct$. 

In this case, the point $(-\NPct,t)$ is called a \textit{crossing} of $\pi(\xi)$. The crossing is called \textit{transverse} if there is a neighborhood $U$ of $p$ in $[0,\TheStrand+1]\times[t_{\Subdiv-1},t_\Subdiv]$ such that the pair 
\[
(U,(\pi(\xi)\cup\{-\NPct\}\times I)\cap U)
\]
is locally homeomorphic to $(\RR^2,\RR\times\{0\}\cup\{0\}\times\RR)$ via a homeomorphism identifying $\pi(\xi_\Str)$ with $\RR\times\{0\}$ and $\{-\NPct\}\times I$ with $\{0\}\times\RR$. 

In the orbifold braid $\xi$ the strand $\xi_\Str$ \textit{crosses over} $r_\NPct$ if $\textup{Im}(\xi_\Str(t))<0$. Otherwise $\xi_\Str$ \textit{crosses under} $r_\NPct$. 

We will consider the projection mainly for those orbifold braids $\xi$ which satisfy the following conditions: 
\begin{enumerate}
\item \label{def:orb_braid_diag_it1} 
at most one crossing, either of two strands or a strand and a puncture, appears at a height, 
\item \label{def:orb_braid_diag_it2} 
the strands and punctures cross transversely in each crossing, 
\item \label{def:orb_braid_diag_it4} 
no crossing occurs at the same time as a $\freeprod$-leap and 
\item \label{def:orb_braid_diag_it5} 
no two $\freeprod$-leaps occur at the same time. 
\end{enumerate}

In this case, the projection $\pi(\xi)$ of an orbifold braid $\xi$ together with the data of which strand crosses over (resp. under) and the data which strand at which time contains a $\freeprod$-leap is called an \textit{orbifold braid diagram} for $\xi$. 
\end{definition}

\new{As for the Artin braids,} we can encode the orbifold braid diagrams in pictures (see Figure \ref{fig:orb_braid_and_diagram}, right for an example).  The crossings of two strands and the crossings of a strand and a puncture are illustrated as for the geometric braids. Further, recall that we have chosen cyclic generators $\gamma_\nu$ of the cyclic factors $\cyc{\TheOrder_\nu}$ in $\freeprod$ in Example~\ref{ex:good_orb_free_prod}. If a strand $\xi_\Strand$ contains a $\freeprod$-leap $\gamma_\Subdiv^\Strand=\gamma_\nu^\varepsilon$ at time $t_\Subdiv$, we draw the $\Strand$-th strand encircling the bar that corresponds to the $\nu$-th cone point. For $\varepsilon=1$, we draw the $\Strand$-th strand encircling the $\nu$-th cone point bar counterclockwise. For $\varepsilon=-1$, we draw the $\Strand$-th strand encircling the $\nu$-th cone point bar clockwise. Due to condition \ref{it_simpl_paths_pw_red_fd}, a $\freeprod$-leap by $\gamma_\nu^l$ with $l\not\in\{\pm1\}$ cannot occur. 

Proposition \ref{prop:paths_rep_by_braids} together with the standard techniques from the classical case and the shifts introduced in Definition \ref{def:equiv_paths_shift} yields: 

\begin{lemma}
\label{lem:braids_crossing_pattern}
Every element in $\Z_\TheStrand(\Sigma_\freeprod(\ThePct))$ and $\Z_\TheStrand(\SGpct)$ is represented by an orbifold braid that projects to an orbifold braid diagram. 
\end{lemma}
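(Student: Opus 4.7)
The plan is to start from Proposition \ref{prop:paths_rep_by_braids}: given an element of $\Z_\TheStrand(\Sigma_\freeprod(\ThePct))$ or $\Z_\TheStrand(\SGpct)$, I first pick a representative $\freeprod^\TheStrand\rtimes\Sym_\TheStrand$-path $\xi$ whose continuous pieces $c_\Subdiv^\Strand$ are piecewise linear, avoid cone points, and lie inside the fundamental domain $\FD(\ThePct)$. The task then reduces to modifying $\xi$, without changing its homotopy class, so that its projection $\pi(\xi)$ satisfies the diagram conditions \ref{def:orb_braid_diag_it1}--\ref{def:orb_braid_diag_it5}.

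First I would address the crossing conditions \ref{def:orb_braid_diag_it1} and \ref{def:orb_braid_diag_it2} following the standard classical argument for Artin braids. The piecewise-linear configurations in which two crossings share a height, in which a crossing is non-transverse, or in which a strand meets a puncture column $\{-\NPct\}\times I$ tangentially form a proper closed subset of the finite-dimensional space of admissible segments. Hence an arbitrarily small piecewise-linear perturbation of the $c_\Subdiv^\Strand$, carried out strictly inside $\inter{\Sigma(\ThePct)}\cap\FD(\ThePct)$, puts $\pi(\xi)$ in general position with respect to $\pi$. Such perturbations do not hit cone points, do not leave the fundamental domain, and yield a path homotopic to $\xi$. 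After subdividing so that every $[t_{\Subdiv-1},t_\Subdiv]$ contains at most one crossing, conditions \ref{def:orb_braid_diag_it1} and \ref{def:orb_braid_diag_it2} are satisfied.

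To arrange conditions \ref{def:orb_braid_diag_it4} and \ref{def:orb_braid_diag_it5} I would use shifts in the sense of Definition \ref{def:equiv_paths_shift}. Given a leap $\gamma_\Subdiv^\Strand$ at time $t_\Subdiv$ that should be relocated to a nearby time $t'$, I subdivide $c_\Subdiv^\Strand$ at $t'$ and apply a shift with $h=(\gamma_\Subdiv^\Strand)^{-1}$ to the new sub-piece over $[t',t_\Subdiv]$; this replaces the sub-piece by its $\freeprod$-translate and moves the leap from $t_\Subdiv$ to $t'$. Choosing $t'$ to avoid the finitely many crossings and leaps already placed isolates every leap from crossings and from other leaps. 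If the translated sub-piece no longer lies in $\FD(\ThePct)$, the reduction procedure used in the proof of Proposition \ref{prop:paths_rep_by_braids} puts it back there. Iterating these moves yields conditions \ref{def:orb_braid_diag_it4} and \ref{def:orb_braid_diag_it5}.

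The main obstacle is verifying that the two phases do not interfere: a shift applied after the perturbation could in principle reintroduce a multiple or non-transverse crossing, and the reduction to $\FD(\ThePct)$ could create new collisions. However, since shifts and reductions are equivalences of $\freeprod^\TheStrand\rtimes\Sym_\TheStrand$-paths, they preserve the homotopy class of $\xi$ together with properties \ref{it_simpl_paths_pw_geod}--\ref{it_simpl_paths_pw_red_fd}; any residual non-generic crossings introduced by them can be removed by a final round of small interior perturbations, which by construction do not disturb the $\freeprod$-leap times already chosen. The resulting orbifold braid has a projection that is an orbifold braid diagram representing the original class in $\Z_\TheStrand(\Sigma_\freeprod(\ThePct))$ or $\Z_\TheStrand(\SGpct)$.
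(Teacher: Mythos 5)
Your overall strategy is the intended one: start from Proposition \ref{prop:paths_rep_by_braids}, use a classical general-position perturbation for the crossing conditions \ref{def:orb_braid_diag_it1} and \ref{def:orb_braid_diag_it2}, and invoke the shifts of Definition \ref{def:equiv_paths_shift}. The first phase of your argument is fine. The gap lies in your treatment of conditions \ref{def:orb_braid_diag_it4} and \ref{def:orb_braid_diag_it5}: you try to relocate a $\freeprod$-leap in time using only equivalences (subdivision followed by a shift with $h=(\gamma_\Subdiv^\Strand)^{-1}$). For a representative that is reduced to the fundamental domain this cannot work. The shift replaces the sub-piece over $[t',t_\Subdiv]$ by its $(\gamma_\Subdiv^\Strand)^{-1}$-translate, which lies in $(\gamma_\Subdiv^\Strand)^{-1}(\FD(\ThePct))$ and so destroys property \ref{it_simpl_paths_pw_red_fd}; and your proposed repair --- running the reduction procedure again --- consists of shifting this sub-piece back by $\gamma_\Subdiv^\Strand$, which restores the leap at the original time $t_\Subdiv$. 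The move is circular. The underlying reason is that for a reduced representative avoiding cone points the leap times are essentially intrinsic: a leap can only occur at a time where the strand meets $\freeprod(\partial\FD(\ThePct))$, and which of these times carry a leap is determined by the underlying path in the orbifold; shifts change only the chosen $\freeprod$-translates of the pieces, never that underlying path, so they cannot separate a leap from a crossing or from another leap.

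What conditions \ref{def:orb_braid_diag_it4} and \ref{def:orb_braid_diag_it5} require is a genuine (small, piecewise linear) homotopy of the strands, of exactly the kind you already use for the crossings: each strand meets $\freeprod(\partial\FD(\ThePct))$ at only finitely many times, so a small perturbation near these times makes the boundary-crossing times pairwise distinct and distinct from the finitely many projection crossings; afterwards one re-reduces to $\FD(\ThePct)$ by subdividing at the (new) boundary intersections and applying shifts. In other words, the role of the shifts is to restore the fundamental-domain condition after the perturbation, not to move leaps. With that correction your argument closes; in particular, the care you take in the last paragraph to keep the ``already chosen'' leap times fixed during the final perturbation becomes unnecessary, since the leap times should only be fixed after all perturbations have been performed.
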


\new{In the following,} we will use the orbifold braid diagrams introduced above to encode orbifold braids. We will no longer distinguish between an orbifold braid and its homotopy class in $\Z_\TheStrand(\SG)$ and $\Z_\TheStrand(\SGpct)$, respectively. \new{Motivated by Artin braid groups,} we will begin with orbifold braids that do not braid with cone points or punctures: We define the orbifold braid $h_\Strand$ for $1\leq\Strand<\TheStrand$ as the one represented by the following braid diagram: 

\begin{figure}[H]
\import{Grafiken/orb_braids/}{h_j_wc.pdf_tex}
\caption{The generator $h_\Strand$.}
\label{fig:h_j}
\end{figure}

These orbifold braids 
generate a subgroup of $\Z_\TheStrand(\Sigma_\freeprod(\ThePct))$ and $\Z_\TheStrand(\Sigma_\freeprod(\ThePct,\TheCone))$, respectively, that is isomorphic to the Artin braid group $\B_\TheStrand$. We can further define 
\begin{equation}
\label{eq:def_a_ji}
\twist{\Strand}{\Str}:=h_{\Strand-1}^{-1}...h_{\Str+1}^{-1}h_\Str^2h_{\Str+1}...h_{\Strand-1} \text{ for } 1\leq\Str<\Strand\leq\TheStrand, 
\end{equation}
which is a braid in $\PZ_\TheStrand(\Sigma_\freeprod(\ThePct))$ with the following projection 
\begin{figure}[H]
\import{Grafiken/orb_braids/}{a_ji_wc.pdf_tex}
\caption{Braid diagram of $\twist{\Strand}{\Str}$.}
\label{fig:a_ji}
\end{figure}

Further, we introduce orbifold braids $\twP{\NPct}$ for $1\leq\NPct\leq\ThePct$ and $\twC{\nu}$ for $1\leq\nu\leq\TheCone$ that involve either cone points or punctures. \new{In both cases,} the last $\TheStrand-1$ strands are fixed. The first strands are pictured in Figure \ref{fig:tau_c_nu_tau_r_lambda}. 
\begin{figure}[h]
\import{Grafiken/orb_braids/}{tau_c_nu_tau_r_lambda.pdf_tex}
\caption{The first strand of $\twC{\nu}$ (left) and $\twP{\NPct}$ (right).}
\label{fig:tau_c_nu_tau_r_lambda}
\end{figure}

For $1\leq\NStrand\leq\TheStrand$, $1\leq\nu\leq\TheCone$ and $1\leq\NPct\leq\ThePct$, we further define 
\begin{equation}
\label{eq:def_a_kc_a_kr}
\twistC{\NStrand}{\nu}:=h_{\NStrand-1}^{-1}...h_1^{-1}\twC{\nu}h_1...h_{\NStrand-1} \; \text{ and } \; \twistP{\NStrand}{\NPct}:=h_{\NStrand-1}^{-1}...h_1^{-1}\twP{\NPct}h_1...h_{\NStrand-1}. 
\end{equation}

These elements project to the diagrams depicted in Figure \ref{fig:a_kc_nu_a_kr_lambda} below. 
\begin{figure}[H]
\import{Grafiken/orb_braids/}{a_kc_nu_a_kr_lambda_wc.pdf_tex}
\caption{Braid diagrams of $\twistC{\NStrand}{\nu}$ (left) and $\twistP{\NStrand}{\NPct}$ (right).}
\label{fig:a_kc_nu_a_kr_lambda}
\end{figure}

\begin{remark}
\label{rem:braid_fin_order}
Even though orbifold braid diagrams look similar to Artin braid diagrams, there is an essential difference: For each $1\leq\NStrand\leq\TheStrand$ and $1\leq\nu\leq\TheCone$ the $\TheOrder_\nu$-th power of $\twistC{\NStrand}{\nu}$ is the trivial braid in $\Z_\TheStrand(\SG)$. That is because $\twistC{\NStrand}{\nu}^{\TheOrder_\nu}$ is homotopic to a braid with all strands except the $\NStrand$-th one fixed. Further, we may apply shifts to the $\NStrand$-th strand, so that this strand is a continuous $\freeprod$-path and encircles the $\nu$-th cone point (see Figure \ref{fig:gens_satisfy_rels_free_prod_t^m}). Since the loop is contractible in $\S$, this implies that $\twistC{\NStrand}{\nu}^{\TheOrder_\nu}$ is trivial. Hence, $\twistC{\NStrand}{\nu}$ is an element of finite order in $\Z_\TheStrand(\S)$. This behavior was already emphasized by Allcock \cite{Allcock2002}. 

\begin{figure}[H]
\import{Grafiken/orb_braids/}{power_tau_trivial.pdf_tex}
\caption{The relation $\twistC{\NStrand}{\nu}^{\TheOrder_\nu}=1$ for $\NStrand=1$.}
\label{fig:gens_satisfy_rels_free_prod_t^m}
\end{figure}

In contrast, the loop in Figure \ref{fig:gens_satisfy_rels_free_prod_t^m} is not contractible if we remove the cone point, i.e.\ $\twistC{\NStrand}{\nu}^{\TheOrder_\nu}$ is not trivial in $\Z_\TheStrand(\SGpct)$. This prevents the epimorphisms in Corollary \ref{cor:surject_Z_n_ast_to_Z_n} from being injective. 

Comparing orbifold braid diagrams for braids in $\Z_\TheStrand(\Sigma_\freeprod(\ThePct))$ to Artin braid diagrams, the relations $\twC{\nu}^{\TheOrder_\nu}\stackrel{\eqref{eq:def_a_kc_a_kr}}=\twistC{1}{\nu}^{\TheOrder_\nu}=1$ for $1\leq\nu\leq\TheCone$ reflect additional transformations allowed for the modification of orbifold braid diagrams (see Figure \ref{fig:orb-Reidemeister-move}). In analogy to the classical case, discussed in \cite[Section 1.2.3]{KasselTuraev2008}, this transformations can be seen as an additional Reidemeister moves for orbifold braid diagrams. 
\end{remark}

\begin{figure}[H]
\centerline{\import{Grafiken/orb_braids/}{orb-Reidemeister-move_wc.pdf_tex}}
\caption{Additional Reidemeister move for orbifold braid diagrams in $\Z_\TheStrand(\Sigma_\freeprod(\ThePct))$ for $\TheOrder_\nu=3$.}
\label{fig:orb-Reidemeister-move}
\end{figure}

\subsection{A finite generating set of $\Z_\TheStrand(\Sigma_\freeprod(\ThePct))$}
\label{subsec:fin_gen_set_Z_n_Sigma_freeprod}

\new{For the Artin braid group $\B_\TheStrand$ and its pure subgroup $\PB_\TheStrand$,} generating sets are well known, see for instance \cite[Theorem 1.12, Corollary 1.19]{KasselTuraev2008}. The main idea is described in Observation \ref{obs:gen_set_braid_grp_and_Conf}. Using orbifold braid diagrams, we similarly obtain generating sets for the orbifold braid group and its pure subgroup. For the pure orbifold braid group, this is analogous to \cite[Lemma 4.1]{Roushon2021}. Details can also be found in the author’s PhD thesis \cite[Section 3.5]{Flechsig2023}.

\begin{theorem}
\label{thm:gen_set_Z_n}
The orbifold braid group $\Z_\TheStrand(\SGpct)$ is generated by the elements 
\[
h_\Strand, \twP{\NPct} \; \text{ and } \; \twC{\nu} \; \text{ for } \; 1\leq\Strand<\TheStrand, 1\leq\NPct\leq\ThePct \; \text{ and } \; 1\leq\nu\leq\TheCone. 
\]
\end{theorem}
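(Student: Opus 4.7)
The plan is to mimic the argument sketched in Observation \ref{obs:gen_set_braid_grp_and_Conf} for the Artin braid group, but with an extended inventory of elementary pieces that accounts for $\freeprod$-leaps and punctures. First I would invoke Lemma \ref{lem:braids_crossing_pattern} to represent an arbitrary class in $\Z_\TheStrand(\SGpct)$ by an orbifold braid $\xi$ whose projection is an orbifold braid diagram. By Definition \ref{def:braid_proj_cross}, at most one crossing (either between two strands or between a strand and a puncture bar) and at most one $\freeprod$-leap occur at any given height, and these events take place at pairwise distinct heights. Cutting the parameter interval at the finitely many heights at which a crossing or a $\freeprod$-leap occurs, I decompose $\xi$ as a concatenation of shorter orbifold braids $\xi^{(1)},\dots,\xi^{(N)}$, each of which contains exactly one of these events.

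Next, I would exploit the flexibility of Proposition \ref{prop:paths_rep_by_braids} to modify each piece $\xi^{(s)}$ by an ambient isotopy that drags its top and bottom endpoint configurations back to the fixed base configuration $\{p_1,\dots,p_\TheStrand\}$, in direct analogy with Figure \ref{fig:geometric_braid_iso}. Because every piece contains just one crossing or one $\freeprod$-leap, this straightening can be performed without creating new crossings of strands with each other or with puncture bars; any cone point intersections accidentally introduced along the way can be eliminated by the $\Delta$-moves used in the proof of Proposition \ref{prop:paths_rep_by_braids}. After this adjustment, each factor is itself a closed orbifold braid based at $\{p_1,\dots,p_\TheStrand\}$, and the equality $\xi=\xi^{(1)}\cdots\xi^{(N)}$ holds in $\Z_\TheStrand(\SGpct)$.

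Then I would identify each factor with one of the proposed generators. If $\xi^{(s)}$ contains a single crossing of two strands, these two strands are necessarily adjacent, so $\xi^{(s)}=h_\Strand^{\pm1}$ for some $1\leq\Strand<\TheStrand$. If $\xi^{(s)}$ contains a crossing of the $\NStrand$-th strand with the puncture bar over $r_\NPct$, then after the straightening $\xi^{(s)}$ is isotopic to $\twistP{\NStrand}{\NPct}^{\pm1}$, which by \eqref{eq:def_a_kc_a_kr} lies in the subgroup generated by $h_1,\dots,h_{\TheStrand-1}$ and $\twP{\NPct}$. If instead $\xi^{(s)}$ contains a single $\freeprod$-leap of the $\NStrand$-th strand, condition \ref{it_simpl_paths_pw_red_fd} of Proposition \ref{prop:paths_rep_by_braids} forces this leap to be $\gamma_\nu^{\pm1}$ for some $1\leq\nu\leq\TheCone$, and the piece is isotopic to $\twistC{\NStrand}{\nu}^{\pm1}$; by \eqref{eq:def_a_kc_a_kr} this again lies in the subgroup generated by $h_1,\dots,h_{\TheStrand-1}$ and $\twC{\nu}$. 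Combining these identifications expresses $\xi$ as a word in the claimed generators.

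I expect the main obstacle to be the bookkeeping for the straightening step: one must check that the isotopy pulling the endpoints of each piece back to the base configuration can be chosen inside the class of orbifold braids, i.e.\ without pushing a strand across a $\freeprod$-translate of another strand, a puncture, or a cone point, and without spoiling the transversality or single-event-per-height conditions of Definition \ref{def:braid_proj_cross}. The piecewise linear reduction to $\FD(\ThePct)$ supplied by Proposition \ref{prop:paths_rep_by_braids}, together with the availability of $\Delta$-moves to clear small obstructions, is precisely the technical setup that makes this routine in practice, and details can be found along the lines of \cite[Lemma 4.1]{Roushon2021}.
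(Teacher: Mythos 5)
Your overall strategy is the one the paper itself only sketches (Observation \ref{obs:gen_set_braid_grp_and_Conf} adapted to orbifold braid diagrams, with Proposition \ref{prop:paths_rep_by_braids} and Lemma \ref{lem:braids_crossing_pattern} as the reduction steps), so the skeleton is right. However, there is a concrete flaw in the straightening/identification step. You claim that after cutting at the event heights you can drag the endpoint configurations back to $\{p_1,\dots,p_\TheStrand\}$ ``without creating new crossings of strands with each other or with puncture bars,'' and that a piece containing a single strand--puncture crossing is then isotopic to $b_{\NStrand\NPct}^{\pm1}$. Both statements cannot hold simultaneously, for a parity reason: in the projection the puncture bar is the full vertical line over $-\NPct$, and a strand of a \emph{closed} factor starts and ends at the base points, whose real parts are $\geq 1$; hence its projection meets that vertical line an even number of times (all crossings being transverse). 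So a factor with both endpoints at the base configuration and exactly one puncture-bar crossing does not exist. If at a cut height the moving strand lies to the left of $r_\NPct$, the pull-back path is forced to cross that bar again, and depending on which side it passes the resulting closed factor is either $b_{\NStrand\NPct}^{\pm1}$ or trivial --- this is exactly the bookkeeping you suppress. The same problem is worse for the leap factors when $\ThePct\geq1$: to reach the boundary arcs adjacent to $\cp_\nu$ (real part $-\ThePct-\nu$) from $p_\NStrand$ and return, the strand must cross every puncture bar at least twice, so a closed factor containing a leap can never contain ``exactly one event,'' and it is not literally isotopic to $c_{\NStrand\nu}^{\pm1}$ but only equal to it up to loops around punctures created and cancelled by the pull-back paths.

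The repair is standard but is precisely the content you wave away: insert at each cut a pull-back path \emph{and its inverse} (the telescoping trick behind Figure \ref{fig:geometric_braid_iso}), accept that each factor acquires auxiliary bar crossings, and then prove that a closed factor in which only one essential event survives --- all other strands being constant and the moving strand tracing a loop in $\FD(\ThePct)$ around at most one puncture or one cone-point bar --- represents $h_\Strand^{\pm1}$, $b_{\NStrand\NPct}^{\pm1}$, $c_{\NStrand\nu}^{\pm1}$ or the identity; via \eqref{eq:def_a_kc_a_kr} these all lie in $\langle h_\Strand,\twP{\NPct},\twC{\nu}\rangle$. Equivalently, one can induct on the number of events, or argue as in \cite[Lemma 4.1]{Roushon2021} by first treating braids in which only the last strand moves. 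As written, though, the decomposition you describe is internally inconsistent, so the identification of the factors with the generators does not go through without this additional argument.
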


\new{By Corollary \ref{cor:surject_Z_n_ast_to_Z_n},} this in particular implies: 

\begin{corollary}
\label{cor:gen_set_Z_n}
The orbifold braid group $\Z_\TheStrand(\SG)$ is generated by the elements 
\[
h_\Strand, \twP{\NPct} \; \text{ and } \; \twC{\nu} \; \text{ for } \; 1\leq\Strand<\TheStrand, 1\leq\NPct\leq\ThePct \; \text{ and } \; 1\leq\nu\leq\TheCone. 
\]
\end{corollary}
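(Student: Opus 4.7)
The plan is to deduce Corollary \ref{cor:gen_set_Z_n} directly from Theorem \ref{thm:gen_set_Z_n} via the surjection provided by Corollary \ref{cor:surject_Z_n_ast_to_Z_n}. Since the generating set for $\Z_\TheStrand(\SGpct)$ has already been established, I only need to observe that every generator of the target group lies in the image of this surjection.

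More concretely, I would argue as follows. By Corollary \ref{cor:surject_Z_n_ast_to_Z_n}, the inclusion $\SGpct\hookrightarrow\SG$ induces a surjective homomorphism $\Z_\TheStrand(\SGpct)\twoheadrightarrow\Z_\TheStrand(\SG)$. Under this map, each of the orbifold braids $h_\Strand$, $\twP{\NPct}$ and $\twC{\nu}$ defined in Section \ref{subsec:Z_n_Sigma_freeprod_braid_diagrams}, viewed as a braid in $\Z_\TheStrand(\SGpct)$ (where cone points are removed), is sent to the braid with the same diagram in $\Z_\TheStrand(\SG)$, since the defining braid diagrams never require the cone points to be present in the underlying surface. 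Together with Theorem \ref{thm:gen_set_Z_n}, which states that these elements generate $\Z_\TheStrand(\SGpct)$, and the standard fact that the image of a generating set under a surjective homomorphism is a generating set of the codomain, this yields the claim.

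There is no serious obstacle here: the heavy lifting has been done in establishing Theorem \ref{thm:gen_set_Z_n} (via Proposition \ref{prop:paths_rep_by_braids} and Lemma \ref{lem:braids_crossing_pattern}, following the Artin template of Observation \ref{obs:gen_set_braid_grp_and_Conf}) and in the surjectivity statement of Corollary \ref{cor:surject_Z_n_ast_to_Z_n}. The only minor point of care is to verify that the named generators of $\Z_\TheStrand(\SGpct)$ are indeed mapped to the like-named generators of $\Z_\TheStrand(\SG)$ under the inclusion-induced map; but this is immediate from the construction of these braids via piecewise linear diagrams inside the fundamental domain $\FD(\ThePct)$, which is unchanged by adjoining the cone points.
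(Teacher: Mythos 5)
Your proposal is correct and matches the paper's argument: Corollary \ref{cor:gen_set_Z_n} is deduced exactly by pushing the generating set of Theorem \ref{thm:gen_set_Z_n} through the surjection of Corollary \ref{cor:surject_Z_n_ast_to_Z_n}, under which the braids $h_\Strand$, $\twP{\NPct}$ and $\twC{\nu}$ map to their like-named counterparts. No further comment is needed.
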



\begin{theorem}
\label{thm:gen_set_PZ_n}
The pure orbifold braid group $\PZ_\TheStrand(\SGpct)$ is generated by 
\[
\twist{\Strand}{\Str}, \twistP{\NStrand}{\NPct} \; \text{ and } \; \twistC{\NStrand}{\nu} \; \text{ for } \; 1\leq\Str,\Strand,\NStrand\leq\TheStrand \; \text{ with } \; \Str<\Strand, \; 1\leq\NPct\leq\ThePct \; \text{ and } \; 1\leq\nu\leq\TheCone. 
\]
\end{theorem}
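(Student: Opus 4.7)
The plan is to argue by induction on $\TheStrand$, exploiting that the cone points have been removed in $\SGpct$, so the $\freeprod$-action on $\inter{\Sigma}(\ThePct,\TheCone)$ is free. Under this hypothesis, Lemma~\ref{lem:fund_grp_free_act} identifies $\PZ_\TheStrand(\SGpct)$ with the classical pure surface braid group $\pi_1(\PConf_\TheStrand(M))$, where $M:=\inter{\Sigma}(\ThePct,\TheCone)/\freeprod$ is a plane with $\ThePct+\TheCone$ punctures (the images of the $r_\NPct$ and $\cp_\nu$).

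The base case $\TheStrand=1$ is immediate: $\PZ_1(\SGpct)=\piOrb(\SGpct)$ is generated by $\twP{1},\ldots,\twP{\ThePct},\twC{1},\ldots,\twC{\TheCone}$ by Theorem~\ref{thm:gen_set_Z_n}, and these are precisely $\twistP{1}{\NPct}$ and $\twistC{1}{\nu}$ in view of \eqref{eq:def_a_kc_a_kr}; no $\twist{\Strand}{\Str}$ appears for $\TheStrand=1$.

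For the inductive step, the map $\piPZ$ that forgets the $\TheStrand$-th strand descends, on the quotient $M$, to the classical Fadell--Neuwirth fibration, yielding the short exact sequence
\[
1\to\pi_1\bigl(M\setminus\{p_1,\ldots,p_{\TheStrand-1}\}\bigr)\to\PZ_\TheStrand(\SGpct)\xrightarrow{\piPZ}\PZ_{\TheStrand-1}(\SGpct)\to1.
\]
The fiber is the free group on the meridional loops around each of the $\ThePct+\TheCone+\TheStrand-1$ punctures of $M\setminus\{p_1,\ldots,p_{\TheStrand-1}\}$. Interpreting each such meridian as an orbifold braid diagram via the covering $\Spct\to M$, they correspond exactly to $\twist{\TheStrand}{\Strand}$ (loop around $p_\Strand$) for $1\leq\Strand<\TheStrand$, to $\twistP{\TheStrand}{\NPct}$ (loop around $r_\NPct$) for $1\leq\NPct\leq\ThePct$, and to $\twistC{\TheStrand}{\nu}$ (loop around $\cp_\nu$) for $1\leq\nu\leq\TheCone$. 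Combining these with lifts of the inductive generators of $\PZ_{\TheStrand-1}(\SGpct)$ yields the claimed generating set.

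The technical crux is the orbifold Fadell--Neuwirth fibration and the matching of the free meridional generators with the specified orbifold braids through the braid-diagram description of Section~\ref{subsec:Z_n_Sigma_freeprod_braid_diagrams}. The freeness of the $\freeprod$-action on $\inter{\Sigma}(\ThePct,\TheCone)$ is essential here: it is what allows the orbifold problem to reduce cleanly to the classical setting. The analogous reduction fails for $\SG$ (where cone points are retained), and indeed in that case the corresponding sequence acquires a non-trivial kernel $\kernel$, as stated in Theorem~\ref{thm-intro:kernel_ex_seq}.
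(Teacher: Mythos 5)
Your argument is correct, and it takes a genuinely different route from the paper's. You reduce to the classical setting at the outset: since the cone points are deleted, $\freeprod^\TheStrand$ acts freely and properly on the pure configuration space, so Lemma \ref{lem:fund_grp_free_act} -- together with the homeomorphism of the quotient with $\PConf_\TheStrand(D(\ThePct,\TheCone))$, exactly as in the proof of Corollary \ref{cor:ev-map_iso} -- identifies $\PZ_\TheStrand(\SGpct)$ with the classical pure braid group of an $(\ThePct+\TheCone)$-punctured disk, and then the Fadell--Neuwirth fibration plus induction on $\TheStrand$ gives generation: the free meridional basis of the fiber corresponds to $a_{\TheStrand\Strand}$, $b_{\TheStrand\NPct}$, $c_{\TheStrand\nu}$, and the same-named braids with constant $\TheStrand$-th strand are preimages of the inductive generators of the quotient (note that for generation you only need exactness at the middle and right terms, so no $\pi_2$ input is required). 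The paper instead argues directly with orbifold braid diagrams, combing a pure diagram strand by strand in the spirit of Observation \ref{obs:gen_set_braid_grp_and_Conf} and of the classical argument cited from Kassel--Turaev, analogously to Roushon's Lemma 4.1, with details in the author's thesis. Your reduction buys brevity and lets you quote classical facts wholesale, and it is not circular, since it uses only the classical Fadell--Neuwirth sequence for the quotient surface rather than the orbifold exact sequence established later in Section \ref{subsec:ex_seq_pure_orb_braid}; the diagrammatic proof buys a self-contained argument inside the orbifold braid calculus that handles $\Z_\TheStrand$ and $\PZ_\TheStrand$ uniformly and sets up techniques reused later. In a full write-up you should make two points explicit: the homeomorphism of $(\Spct^\TheStrand\setminus\Delta_\TheStrand^\freeprod(\Spct))/\freeprod^\TheStrand$ with $\PConf_\TheStrand(D(\ThePct,\TheCone))$, and the verification, with the paper's orientation conventions, that your chosen free basis of the fiber's fundamental group is exactly the braids of Figures \ref{fig:a_ji} and \ref{fig:a_kc_nu_a_kr_lambda}. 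Finally, as you note, the reduction is special to $\SGpct$; the statement for $\SG$ then follows via Corollary \ref{cor:surject_Z_n_ast_to_Z_n}, which is precisely how the paper deduces Corollary \ref{cor:gen_set_PZ_n}.
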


\new{By Corollary \ref{cor:surject_Z_n_ast_to_Z_n},} this in particular implies: 

\begin{corollary}
\label{cor:gen_set_PZ_n}
The pure orbifold braid group $\PZ_\TheStrand(\SG)$ is generated by 
\[
\twist{\Strand}{\Str}, \twistP{\NStrand}{\NPct} \; \text{ and } \; \twistC{\NStrand}{\nu} \; \text{ for } \; 1\leq\Str,\Strand,\NStrand\leq\TheStrand \; \text{ with } \;  \Str<\Strand, \; 1\leq\NPct\leq\ThePct \; \text{ and } \; 1\leq\nu\leq\TheCone. 
\]
\end{corollary}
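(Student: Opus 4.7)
The plan is to deduce this corollary directly from Theorem \ref{thm:gen_set_PZ_n} via the surjection in Corollary \ref{cor:surject_Z_n_ast_to_Z_n}. Concretely, denote by
\[
\rho:\PZ_\TheStrand(\SGpct)\twoheadrightarrow\PZ_\TheStrand(\SG)
\]
the inclusion-induced epimorphism. Since any surjective group homomorphism sends a generating set to a generating set, it suffices to verify that the images $\rho(\twist{\Strand}{\Str})$, $\rho(\twistP{\NStrand}{\NPct})$ and $\rho(\twistC{\NStrand}{\nu})$ are precisely the elements with the same names in $\PZ_\TheStrand(\SG)$.

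This last identification is essentially tautological: the orbifold braid diagrams that define $\twist{\Strand}{\Str}$, $\twistP{\NStrand}{\NPct}$ and $\twistC{\NStrand}{\nu}$ (as pictured in Figures \ref{fig:a_ji} and \ref{fig:a_kc_nu_a_kr_lambda}) may be realized by $\freeprod^\TheStrand$-paths whose strands avoid the $\freeprod$-orbit $\freeprod(\{r_1,\ldots,r_\ThePct\})$ entirely, and \ref{it_simpl_paths_pw_geod}--\ref{it_simpl_paths_pw_red_fd} of Proposition \ref{prop:paths_rep_by_braids} hold for the same representatives in both orbifolds. Hence the defining representatives yield homotopy classes in $\PZ_\TheStrand(\SGpct)$ which, after applying $\rho$, give back the homotopy classes of the same diagrams in $\PZ_\TheStrand(\SG)$.

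Combining the two observations: by Theorem \ref{thm:gen_set_PZ_n} every element of $\PZ_\TheStrand(\SGpct)$ is a word in the generators $\twist{\Strand}{\Str}$, $\twistP{\NStrand}{\NPct}$, $\twistC{\NStrand}{\nu}$, and by Corollary \ref{cor:surject_Z_n_ast_to_Z_n} every element of $\PZ_\TheStrand(\SG)$ lifts along $\rho$. Applying $\rho$ to such a lift then expresses every element of $\PZ_\TheStrand(\SG)$ as a word in $\rho(\twist{\Strand}{\Str})=\twist{\Strand}{\Str}$, $\rho(\twistP{\NStrand}{\NPct})=\twistP{\NStrand}{\NPct}$ and $\rho(\twistC{\NStrand}{\nu})=\twistC{\NStrand}{\nu}$, which is the claim.

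The argument has no real obstacle since it is a mechanical transfer along a surjection; the only step requiring genuine care is the naming identification in the middle paragraph, but this is forced by the fact that the defining representatives from Section \ref{subsec:Z_n_Sigma_freeprod_braid_diagrams} are chosen to lie in $\inter{\Sigma}(\ThePct)$, hence automatically in $\inter{\Sigma}(\ThePct,\TheCone)\cup\{$cone points$\}$ with cone-point intersections handled exactly as in the proof of Proposition \ref{prop:paths_rep_by_braids}.
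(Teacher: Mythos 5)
Your argument is exactly the paper's: the corollary is deduced from Theorem \ref{thm:gen_set_PZ_n} by pushing the generating set forward along the inclusion-induced epimorphism $\PZ_\TheStrand(\SGpct)\twoheadrightarrow\PZ_\TheStrand(\SG)$ of Corollary \ref{cor:surject_Z_n_ast_to_Z_n}, with the same-named generators corresponding under this map since they are represented by the same diagrams avoiding cone points. Your extra care about the naming identification is correct and only spells out what the paper leaves implicit.
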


\section{Orbifold mapping class groups}
\label{sec:basics_mcg_orb}

An important approach to the Artin braid groups is the identification with mapping class groups of punctured disks, see, for instance, \cite[Section 9.1.3]{FarbMargalit2011}. \new{In this section,} we recall the definition of orbifold mapping class groups (with marked points) and some results about them from \cite{Flechsig2023mcg}. This sets the basis to compare orbifold braid groups and orbifold mapping class groups. 

\new{Given an orbifold $M_\G$,} we want to define its mapping class group as a group of certain homeomorphisms of $M$ modulo an equivalence relation. This generalizes the concept of mapping class groups of manifolds. If the acting group $\G$ is trivial, then the orbifold mapping class group of $M_{\{1\}}$ defined below coincides with the mapping class group of $M$. 

\new{As it is usual in the case of manifolds, we consider homeomorphisms of $M$ 
that fix the boundary 
pointwise. Moreover, the orbifold mapping class group should reflect the structure of the $\G$-action on $M$. For this reason, we restrict to the subgroup $\HomeoOrb{}{M_\G,\partial M}\leq\Homeo{}{M,\partial M}$ of \textit{$\G$-equivariant} homeomorphisms, i.e.\ for each $H\in\HomeoOrb{}{M_\G,\partial M}$, we have $H(\g(x))=\g(H(x))$ for all $\g\in\G$ and $x\in M$. An \textit{ambient isotopy} is a continuous map 
\[
I\rightarrow\HomeoOrb{}{M_\G,\partial M}. 
\]
Two $\G$-equivariant homeomorphisms $H,H'$ are \textit{ambient isotopic}, denoted by $H\sim~H'$, if there exists an ambient isotopy $H_t$ with $H_0=H$ and $H_1=H'$. Subject to the equivalence relation induced by ambient isotopies, we define the orbifold mapping class group.} 

\begin{definition}[Orbifold mapping class group]
\label{def:mcg_orb}
The group of $\G$-equivariant homeomorphisms that fix the boundary pointwise modulo ambient isotopy
\[
\MapOrb{}{M_\G}:=\HomeoOrb{}{M_\G,\partial M}/\sim
\]
is called the \textit{mapping class group} of $M_\G$. 
\end{definition}

Based on the fact that $\HomeoOrb{}{M_\G,\partial M}$ is a topological group, the mapping class group also carries the structure of a topological group. \new{For an example of an orbifold mapping class group, we refer to \cite[Example 3.4]{Flechsig2023mcg}.}

\begin{definition}[Orbifold mapping class group with marked points]
\label{def:mcg_marked_pts}
Let $M_{\G}$ be an orbifold and let us fix a set of non-singular marked points $P=\{p_1,...,p_\TheStrand\}$ in~$M$ such that $\G(p_\Str)\neq\G(p_\Strand)$ for $1\leq\Str,\Strand\leq\TheStrand,\Str\neq\Strand$. 
By $\HomeoOrb{\TheStrand}{M_\G,\partial M}$ we denote the subgroup of homeomorphisms that preserve the orbit of the marked points $\G(P)$ as a set: 
\[
\{H\in\HomeoOrb{}{M_\G,\partial M}\mid H(\G(P))=\G(P)\}. 
\]
We consider these homeomorphisms up to ambient isotopies $I\rightarrow\HomeoOrb{\TheStrand}{M_\G,\partial M}$. The corresponding equivalence relation is denoted by $\sim_\TheStrand$. 
By 
\[
\MapOrb{\TheStrand}{M_\G}:=\HomeoOrb{\TheStrand}{M_\G,\partial M}/\sim_\TheStrand 
\]
we denote the \textit{orbifold mapping class group of $M_\G$ with respect to the $\TheStrand$ marked~points}. 
\end{definition}

We stress that the orbit of marked points $\G(P)$ is a discrete set. Hence, an ambient isotopy $H_t$ through $\HomeoOrb{\TheStrand}{M_\G,\partial M}$ is constant on marked points, i.e.\ $H_t(p_\Strand)=H_0(p_\Strand)=H_1(p_\Strand)$ for each $1\leq\Strand\leq\TheStrand$ and $t\in I$. \new{For an example of an orbifold mapping class group with marked points, we refer to \cite[Example 3.6]{Flechsig2023mcg}.} 

Homeomorphisms that map each marked point inside its $\G$-orbit yield the so called \textit{pure orbifold mapping class group}: 

\begin{definition}[Pure orbifold mapping class group]
\label{def:pure_mcg}
Let $\PHomeoOrb{\TheStrand}{M_\G,\partial M}$ be the group of \textit{pure homeomorphisms} 
\[
\{H\in\HomeoOrb{\TheStrand}{M_\G,\partial M}\mid H(p_\Strand)=\g_\Strand(p_{\Strand}) \text{ with } \g_\Strand\in\G \text{ for all } 1\leq\Strand\leq\TheStrand\}. 
\] 
The subgroup of $\MapOrb{\TheStrand}{M_\G}$ induced by pure homeomorphisms is called the \textit{pure orbifold mapping class group} 
\[
\PMapOrb{\TheStrand}{M_\G}:=\PHomeoOrb{\TheStrand}{M_\G,\partial M}/\sim_\TheStrand. 
\]
\end{definition}

\new{At this point,} we recall that a homeomorphism in the pure mapping class group of a manifold fixes each of the marked points. In contrast, we only require the homeomorphisms in $\PMapOrb{\TheStrand}{M_\G}$ to preserve the orbit of each marked point but not to fix the points themselves. Further, we emphasize that we allow different group actions on different orbits of marked points, i.e.\ $H(p_\Str)=\g_\Str(p_\Str)$ and $H(p_\Strand)=\g_\Strand(p_\Strand)$ with $\g_\Str\neq\g_\Strand$ for $\Str\neq\Strand$. 

\renewcommand{\Twist}{A}
\renewcommand{\TwistP}{B}
\renewcommand{\TwistC}{C}
\renewcommand{\twist}{a}
\renewcommand{\twistP}{b}
\renewcommand{\twistC}{c}

\new{In \cite{Flechsig2023mcg}, we studied the following subgroup of the orbifold mapping class group for the orbifold $\SG$ with underlying surface $\Sigma$ punctured in $\freeprod(\{r_1,...,r_\ThePct\})$: The kernel of the homomorphism  
\[
\Forget_\TheStrand^{orb}:\MapOrb{\TheStrand}{\Sigma_\freeprod(\ThePct)}\rightarrow\MapOrb{}{\Sigma_\freeprod(\ThePct)} 
\]
that forgets the marked points.} 
 
\begin{definition}
\label{def:Map_orb(ThePct)}
Let $\MapIdOrb{\TheStrand}{\Sigma_\freeprod(\ThePct)}$ denote the kernel of $\Forget_\TheStrand^{orb}$. 
This subgroup is induced by the subgroup
\[
\HomeoIdOrb{\TheStrand}{\Sigma_\freeprod(\ThePct),\partial\Sigma(\ThePct)}:=\{H\in\HomeoOrb{\TheStrand}{\Sigma_\freeprod(\ThePct),\partial\S}\mid H\sim\id_{\Sigma(\ThePct)}\} 
\]
of $\HomeoOrb{\TheStrand}{\Sigma_\freeprod(\ThePct)}$. Moreover, let $\PMapIdOrb{\TheStrand}{\Sigma_\freeprod(\ThePct)}:=\Forget_\TheStrand^{orb}\vert_{\PMapOrb{\TheStrand}{\Sigma_\freeprod(\ThePct)}}$. This subgroup is induced by the subgroup $\PHomeoIdOrb{\TheStrand}{\Sigma_\freeprod(\ThePct),\partial\Sigma(\ThePct)}$ that contains the pure homeomorphisms of $\HomeoIdOrb{\TheStrand}{\Sigma_\freeprod(\ThePct),\partial\Sigma(\ThePct)}$. 
\end{definition}

This subgroup can be identified with an analogous subgroup in $\Map{\TheStrand}{D(\ThePct,\TheCone)}$, see \cite[Proposition 4.3]{Flechsig2023mcg} for details. In particular, this yields: 

\begin{theorem}[{Birman exact sequence for orbifold mapping class groups}, {\cite[Theorem A]{Flechsig2023mcg}}]
\label{thm:Birman_es_orb}
The following diagram is a short exact sequence: 
\\
\begin{adjustbox}{center}
\begin{tikzcd}[column sep=40pt]
1\rightarrow\pi_1\left(\Conf_\TheStrand\left(D(\ThePct,\TheCone)\right)\right)\arrow[r,"\Push_\TheStrand^{orb}"] & \MapIdOrb{\TheStrand}{\Sigma_\freeprod(\ThePct)} \arrow[r,"\Forget_\TheStrand^{orb}"] & \underbrace{\MapIdOrb{}{\Sigma_\freeprod(\ThePct)}}_{=1}\rightarrow1.  
\end{tikzcd}
\end{adjustbox}
\end{theorem}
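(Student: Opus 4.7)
The plan is to reduce this orbifold Birman sequence to the classical Birman sequence via the identification of $\MapIdOrb{\TheStrand}{\Sigma_\freeprod(\ThePct)}$ with a subgroup of the classical mapping class group $\Map{\TheStrand}{D(\ThePct,\TheCone)}$ that is announced just before the statement (\cite[Proposition 4.3]{Flechsig2023mcg}). Here $D(\ThePct,\TheCone)$ is a disk with $\ThePct+\TheCone$ punctures; it is naturally obtained from the fundamental domain $\FD(\ThePct)$ by promoting the $\TheCone$ cone points $\cp_1,\ldots,\cp_\TheCone$ on its boundary to punctures.

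The argument proceeds in three steps. First, I would establish the identification: a $\freeprod$-equivariant homeomorphism of $\Sigma(\ThePct)$ fixing $\partial\Sigma(\ThePct)$ and the cone points pointwise is determined by its restriction to $\FD(\ThePct)$, which descends to a homeomorphism of $D(\ThePct,\TheCone)$; conversely, a homeomorphism of $\FD(\ThePct)$ that is compatible with the $\freeprod$-identifications on the shared boundary arcs extends $\freeprod$-equivariantly. Under this identification, the subgroup $\MapIdOrb{\TheStrand}{\Sigma_\freeprod(\ThePct)}$ corresponds exactly to the analogous subgroup $\Map{\TheStrand}{D(\ThePct,\TheCone)}^{\id}$, namely the kernel of the classical Forget map. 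Second, I would invoke the classical Birman exact sequence for a surface with boundary:
\[
1\rightarrow\pi_1(\Conf_\TheStrand(D(\ThePct,\TheCone)))\xrightarrow{\Push_\TheStrand}\Map{\TheStrand}{D(\ThePct,\TheCone)}\xrightarrow{\Forget_\TheStrand}\Map{}{D(\ThePct,\TheCone)}\rightarrow1.
\]
Restricting to the kernel of $\Forget_\TheStrand$ yields the short exact sequence
\[
1\rightarrow\pi_1(\Conf_\TheStrand(D(\ThePct,\TheCone)))\xrightarrow{\Push_\TheStrand}\Map{\TheStrand}{D(\ThePct,\TheCone)}^{\id}\rightarrow1.
\]
Third, I would transport this back via the identification and observe that $\MapIdOrb{}{\Sigma_\freeprod(\ThePct)}=1$ by definition of the Forget kernel (there are no marked points to forget), which gives the stated sequence. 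The map $\Push_\TheStrand^{orb}$ is obtained by performing the classical push inside $\FD(\ThePct)$ and extending equivariantly to $\Sigma(\ThePct)$.

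The main obstacle is the identification of isotopies in the first step. Orbifold mapping classes are taken modulo $\freeprod$-equivariant ambient isotopies, whereas the classical side uses ordinary ambient isotopies of $D(\ThePct,\TheCone)$. To match these equivalence relations, I would need to show that restriction sends a $\freeprod$-equivariant ambient isotopy of $\Sigma(\ThePct)$ to an ordinary ambient isotopy of $D(\ThePct,\TheCone)$, and conversely that any ordinary ambient isotopy of $D(\ThePct,\TheCone)$, after a suitable normalisation near the identified boundary arcs of $\FD(\ThePct)$, extends to a $\freeprod$-equivariant isotopy of $\Sigma(\ThePct)$. This requires particular care at the cone points (which become punctures on the disk side) and along the glued boundary arcs, where the two halves of a neighbourhood are interchanged by the local cyclic $\freeprod$-stabiliser. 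Once this compatibility is settled, the classical Birman sequence imports verbatim to give the theorem.
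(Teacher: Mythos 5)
Your proposal follows essentially the same route as the paper: the theorem is quoted from the companion work, whose argument is precisely the identification of $\MapIdOrb{\TheStrand}{\Sigma_\freeprod(\ThePct)}$ with the analogous kernel subgroup of $\Map{\TheStrand}{D(\ThePct,\TheCone)}$ (the cited Proposition 4.3, which handles the equivariant-versus-ordinary isotopy matching you flag) followed by the classical Birman exact sequence for the punctured disk. So your outline is correct and coincides with the paper's approach, with the isotopy-compatibility step being exactly the part delegated to that proposition.
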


\begin{corollary}[{\cite[Theorem B]{Flechsig2023mcg}}]
\label{cor:pure_orb_mcg_ses}
The following diagram is a short exact sequence that splits:
\\
\begin{adjustbox}{center}
\begin{tikzcd}[column sep=50pt]
1\rightarrow\freegrp{\TheStrand-1+\ThePct+\TheCone} \arrow[r,"\PushPMap^{orb}"] & \PMapIdOrb{\TheStrand}{\Sigma_\freeprod(\ThePct)}\arrow[r,"\ForgetPMap^{orb}"] & \PMapIdOrb{\TheStrand-1}{\Sigma_\freeprod(\ThePct)}\rightarrow1. 
\end{tikzcd}
\end{adjustbox}
\end{corollary}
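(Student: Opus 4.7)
The plan is to derive this pure version of the Birman exact sequence by combining Theorem~\ref{thm:Birman_es_orb} with the classical Fadell--Neuwirth fibration.

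First, I would show that the isomorphism
\[
\Push_\TheStrand^{orb}:\pi_1\left(\Conf_\TheStrand(D(\ThePct,\TheCone))\right)\xrightarrow{\cong}\MapIdOrb{\TheStrand}{\Sigma_\freeprod(\ThePct)}
\]
from Theorem~\ref{thm:Birman_es_orb} restricts to an isomorphism between $\pi_1(\PConf_\TheStrand(D(\ThePct,\TheCone)))$ and the pure subgroup $\PMapIdOrb{\TheStrand}{\Sigma_\freeprod(\ThePct)}$. The pure configuration space is the total space of a regular covering $\PConf_\TheStrand(D(\ThePct,\TheCone))\to\Conf_\TheStrand(D(\ThePct,\TheCone))$ with deck group $\Sym_\TheStrand$, and a loop in the base lifts to a loop in the cover precisely when it induces the trivial permutation on its endpoints. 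Under $\Push_\TheStrand^{orb}$ this becomes the condition that the associated mapping class preserves each marked-point orbit individually, i.e.\ is pure.

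Second, I would apply the Fadell--Neuwirth fibration theorem to the forgetful map
\[
p:\PConf_\TheStrand(D(\ThePct,\TheCone))\to\PConf_{\TheStrand-1}(D(\ThePct,\TheCone))
\]
that drops the last coordinate. The fiber over $(q_1,\dots,q_{\TheStrand-1})$ is $D(\ThePct,\TheCone)\setminus\{q_1,\dots,q_{\TheStrand-1}\}$, a disk with $\ThePct+\TheCone+\TheStrand-1$ punctures, hence a $K(\freegrp{\TheStrand-1+\ThePct+\TheCone},1)$. Since the higher homotopy of the fiber vanishes, the long exact sequence of the fibration collapses to
\[
1\to\freegrp{\TheStrand-1+\ThePct+\TheCone}\to\pi_1\left(\PConf_\TheStrand(D(\ThePct,\TheCone))\right)\to\pi_1\left(\PConf_{\TheStrand-1}(D(\ThePct,\TheCone))\right)\to 1.
\]
Applying Step one in both ranks $\TheStrand$ and $\TheStrand-1$ and checking that $p$ intertwines under Push with $\ForgetPMap^{orb}$ yields the claimed sequence.

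Third, for the splitting, I would exploit the fact that $D(\ThePct,\TheCone)$ has non-empty boundary: choose a base point $q_\TheStrand$ in a collar neighborhood of $\partial D(\ThePct,\TheCone)$ that is disjoint from all punctures. The map $(q_1,\dots,q_{\TheStrand-1})\mapsto(q_1,\dots,q_{\TheStrand-1},q_\TheStrand)$ defines a continuous section of $p$, whose image under Push produces a splitting $\PMapIdOrb{\TheStrand-1}{\Sigma_\freeprod(\ThePct)}\to\PMapIdOrb{\TheStrand}{\Sigma_\freeprod(\ThePct)}$ of $\ForgetPMap^{orb}$.

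The main technical obstacle is the first step: making precise how the $\Sym_\TheStrand$-action on $\Conf_\TheStrand(D(\ThePct,\TheCone))$ on the topological side matches the permutation action on marked-point orbits on the mapping class group side. Once $\Push_\TheStrand^{orb}$ is shown to intertwine these two $\Sym_\TheStrand$-actions---which should be built into its construction in \cite{Flechsig2023mcg}---the identification of the pure subgroups becomes formal and the remainder reduces to classical fibration techniques.
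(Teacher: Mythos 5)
This corollary is not proved in the present paper at all; it is imported from \cite{Flechsig2023mcg} (Theorem B), so your argument can only be measured against the route suggested by the surrounding machinery, and in outline it matches that route. Theorem \ref{thm:Birman_es_orb} identifies $\MapIdOrb{\TheStrand}{\Sigma_\freeprod(\ThePct)}$ with $\pi_1\left(\Conf_\TheStrand(D(\ThePct,\TheCone))\right)$; pureness of a mapping class (preservation of each $\freeprod$-orbit of marked points) corresponds exactly to triviality of the permutation induced on the quotient disk, hence to $\pi_1\left(\PConf_\TheStrand(D(\ThePct,\TheCone))\right)$; and the Fadell--Neuwirth map forgetting the last coordinate has fiber $D(\ThePct,\TheCone)$ minus $\TheStrand-1$ points, whose fundamental group is free of rank $\TheStrand-1+\ThePct+\TheCone$, giving the stated kernel. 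The compatibility of $\Push_\TheStrand^{orb}$ with the $\Sym_\TheStrand$-actions and with the two forgetful maps, which you correctly single out as the main thing to verify, is indeed part of the construction in \cite{Flechsig2023mcg}.

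Two steps need repair as written. First, your splitting: the assignment $(q_1,\dots,q_{\TheStrand-1})\mapsto(q_1,\dots,q_{\TheStrand-1},q_\TheStrand)$ with a \emph{fixed} $q_\TheStrand$ does not land in $\PConf_\TheStrand(D(\ThePct,\TheCone))$, because the points $q_1,\dots,q_{\TheStrand-1}$ range over the entire disk and may collide with $q_\TheStrand$. The standard fix, which uses the non-empty boundary exactly as you intend, is to precompose with an embedding $\varphi$ of $D(\ThePct,\TheCone)$ into itself that is isotopic to the identity and clears a boundary collar, and then place $q_\TheStrand$ in that collar; since $\varphi\simeq\id$, this induces an honest group-theoretic section (alternatively one can define the section directly on generators, as is done for $\sPZ$ in Section \ref{subsec:ex_seq_pure_orb_braid}). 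Second, to collapse the long exact sequence of the fibration you need $\pi_2$ of the \emph{base} $\PConf_{\TheStrand-1}(D(\ThePct,\TheCone))$ to vanish, not merely the higher homotopy of the fiber; this does hold, but by induction along the Fadell--Neuwirth tower (all fibers are aspherical punctured disks, and $\PConf_1=D(\ThePct,\TheCone)$ is aspherical), so the justification should be phrased that way. With these corrections your argument is complete and is essentially the proof behind the cited Theorem B.
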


\begin{definition}
\label{def:semidir_prod}
A group $G$ is a \textit{semidirect product} with \textit{normal subgroup}~$N$ and \textit{quotient} $H$ if there exists a short exact sequence 
\[
1\rightarrow N\xrightarrow{\iota}G\xrightarrow{\pi}H\rightarrow1
\]
that has a 
section $s:H\rightarrow G$. In this case, we denote $G=N\rtimes H$. 
\end{definition}

In particular, Corollary \ref{cor:pure_orb_mcg_ses} shows $\PMapIdOrb{\TheStrand}{\Sigma_\freeprod(\ThePct)}$ has a semidirect product structure 
\[
\freegrp{\TheStrand-1+\ThePct+\TheCone}\rtimes\PMapIdOrb{\TheStrand-1}{\Sigma_\freeprod(\ThePct)}. 
\]

In the following, presentations of groups will be an important tool for us. In particular, presentations allow us to define group homomorphisms by assignments defined on generating sets. 

\begin{definition}
Let $G$ be a group with presentation 
\[
\langle X\mid R\rangle=\langle x_1,...,x_k\mid r_1=s_1,...,r_l=s_l\rangle 
\]
and $H$ a group generated by a set of elements $\{y_1,...,y_p\}$ with $p\geq k$. Moreover, let us assume that the words $r_j$ and $s_j$ are given by $x_{j_1}^{\varepsilon_1}...x_{j_q}^{\varepsilon_q}$ and $x_{\tilde{j}_1}^{\delta_1}...x_{\tilde{j}_r}^{\delta_r}$, respectively. Given \textit{assignments} $\phi:x_i\mapsto y_i$ for $1\leq i\leq k$, we apply them letterwise to words mapping $x_i^{-1}$ to $y_i^{-1}$. We say that the assignments $\phi$ \textit{preserve the relations in $R$} if the relation $y_{j_1}^{\varepsilon_1}...y_{j_q}^{\varepsilon_q}=y_{\tilde{j}_1}^{\delta_1}...y_{\tilde{j}_r}^{\delta_r}$ is valid in $H$ for each $1\leq j\leq l$. 
\end{definition}

\begin{theorem}[{von Dyck}, {\cite[p.\ 346]{Rotman2012}}]
\label{thm:von_Dyck}
Let $G$ be a group with presentation $\langle X\mid R\rangle$ as above and $H$ a group generated by a set $\{y_1,...,y_p\}$ with $p\geq k$. If the assignments 
\[
\phi:x_i\mapsto y_i 
\]
preserve the relations in $R$, then these assignments induce a homomorphism 
\[
\phi:G\rightarrow H. 
\]
\end{theorem}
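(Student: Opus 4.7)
The plan is to invoke the universal property of the free group on $X=\{x_1,\dots,x_k\}$ and then descend to the quotient by the normal closure of the relators. First I would construct, via the universal property of the free group $F(X)$, the unique group homomorphism $\tilde{\phi}:F(X)\to H$ sending each free generator $x_i$ to the element $y_i$. This requires no hypothesis beyond having a set-theoretic assignment from the free basis into $H$.

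Next, I would identify $G=\langle X\mid R\rangle$ with the quotient $F(X)/N$, where $N\trianglelefteq F(X)$ is the normal closure of the set $\{r_j s_j^{-1}\mid 1\leq j\leq l\}$ obtained by rewriting each relation $r_j=s_j$ as $r_j s_j^{-1}=1$. To factor $\tilde{\phi}$ through $G$, it suffices to show $N\subseteq\ker(\tilde{\phi})$, and since $\ker(\tilde{\phi})$ is a normal subgroup of $F(X)$, it is enough to verify that each relator $r_j s_j^{-1}$ lies in $\ker(\tilde{\phi})$. Writing $r_j=x_{j_1}^{\varepsilon_1}\cdots x_{j_q}^{\varepsilon_q}$ and $s_j=x_{\tilde{j}_1}^{\delta_1}\cdots x_{\tilde{j}_r}^{\delta_r}$, I would compute letterwise
\[
\tilde{\phi}(r_j s_j^{-1})=y_{j_1}^{\varepsilon_1}\cdots y_{j_q}^{\varepsilon_q}\bigl(y_{\tilde{j}_1}^{\delta_1}\cdots y_{\tilde{j}_r}^{\delta_r}\bigr)^{-1},
\]
which is trivial in $H$ precisely because the assignments $\phi$ are assumed to preserve the relations in $R$.

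Finally, by the universal property of quotient groups, $\tilde{\phi}$ descends to a well-defined group homomorphism $\phi:G=F(X)/N\to H$ with $\phi(x_i)=y_i$, as required. I do not anticipate a genuine obstacle here, since the entire argument is a direct application of the universal properties of free groups and quotient groups; the only bookkeeping step is to translate the stated hypothesis (that relations in $R$ hold among the $y_i$) into the statement that the relators lie in the kernel of $\tilde{\phi}$, which is immediate from the letterwise definition of $\tilde{\phi}$.
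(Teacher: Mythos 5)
Your argument is correct and is the standard proof of von Dyck's theorem: lift the assignment to the free group $F(X)$, check that the normal closure of the relators lies in the kernel (which is exactly the hypothesis that the relations are preserved), and descend to the quotient $G=F(X)/N$. The paper itself gives no proof, citing Rotman instead, and your argument coincides with that standard textbook treatment.
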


\begin{lemma}[{\cite[Lemma 5.17]{Flechsig2023}}]
\label{lem:semidir_prod_pres}
Let $N$ and $H$ be groups given by presentations $N=\langle X\mid R\rangle$ and $H=\langle Y\mid S\rangle$. Then the following are equivalent: 
\begin{enumerate}
\item \label{lem:semidir_prod_pres_it1}
$G$ is a semidirect product with normal subgroup $N$ and quotient $H$. 
\item \label{lem:semidir_prod_pres_it2}
$G$ has a presentation 
\[
G=\langle X,Y\mid R,S,y^{\pm1}xy^{\mp1}=\phi_{y^{\pm1}}(x) \text{ for all } x\in X, y\in Y\rangle
\]
such that $\phi_{y^{\pm1}}(x)$ is a word in the alphabet~$X$ for all $x\in X$ and $y\in Y$. Moreover, for each $y\in Y$, the assignments 
\begin{equation}
\label{lem:semidir_prod_it2_cond_auto}
x\mapsto\phi_y(x)
\end{equation}
induce an automorphism $\phi_y\in\Aut(N)$ and the assignments 
\begin{equation}
\label{lem:semidir_prod_it2_cond_homo}
y\mapsto\phi_y 
\end{equation}
induce a homomorphism $H\rightarrow\Aut(N)$. 
\end{enumerate}
\end{lemma}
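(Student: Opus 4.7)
My plan is to prove the two implications separately, with the bulk of the work going into (2) $\Rightarrow$ (1). For (1) $\Rightarrow$ (2), suppose $G$ fits in a split short exact sequence $1 \to N \xrightarrow{\iota} G \xrightarrow{\pi} H \to 1$ with section $s \colon H \to G$. Identifying $H$ with $s(H) \leq G$ and $N$ with $\iota(N) \leq G$, the set $X \cup Y$ generates $G$. Since $N$ is normal in $G$, each element $y^{\pm 1} x y^{\mp 1}$ lies in $N$ and hence can be written as a word $\phi_{y^{\pm 1}}(x)$ in $X$; the induced maps $\phi_y \colon N \to N$ are automorphisms (being conjugations) and $y \mapsto \phi_y$ defines a homomorphism $H \to \Aut(N)$. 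To verify that the listed presentation of $G$ is correct, I would check the universal property: given any group $K$ and an assignment $X \cup Y \to K$ preserving $R$, $S$, and the conjugation relations, the $R$-preservation gives a homomorphism $N \to K$, the $S$-preservation gives a homomorphism $H \to K$, and the conjugation relations force compatibility, so the universal property of the semidirect product yields a homomorphism $G \to K$.

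For (2) $\Rightarrow$ (1), I would construct three maps via von Dyck's theorem. The projection $\pi \colon G \to H$ is induced by $X \mapsto 1$, $Y \mapsto Y$: the relations in $R$ become trivial, those in $S$ hold in $H$, and each conjugation relation maps to $1 = 1$ because $\phi_y(x)$ is a word in $X$. The section $s \colon H \to G$ is induced by $Y \mapsto Y$, well-defined since $S$ is among the defining relations of $G$, and satisfies $\pi \circ s = \id_H$ by construction. The map $\iota \colon N \to G$ is induced by $X \mapsto X$, well-defined since $R$ is among the defining relations of $G$.

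The main hurdle is establishing exactness, namely that $\iota$ is injective and that $\ker(\pi) = \iota(N)$. For the equality of kernels, I would use the conjugation relations to push all $Y$-letters to the right in any word: iteratively applying $y x = \phi_y(x) \, y$ and $y^{-1} x = \phi_{y^{-1}}(x) \, y^{-1}$ rewrites any element of $G$ as $\iota(n) \cdot s(h)$ for some $n \in N$ and $h \in H$. Applying $\pi$ returns $h$, so elements of $\ker(\pi)$ force $h = 1_H$ and therefore lie in $\iota(N)$. For injectivity of $\iota$, which is the truly delicate point, I would form the abstract semidirect product $N \rtimes_\phi H$ using the $\Aut(N)$-valued homomorphism supplied by (2); the assignments $x \mapsto (x, 1)$ and $y \mapsto (1, y)$ preserve all three classes of relations, since the conjugation relations are precisely $\phi$'s defining identity inside the semidirect product. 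By von Dyck's theorem this induces a homomorphism $\Psi \colon G \to N \rtimes_\phi H$, and $\Psi \circ \iota$ agrees on the generators $X$ with the canonical inclusion $N \hookrightarrow N \rtimes_\phi H$; since the latter is injective, so is $\iota$. Simultaneously, $\Psi$ is the isomorphism realizing $G$ as $N \rtimes H$.
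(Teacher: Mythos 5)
Your proposal is correct, and it is the standard proof of this statement. Note that the paper itself does not prove Lemma \ref{lem:semidir_prod_pres}; it is quoted from the author's thesis (Lemma 5.17 there), so there is no in-paper argument to compare against. Your route --- von Dyck's theorem in both directions, the rewriting of any word into the normal form $\iota(n)\,s(h)$ by pushing $Y$-letters to the right to identify $\ker(\piMap)$ with $\iota(N)$, and the homomorphism $\Psi\colon G\to N\rtimes_\phi H$ onto the abstract semidirect product to obtain injectivity of $\iota$ --- is exactly the expected argument, and your final observation that exactness plus the section $s$ already yields (1) (so that bijectivity of $\Psi$ is a bonus rather than a necessity) is sound. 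Two points deserve an explicit line rather than a gloss: first, in (1)$\Rightarrow$(2) the compatibility $\beta(h)\alpha(n)\beta(h)^{-1}=\alpha(\phi_h(n))$ must be promoted from generators to all $h\in H$, $n\in N$ (a routine ``the set where it holds is a subgroup containing the generators'' argument); second, in (2)$\Rightarrow$(1), when checking that the relations $y^{-1}xy=\phi_{y^{-1}}(x)$ are preserved by $\Psi$, you need that the endomorphism of $N$ induced by the word $\phi_{y^{-1}}(x)$ is the inverse of $\phi_y$, which is exactly what the hypothesis that $y\mapsto\phi_y$ induces a homomorphism $H\rightarrow\Aut(N)$ provides --- this is also how the paper uses the lemma in practice, cf.\ Step \ref{rem:semidir_prod_pres_step2} of Remark \ref{rem:semidir_prod_pres}, where $\phi_{y^{-1}}\circ\phi_y=\id$ is derived from that condition.
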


\begin{remark}
\label{rem:semidir_prod_pres}
Lemma \ref{lem:semidir_prod_pres} will be essential in the proof of Theorem \ref{thm-intro:kernel_ex_seq}. 
%
%
There we will apply it given a group $G$ with a presentation to deduce that $G$ is a semidirect product. In this case, we want to show that the given presentation satisfies the conditions from Lemma \ref{lem:semidir_prod_pres_it2}. 

The base to prove that is to divide the generating set into two disjoint subsets $X$ and $Y$ such that $X$ generates the 
normal subgroup and $Y$ generates the 
quotient. 

Further, we divide the relations into three disjoint subsets $R, S$ and $C$ such that~$R$ contains all relations in letters from $X$, $S$ contains all relations in letters from $Y$ and $C$ contains all the remaining relations. In particular, the relations in $C$ should be given in the form $y^{\pm1}xy^{\mp1}=\phi_{y^{\pm1}}(x)$ for all $x\in X$ and $y\in Y$. To deduce a semidirect product structure, it remains to check that the relations from $C$ satisfy the conditions on the assignments \eqref{lem:semidir_prod_it2_cond_auto} and \eqref{lem:semidir_prod_it2_cond_homo}. It is reasonable to check these conditions in the following order: 


\begin{step}
\label{rem:semidir_prod_pres_step1}
Using Theorem \ref{thm:von_Dyck}, the first step will be to check that the assignments $\phi:y\mapsto\phi_y$ from \eqref{lem:semidir_prod_it2_cond_homo} preserve the relations from $S$. If $S$ contains a relation $y_1^{\varepsilon_1}...y_q^{\varepsilon_q}=\tilde{y}_1^{\delta_1}...\tilde{y}_r^{\delta_r}$, this requires that the assignments
\begin{align*}
x & \mapsto\phi_{y_1^{\varepsilon_1}}\circ...\circ\phi_{y_q^{\varepsilon_q}}(x) \text{ and }
\\
x & \mapsto\phi_{\tilde{y}_1^{\delta_1}}\circ...\circ\phi_{\tilde{y}_r^{\delta_r}}(x)
\end{align*}
coincide on each letter $x\in X$ (up to relations in $R$). 

In particular, we may check if $\phi$ induces a homomorphism independently of the fact if $x\mapsto\phi_y(x)$ induces an automorphism of the group presented by $\langle X\mid R\rangle$. 
\end{step}

\begin{step}
\label{rem:semidir_prod_pres_step2}
In the second step, we check that the assignments $\phi_y:x\mapsto\phi_y(x)$ induce an automorphism of the group presented by $\langle X\mid R\rangle$ for all $y\in Y$. 

To apply Theorem \ref{thm:von_Dyck}, we check if for all $y\in Y$ the assignments $\phi_y$ preserve the relations from $R$. 
If this is the case, the assignments $\phi_y$ induce an endomorphism of the group presented by $\langle X\mid R\rangle$. By the first step, we further have 
\[
\phi_{y^{-1}}\circ\phi_y=\id_{\langle X\mid R\rangle}=\phi_y\circ\phi_{y^{-1}}, 
\]
i.e.\ the endomorphism induced by $x\mapsto\phi_y(x)$ is bijective and therefore an automorphism of the group presented by $\langle X\mid R\rangle$. 
\end{step}
\end{remark}

\new{Corollary \ref{cor:pure_orb_mcg_ses} and Lemma \ref{lem:semidir_prod_pres} together yield a presentation of $\PMapIdOrb{\TheStrand}{\Sigma_\freeprod(\ThePct)}$. For a description of this presentation, we need to establish a set of homeomorphisms that induces a generating set.} \enew{With respect to the embedding of $\FD(\ThePct)$ described in Figure \ref{fig:embedding_fund_domain}, let $D_{\Str,\Strand}\subseteq\FD(\ThePct)$ for every $1\leq\Str<\Strand\leq\TheStrand$ be the disk
\begin{align*}
& \left(B_{\frac{1}{4}}(p_\Str)\cup B_{\frac{1}{4}}(p_\Strand)\right)\cap\{x\in\CC\mid\textup{Im}(x)\geq0\}
\\
\cup & A_{\frac{\Strand-\Str}{2}-\frac{1}{4},\frac{\Strand-\Str}{2}+\frac{1}{4}}\left(\tfrac{p_\Str+p_\Strand}{2}\right)\cap\{x\in\CC\mid\textup{Im}(x)\leq0\}
\end{align*}
where $A_{r,R}(x)$ denotes the annulus with inner radius $r$ and outer radius $R$ centered around $x$. 
The disk $D_{\Str,\Strand}$ precisely contains the marked points $p_\Str$ and $p_\Strand$. See Figure~\ref{fig:disks_with_marked_pts} (left) for a picture of $D_{\Str,\Strand}$. 

Moreover, for every $1\leq\NStrand\leq\TheStrand$ and $1\leq\NPct\leq\ThePct$, let $D_{r_\NPct,\NStrand}\subseteq\FD(\ThePct)$ be the disk 
\begin{align*}
& \left(B_{\frac{1}{4}}(r_\NPct)\cup B_{\frac{1}{4}}(p_\NStrand)\right)\cap\{x\in\CC\mid\textup{Im}(x)\geq0\}
\\
\cup & A_{\frac{\NStrand+\NPct}{2}-\frac{1}{4},\frac{\NStrand+\NPct}{2}+\frac{1}{4}}\left(\tfrac{r_\NPct+p_\NStrand}{2}\right)\cap\{x\in\CC\mid\textup{Im}(x)\leq0\}. 
\end{align*}
The disk $D_{r_\NPct,\NStrand}$ precisely contains the marked point $r_\NPct$ and $p_\NStrand$. See Figure \ref{fig:disks_with_marked_pts} (right) for a picture of $D_{r_\NPct,\NStrand}$. 
\begin{figure}[H]
\import{Grafiken/orb_mcg_marked_pts/}{disks_with_marked_pts_2_s.pdf_tex}
\caption{The disks $D_{\Str,\Strand}$ (left) and $D_{r_\NPct,\NStrand}$ (right).}
\label{fig:disks_with_marked_pts}
\end{figure}
}
The homeomorphisms $\Twist_{\Strand\Str}$ and $\TwistP_{\NStrand\NPct}$ perform the twists pictured in Figure \ref{fig:homeos_twist_marked_pts} on each $\freeprod$-translate of $D_{\Str,\Strand}$ and $D_{r_\NPct,\NStrand}$. 
\begin{figure}[H]
\import{Grafiken/orb_mcg_marked_pts/}{homeos_twist_marked_pts_clockwise.pdf_tex}
\caption{The twists induced by $\Twist_{\Strand\Str}$ (left) and $\TwistP_{\NStrand\NPct}$ (right).}
\label{fig:homeos_twist_marked_pts}
\end{figure}
\enew{Moreover, for every $1\leq\NStrand\leq\TheStrand$ and $1\leq\nu\leq\TheCone$, let $\tilde{D}_{\cp_\nu,\NStrand}$ be the disk 
\begin{align*}
& B_{\frac{1}{4}}(p_\NStrand)\cap\{x\in\CC\mid\textup{Im}(x)\geq0\}
\\
\cup & A_{\frac{\NStrand+\ThePct+\nu}{2}-\frac{1}{4},\frac{\NStrand+\ThePct+\nu}{2}+\frac{1}{4}}\left(\tfrac{-\ThePct-\nu+p_\NStrand}{2}\right)\cap\{x\in\CC\mid\textup{Im}(x)\leq0\}
\\
\cup & \left\lbrace x\in\FD\mid\textup{Im}(x)\geq0, \text{Re}(x)\in\left[-\ThePct-\nu-\tfrac{1}{4},-\ThePct-\nu+\tfrac{1}{4}\right]\right\rbrace. 
\end{align*}
Then $D_{\cp_\nu,\NStrand}:=\cyc{\TheOrder_\nu}\cdot\tilde{D}_{\cp_\nu,\NStrand}$ is a $\cyc{\TheOrder_\nu}$-invariant disk that contains the cone point $\cp_\nu$ and the adjacent marked points $\ZZ_{\TheOrder_\nu}(p_\NStrand)$. See Figure \ref{fig:disk_with_marked_pts_cp} for a picture of $\tilde{D}_{\cp_\nu,\NStrand}$ (left) and an example of the disk $D_{\cp_\nu,\NStrand}\subseteq\Sigma(\ThePct)$ (right). 
\begin{figure}[H]
\import{Grafiken/orb_mcg_marked_pts/}{disk_with_marked_pts_cp_2.pdf_tex}
\caption{The disk $\tilde{D}_{\cp_\nu,\NStrand}\subseteq\FD$ (left) and $D_{\cp_\nu,\NStrand}\subseteq\Sigma(\ThePct)$ for a cone point of order three (right).}
\label{fig:disk_with_marked_pts_cp}
\end{figure}
}

\new{Let $\TwistC_{\NStrand\nu}$ be the homeomorphism that performs a $\frac{2\pi}{\TheOrder_\nu}$-twist as in Figure \ref{fig:homeo_twist_marked_pts_cp_clockwise} 
on each $\freeprod$-translate of $D_{\cp_\nu,\NStrand}$.}
\new{For the homeomorphisms $\Twist_{\Strand\Str}, \TwistP_{\NStrand\NPct}$ and $\TwistC_{\NStrand\nu}$, we will use their names as acronyms of the corresponding mapping classes. These elements satisfy the following relations: 

\begin{lemma}[{\cite[Lemma 4.18]{Flechsig2023mcg}}]
\label{lem:PMap_gens_sat_rels}
Let $1\leq\Strr,\Str,\Strand,\NStrand,\NNStrand<\TheStrand$ with $\Strr<\Str<\Strand<\NStrand<\NNStrand$, $1\leq\Pc,\NPct\leq\ThePct$ with $\Pc<\NPct$ and $1\leq\mu,\nu\leq\TheCone$ with $\mu<\nu$. Then the following relations hold: 
\begin{enumerate}
\item 
\label{lem:PMap_gens_sat_rels_it1}
\begin{enumerate}
\item \label{lem:PMap_gens_sat_rels_it1a}
$\Twist_{\NNStrand\Strand}\Twist_{\TheStrand\Strand}\Twist_{\NNStrand\Strand}^{-1}= \Twist_{\TheStrand\Strand}^{-1}\Twist_{\TheStrand\NNStrand}^{-1}\Twist_{\TheStrand\Strand}\Twist_{\TheStrand\NNStrand}\Twist_{\TheStrand\Strand}\; \text{ and } \; \hypertarget{lem:PMap_gens_sat_rels_it1a_2}{\textup{a')}}\;\Twist_{\NNStrand\Strand}^{-1}\Twist_{\TheStrand\Strand}\Twist_{\NNStrand\Strand}=\Twist_{\TheStrand\NNStrand}\Twist_{\TheStrand\Strand}\Twist_{\TheStrand\NNStrand}^{-1}$, 
\item \label{lem:PMap_gens_sat_rels_it1b}
$\Twist_{\Strand\Str}\Twist_{\TheStrand\Strand}\Twist_{\Strand\Str}^{-1}=\Twist_{\TheStrand\Str}^{-1}\Twist_{\TheStrand\Strand}\Twist_{\TheStrand\Str}\; \text{ and } \; \hypertarget{lem:PMap_gens_sat_rels_it1b_2}{\textup{b')}}\;\Twist_{\Strand\Str}^{-1}\Twist_{\TheStrand\Strand}\Twist_{\Strand\Str}=\Twist_{\TheStrand\Strand}\Twist_{\TheStrand\Str}\Twist_{\TheStrand\Strand}\Twist_{\TheStrand\Str}^{-1}\Twist_{\TheStrand\Strand}^{-1}$, 
\item \label{lem:PMap_gens_sat_rels_it1c}
$\TwistP_{\Strand\NPct}\Twist_{\TheStrand\Strand}\TwistP_{\Strand\NPct}^{-1}=\TwistP_{\TheStrand\NPct}^{-1}\Twist_{\TheStrand\Strand}\TwistP_{\TheStrand\NPct} \; \text{ and } \;\hypertarget{lem:PMap_gens_sat_rels_it1c_2}{\textup{c')}}\;\TwistP_{\Strand\NPct}^{-1}\Twist_{\TheStrand\Strand}\TwistP_{\Strand\NPct}=\Twist_{\TheStrand\Strand}\TwistP_{\TheStrand\NPct}\Twist_{\TheStrand\Strand}\TwistP_{\TheStrand\NPct}^{-1}\Twist_{\TheStrand\Strand}^{-1}$, 
\item \label{lem:PMap_gens_sat_rels_it1d}
$\TwistC_{\Strand\nu}\Twist_{\TheStrand\Strand}\TwistC_{\Strand\nu}^{-1}=\TwistC_{\TheStrand\nu}^{-1}\Twist_{\TheStrand\Strand}\TwistC_{\TheStrand\nu}\; \text{ and } \; \hypertarget{lem:PMap_gens_sat_rels_it1d_2}{\textup{d')}}\;\TwistC_{\Strand\nu}^{-1}\Twist_{\TheStrand\Strand}\TwistC_{\Strand\nu}=\Twist_{\TheStrand\Strand}\TwistC_{\TheStrand\nu}\Twist_{\TheStrand\Strand}\TwistC_{\TheStrand\nu}^{-1}\Twist_{\TheStrand\Strand}^{-1}$, 
\item \label{lem:PMap_gens_sat_rels_it1e}
$\TwistP_{\Strand\NPct}\TwistP_{\TheStrand\NPct}\TwistP_{\Strand\NPct}^{-1}=\TwistP_{\TheStrand\NPct}^{-1}\Twist_{\TheStrand\Strand}^{-1}\TwistP_{\TheStrand\NPct}\Twist_{\TheStrand\Strand}\TwistP_{\TheStrand\NPct}\; \text{ and } \; \hypertarget{lem:PMap_gens_sat_rels_it1e_2}{\textup{e')}}\;\TwistP_{\Strand\NPct}^{-1}\TwistP_{\TheStrand\NPct}\TwistP_{\Strand\NPct}=\Twist_{\TheStrand\Strand}\TwistP_{\TheStrand\NPct}\Twist_{\TheStrand\Strand}^{-1}$, 
\item \label{lem:PMap_gens_sat_rels_it1f}
$\TwistC_{\Strand\nu}\TwistC_{\TheStrand\nu}\TwistC_{\Strand\nu}^{-1}=\TwistC_{\TheStrand\nu}^{-1}\Twist_{\TheStrand\Strand}^{-1}\TwistC_{\TheStrand\nu}\Twist_{\TheStrand\Strand}\TwistC_{\TheStrand\nu}\; \text{ and } \; \hypertarget{lem:PMap_gens_sat_rels_it1f_2}{\textup{f')}}\;\TwistC_{\Strand\nu}^{-1}\TwistC_{\TheStrand\nu}\TwistC_{\Strand\nu}=\Twist_{\TheStrand\Strand}\TwistC_{\TheStrand\nu}\Twist_{\TheStrand\Strand}^{-1}$,
\end{enumerate} 
\item 
\label{lem:PMap_gens_sat_rels_it2}
\begin{enumerate}
\item \label{lem:PMap_gens_sat_rels_it2a}
$[\Twist_{\Str\Strr},\Twist_{\TheStrand\Strand}]=1$, \; $[\TwistP_{\Str\NPct},\Twist_{\TheStrand\Strand}]=1 \; \text{ and } \; [\TwistC_{\Str\nu},\Twist_{\TheStrand\Strand}]=1$, 
\item \label{lem:PMap_gens_sat_rels_it2b}
$[\Twist_{\NNStrand\NStrand},\Twist_{\TheStrand\Strand}]=1$, \; $[\Twist_{\Strand\Str},\TwistP_{\TheStrand\NPct}]=1$, \;
$[\TwistP_{\Strand\Pc},\TwistP_{\TheStrand\NPct}]=1$, 
\\
$[\Twist_{\Strand\Str},\TwistC_{\TheStrand\nu}]=1$, \; 
$[\TwistP_{\Strand\NPct},\TwistC_{\TheStrand\nu}]=1 \; \text{ and } \; [\TwistC_{\Strand\mu},\TwistC_{\TheStrand\nu}]=1$, 
\item \label{lem:PMap_gens_sat_rels_it2c}
$[\Twist_{\TheStrand\NNStrand}\Twist_{\TheStrand\Strand}\Twist_{\TheStrand\NNStrand}^{-1},\Twist_{\NNStrand\Str}]=1$, \; $[\Twist_{\TheStrand\NNStrand}\Twist_{\TheStrand\Strand}\Twist_{\TheStrand\NNStrand}^{-1},\TwistP_{\NNStrand\NPct}]=1 \;$ \text{ and } 
\\ 
$[\Twist_{\TheStrand\NNStrand}\Twist_{\TheStrand\Strand}\Twist_{\TheStrand\NNStrand}^{-1},\TwistC_{\NNStrand\nu}]=1$, 
\item \label{lem:PMap_gens_sat_rels_it2d} 
$[\Twist_{\TheStrand\Strand}\TwistP_{\TheStrand\Pc}\Twist_{\TheStrand\Strand}^{-1},\TwistP_{\Strand\NPct}]=1 \; \text{ and } \; [\Twist_{\TheStrand\Strand}\TwistP_{\TheStrand\Pc}\Twist_{\TheStrand\Strand}^{-1},\TwistC_{\Strand\nu}]=1$, 
\item \label{lem:PMap_gens_sat_rels_it2e}
$[\Twist_{\TheStrand\Strand}\TwistC_{\TheStrand\mu}\Twist_{\TheStrand\Strand}^{-1},\TwistC_{\Strand\nu}]=1$. 
\end{enumerate}
\end{enumerate}
\newpage
In particular, these relations imply: 
\begin{enumerate}[resume]
\item \label{lem:PMap_gens_sat_rels_it3}
\begin{enumerate}
\item \label{lem:PMap_gens_sat_rels_it3a}
$\Twist_{\NNStrand\Str}\Twist_{\TheStrand\Strand}\Twist_{\NNStrand\Str}^{-1} = \Twist_{\TheStrand\Str}^{-1}\Twist_{\TheStrand\NNStrand}^{-1}\Twist_{\TheStrand\Str}\Twist_{\TheStrand\NNStrand}\Twist_{\TheStrand\Strand}\Twist_{\TheStrand\NNStrand}^{-1}\Twist_{\TheStrand\Str}^{-1}\Twist_{\TheStrand\NNStrand}\Twist_{\TheStrand\Str}$, 
\item[\textup{a')}] \label{lem:PMap_gens_sat_rels_it3a_2}
$\Twist_{\NNStrand\Str}^{-1}\Twist_{\TheStrand\Strand}\Twist_{\NNStrand\Str} = \Twist_{\TheStrand\NNStrand}\Twist_{\TheStrand\Str}\Twist_{\TheStrand\NNStrand}^{-1}\Twist_{\TheStrand\Str}^{-1}\Twist_{\TheStrand\Strand}\Twist_{\TheStrand\Str}\Twist_{\TheStrand\NNStrand}\Twist_{\TheStrand\Str}^{-1}\Twist_{\TheStrand\NNStrand}^{-1}$, 
\item \label{lem:PMap_gens_sat_rels_it3b}
$\TwistP_{\NNStrand\NPct}\Twist_{\TheStrand\Strand}\TwistP_{\NNStrand\NPct}^{-1} = \TwistP_{\TheStrand\NPct}^{-1}\Twist_{\TheStrand\NNStrand}^{-1}\TwistP_{\TheStrand\NPct}\Twist_{\TheStrand\NNStrand}\Twist_{\TheStrand\Strand}\Twist_{\TheStrand\NNStrand}^{-1}\TwistP_{\TheStrand\NPct}^{-1}\Twist_{\TheStrand\NNStrand}\TwistP_{\TheStrand\NPct}$, 
\item[\textup{b')}] \label{lem:PMap_gens_sat_rels_it3b_2}
$\TwistP_{\NNStrand\NPct}^{-1}\Twist_{\TheStrand\Strand}\TwistP_{\NNStrand\NPct} = \Twist_{\TheStrand\NNStrand}\TwistP_{\TheStrand\NPct}\Twist_{\TheStrand\NNStrand}^{-1}\TwistP_{\TheStrand\NPct}^{-1}\Twist_{\TheStrand\Strand}\TwistP_{\TheStrand\NPct}\Twist_{\TheStrand\NNStrand}\TwistP_{\TheStrand\NPct}^{-1}\Twist_{\TheStrand\NNStrand}^{-1}$, 
\item \label{lem:PMap_gens_sat_rels_it3c}
$\TwistC_{\NNStrand\nu}\Twist_{\TheStrand\Strand}\TwistC_{\NNStrand\nu}^{-1} = \TwistC_{\TheStrand\nu}^{-1}\Twist_{\TheStrand\NNStrand}^{-1}\TwistC_{\TheStrand\nu}\Twist_{\TheStrand\NNStrand}\Twist_{\TheStrand\Strand}\Twist_{\TheStrand\NNStrand}^{-1}\TwistC_{\TheStrand\nu}^{-1}\Twist_{\TheStrand\NNStrand}\TwistC_{\TheStrand\nu}$, 
\item[\textup{c')}] \label{lem:PMap_gens_sat_rels_it3c_2}
$\TwistC_{\NNStrand\nu}^{-1}\Twist_{\TheStrand\Strand}\TwistC_{\NNStrand\nu} = \Twist_{\TheStrand\NNStrand}\TwistC_{\TheStrand\nu}\Twist_{\TheStrand\NNStrand}^{-1}\TwistC_{\TheStrand\nu}^{-1}\Twist_{\TheStrand\Strand}\TwistC_{\TheStrand\nu}\Twist_{\TheStrand\NNStrand}\TwistC_{\TheStrand\nu}^{-1}\Twist_{\TheStrand\NNStrand}^{-1}$, 
\item \label{lem:PMap_gens_sat_rels_it3d}
$\TwistP_{\Strand\NPct}\TwistP_{\TheStrand\Pc}\TwistP_{\Strand\NPct}^{-1} = \TwistP_{\TheStrand\NPct}^{-1}\Twist_{\TheStrand\Strand}^{-1}\TwistP_{\TheStrand\NPct}\Twist_{\TheStrand\Strand}\TwistP_{\TheStrand\Pc}\Twist_{\TheStrand\Strand}^{-1}\TwistP_{\TheStrand\NPct}^{-1}\Twist_{\TheStrand\Strand}\TwistP_{\TheStrand\NPct}$, 
\item[\textup{d')}] \label{lem:PMap_gens_sat_rels_it3d_2}
$\TwistP_{\Strand\NPct}^{-1}\TwistP_{\TheStrand\Pc}\TwistP_{\Strand\NPct} = \Twist_{\TheStrand\Strand}\TwistP_{\TheStrand\NPct}\Twist_{\TheStrand\Strand}^{-1}\TwistP_{\TheStrand\NPct}^{-1}\TwistP_{\TheStrand\Pc}\TwistP_{\TheStrand\NPct}\Twist_{\TheStrand\Strand}\TwistP_{\TheStrand\NPct}^{-1}\Twist_{\TheStrand\Strand}^{-1}$, 
\item \label{lem:PMap_gens_sat_rels_it3e}
$\TwistC_{\Strand\nu}\TwistP_{\TheStrand\NPct}\TwistC_{\Strand\nu}^{-1} = \TwistC_{\TheStrand\nu}^{-1}\Twist_{\TheStrand\Strand}^{-1}\TwistC_{\TheStrand\nu}\Twist_{\TheStrand\Strand}\TwistP_{\TheStrand\NPct}\Twist_{\TheStrand\Strand}^{-1}\TwistC_{\TheStrand\nu}^{-1}\Twist_{\TheStrand\Strand}\TwistC_{\TheStrand\nu}$, 
\item[\textup{e')}] \label{lem:PMap_gens_sat_rels_it3e_2}
$\TwistC_{\Strand\nu}^{-1}\TwistP_{\TheStrand\NPct}\TwistC_{\Strand\nu} = \Twist_{\TheStrand\Strand}\TwistC_{\TheStrand\nu}\Twist_{\TheStrand\Strand}^{-1}\TwistC_{\TheStrand\nu}^{-1}\TwistP_{\TheStrand\NPct}\TwistC_{\TheStrand\nu}\Twist_{\TheStrand\Strand}\TwistC_{\TheStrand\nu}^{-1}\Twist_{\TheStrand\Strand}^{-1}$,  
\item \label{lem:PMap_gens_sat_rels_it3f}
$\TwistC_{\Strand\nu}\TwistC_{\TheStrand\mu}\TwistC_{\Strand\nu}^{-1} = \TwistC_{\TheStrand\nu}^{-1}\Twist_{\TheStrand\Strand}^{-1}\TwistC_{\TheStrand\nu}\Twist_{\TheStrand\Strand}\TwistC_{\TheStrand\mu}\Twist_{\TheStrand\Strand}^{-1}\TwistC_{\TheStrand\nu}^{-1}\Twist_{\TheStrand\Strand}\TwistC_{\TheStrand\nu}$, 
\item[\textup{f')}] \label{lem:PMap_gens_sat_rels_it3f_2}
$\TwistC_{\Strand\nu}^{-1}\TwistC_{\TheStrand\mu}\TwistC_{\Strand\nu} = \Twist_{\TheStrand\Strand}\TwistC_{\TheStrand\nu}\Twist_{\TheStrand\Strand}^{-1}\TwistC_{\TheStrand\nu}^{-1}\TwistC_{\TheStrand\mu}\TwistC_{\TheStrand\nu}\Twist_{\TheStrand\Strand}\TwistC_{\TheStrand\nu}^{-1}\Twist_{\TheStrand\Strand}^{-1}$. 
\end{enumerate}
\end{enumerate}
\end{lemma}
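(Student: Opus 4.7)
The strategy is purely geometric: each generator $\Twist_{\Strand\Str}$, $\TwistP_{\NStrand\NPct}$ and $\TwistC_{\NStrand\nu}$ is supported (up to the $\freeprod$-action) on an explicit disk $D_{\Str,\Strand}$, $D_{r_\NPct,\NStrand}$ or $D_{\cp_\nu,\NStrand}$, on which it performs a half-twist (respectively a $\frac{2\pi}{\TheOrder_\nu}$-twist around the cone point). The plan is to verify all relations of parts \eqref{lem:PMap_gens_sat_rels_it1} and \eqref{lem:PMap_gens_sat_rels_it2} by inspecting these supports and the images of the disks under the relevant homeomorphisms, and then to derive the relations of part \eqref{lem:PMap_gens_sat_rels_it3} purely algebraically from what has been established. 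Throughout, everything will be verified at the level of $\freeprod$-equivariant homeomorphisms (a chosen representative of each twist) and then passed to $\sim_\TheStrand$-classes in $\PMapIdOrb{\TheStrand}{\Sigma_\freeprod(\ThePct)}$.

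The commutator relations in \eqref{lem:PMap_gens_sat_rels_it2} are the easiest. For each relation $[X,Y]=1$ in that list I would exhibit representatives of $X$ and $Y$ whose supports are disjoint (even after taking $\freeprod$-orbits), by checking that the corresponding disks $D_{\Str,\Strand}$, $D_{r_\NPct,\NStrand}$, $D_{\cp_\nu,\NStrand}$ (and their twisted siblings $\Twist_{\TheStrand\NNStrand}\Twist_{\TheStrand\Strand}\Twist_{\TheStrand\NNStrand}^{-1}$, $\Twist_{\TheStrand\Strand}\TwistP_{\TheStrand\Pc}\Twist_{\TheStrand\Strand}^{-1}$, etc.\ appearing in \ref{lem:PMap_gens_sat_rels_it2c}-\ref{lem:PMap_gens_sat_rels_it2e}) can be chosen with disjoint $\freeprod$-orbits, using the embedding of $\FD(\ThePct)$ in Figure \ref{fig:embedding_fund_domain}. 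The slight subtlety is \ref{lem:PMap_gens_sat_rels_it2c}-\ref{lem:PMap_gens_sat_rels_it2e} where one of the two elements is a conjugate; here I would first describe geometrically the support of the conjugate (its image under the conjugating twist is another embedded disk) and then verify disjointness.

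The conjugation-type relations in \eqref{lem:PMap_gens_sat_rels_it1} form the core geometric content and I would prove them case by case. For each such relation $X\, Y\, X^{-1}=W$, the left-hand side is the twist $Y$ carried along by the homeomorphism $X$, i.e.\ the twist along the image disk $X(D)$, where $D$ is the support disk of $Y$. I would draw $X(D)$ explicitly in $\FD(\ThePct)$ and then factor the resulting arc-twist as a product of the generators along a standard reference arc, reproducing the word $W$. This is essentially the same kind of computation as for Artin generators in the classical surface-braid setting, augmented by the behavior of the $\frac{2\pi}{\TheOrder_\nu}$-twists $\TwistC_{\NStrand\nu}$ around cone points, where one must be careful to track the $\cyc{\TheOrder_\nu}$-equivariance on the disk $D_{\cp_\nu,\NStrand}$. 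The main obstacle will be the bookkeeping in cases \ref{lem:PMap_gens_sat_rels_it1d}, \ref{lem:PMap_gens_sat_rels_it1f} and the corresponding primed versions, where the conjugating element is a cone-point twist and one must verify that after pushing the support disk of $\Twist_{\TheStrand\Strand}$ or $\TwistC_{\TheStrand\nu}$ through a $\frac{2\pi}{\TheOrder_\nu}$-rotation, the resulting arc factors as prescribed; all remaining cases of \eqref{lem:PMap_gens_sat_rels_it1} are direct analogues of the classical surface computation.

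Finally, the relations in \eqref{lem:PMap_gens_sat_rels_it3} I would derive algebraically, with no further pictures. Each relation \ref{lem:PMap_gens_sat_rels_it3a}-\ref{lem:PMap_gens_sat_rels_it3f} has the same left-hand side shape $Z\,\Twist_{\TheStrand\Strand}\,Z^{-1}$ or $Z\,\TwistC_{\TheStrand\mu}\,Z^{-1}$ as a relation in \eqref{lem:PMap_gens_sat_rels_it1}, but with the index $\NNStrand$ replaced by a smaller index $\Str$ (or analogously for cone/puncture indices). The plan is to write
\[
\Twist_{\NNStrand\Str}=\bigl(\Twist_{\TheStrand\NNStrand}\Twist_{\TheStrand\Str}\Twist_{\TheStrand\NNStrand}^{-1}\bigr)\cdot\Twist_{\TheStrand\NNStrand}^{-1}\cdot\bigl(\cdots\bigr)
\]
using the basic relations of \eqref{lem:PMap_gens_sat_rels_it1} to move the outer $\Twist_{\TheStrand\NNStrand}$'s past the middle factor, and then commuting factors that have disjoint support by \eqref{lem:PMap_gens_sat_rels_it2}. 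This is a routine conjugation expansion once \eqref{lem:PMap_gens_sat_rels_it1} and \eqref{lem:PMap_gens_sat_rels_it2} are in hand, so the whole lemma reduces to the geometric verification described in the previous two paragraphs.
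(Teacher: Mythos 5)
The paper itself contains no proof of this lemma: it is quoted verbatim from the companion paper \cite[Lemma 4.18]{Flechsig2023mcg}, so there is no in-text argument to measure your proposal against line by line. Judged on its own terms, your outline follows the natural route (and almost certainly the route of the cited proof): the commutators in \ref{lem:PMap_gens_sat_rels_it2} via disjointness, up to isotopy, of the supporting disks and of their $\freeprod$-orbits; the conjugation formulas in \ref{lem:PMap_gens_sat_rels_it1} via the change-of-coordinates principle, i.e.\ $XYX^{-1}$ is the (fractional) twist about the image of the supporting disk of $Y$ under $X$; and \ref{lem:PMap_gens_sat_rels_it3} by pure algebra from the first two parts, exactly as the phrase ``in particular, these relations imply'' demands. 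One caution on the core step: for \ref{lem:PMap_gens_sat_rels_it1} your plan defers essentially all of the content, since ``draw $X(D)$ and factor the resulting twist, reproducing $W$'' asserts the identity rather than certifying it. You should commit to a verification mechanism, e.g.\ the Alexander method on a suitable system of arcs in $\FD(\ThePct)$, or transporting the claim through the identification $\MapIdOrb{\TheStrand}{\Sigma_\freeprod(\ThePct)}\cong\pi_1\left(\Conf_\TheStrand\left(D(\ThePct,\TheCone)\right)\right)$ from Theorem \ref{thm:Birman_es_orb}, under which $A_{\Strand\Str}$, $B_{\NStrand\NPct}$, $C_{\NStrand\nu}$ become standard pure braid generators of the disk with $\ThePct+\TheCone$ distinguished punctures and the formulas of \ref{lem:PMap_gens_sat_rels_it1} and \ref{lem:PMap_gens_sat_rels_it2} are the classical conjugation formulas (this uses only the Birman sequence, not the presentations that are later derived from the present lemma, so there is no circularity). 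Your equivariance caveat for $C_{\NStrand\nu}$ is the right one to make: the conjugated element is again a $\frac{2\pi}{\TheOrder_\nu}$-twist on a disk invariant under the relevant conjugate of $\cyc{\TheOrder_\nu}$, not a full Dehn twist.

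The one concrete misstep is the displayed decomposition of $A_{\NNStrand\Str}$ in your last paragraph: it is not meaningful as written, and no relation available here expresses $A_{\NNStrand\Str}$ in terms of the elements $A_{\TheStrand\,\cdot}$. The actual derivation of \ref{lem:PMap_gens_sat_rels_it3} runs the other way, by inserting the conjugator's ``partner'' around the conjugated element. For the first relation of \ref{lem:PMap_gens_sat_rels_it3}, write $A_{\TheStrand\Strand}=A_{\TheStrand\NNStrand}^{-1}\bigl(A_{\TheStrand\NNStrand}A_{\TheStrand\Strand}A_{\TheStrand\NNStrand}^{-1}\bigr)A_{\TheStrand\NNStrand}$ and conjugate by $A_{\NNStrand\Str}$: the middle factor is untouched because $[A_{\TheStrand\NNStrand}A_{\TheStrand\Strand}A_{\TheStrand\NNStrand}^{-1},A_{\NNStrand\Str}]=1$ by \ref{lem:PMap_gens_sat_rels_it2}, while the outer factors transform by $A_{\NNStrand\Str}A_{\TheStrand\NNStrand}^{\pm1}A_{\NNStrand\Str}^{-1}=A_{\TheStrand\Str}^{-1}A_{\TheStrand\NNStrand}^{\pm1}A_{\TheStrand\Str}$ from \ref{lem:PMap_gens_sat_rels_it1}; multiplying out gives exactly the stated right-hand side. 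The remaining relations of \ref{lem:PMap_gens_sat_rels_it3} follow by the same two-step insertion with $B$- or $C$-generators as conjugators, using the matching commutations from \ref{lem:PMap_gens_sat_rels_it2} and conjugation formulas from \ref{lem:PMap_gens_sat_rels_it1}. With that repair, and with the part-\ref{lem:PMap_gens_sat_rels_it1} computations actually carried out rather than asserted, your plan yields a complete proof.
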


Now Corollary \ref{cor:pure_orb_mcg_ses} together with  Lemmas \ref{lem:semidir_prod_pres} and \ref{lem:PMap_gens_sat_rels} induce the following presentation of $\PMapIdOrb{\TheStrand}{\Sigma_\freeprod(\ThePct)}$:} 

\begin{corollary}[{\cite[Corollary 4.19]{Flechsig2023mcg}}]
\label{cor:pres_PMap_free_prod}
The pure mapping class group $\PMapIdOrb{\TheStrand}{\Sigma_\freeprod(\ThePct)}$ has a presentation with generators 
\[
\Twist_{\Strand\Str}, \TwistP_{\NStrand\NPct} \; \text{ and } \; \TwistC_{\NStrand\nu}, 
\]
for $1\leq\Str,\Strand,\NStrand\leq\TheStrand$ with $\Str<\Strand$, $1\leq\NPct\leq\ThePct$ and $1\leq\nu\leq\TheCone$ and the following defining relations for $1\leq\Str,\Strand,\NStrand,\NNStrand\leq\TheStrand$ with $\Str<\Strand<\NStrand<\NNStrand$, $1\leq\Pc,\NPct\leq\ThePct$ with $\Pc<\NPct$ and $1\leq\mu,\nu\leq\TheCone$ with $\mu<\nu$: 
\begin{enumerate}
\item 
\label{cor:pres_PMap_free_prod_rel1}
$[\Twist_{\Strand\Str},\Twist_{\NNStrand\NStrand}]=1$, 
$[\TwistP_{\Strand\NPct},\Twist_{\NNStrand\NStrand}]=1 \; \text{ and } \; 
[\TwistC_{\Strand\nu},\Twist_{\NNStrand\NStrand}]=1$, 
\item 
\label{cor:pres_PMap_free_prod_rel2}
$[\Twist_{\NNStrand\Str},\Twist_{\NStrand\Strand}]=1$, 
$[\TwistP_{\NNStrand\NPct},\Twist_{\NStrand\Strand}]=1$, 
$[\TwistP_{\NNStrand\NPct},\TwistP_{\NStrand\Pc}]=1$, 
$[\TwistC_{\NNStrand\nu},\Twist_{\NStrand\Strand}]=1$, 
\\
$[\TwistC_{\NNStrand\nu},\TwistP_{\NStrand\NPct}]=1 \; \text{ and } \; 
[\TwistC_{\NNStrand\nu},\TwistC_{\NStrand\mu}]=1$, 
\item 
\label{cor:pres_PMap_free_prod_rel3}
$[\Twist_{\NNStrand\NStrand}\Twist_{\NNStrand\Strand}\Twist_{\NNStrand\NStrand}^{-1},\Twist_{\NStrand\Str}]=1$, 
$[\Twist_{\NStrand\Strand}\Twist_{\NStrand\Str}\Twist_{\NStrand\Strand}^{-1},\TwistP_{\Strand\NPct}]=1$, 
$[\Twist_{\NStrand\Strand}\TwistP_{\NStrand\Pc}\Twist_{\NStrand\Strand}^{-1},\TwistP_{\Strand\NPct}]=1$, 
$[\Twist_{\NStrand\Strand}\Twist_{\NStrand\Str}\Twist_{\NStrand\Strand}^{-1},\TwistC_{\Strand\nu}]=1$, 
$[\Twist_{\NStrand\Strand}\TwistC_{\NStrand\mu}\Twist_{\NStrand\Strand}^{-1},\TwistC_{\Strand\nu}]=1 \; \text{ and } \; 
[\Twist_{\NStrand\Strand}\TwistP_{\NStrand\NPct}\Twist_{\NStrand\Strand}^{-1},\TwistC_{\Strand\nu}]=~1$, 
\item 
\label{cor:pres_PMap_free_prod_rel4}
$\Twist_{\Strand\Str}\Twist_{\NStrand\Strand}\Twist_{\NStrand\Str}=\Twist_{\NStrand\Str}\Twist_{\Strand\Str}\Twist_{\NStrand\Strand}=\Twist_{\NStrand\Strand}\Twist_{\NStrand\Str}\Twist_{\Strand\Str}$, 
\\
$\Twist_{\Strand\Str}\TwistP_{\Strand\NPct}\TwistP_{\Str\NPct}=\TwistP_{\Str\NPct}\Twist_{\Strand\Str}\TwistP_{\Strand\NPct}=\TwistP_{\Strand\NPct}\TwistP_{\Str\NPct}\Twist_{\Strand\Str}$ and 
\\
$\Twist_{\Strand\Str}\TwistC_{\Strand\nu}\TwistC_{\Str\nu}=\TwistC_{\Str\nu}\Twist_{\Strand\Str}\TwistC_{\Strand\nu}=\TwistC_{\Strand\nu}\TwistC_{\Str\nu}\Twist_{\Strand\Str}$. 
\end{enumerate}
\end{corollary}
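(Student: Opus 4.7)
The plan is to prove the presentation by induction on $\TheStrand$, using the split short exact sequence from Corollary \ref{cor:pure_orb_mcg_ses} together with Lemma \ref{lem:semidir_prod_pres} at each stage. The base case $\TheStrand = 1$ is immediate: Corollary \ref{cor:pure_orb_mcg_ses} identifies $\PMapIdOrb{1}{\Sigma_\freeprod(\ThePct)}$ with the free group $F_{\ThePct+\TheCone}$, and the statement of the corollary degenerates to the free presentation on the $\ThePct + \TheCone$ generators $\TwistP_{1\NPct}, \TwistC_{1\nu}$, since every index constraint of the form $\Str < \Strand < \NStrand < \NNStrand$ (or analogous) is vacuous in $\{1\}$.

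For the inductive step, Corollary \ref{cor:pure_orb_mcg_ses} realizes $\PMapIdOrb{\TheStrand}{\Sigma_\freeprod(\ThePct)}$ as a semidirect product $F \rtimes Q$ with $F = F_{\TheStrand-1+\ThePct+\TheCone}$ and $Q = \PMapIdOrb{\TheStrand-1}{\Sigma_\freeprod(\ThePct)}$. By Lemma \ref{lem:semidir_prod_pres}, this yields a presentation $\langle X, Y \mid R, S, C \rangle$ in which I take $X = \{\Twist_{\TheStrand\Str}, \TwistP_{\TheStrand\NPct}, \TwistC_{\TheStrand\nu}\}$ as free generators of the kernel (so $R = \emptyset$), and $(Y, S)$ as the inductively known presentation of $Q$. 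The relations in $C$ prescribe each conjugate $y^{\pm 1} x y^{\mp 1}$ as a word in $X$, and they are supplied by Lemma \ref{lem:PMap_gens_sat_rels}: parts \ref{lem:PMap_gens_sat_rels_it1} and \ref{lem:PMap_gens_sat_rels_it3} handle the non-commuting pairs, while part \ref{lem:PMap_gens_sat_rels_it2} provides the commuting ones. Collecting all relations with at least one index equal to $\TheStrand$ from $C$ together with the rest from the inductive $S$ precisely reorganizes into the four families \ref{cor:pres_PMap_free_prod_rel1}--\ref{cor:pres_PMap_free_prod_rel4}: disjoint, nested, and conjugation-type commutator relations, together with the triangular relations, the latter obtained by rewriting identities such as $\Twist_{\Strand\Str}\Twist_{\TheStrand\Strand}\Twist_{\Strand\Str}^{-1} = \Twist_{\TheStrand\Str}^{-1}\Twist_{\TheStrand\Strand}\Twist_{\TheStrand\Str}$ into the symmetric form $\Twist_{\Strand\Str}\Twist_{\TheStrand\Strand}\Twist_{\TheStrand\Str} = \Twist_{\TheStrand\Str}\Twist_{\Strand\Str}\Twist_{\TheStrand\Strand}$.

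The main obstacle is verifying the hypotheses of Lemma \ref{lem:semidir_prod_pres}, specifically Step \ref{rem:semidir_prod_pres_step1} of Remark \ref{rem:semidir_prod_pres}: the conjugation assignments $\phi: Y \to \Aut(F)$ must respect every defining relation of $Q$. Concretely, for each relation from $S$ and each $x \in X$, one iterates the conjugation formulas of Lemma \ref{lem:PMap_gens_sat_rels} on both sides and checks that the resulting words in $X$ agree. Step \ref{rem:semidir_prod_pres_step2} of the remark is automatic because $R$ is empty, so any assignment of images to free generators is admissible. The verification is a finite but substantial case analysis across all combinations of generator types $\Twist, \TwistP, \TwistC$ and index configurations; the triangular relations \ref{cor:pres_PMap_free_prod_rel4} produce the most intricate substitutions and constitute the combinatorial bulk of the argument.
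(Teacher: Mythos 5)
Your proposal is correct and follows essentially the same route the paper indicates: the presentation is obtained from the split exact sequence of Corollary \ref{cor:pure_orb_mcg_ses}, the semidirect-product presentation criterion of Lemma \ref{lem:semidir_prod_pres} (with the free kernel generated by $\Twist_{\TheStrand\Strand},\TwistP_{\TheStrand\NPct},\TwistC_{\TheStrand\nu}$), and the conjugation relations of Lemma \ref{lem:PMap_gens_sat_rels}, applied inductively in $\TheStrand$ exactly as you describe. The paper itself only cites \cite[Corollary 4.19]{Flechsig2023mcg} for the detailed verification, so your outline matches the intended argument.
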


\new{Furthermore, we consider the group $\MapIdOrb{\TheStrand}{\Sigma_\freeprod(\ThePct)}$. Besides the elements mentioned above, this group contains elements represented by the following homeo-morphisms. Let $H_\Strand$ for $1\leq\Strand<\TheStrand$ be the homeomorphism that performs the following half-twist on each $\freeprod$-translate of the disk $D_{\Strand,\Strand+1}$, i.e.\ the disk $D_{\Str,\Strand+1}$ from Figure \ref{fig:disks_with_marked_pts} with $\Str=\Strand$:}  
\begin{figure}[H]
\import{Grafiken/orb_mcg_marked_pts/}{homeo_half-twist_marked_pts_clockwise.pdf_tex}
\caption{The half-twist $H_\Strand$.}
\label{fig:homeo_half-twist}
\end{figure}

\new{For every $1\leq\NPct\leq\ThePct$ and $1\leq\nu\leq\TheCone$, let $\TwP{\NPct}:=\TwistP_{1\NPct}$ and $\TwC{\nu}:=\TwistC_{1\nu}$. As for the pure generators, we will use the names $H_\Strand,\TwP{\NPct}$ and $\TwC{\nu}$ as acronyms for the represented mapping classes. 

The groups $\PMapIdOrb{\TheStrand}{\Sigma_\freeprod(\ThePct)}$ and $\MapIdOrb{\TheStrand}{\Sigma_\freeprod(\ThePct)}$ are related by the short exact sequence 
\[
1\rightarrow\PMapIdOrb{\TheStrand}{\Sigma_\freeprod(\ThePct)}\rightarrow\MapIdOrb{\TheStrand}{\Sigma_\freeprod(\ThePct)}\rightarrow\Sym_\TheStrand\rightarrow1.  
\]
This yields the following presentation of $\MapIdOrb{\TheStrand}{\Sigma_\freeprod(\ThePct)}$:} 

\begin{proposition}[{\cite[Proposition 4.22]{Flechsig2023mcg}}]
\label{prop:pres_map_kcp}
\new{For $\TheStrand\geq1$,} the group $\MapIdOrb{\TheStrand}{\Sigma_\freeprod(\ThePct)}$ is presented by generators
\[
H_1,...,H_{\TheStrand-1},\TwP{1},...,\TwP{\ThePct},\TwC{1},...,\TwC{\TheCone}
\]
and defining relations for $2\leq\Strand<\TheStrand$, $1\leq\Pc,\NPct\leq\ThePct$ with $\Pc<\NPct$ and $1\leq\mu,\nu\leq\TheCone$ with $\mu<\nu$: 
\begin{enumerate}
\item \label{prop:pres_map_kcp_rel1}
braid and commutator relations for the generators $H_1,...,H_{\TheStrand-1}$, 
\item \label{prop:pres_map_kcp_rel2}
\begin{enumerate}
\item \label{prop:pres_map_kcp_rel2a}
$[\TwP{\NPct},H_\Strand]=1$, 
\item \label{prop:pres_map_kcp_rel2b}$[\TwC{\nu},H_\Strand]=1$, 
\end{enumerate}
\item \label{prop:pres_map_kcp_rel3}
\begin{enumerate}
\item \label{prop:pres_map_kcp_rel3a}
$[H_1\TwP{\NPct}H_1,\TwP{\NPct}]=1$, 
\item \label{prop:pres_map_kcp_rel3b} 
$[H_1\TwC{\nu} H_1,\TwC{\nu}]=1$ and 
\end{enumerate}
\item \label{prop:pres_map_kcp_rel4}
\begin{enumerate}
\item \label{prop:pres_map_kcp_rel4a} 
$[\TwP{\Pc},\TwistP_{2\NPct}]=1$ for $\TwistP_{2\NPct}=H_1^{-1}\TwP{\NPct}H_1$, 
\item \label{prop:pres_map_kcp_rel4b} 
$[\TwC{\mu},\TwistC_{2\nu}]=1$ for $\TwistC_{2\nu}=H_1^{-1}\TwC{\nu}H_1$ and
\item \label{prop:pres_map_kcp_rel4c}
$[\TwP{\NPct},\TwistC_{2\nu}]=1$ for $\TwistC_{2\nu}=H_1^{-1}\TwC{\nu}H_1$. 
\end{enumerate}
\end{enumerate}
\end{proposition}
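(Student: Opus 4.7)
The plan is to leverage the short exact sequence
\[
1\to\PMapIdOrb{\TheStrand}{\Sigma_\freeprod(\ThePct)}\to\MapIdOrb{\TheStrand}{\Sigma_\freeprod(\ThePct)}\to\Sym_\TheStrand\to1
\]
together with the presentation of the pure subgroup from Corollary \ref{cor:pres_PMap_free_prod} and the Coxeter presentation of $\Sym_\TheStrand$ in order to derive the claimed presentation by an extension-of-presentations argument. The half-twists $H_\Strand$ serve as lifts of the Coxeter transpositions $\sigma_\Strand$, and the square $H_\Strand^2$ represents the pure twist $\Twist_{\Strand+1,\Strand}$. Via the formulas
\begin{align*}
\Twist_{\Strand\Str} &= H_{\Strand-1}^{-1}\cdots H_{\Str+1}^{-1}H_\Str^2H_{\Str+1}\cdots H_{\Strand-1}, \\
\TwistP_{\NStrand\NPct} &= H_{\NStrand-1}^{-1}\cdots H_1^{-1}\TwP{\NPct}H_1\cdots H_{\NStrand-1}, \\
\TwistC_{\NStrand\nu} &= H_{\NStrand-1}^{-1}\cdots H_1^{-1}\TwC{\nu}H_1\cdots H_{\NStrand-1},
\end{align*}
every pure generator from Corollary \ref{cor:pres_PMap_free_prod} is expressible as a word in the listed generators, so the latter do generate $\MapIdOrb{\TheStrand}{\Sigma_\freeprod(\ThePct)}$.

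First I would define a homomorphism from the abstract group $G$ presented as in the statement to $\MapIdOrb{\TheStrand}{\Sigma_\freeprod(\ThePct)}$ by sending each generator to its namesake. By von Dyck's theorem (Theorem \ref{thm:von_Dyck}) it suffices to check that the listed relations hold in the mapping class group. The braid and commutator relations of item (1) follow from the standard analysis of the supporting disks of the half-twists. The relations of item (2) are immediate from the disjointness of the supports $D_{\Strand,\Strand+1}$ and $D_{r_\NPct,1}$ (respectively $D_{\cp_\nu,1}$) for $\Strand\geq2$. The relations of item (3) encode that the conjugates $H_1\TwP{\NPct}H_1$ and $H_1\TwC{\nu}H_1$ have support on $\freeprod$-translates of disks disjoint from those of $\TwP{\NPct}$ and $\TwC{\nu}$, respectively. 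The relations of item (4) similarly follow from pairwise disjointness of supporting disks up to $\freeprod$-translation.

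The main step is to show that the listed relations suffice. I would consider $G$ with its projection onto $\Sym_\TheStrand$ sending $H_\Strand\mapsto\sigma_\Strand$ and $\TwP{\NPct},\TwC{\nu}\mapsto1$. Inside $G$, introduce formal abbreviations $\tilde\Twist_{\Strand\Str},\tilde\TwistP_{\NStrand\NPct},\tilde\TwistC_{\NStrand\nu}$ for the words displayed above. Using the braid and commutator relations of item (1), one verifies that conjugation by any $H_k$ in $G$ permutes the indices on these symbols in the manner dictated by the transposition $\sigma_k$. Combined with items (2)--(4), all defining relations of Corollary \ref{cor:pres_PMap_free_prod} can then be deduced by conjugating the ``base case'' relations by suitable $H$-words. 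A Reidemeister--Schreier style argument for group extensions then identifies the kernel of $G\to\Sym_\TheStrand$ with $\PMapIdOrb{\TheStrand}{\Sigma_\freeprod(\ThePct)}$, yielding $G\cong\MapIdOrb{\TheStrand}{\Sigma_\freeprod(\ThePct)}$.

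The hard part will be precisely this last verification: the symbolic bookkeeping required to derive the many relations listed in Corollary \ref{cor:pres_PMap_free_prod} from the handful in the proposition. This can be organised systematically by first conjugating the base relations on $\TwP{\NPct},\TwC{\nu}$ by appropriate words in the $H_k$'s, and closely parallels the classical passage from a presentation of the pure Artin braid group to one of the full Artin braid group.
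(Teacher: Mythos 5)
Your route is essentially the paper's: the proposition is not proved in this article but imported from \cite[Proposition 4.22]{Flechsig2023mcg}, and the text introduces it precisely via the short exact sequence $1\to\PMapIdOrb{\TheStrand}{\Sigma_\freeprod(\ThePct)}\to\MapIdOrb{\TheStrand}{\Sigma_\freeprod(\ThePct)}\to\Sym_\TheStrand\to1$ combined with the presentation of the pure subgroup from Corollary \ref{cor:pres_PMap_free_prod}, which is exactly the extension-of-presentations argument you set up, with $H_\Strand^2$ and the conjugation formulas recovering the pure generators. Your plan is sound; just be aware that the step you defer — deriving all relations of Corollary \ref{cor:pres_PMap_free_prod} among the words $\tilde{A}_{\Strand\Str},\tilde{B}_{\NStrand\NPct},\tilde{C}_{\NStrand\nu}$ from relations (1)--(4) and showing via a Schreier-type transversal that these words generate $\ker(G\to\Sym_\TheStrand)$ — is the actual content of the cited proof and must be carried out in full, in parallel with the classical Artin-braid bookkeeping you invoke.
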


\new{Here above and in the following, we mean the relations $H_\Str H_{\Str+1}H_\Str=H_{\Str+1}H_\Str H_{\Str+1}$ for $1\leq\Str\leq\TheStrand-2$ and $[H_\Strand,H_\NStrand]=1$ for $1\leq\Strand,\NStrand<\TheStrand$ with $\vert\Strand-\NStrand\vert\geq2$ by \textit{braid and commutator relations} for $H_1,...,H_{\TheStrand-1}$.} 

\renewcommand{\Twist}{A}
\renewcommand{\TwistP}{B}
\renewcommand{\TwistC}{C}
\renewcommand{\twist}{a}
\renewcommand{\twistP}{b}
\renewcommand{\twistC}{c}

\section{Relating orbifold braid groups and orbifold mapping class groups}
\label{sec:braid_and_mcg}

This section highlights two fundamental differences between orbifold braid groups and Artin braid groups. \new{In the classical situation,} the generalized Birman exact sequence 
\[
1\rightarrow\underbrace{\pi_1(\Conf_\TheStrand(D))}_{=\B_\TheStrand}\xrightarrow{\Push_\TheStrand}\Map{\TheStrand}{D}\xrightarrow{\Forget_\TheStrand}\underbrace{\Map{}{D}}_{=1}\rightarrow1
\]
implies that the Artin braid group $\B_\TheStrand$ is isomorphic to $\Map{\TheStrand}{D}$, \new{see \cite[Theorem~9.1]{FarbMargalit2011} for details.} The inverse of the point-pushing map is the evaluation map $\ev:\Map{\TheStrand}{D}\rightarrow\B_\TheStrand$, which evaluates a certain ambient isotopy at the marked points. More precisely, given a self-homeomorphism $H$ of the disk $D$ that preserves the set of marked points $\{p_1,...,p_\TheStrand\}$ and fixes the boundary, the Alexander trick yields an ambient isotopy from $H$ to $\id_D$. Evaluated at the marked points, this ambient isotopy describes the strands of a braid $\ev([H])$. 

For orbifolds, we will establish an analogous map 
\[
\ev:\MapIdOrb{\TheStrand}{\Sigma_\freeprod(\ThePct)}\rightarrow\Z_\TheStrand(\Sigma_\freeprod(\ThePct)) 
\]
which, in contrast to the classical case, is not an isomorphism (see Theorem \ref{prop:orb_br_grp_quot}). 

This difference between the orbifold mapping class group and the orbifold braid group 
has fundamental consequences: 
recall that the pure subgroup $\PB_\TheStrand$ fits into a short exact sequence 
\[
1\rightarrow\underbrace{\pi_1(D(\TheStrand-1))}_{=\freegrp{\TheStrand-1}}\rightarrow\PB_\TheStrand\rightarrow\PB_{\TheStrand-1}\rightarrow1 
\]
that stems from the characterization of $\PB_\TheStrand$ as the pure mapping class group $\PMap{\TheStrand}{D}$ and a restriction of the generalized Birman exact sequence from \cite[Theorem~9.1]{FarbMargalit2011} to pure subgroups. For pure orbifold mapping class groups, we have a similar short exact sequence
\[
1\rightarrow\freegrp{\TheStrand-1+\ThePct}\rightarrow\PMapIdOrb{\TheStrand}{\Sigma_\freeprod(\ThePct)}\rightarrow\PMapIdOrb{\TheStrand-1}{\Sigma_\freeprod(\ThePct)}\rightarrow1
\] 
discussed in Corollary \ref{cor:pure_orb_mcg_ses}. In this section, we will consider similar maps for pure orbifold braid groups. We will show that we still have an exact sequence 
\[
\piOrb\left(\Sigma_\freeprod(\TheStrand-1+\ThePct)\right)\rightarrow\PZ_\TheStrand(\Sigma_\freeprod(\ThePct))\rightarrow\PZ_{\TheStrand-1}(\Sigma_\freeprod(\ThePct))\rightarrow1
\]
but the map $\piOrb(\Sigma_\freeprod(\TheStrand-1+\ThePct))\rightarrow\PZ_\TheStrand(\Sigma_\freeprod(\ThePct))$ surprisingly has a non-trivial kernel~$\kernel_\TheStrand$ (see Corollary \ref{cor:pure_orb_braid_semidir_prod_with_K}). This corrects Theorem 2.14 in \cite{Roushon2021}. Moreover, this implies that the canonical homomorphism $\Z_\TheStrand(\Sigma_\freeprod(\ThePct))\rightarrow\Z_{\TheStrand+\ThePct}(\Sigma_\freeprod)$ that sends punctures to fixed strands has non-trivial kernel (see Proposition \ref{prop:emb_orb_braid_grps_not_inj}). This corrects Proposition 4.1 in \cite{Roushon2021a}. 

\subsection{The orbifold braid group is a quotient of the orbifold mapping class group}
\label{subsec:orb_braid_quot_orb_mcg}

Our first goal is to define the evaluation map  
\[
\ev:\MapIdOrb{\TheStrand}{\Sigma_\freeprod(\ThePct)}\rightarrow\Z_\TheStrand(\Sigma_\freeprod(\ThePct)). 
\]
Moreover, we want to consider the orbifold braid group $\Z_\TheStrand(\Sigma_\freeprod(\ThePct,\TheCone))$ on the orbi-fold $\Sigma_\freeprod(\ThePct,\TheCone)$ that is also punctured at the cone points. We also want to establish a similar evaluation map
\[
\ev^\ast:\MapIdOrb{\TheStrand}{\Sigma_\freeprod(\ThePct)}\rightarrow\Z_\TheStrand(\Sigma_\freeprod(\ThePct,\TheCone)). 
\]

Together with the epimorphism $f:\Z_\TheStrand(\Sigma_\freeprod(\ThePct,\TheCone))\rightarrow\Z_\TheStrand(\Sigma_\freeprod(\ThePct))$ from Corollary~\ref{cor:surject_Z_n_ast_to_Z_n}, 
the evaluation maps will fit into a commutative diagram
\begin{equation}
\label{eq:comm_diag_ev-maps}
\begin{tikzcd}
\MapIdOrb{\TheStrand}{\Sigma_\freeprod(\ThePct)} \arrow[r,"\ev"] \arrow[d,"\ev^\ast"] & \Z_\TheStrand(\Sigma_\freeprod(\ThePct)). 
\\
\Z_\TheStrand(\Sigma_\freeprod(\ThePct,\TheCone))  \arrow[ru,"f"] & 
\end{tikzcd}
\end{equation}

The idea of the evaluation maps is the following: Recall that each homeomorphism $H$ that represents a mapping class in $\MapIdOrb{\TheStrand}{\Sigma_\freeprod(\ThePct)}$ by Definition \ref{def:Map_orb(ThePct)} is ambient isotopic to $\id_\Sigma$ if we forget the marked points $p_1,...,p_\TheStrand$. Let $H_t$ be such an ambient isotopy that fixes 
$r_1,...,r_\ThePct$ pointwise. Then evaluating $H_t$ at $p_1,...,p_\TheStrand$ represents 
a braid in $\Z_\TheStrand(\Sigma_\freeprod(\ThePct))$. However, it is not clear that the braid $[H_t(p_1,...,p_\TheStrand)]$ does not depend on the choice of the representative $H$ and the ambient isotopy. 

\new{To address this problem,} we recall that due to Proposition \ref{prop:pres_map_kcp} $\MapIdOrb{\TheStrand}{\Sigma_\freeprod(\ThePct)}$ has a finite presentation in terms of generators 
\[
H_\Strand, \TwP{\NPct} \; \text{ and } \; \TwC{\nu} \; \text{ for } \; 1\leq\Strand<\TheStrand, \; 1\leq\NPct\leq\ThePct \; \text{ and } \; 1\leq\nu\leq\TheCone. 
\]
For each generator $H_\Strand, \TwP{\NPct}$ and $\TwC{\nu}$, there is an 
ambient isotopy to $\id_\Sigma$ that forgets the marked points and performs the Alexander trick on each supporting disk, which is centered at a marked point or a cone point \new{for $\TwP{\NPct}$ and $\TwC{\nu}$, respectively.} Evaluating these ambient isotopies at the marked points, defines $\ev$ on the generators.

To check that the evaluation maps induce homomorphisms, 
we recall from Theorem \ref{thm:gen_set_Z_n} and Corollary \ref{cor:gen_set_Z_n} that the groups $\Z_\TheStrand(\Sigma_\freeprod(\ThePct,\TheCone))$ and $\Z_\TheStrand(\Sigma_\freeprod(\ThePct))$ are generated by braids 
\[
h_\Strand, \twP{\NPct} \; \text{ and } \; \twC{\nu} \; \text{ for } \; 1\leq\Strand<\TheStrand, \; 1\leq\NPct\leq\ThePct \; \text{ and } \; 1\leq\nu\leq\TheCone. 
\]
The induced map sends $H_\Strand$ to $h_\Strand$, $\TwP{\NPct}$ to $\twP{\NPct}$ and $\TwC{\nu}$ to~$\twC{\nu}$. In particular, this observation uses that all twists and half-twists pictured in Figures \ref{fig:homeo_twist_marked_pts_cp_clockwise}, \ref{fig:homeos_twist_marked_pts} and \ref{fig:homeo_half-twist} were defined moving clockwise. Consequently, the ambient isotopy to the identity moves the marked points counterclockwise. This matches the definition of $h_\Strand,\twP{\NPct}$ and $\twC{\nu}$ on page \pageref{fig:h_j}. 
In analogy to Proposition \ref{prop:pres_map_kcp}, we observe the following relations for the generators of the orbifold braid groups: 

\begin{lemma}
\label{lem:gens_sat_rels_free_prod}
The generators $h_1,...,h_{\TheStrand-1},\twP{1},...,\twP{\ThePct},\twC{1},...,\twC{\TheCone}$ of $\Z_\TheStrand(\Sigma_\freeprod(\ThePct))$ satisfy the following relations for $2\leq\Strand<\TheStrand$, $1\leq\Pc,\NPct\leq\ThePct,\Pc<\NPct$ and $1\leq\mu,\nu\leq\TheCone,\mu<\nu$: 
\begin{enumerate}
\item \label{lem:gens_sat_rels_free_prod_rel1} 
$\twC{\nu}^{\TheOrder_\nu}=1$, 
\item \label{lem:gens_sat_rels_free_prod_rel2} 
braid and commutator relations for the generators $h_1,...,h_{\TheStrand-1}$, 
\item \label{lem:gens_sat_rels_free_prod_rel3} 
\begin{enumerate}
\item \label{lem:gens_sat_rels_free_prod_rel3a} 
$[\twP{\NPct},h_\Strand]=1$ and 
\;\hypertarget{lem:gens_sat_rels_free_prod_rel3b}{\textup{b)}} $[\twC{\nu},h_\Strand]=1$, 
\end{enumerate}
\item \label{lem:gens_sat_rels_free_prod_rel4} 
\begin{enumerate}
\item \label{lem:gens_sat_rels_free_prod_rel4a} 
$[h_1\twP{\NPct}h_1,\twP{\NPct}]=1\;$ and \; 
\hypertarget{lem:gens_sat_rels_free_prod_rel4b}{\textup{b)}} $[h_1\twC{\nu}h_1,\twC{\nu}]$, 
\end{enumerate}
\item \label{lem:gens_sat_rels_free_prod_rel5}
\begin{enumerate}
\item \label{lem:gens_sat_rels_free_prod_rel5a} 
$[\twP{\Pc},\twistP_{2\NPct}]=1$, \hypertarget{lem:gens_sat_rels_free_prod_rel5b}{\textup{b)}} $[\twC{\mu},\twistC_{2\nu}]=1\;$ and \;\hypertarget{lem:gens_sat_rels_free_prod_rel5c}{\textup{c)}} $[\twP{\NPct},\twistC_{2\nu}]=1$ 
\\
with $\twistP_{2\NPct}=h_1^{-1}\twP{\NPct}h_1$ and $\twistC_{2\nu}=h_1^{-1}\twC{\nu} h_1$. 
\end{enumerate}
\end{enumerate}
\new{With the exception of \textup{\ref{lem:gens_sat_rels_free_prod_rel1}},} the same relations hold in $\Z_\TheStrand(\Sigma_\freeprod(\ThePct,\TheCone))$. 
\end{lemma}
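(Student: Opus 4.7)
The plan is to verify each relation by exhibiting an isotopy between the orbifold braid diagrams representing the two sides, in the spirit of the diagrammatic proofs of the classical Artin braid relations. All relations hold in both $\Z_\TheStrand(\SG)$ and $\Z_\TheStrand(\SGpct)$ except for the finite-order relation~(1), which genuinely uses the presence of the $\nu$-th cone point (rather than just a puncture at $\cp_\nu$).

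I would dispatch (1) and (2) first. Relation (1) is a special case of Remark~\ref{rem:braid_fin_order} applied with the first strand encircling the $\nu$-th cone point, so that the resulting braid coincides with $\twC{\nu}^{\TheOrder_\nu}$: a shift turns the relevant piece into a continuous $\freeprod$-loop bounding a disk around $\cp_\nu$ in $\Sigma(\ThePct)$, hence nullhomotopic. For (2), the subgroup generated by $h_1,\ldots,h_{\TheStrand-1}$ was already identified with the Artin braid group $\B_\TheStrand$ in Section~\ref{subsec:Z_n_Sigma_freeprod_braid_diagrams}, so the usual braid and commutator relations transfer verbatim.

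Relations (3)--(5) are then handled by disjoint-support arguments at the level of braid diagrams. For (3), when $\Strand\geq 2$ the braid $h_\Strand$ touches only strands $\Strand$ and $\Strand+1$, whereas $\twP{\NPct}$ and $\twC{\nu}$ touch only the first strand; after a small isotopy their supports lie in disjoint subcylinders and they commute. The three cases of (5) follow by the same principle: the two factors involve different strands travelling to different punctures or cone points, so their supports separate after an isotopy. All of this requires only standard Reidemeister-type moves on orbifold braid diagrams and never interacts with the extra move of Figure~\ref{fig:orb-Reidemeister-move}, which is why (3) and (5) hold in both $\Z_\TheStrand(\SG)$ and $\Z_\TheStrand(\SGpct)$.

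The main obstacle will be relation~(4), where both factors encircle the \emph{same} puncture or cone point and so disjoint-support is not immediate. The plan here is to redraw $h_1\twP{\NPct}h_1$, using a classical braid move, as a diagram in which the second strand travels around puncture~$\NPct$ without the first strand getting in its way; its loop can then be drawn concentrically around the loop of $\twP{\NPct}$, and the two concentric loops can be stacked in either order via a nested-loop isotopy. Relation~(4b) is proved analogously with $\cp_\nu$ replacing $r_\NPct$, and because no strand is forced to pass through the cone point during the nested-loop isotopy, the same argument works identically in $\Z_\TheStrand(\SG)$ and $\Z_\TheStrand(\SGpct)$.
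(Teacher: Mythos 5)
Your proposal is correct and follows essentially the same route as the paper: relation (1) is exactly the finite-order observation of Remark~\ref{rem:braid_fin_order}, relation (2) is inherited from the classical Artin case, and relations (3)--(5) are verified by isotopies of orbifold braid diagrams (disjoint supports, respectively concentric loops around the same puncture or cone point), which is precisely what the paper's Figure~\ref{fig:gens_satisfy_rels_free_prod_comm_rel} records. Your remark that only (1) uses the cone point itself, so the other relations persist in $\Z_\TheStrand(\Sigma_\freeprod(\ThePct,\TheCone))$, matches the paper's argument as well.
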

\begin{proof}
The braid and commutator relations for $h_1,...,h_{\TheStrand-1}$ follow as in the surface case directly from the braid diagrams. The remaining commutator relations also follow from the braid diagrams (see Figure \ref{fig:gens_satisfy_rels_free_prod_comm_rel} for the relations that involve twists around cone points). 

The relation \ref{lem:gens_sat_rels_free_prod_rel1} was observed in Remark \ref{rem:braid_fin_order}. This observation was based on the contractibility of a loop that contains a cone point (see Figure \ref{fig:gens_satisfy_rels_free_prod_t^m}). This argument does not hold once cone points are removed. In fact, we will later see that the relation $\twC{\nu}^{\TheOrder_\nu}=1$ does not hold in $\Z_\TheStrand(\Sigma_\freeprod(\ThePct,\TheCone))$. 
\end{proof}

\begin{figure}[H]
\import{Grafiken/braid_and_mcg/}{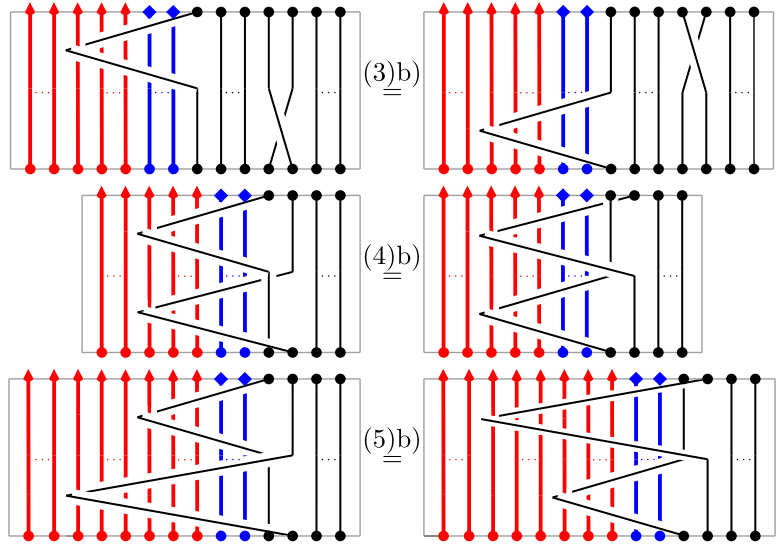}
\caption{Observation of the relations \ref{lem:gens_sat_rels_free_prod_rel3}-\ref{lem:gens_sat_rels_free_prod_rel5} by consideration of orbifold braid diagrams.}
\label{fig:gens_satisfy_rels_free_prod_comm_rel}
\end{figure}

Lemma \ref{lem:gens_sat_rels_free_prod} in particular implies that the assignments $\ev$ and $\ev^\ast$ preserve the relations from Proposition \ref{prop:pres_map_kcp}. Hence, Theorem \ref{thm:von_Dyck} yields:  

\begin{corollary}
\label{cor:ev_maps_homo}
The maps $\ev^\ast$ and $\ev$ are homomorphisms that satisfy the condition from \eqref{eq:comm_diag_ev-maps}. 
\end{corollary}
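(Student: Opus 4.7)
The plan is to assemble the corollary directly from the three ingredients that are already on the table: the presentation of $\MapIdOrb{\TheStrand}{\Sigma_\freeprod(\ThePct)}$ from Proposition \ref{prop:pres_map_kcp}, the relations verified among the braid generators in Lemma \ref{lem:gens_sat_rels_free_prod}, and von Dyck's theorem (Theorem \ref{thm:von_Dyck}). The definitions of $\ev$ and $\ev^\ast$ on generators are already fixed: $H_\Strand\mapsto h_\Strand$, $\TwP{\NPct}\mapsto\twP{\NPct}$, $\TwC{\nu}\mapsto\twC{\nu}$, with targets in $\Z_\TheStrand(\Sigma_\freeprod(\ThePct))$ and $\Z_\TheStrand(\Sigma_\freeprod(\ThePct,\TheCone))$, respectively; by Theorem \ref{thm:gen_set_Z_n} and Corollary \ref{cor:gen_set_Z_n} these images do generate the targets, so we need only verify the preservation of relations.

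First I would note that the defining relations of $\MapIdOrb{\TheStrand}{\Sigma_\freeprod(\ThePct)}$ listed in Proposition \ref{prop:pres_map_kcp}\eqref{prop:pres_map_kcp_rel1}--\eqref{prop:pres_map_kcp_rel4} match, letter for letter after the substitution $H\rightsquigarrow h$, $\TwP{}\rightsquigarrow\twP{}$, $\TwC{}\rightsquigarrow\twC{}$, the relations asserted in Lemma \ref{lem:gens_sat_rels_free_prod}\eqref{lem:gens_sat_rels_free_prod_rel2}--\eqref{lem:gens_sat_rels_free_prod_rel5}. Lemma \ref{lem:gens_sat_rels_free_prod} shows these relations hold in $\Z_\TheStrand(\Sigma_\freeprod(\ThePct))$ and also in $\Z_\TheStrand(\Sigma_\freeprod(\ThePct,\TheCone))$. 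The only relation that fails in the punctured orbifold is $\twC{\nu}^{\TheOrder_\nu}=1$, but this is not a defining relation of the mapping class group, so it plays no role in applying von Dyck's theorem. Consequently Theorem \ref{thm:von_Dyck} produces well-defined homomorphisms $\ev$ and $\ev^\ast$ on all of $\MapIdOrb{\TheStrand}{\Sigma_\freeprod(\ThePct)}$.

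It remains to verify the commutativity $\ev=f\circ\ev^\ast$ of diagram \eqref{eq:comm_diag_ev-maps}. Here I would recall that $f$ is the map induced by the inclusion $\Sigma_\freeprod(\ThePct,\TheCone)\hookrightarrow\Sigma_\freeprod(\ThePct)$ from Corollary \ref{cor:surject_Z_n_ast_to_Z_n}; under this inclusion the braids $h_\Strand,\twP{\NPct},\twC{\nu}\in\Z_\TheStrand(\Sigma_\freeprod(\ThePct,\TheCone))$ are sent to the braids of the same names in $\Z_\TheStrand(\Sigma_\freeprod(\ThePct))$. Therefore $f\circ\ev^\ast$ and $\ev$ agree on the generating set $\{H_\Strand,\TwP{\NPct},\TwC{\nu}\}$, and since both are homomorphisms out of $\MapIdOrb{\TheStrand}{\Sigma_\freeprod(\ThePct)}$, they coincide.

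There is no real obstacle here; the work has been done in advance in Lemma \ref{lem:gens_sat_rels_free_prod}. The only subtlety worth flagging is the careful orientation bookkeeping: the twists in Figures \ref{fig:homeo_twist_marked_pts_cp_clockwise}, \ref{fig:homeos_twist_marked_pts} and \ref{fig:homeo_half-twist} were declared clockwise, so the associated ambient isotopy back to the identity traces the marked points counterclockwise, matching the defining braid diagrams of $h_\Strand$, $\twP{\NPct}$ and $\twC{\nu}$ from Section \ref{sec:braid_grp_orb}. This confirms that the assignments on generators genuinely record the evaluation of a canonical ambient isotopy and are not merely a formal relabelling.
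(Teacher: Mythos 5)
Your proposal is correct and follows essentially the same route as the paper: the paper likewise deduces the corollary by observing that Lemma \ref{lem:gens_sat_rels_free_prod} shows the assignments preserve the defining relations of $\MapIdOrb{\TheStrand}{\Sigma_\freeprod(\ThePct)}$ from Proposition \ref{prop:pres_map_kcp}, and then invokes Theorem \ref{thm:von_Dyck}, with commutativity of \eqref{eq:comm_diag_ev-maps} checked on generators. Your explicit remarks that the relation $\twC{\nu}^{\TheOrder_\nu}=1$ is not needed and that $f$ maps generators to their homonyms are consistent with, and slightly more explicit than, the paper's treatment.
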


Furthermore, using Lemma \ref{lem:fund_grp_free_act}, we may deduce: 

\begin{corollary}
\label{cor:ev-map_iso}
The map $\ev^\ast:\MapIdOrb{\TheStrand}{\Sigma_\freeprod(\ThePct)}\rightarrow\Z_\TheStrand(\Sigma_\freeprod(\ThePct,\TheCone))$ is an isomorphism. 
\end{corollary}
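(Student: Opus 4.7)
The plan is to identify both $\MapIdOrb{\TheStrand}{\Sigma_\freeprod(\ThePct)}$ and $\Z_\TheStrand(\Sigma_\freeprod(\ThePct,\TheCone))$ with the single group $\pi_1(\Conf_\TheStrand(D(\ThePct,\TheCone)))$, and then verify that $\ev^\ast$ realises the resulting composite isomorphism. The pivotal observation is that in $\Sigma_\freeprod(\ThePct,\TheCone)$ the cone points---the only singular points of the $\freeprod$-action on $\Sigma_\freeprod(\ThePct)$---have been removed, so $\freeprod$ acts freely on $\Sigma(\ThePct,\TheCone)$.

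First I would exploit this freeness: the coordinatewise action of $\freeprod^\TheStrand\rtimes\Sym_\TheStrand$ on $\Sigma(\ThePct,\TheCone)^\TheStrand\setminus\Delta_\TheStrand^\freeprod$ is also free, and its quotient is homeomorphic to $\Conf_\TheStrand(\Sigma(\ThePct,\TheCone)/\freeprod)=\Conf_\TheStrand(D(\ThePct,\TheCone))$. Lemma \ref{lem:fund_grp_free_act} then yields a canonical isomorphism
\[
\Z_\TheStrand(\Sigma_\freeprod(\ThePct,\TheCone))\cong\pi_1(\Conf_\TheStrand(D(\ThePct,\TheCone))).
\]
Simultaneously, the Birman exact sequence of Theorem \ref{thm:Birman_es_orb} supplies
\[
\Push_\TheStrand^{orb}:\pi_1(\Conf_\TheStrand(D(\ThePct,\TheCone)))\xrightarrow{\cong}\MapIdOrb{\TheStrand}{\Sigma_\freeprod(\ThePct)}.
\]

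Finally I would check that $\ev^\ast$ coincides with the composition of $(\Push_\TheStrand^{orb})^{-1}$ with the free-action identification above. Given $[H]\in\MapIdOrb{\TheStrand}{\Sigma_\freeprod(\ThePct)}$, any ambient isotopy from $H$ to $\id_{\Sigma(\ThePct)}$ through $\HomeoIdOrb{\TheStrand}{\Sigma_\freeprod(\ThePct),\partial\Sigma(\ThePct)}$ is $\freeprod$-equivariant at each time and fixes the cone points pointwise; hence no marked point can be sent onto a cone point (a non-singular orbit cannot map to a singular one under a homeomorphism), and the traced strands define a loop in the orbifold configuration space of $\Sigma_\freeprod(\ThePct,\TheCone)$. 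By construction this loop represents $\ev^\ast([H])$, and by the very definition of the orbifold point-pushing map it also represents $(\Push_\TheStrand^{orb})^{-1}([H])$ under the free-action identification.

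The main obstacle I anticipate is this last matching step: confirming that $\ev^\ast$ and $\Push_\TheStrand^{orb}$ are mutually inverse under these two identifications, rather than differing by some correction factor. This is the orbifold-equivariant analog of the classical fact that point-pushing and isotopy-evaluation are inverse isomorphisms for ordinary surface mapping class groups, and the verification should reduce to inspecting how $\Push_\TheStrand^{orb}$ was originally constructed in the proof of Theorem \ref{thm:Birman_es_orb}.
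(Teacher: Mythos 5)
Your proposal is correct and follows essentially the same route as the paper: the paper likewise checks that the $\freeprod^\TheStrand\rtimes\Sym_\TheStrand$-action on the punctured configuration space is free, applies Lemma \ref{lem:fund_grp_free_act} to identify $\Z_\TheStrand(\Sigma_\freeprod(\ThePct,\TheCone))$ with $\pi_1(\Conf_\TheStrand(D(\ThePct,\TheCone)))$, and then invokes the Birman exact sequence of Theorem \ref{thm:Birman_es_orb} to obtain an isomorphism from $\MapIdOrb{\TheStrand}{\Sigma_\freeprod(\ThePct)}$ making the square with $\ev^\ast$ commute. The compatibility of $\ev^\ast$ with $\Push_\TheStrand^{orb}$ that you flag as the remaining obstacle is exactly the commutativity the paper asserts, so your argument matches the intended one.
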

\newpage
\begin{proof}
The group $\Z_\TheStrand(\Sigma_\freeprod(\ThePct,\TheCone))$ is defined as the orbifold fundamental group of the orbifold configuration space 
\[
\Conf_\TheStrand^\freeprod(\Sigma_\freeprod(\ThePct,\TheCone))=\widetilde{\PConf}_\TheStrand^\freeprod(\Sigma(\ThePct,\TheCone))_{\freeprod^\TheStrand\rtimes\Sym_\TheStrand},  
\]
where $\widetilde{\PConf}_\TheStrand^\freeprod(\Sigma(\ThePct,\TheCone))$ denotes the underlying space 
\[
\left\lbrace(x_1,...,x_\TheStrand)\in\left(\Spct
\right)^\TheStrand\mid x_\Str\neq\gamma(\twistn{\Strand}) \text{ for all } 1\leq\Str,\Strand\leq\TheStrand,\Str\neq\Strand, \gamma\in\freeprod\right\rbrace. 
\]

To apply Lemma \ref{lem:fund_grp_free_act}, we want to show that the $\freeprod^\TheStrand\rtimes\Sym_\TheStrand$-action on $\widetilde{\PConf}_\TheStrand^\freeprod(\Sigma(\ThePct,\TheCone))$ is free. Therefore, we recall that the action of $((\gamma_1,...,\gamma_\TheStrand),\sigma)\in\freeprod^\TheStrand\rtimes\Sym_\TheStrand$ on $(x_1,...,x_\TheStrand)\in\widetilde{\PConf}_\TheStrand^\freeprod(\Sigma(\ThePct,\TheCone))$ is given by 
\[
((\gamma_1,...,\gamma_\TheStrand),\sigma)(x_1,...,x_\TheStrand)=(\gamma_{\sigma(1)}(x_{\sigma(1)}),...,\gamma_{\sigma(\TheStrand)}(x_{\sigma(\TheStrand)})). 
\]
Let us assume that $(x_1,...,x_\TheStrand)\in\widetilde{\PConf}_\TheStrand^\freeprod(\Sigma(\ThePct,\TheCone))$ is fixed by $((\gamma_1,...,\gamma_\TheStrand),\sigma)$ in $\freeprod^\TheStrand\rtimes\Sym_\TheStrand$. Since $x_\Str\neq\gamma(\twistn{\Strand})$ for all $\Str\neq\Strand$ and $\gamma\in\freeprod$, we obtain $\sigma=\id_\TheStrand$. Moreover, $\freeprod$ acts freely on $\Spct$ such that $\gamma_\Str=1$ for all $1\leq\Str\leq\TheStrand$, i.e.\ the action of $\freeprod^\TheStrand\rtimes S_\TheStrand$ is free. 

By Lemma \ref{lem:fund_grp_free_act}, this implies 
\[
\piOrb(\Conf_\TheStrand^\freeprod(\Sigma_\freeprod(\ThePct,\TheCone)))\cong\pi_1(\widetilde{\PConf}_\TheStrand^\freeprod(\Sigma(\ThePct,\TheCone))/\freeprod^\TheStrand\rtimes\Sym_\TheStrand). 
\]
Mapping $\left(\freeprod^\TheStrand\rtimes\Sym_\TheStrand\right)(x_1,...,x_\TheStrand)$ to $\Sym_\TheStrand(\freeprod(x_1),...,\freeprod(x_\TheStrand))$ defines a homeomorphism
\[
\widetilde{\PConf}_\TheStrand^\freeprod(\Sigma(\ThePct,\TheCone))/\freeprod^\TheStrand\rtimes\Sym_\TheStrand\rightarrow\PConf_\TheStrand(\Sigma(\ThePct,\TheCone)/\freeprod)/\Sym_\TheStrand. 
\]
Since $\Spct/\freeprod$ is homeomorphic to $D(\ThePct,\TheCone)$ and 
\[
\Conf_\TheStrand(D(\ThePct,\TheCone))=\PConf_\TheStrand(D(\ThePct,\TheCone))/\Sym_\TheStrand, 
\]
this implies that 
\[
\Z_\TheStrand(\Sigma_\freeprod(\ThePct,\TheCone))\cong\pi_1(\Conf_\TheStrand(D(\ThePct,\TheCone))). 
\]
From Theorem \ref{thm:Birman_es_orb} we further obtain an isomorphism between $\MapIdOrb{\TheStrand}{\Sigma_\freeprod(\ThePct)}$ and $\pi_1(\Conf_\TheStrand(D(\ThePct,\TheCone)))$ such that the following diagram commutes: 
\begin{center}
\begin{tikzcd}
\MapIdOrb{\TheStrand}{\Sigma_\freeprod(\ThePct)} \arrow[r,"\cong"] \arrow[d,"\ev^\ast"] & \pi_1(\Conf_\TheStrand(D(\ThePct,\TheCone))) \arrow[d,"\cong"]
\\
\Z_\TheStrand(\Sigma_\freeprod(\ThePct,\TheCone)) \arrow[r,equal] & \piOrb(\Conf_\TheStrand^\freeprod(\Sigma_\freeprod(\ThePct,\TheCone))). 
\end{tikzcd}
\end{center}
Thus, $\ev^\ast$ is an isomorphism. 
\end{proof}


The other homomorphism $\ev$ is no longer an isomorphism but we may determine its kernel. We will prove: 

\begin{proposition}
\label{prop:orb_br_grp_quot}
The kernel of $\ev$ is the normal closure of $\{\TwC{\nu}^{\TheOrder_\nu}\mid1\leq\nu\leq\TheCone\}$ in $\MapIdOrb{\TheStrand}{\Sigma_\freeprod(\ThePct)}$. The kernel of the restricted map $\ev\vert_{\PMapIdOrb{\TheStrand}{\Sigma_\freeprod(\ThePct)}}$ is the normal closure of $\{\TwistC_{\NStrand\nu}^{\TheOrder_\nu}\mid1\leq\nu\leq \TheCone, 1\leq\NStrand\leq\TheStrand\}$ in $\PMapIdOrb{\TheStrand}{\Sigma_\freeprod(\ThePct)}$. 
\end{proposition}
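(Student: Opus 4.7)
The approach is to use the commutative diagram~\eqref{eq:comm_diag_ev-maps} and the isomorphism $\ev^\ast$ (Corollary~\ref{cor:ev-map_iso}). Since $\ev = f\circ\ev^\ast$, we have $\ker\ev = (\ev^\ast)^{-1}(\ker f)$, so the problem reduces to determining $\ker f$, where $f\colon\Z_\TheStrand(\SGpct)\twoheadrightarrow\Z_\TheStrand(\SG)$ is the epimorphism from Corollary~\ref{cor:surject_Z_n_ast_to_Z_n}. Tracing through the construction of $\ev^\ast$, the elements $\TwC{\nu}$ and $\TwistC_{\NStrand\nu}$ map to the braids $\tilde u_\nu$ and $\tilde c_{\NStrand\nu}$ in $\Z_\TheStrand(\SGpct)$ represented by the same diagrams as $\twC{\nu}$ and $\twistC_{\NStrand\nu}$ but viewed in the cone-punctured orbifold, and similarly $\ev^\ast$ restricts to an isomorphism $\PMapIdOrb{\TheStrand}{\SG}\to\PZ_\TheStrand(\SGpct)$. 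The easy inclusion is then immediate: Lemma~\ref{lem:gens_sat_rels_free_prod}\ref{lem:gens_sat_rels_free_prod_rel1} gives $\ev(\TwC{\nu}^{\TheOrder_\nu}) = \twC{\nu}^{\TheOrder_\nu} = 1$, and~\eqref{eq:def_a_kc_a_kr} handles the conjugate elements $\TwistC_{\NStrand\nu}^{\TheOrder_\nu}$.

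For the reverse inclusion I would determine $\ker f$ geometrically. Corollary~\ref{cor:fund_grp_orb_ses}, applied to $\Conf_\TheStrand^\freeprod(\SG)$ and $\Conf_\TheStrand^\freeprod(\SGpct)$, fits both $\Z_\TheStrand(\SG)$ and $\Z_\TheStrand(\SGpct)$ into short exact sequences with common quotient $\freeprod^\TheStrand\rtimes\Sym_\TheStrand$ and kernels $\pi_1(\widetilde{\PConf}_\TheStrand^\freeprod(\Sigma(\ThePct)))$ and $\pi_1(\widetilde{\PConf}_\TheStrand^\freeprod(\Sigma(\ThePct,\TheCone)))$ respectively, so the five-lemma identifies $\ker f$ with the kernel of the inclusion-induced map on these ordinary fundamental groups. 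The larger configuration-space manifold is obtained from the smaller one by adjoining the pairwise-disjoint real codimension-two submanifolds
\[
A_{\NStrand,\nu,\gamma} = \bigl\{(x_1,\dots,x_\TheStrand)\in\widetilde{\PConf}_\TheStrand^\freeprod(\Sigma(\ThePct))\colon x_\NStrand = \gamma(\cp_\nu)\bigr\}
\]
for $1\leq\NStrand\leq\TheStrand$, $1\leq\nu\leq\TheCone$, $\gamma\in\freeprod$, and the classical result on complements of codimension-two submanifolds presents this kernel as the normal closure of the meridian loops around the $A_{\NStrand,\nu,\gamma}$. The key identification is then that, after straightening via the shifts of Definition~\ref{def:equiv_paths}\ref{def:equiv_paths_shift}, the class of $\tilde c_{\NStrand\nu}^{\TheOrder_\nu}$ is exactly a meridian around $A_{\NStrand,\nu,1}$: the $\TheOrder_\nu$ individual $\gamma_\nu$-leaps combine to $\gamma_\nu^{\TheOrder_\nu} = 1$, while the $\TheOrder_\nu$ fundamental-domain arcs fit together to encircle the $\TheOrder_\nu$-valent vertex $\cp_\nu$ once in $\Sigma(\ThePct,\TheCone)$. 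Meridians around $A_{\NStrand,\nu,\gamma}$ for other $\gamma$ are $\gamma$-conjugates, whence $\ker(f\vert_{\PZ_\TheStrand(\SGpct)}) = \langle\!\langle \tilde c_{\NStrand\nu}^{\TheOrder_\nu}\rangle\!\rangle$, and in the full group the $h_\Strand$-conjugates collapse all $\NStrand$-indices to yield the single-index generators $\tilde u_\nu^{\TheOrder_\nu}$. Pulling back via $(\ev^\ast)^{-1}$ finishes the argument.

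The main obstacle I foresee is the explicit identification of the meridian with $\tilde c_{\NStrand\nu}^{\TheOrder_\nu}$ rather than $\tilde c_{\NStrand\nu}$: one must track carefully how the $\TheOrder_\nu$-valent combinatorics of the thickened Bass--Serre tree at $\cp_\nu$ interacts with the fundamental-domain representation of $\G$-paths, checking that a single ordinary loop around $\cp_\nu$ in the total space $\Sigma(\ThePct,\TheCone)$ lifts (via the subdivisions and shifts of Definition~\ref{def:equiv_paths}, combined with Proposition~\ref{prop:paths_rep_by_braids}) to precisely the $\TheOrder_\nu$-fold concatenation of the single-leap braid $\tilde c_{\NStrand\nu}$. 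Everything else is book-keeping of base points, $\G$-leaps and the semidirect-product structure built into $\freeprod^\TheStrand\rtimes\Sym_\TheStrand$.
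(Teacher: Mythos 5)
Your outline is correct, but it takes a genuinely different route from the paper. The paper also starts from the isomorphism $\ev^\ast$ (that is exactly its base case), but then argues combinatorially: given a word $H$ in the generators with $\ev(H)$ trivial, it realizes a nullhomotopy of the braid by $\Delta$-moves inside the fundamental domain, makes all cone-point crossings transverse, and inducts on their number, showing that each crossing is an orbifold Reidemeister move and hence an insertion or deletion of a conjugate of $\TwistC_{\NStrand\nu}^{\TheOrder_\nu}$ in the word $H$ (the adjustments of Figures \ref{fig:kernel_Map_orb_braid_adjust_Delta} and \ref{fig:kernel_Map_orb_braid_stretch}). You instead compute $\ker f$ once and for all: the short exact sequences of Corollary \ref{cor:fund_grp_orb_ses} over the common quotient $\freeprod^\TheStrand\rtimes\Sym_\TheStrand$ identify $\ker f$ with the kernel of the inclusion-induced map of ordinary fundamental groups of configuration spaces, which is then normally generated by meridians of the codimension-two loci $\{x_\NStrand=\gamma(\cp_\nu)\}$; and the identification of such a meridian with $\twistC_{\NStrand\nu}^{\TheOrder_\nu}$ is exactly the content of Remark \ref{rem:braid_fin_order} (the $\TheOrder_\nu$ leaps compose to the identity while the arcs close up to a loop encircling $\cp_\nu$ once). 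Your approach buys a cleaner conceptual statement (kernel of the cone-point-filling map $=$ meridian subgroup) and avoids the delicate reparametrization of the homotopy; the paper's argument stays entirely inside braid-diagram combinatorics and needs no transversality or codimension-two input.

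Two details need repair in a full write-up. First, the loci $A_{\NStrand,\nu,\gamma}$ are not pairwise disjoint: they only depend on the coset $\gamma\Stab(\cp_\nu)$, and for $\NStrand\neq\NStrand'$ and $\nu\neq\nu'$ they do intersect (e.g. $x_1=\cp_1$, $x_2=\cp_2$ is an admissible configuration). Since these mutual intersections have codimension four, the statement that the kernel is normally generated by meridians of the codimension-two strata survives by a general-position argument, but you must invoke (or prove) that version rather than the single-submanifold result you cite. Second, for the statement about $\ev\vert_{\PMapIdOrb{\TheStrand}{\Sigma_\freeprod(\ThePct)}}$ you need all conjugating elements to be pure: meridians of one component are conjugate by loops in the punctured configuration space (these are pure), and a meridian of $A_{\NStrand,\nu,\gamma}$ is carried to one of $A_{\NStrand,\nu,1}$ by a pure braid whose $\NStrand$-th strand is a $\freeprod$-loop realizing the leap $\gamma$; such braids exist, but this step is where your phrase ``$\gamma$-conjugates'' hides content and should be made explicit so that the normal closure is formed inside the pure group.
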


For the proof, we want to keep track of cone point intersections of homotopies. 
Similar to Definition \ref{def:braid_proj_cross} it is helpful to make assumptions on crossings; in this case crossings with cone points~$\cp_\nu$. Therefore, we recall the embedding of $\FD(\ThePct)$ as described in Figure~\ref{fig:embedding_fund_domain}. With respect to the projection 
\[
\pi:\FD(\ThePct)\times\bigcupdot_{\Subdiv=1}^\TheNSubdiv[t_{\Subdiv-1},t_\Subdiv]\rightarrow\RR\times\bigcupdot_{\Subdiv=1}^\TheNSubdiv[t_{\Subdiv-1},t_\Subdiv], 
\]
this embedding in particular satisfies $\pi((\cp_\nu,t))=(-\ThePct-\nu,t)$ for each $1\leq\nu\leq\TheCone$. 

\begin{definition}[Cone point crossings and generic representatives in $\Z_\TheStrand(\Sigma_\freeprod(\ThePct))$]
Let $\xi$ be a $\freeprod^\TheStrand\rtimes\Sym_\TheStrand$-path that represents an element in $\Z_\TheStrand(\Sigma_\freeprod(\ThePct))$. Assume that $\xi$ satisfies all the condition from the definition of a braid (see Definition \ref{def:braid_proj_cross}) except that it eventually intersects a cone point. 

Let $x\in\xi_\Strand$ such that $\pi(x)=(-\ThePct-\nu,t)$ for some $1\leq\nu\leq\TheCone$ and $t\in[t_{\Subdiv-1},t_\Subdiv]$. 

In this case, we say $\pi(\xi)$ \textit{crosses the cone point $\cp_\nu$} at height $t$. 
The cone point crossing is called \textit{transverse} if there is a neighborhood $U$ of $\pi(x)$ in $[0,\TheStrand+1]\times[t_{\Subdiv-1},t_\Subdiv]$ such that the pair 
\[
(U,(\pi(\xi)\cup\{-\ThePct-\nu\}\times I)\cap U)
\]
is locally homeomorphic to $(\RR^2,\RR\times\{0\}\cup\{0\}\times\RR)$ via a homeomorphism identifying $\pi(\xi_\Strand)$ with $\RR\times\{0\}$ and $\{-\ThePct-\nu\}\times I$ with $\{0\}\times\RR$. 

If all cone point crossings in $\xi$ are transverse, the representative $\xi$ is called \textit{generic}. 
\end{definition}

In particular, a generic $\freeprod^\TheStrand\rtimes\Sym_\TheStrand$-path does not stay in a cone point for a period of time. 

\begin{proof}[Proof of Proposition \textup{\ref{prop:orb_br_grp_quot}}]
If we recall that $\ev(\TwC{\nu})=\twC{\nu}$ for each $1\leq\nu\leq\TheCone$, the relation \ref{lem:gens_sat_rels_free_prod_rel1} implies that $\langle\langle\TwC{\nu}^{\TheOrder_\nu}\rangle\rangle_{\MapIdOrb{\TheStrand}{\Sigma_\freeprod(\ThePct)}}\subseteq\ker(\ev)$. Moreover, $\ev(\TwistC_{\NStrand\nu})=\twistC_{\NStrand\nu}$ and by \eqref{eq:def_a_kc_a_kr} $\twistC_{\NStrand\nu}$ is a $\Z_\TheStrand(\Sigma_\freeprod(\ThePct))$-conjugate of $\twC{\nu}$. Hence, $\twistC_{\NStrand\nu}^{\TheOrder_\nu}=1$ and 
\[
\langle\langle\TwistC_{\NStrand\nu}^{\TheOrder_\nu}\rangle\rangle_{\PMapIdOrb{\TheStrand}{\Sigma_\freeprod(\ThePct)}}\subseteq\ker(\ev\vert_{\PMapIdOrb{\TheStrand}{\Sigma_\freeprod(\ThePct)}}). 
\]

The opposite inclusions require more work. The idea is the following: Given an element $H\in\ker(\ev)$ in terms of the generators of the mapping class group $\MapIdOrb{\TheStrand}{\Sigma_\freeprod(\ThePct)}$, it projects to a braid that is homotopic to the trivial braid. If this homotopy intersects a cone point, a suitable adjustment of the homotopy allows us to read off the used orbifold Reidemeister moves discussed in Remark \ref{rem:braid_fin_order} and Figure \ref{fig:orb-Reidemeister-move}. In terms of the word that represents $H$, this transformation induces a non-trivial insertion or deletion of subwords conjugate to $\TwistC_{\NStrand\nu}^{\TheOrder_\nu}$. This allows us to deduce the claim. 

For example, the homotopy pictured in Figure \ref{fig:id_orb-Reidemeister-move} (left) reflects the orbifold Reidemeister move depicted on the right of the same figure. This Reidemeister move induces a deletion of $\TwC{\nu}^3$ in the word that represents the element $H\in\ker(\ev)$. 

\begin{figure}[H]
\import{Grafiken/braid_and_mcg/}{id_orb-Reidemeister-move_wc.pdf_tex}
\caption{Identification of an orbifold Reidemeister move and induced deletion in $\MapIdOrb{\TheStrand}{\Sigma_\freeprod(\ThePct)}$.}
\label{fig:id_orb-Reidemeister-move}
\end{figure}

Let $H=\rho_1^{\varepsilon_1}...\rho_\TheNSubdiv^{\varepsilon_\TheNSubdiv}$ with 
\[
\rho_\Subdiv\in\{H_1,...,H_{\TheStrand-1},\TwP{1},...,\TwP{\ThePct},\TwC{1},...,\TwC{\TheCone}\} \quad \text{ and } \quad \varepsilon_\Subdiv\in\{\pm1\}
\]
be an element in $\ker(\ev)$. Then 
the braid $\ev(H)=b=\sigma_1^{\varepsilon_1}...\sigma_\TheNSubdiv^{\varepsilon_\TheNSubdiv}$ with $\sigma_\Subdiv=h_\Strand$ if $\rho_\Subdiv=H_\Strand$, $\sigma_\Subdiv=\twP{\NPct}$ if $\rho_\Subdiv=\TwP{\NPct}$ and $\sigma_\Subdiv=\twC{\nu}$ if $\rho_\Subdiv=\TwC{\nu}$ is trivial. 
Applying suitable shifts, we may assume that the strands $b_1,...,b_\TheStrand$ of $b$ 
are 
continuous. 
Since $b$ is trivial, 
we have a system $h_s$ of homotopies $h_s^{(\Strand)}$ connecting the strands $b_\Strand$ to the constant maps $I\rightarrow\Sigma,t\mapsto p_\Strand$ for $1\leq\Strand\leq\TheStrand$ such that the map $h_s:t\mapsto(h_s^{(1)}(t),...,h_s^{(\TheStrand)}(t))$ represents an element in $\Z_\TheStrand(\Sigma_\freeprod(\ThePct))$ for every $s\in I$. 

Since $b$ is piecewise linear, analogous to the classical case described in \cite[Claim~1.7]{KasselTuraev2008}, the homotopy $h_s$ can be realized as a sequence of $\Delta$-moves. For each $s\in I$, we may assume that each strand of $h_s$ intersects the set $\freeprod(\partial \FD(\ThePct))$ in finitely many points. Applying the same ideas as in the proof of Lemma \ref{it_simpl_paths_pw_red_fd}, this yields an equivalent representative $\bar{h}_s$ such that the image of each strand $\bar{h}_s^{(\Strand)}$ is contained in $\FD(\ThePct)$. Additionally, as for the classical case \cite[Claim 1.8]{KasselTuraev2008}, we may also adjust the $\Delta$-move such that the representatives $\bar{h}_s$ are generic for each $s\in I$. In particular, in such a sequence of $\Delta$-moves each strand $h_s^{(\Strand)}$ intersects only finitely many times with cone points. Hence, we may induct on the number of intersections. 
\begin{intermediate}[The base case] 
If the homotopy $h_s$ does not intersect any cone points, the diagram~\eqref{eq:comm_diag_ev-maps} implies that $H$ is also contained in $\ker(\ev^\ast)$. By Lemma \ref{cor:ev-map_iso}, we obtain $H=\id_{\Sigma(\ThePct)}$. 
\end{intermediate}
\begin{intermediate}[The case where the homotopy intersects cone points]
If the homotopy $h_s$ intersects the cone points at least once, 
there exists 
a triple $(s,t,\Strand)$ with $s,t\in I$ and $1\leq\Strand\leq\TheStrand$ for each cone point intersection. For such a triple, we have $h_s^{(\Strand)}(t)=\gamma(\cp_\nu)$ for some~$1\leq\nu\leq\TheCone$ and $\gamma\in\freeprod$, where $h_s^{(\Strand)}$ denotes the strand of $h_s$ that begins in $p_\Strand$. Let us fix such a triple $(s_0,t_0,\Strand_0)$. 
 
During the $\Delta$-move that contains the intersection $h_{s_0}^{(\Strand_0)}(t_0)=\gamma(\cp_\nu)$ the only moving strand is the $\Strand_0$-th one. Since $\bar{h}_s$ is a generic braid for each $s\in I$, we may assume that $\pi(\bar{h}_s)$ does not contain \new{an additional} cone point crossing at height $t_0$ by shifting any such cone point crossing to a different height (see Figure \ref{fig:kernel_Map_orb_shift_cp_crossing} for an example). 

At this point, some explaining words about the following Figures \ref{fig:kernel_Map_orb_shift_cp_crossing}, \ref{fig:kernel_Map_orb_braid_adjust_Delta} and~\ref{fig:kernel_Map_orb_braid_stretch} are in order: Even if these pictures are drawn diagrammatically, they should not be misinterpreted as orbifold braid diagrams in the sense of Definition~\ref{def:braid_proj_cross}, where strands move inside the fundamental domain $\FD(\ThePct)$. Instead, 
as in the more elaborated picture in Figure \ref{fig:id_orb-Reidemeister-move} (left), the arcs in these figures should be considered as strands of a $\freeprod^\TheStrand\rtimes\Sym_\TheStrand$-path in $\Z_\TheStrand(\Sigma_\freeprod(\ThePct))$ moving inside the neighborhood of a cone point. In these figures, the strands drawn in black indicate the braid $\bar{h}_s$ for some $s<s_0$ (resp. $s>s_0$). These braids are pictured as the strands of a braid diagram. Furthermore, the figures depict $\Delta$-moves that intersect a cone point. In contrast, to orbifold braid diagrams, after (resp. before) the application of such a $\Delta$-move, the moved strand (depicted in light blue) does not lie entirely in the fundamental domain. 

\begin{figure}[H]
\import{Grafiken/braid_and_mcg/}{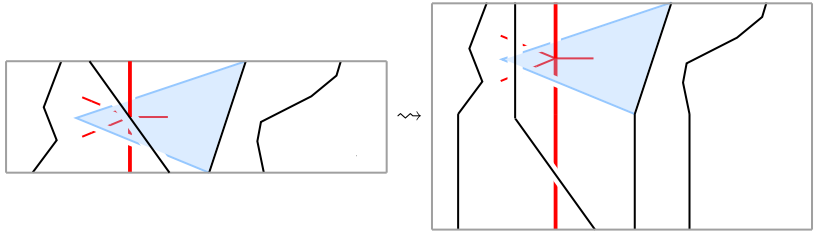}
\caption{Shift of a cone point crossing.}
\label{fig:kernel_Map_orb_shift_cp_crossing}
\end{figure}

Let us say that the relevant $\Delta$-move has underlying triangle $T$. Since $\pi(\bar{h}_{s_0})$ crosses with $\cp_\nu$ at time $t_0$, a subdivision of the underlying triangle $T$ allows us to find a triangle $T'\subseteq T$ and $\delta_1,\delta_2,\delta_1',\delta_2'>0$ such that 
\begin{itemize}
\item $T'$ contains the cone point $\cp_\nu$ in its interior. 
\item $T'$ contains no point $p_\Strand$ for $1\leq\Strand\leq\TheStrand$ that is an end point of a strand. 
\item $h_s
:[s_0-\delta_1,s_0+\delta_2]\rightarrow\Sigma(\ThePct)$ describes the $\Delta$-move supported on $T'$. 
\item for every $s\in[s_0-\delta_1,s_0+\delta_2]$, 
no crossing in the sense of Definition \ref{def:braid_proj_cross} occurs in $\bar{h}_s\vert_{[t_0-\delta_1',t_0+\delta_2']}$. 
\end{itemize}
See Figure \ref{fig:kernel_Map_orb_braid_adjust_Delta} for an illustration. 
\begin{figure}[H]
\import{Grafiken/braid_and_mcg/}{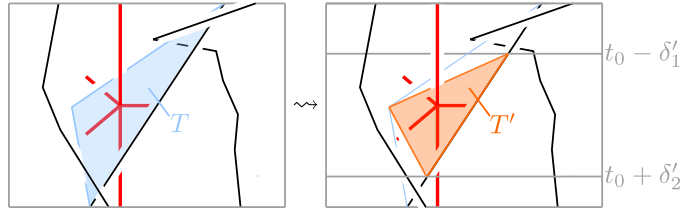}
\caption{Adjustment of the $\Delta$-move to a smaller triangle.}
\label{fig:kernel_Map_orb_braid_adjust_Delta}
\end{figure}
Next, the idea is to adjust the $\Delta$-move supported on $T'$ such that it keeps all strands except the $\Strand_0$-th one fixed at positions $p_1,...,p_\TheStrand$. We divide this adjustment into three steps (see Figure \ref{fig:kernel_Map_orb_braid_stretch} for an illustrating example): 
\begin{enumerate}
\item First of all, for every $s\in[s_0-\delta_1,s_0+\delta_2]$, we reparametrize the $t$-component of the $\Delta$-move $h_s\vert_{[t_0-\delta_1',t_0+\delta_2']}$ shrinking the interval $[t_0-\delta_1',t_0+\delta_2']$ to an interval $[t_0-\varepsilon_1,t_0+\varepsilon_2]$ (see Figure \ref{fig:kernel_Map_orb_braid_stretch}, step (1)). 
\item Moreover, let $\delta_1'',\delta_2''>0$ such that 
\[
t_0-\delta_1''\in(t_0-\new{\delta_1'},t_0-\varepsilon_1) \quad \text{ and } \quad t_0+\delta_2''\in(t_0+\varepsilon_2,t_0+\new{\delta_2'}). 
\]
We want to adjust the endpoints of the homotopy $h_s\vert_{[t_0-\delta_1'',t_0+\delta_2'']}$ such that they are kept fixed. Therefore, we endow the strands with an order according to their real part in $\bar{h}_s(t_0-\delta_1'')$. Let us assume that the $\Strand_0$-th strand appears in the $\NStrand$-th position in this order. Now we adjust $h_s\vert_{[t_0-\delta_1',t_0-\varepsilon_1]}$ and $h_s\vert_{[t_0+\new{\varepsilon_2},t_0+\delta_2']}$ by suitable homotopies 
such that 
\begin{align*}
h_{s_0-\delta_1}(t_0-\delta_1'') & =(p_1,...,p_\TheStrand)=h_{s_0-\delta_1}(t_0+\delta_2'') \text{ and } 
\\
h_{s_0+\delta_2}(t_0-\delta_1'') & =(p_1,...,p_\TheStrand)=h_{s_0+\delta_2}(t_0+\delta_2''). 
\end{align*}
If we realize this adjustment with respect to the order at height $t_0-\delta_1''$, which means we homotope the $\Str$-th strand in this order into the position~$p_\Str$, we may assume that the adjustment does not create any crossings in $[t_0-\delta_1',t_0-\varepsilon_1]$ and $[t_0+\varepsilon_2,t_0+\delta_2']$ (see Figure \ref{fig:kernel_Map_orb_braid_stretch}, step (2)). 
\item 
Further, we may adjust the homotopy so that all strands except the $\Strand_0$-th are kept fixed during the homotopy $h_s\vert_{[t_0-\delta_1'',t_0+\delta_2'']}$, 
(see Figure \ref{fig:kernel_Map_orb_braid_stretch}, step~(3)). If we realize this adjustment moving all the strands in front of the $\Strand_0$-th one, we obtain a $\Delta$-move with constant $\Str$-th strand for $\Str\neq\Strand_0$ as in the final diagram in Figure \ref{fig:kernel_Map_orb_braid_stretch}. 
\end{enumerate} 

\begin{figure}[H]
\centerline{\import{Grafiken/braid_and_mcg/}{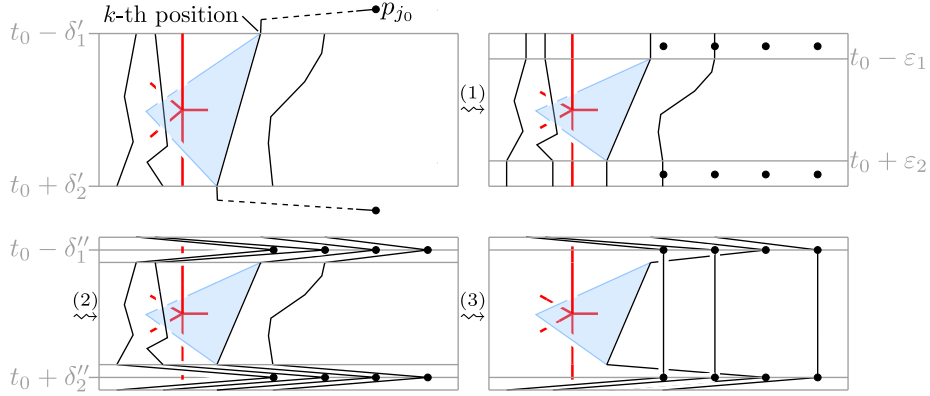}}
\caption{Adjustment of the homotopy to read off the induced orbifold Reidemeister move.}
\label{fig:kernel_Map_orb_braid_stretch}
\end{figure}

Since we moved in front of the $\Strand_0$-th strand, this implies that the braids $h_{s_0-\delta_1}$ and $h_{s_0+\delta_2}$ differ by the orbifold Reidemeister move in Figure \ref{fig:orb-Reidemeister-move_Delta}. If  we assume that the braid $\bar{h}_{s_0-\delta_1}$ corresponds to a word $w_{s_0-\delta_1}$ in the generators from Corollary~\ref{cor:gen_set_Z_n}, this implies that the word $w_{s_0+\delta_2}$ which corresponds to the braid $\bar{h}_{s_0+\delta_2}$ differs from $w_{s_0-\delta_1}$ by insertion or deletion of $h_{\NStrand-1}...h_1\twC{\nu}^{\TheOrder_\nu}h_1^{-1}...h_{\NStrand-1}^{-1}$, i.e.\ a conjugate of $\twistC_{\NStrand\nu}^{\TheOrder_\nu}$. Analogously, the corresponding words $W_{s_0-\delta_1}$ and $W_{s_0+\delta_2}$ differ by insertion or deletion of $H_{\NStrand-1}...H_1\TwC{\nu}^{\TheOrder_\nu}H_1^{-1}...H_{\NStrand-1}^{-1}$. 
\end{intermediate}

\begin{figure}[H]
\centerline{\import{Grafiken/braid_and_mcg/}{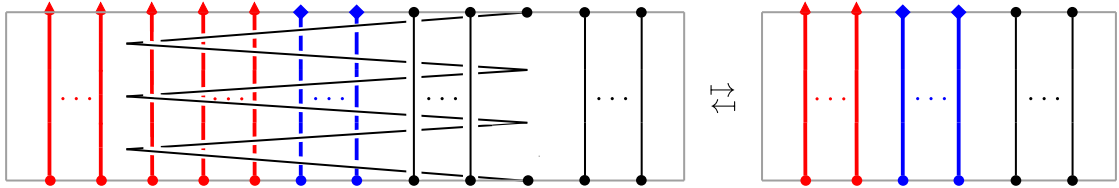}}
\caption{Orbifold Reidemeister move that describes the difference between $h_{s_0-\delta_1}$ and $h_{s_0+\delta_2}$.}
\label{fig:orb-Reidemeister-move_Delta}
\end{figure} 

Applying the above adjustments to each cone point intersection of $h_s$, we obtain that the word $H=\rho_1^{\varepsilon_1}...\rho_\TheNSubdiv^{\varepsilon_\TheNSubdiv}$ differs from the empty word by a finite sequence of insertions or deletions 
\begin{itemize}
\item allowed by the relations in $\MapIdOrb{\TheStrand}{\Sigma_\freeprod(\ThePct)}$ and 
\item of 
conjugates of $\TwistC_{\NStrand\nu}^{\pm\TheOrder_\nu}$ for suitable $1\leq\NStrand\leq\TheStrand$ and $1\leq\nu\leq\TheCone$. 
\end{itemize} 

This implies that $H=\rho_1^{\varepsilon_1}...\rho_\TheNSubdiv^{\varepsilon_\TheNSubdiv}$ is contained in the normal closure of the set $\{\TwistC_{\NStrand\nu}^{\TheOrder_\nu}\mid1\leq\NStrand\leq\TheStrand,1\leq\nu\leq\TheCone\}$, i.e.: 
\[
\ker(\ev)\subseteq\langle\langle\TwistC_{\NStrand\nu}^{\TheOrder_\nu}\mid1\leq\NStrand\leq\TheStrand,1\leq\nu\leq\TheCone\rangle\rangle_{\MapIdOrb{\TheStrand}{\Sigma_\freeprod(\ThePct)}}. 
\]

In $\MapIdOrb{\TheStrand}{\Sigma_\freeprod(\ThePct)}$ each element $\TwistC_{\NStrand\nu}$ is a conjugate of $\TwC{\nu}$. Hence, 
\begin{align*}
\ker(\ev)= & \langle\langle \TwistC_{\NStrand\nu}^{\TheOrder_\nu}\mid1\leq\NStrand\leq\TheStrand,1\leq\nu\leq\TheCone\rangle\rangle_{\MapIdOrb{\TheStrand}{\Sigma_\freeprod(\ThePct)}}
\\
= & \langle\langle \TwC{\nu}^{\TheOrder_\nu}\mid1\leq\nu\leq\TheCone\rangle\rangle_{\MapIdOrb{\TheStrand}{\Sigma_\freeprod(\ThePct)}}. 
\end{align*}
The above arguments also yield 
\[
\ker(\ev\vert_{\PMapIdOrb{\TheStrand}{\Sigma_\freeprod(\ThePct)}})=\langle\langle\TwistC_{\NStrand\nu}^{\TheOrder_\nu}\mid1\leq\NStrand\leq\TheStrand, 1\leq\nu\leq\TheCone\rangle\rangle_{\PMapIdOrb{\TheStrand}{\Sigma_\freeprod(\ThePct)}}. 
\] 
\new{Hence, the proposition follows.} 
\end{proof}

This proves Theorem \ref{thm-intro:kernel_ev} and together with Corollary \ref{cor:pres_PMap_free_prod} and Proposition \ref{prop:pres_map_kcp} we obtain presentations of $\Z_\TheStrand(\Sigma_\freeprod(\ThePct))$ and its pure subgroup. 

\begin{corollary}
\label{cor:pres_pure_free_prod}
The pure braid group $\PZ_\TheStrand(\Sigma_\freeprod(\ThePct))$ has a presentation with generators 
\[
\twist_{\Strand\Str}, \twistP_{\NStrand\NPct} \; \text{ and } \; \twistC_{\NStrand\nu} 
\]
for $1\leq\Str,\Strand,\NStrand\leq\TheStrand$ with $\Str<\Strand$, $1\leq\NPct\leq\ThePct$ and $1\leq\nu\leq\TheCone$, and the following defining relations for $1\leq\Str,\Strand,\NStrand,\NNStrand\leq\TheStrand$ with $\Str<\Strand<\NStrand<\NNStrand$, $1\leq\Pc,\NPct\leq\ThePct$ with $\Pc<\NPct$ and $1\leq\mu,\nu,\leq\TheCone$ with $\mu<\nu$: 
\begin{enumerate}
\item 
\label{cor:pres_pure_free_prod_rel1}
$\twistC_{\NStrand\nu}^{\TheOrder_\nu}=1$, 
\item 
\label{cor:pres_pure_free_prod_rel2}
$[\twist_{\Strand\Str},\twist_{\NNStrand\NStrand}]=1$, 
$[\twistP_{\Strand\NPct},\twist_{\NNStrand\NStrand}]=1$, 
$[\twistC_{\Strand\nu},\twist_{\NNStrand\NStrand}]=1$, 
\item 
\label{cor:pres_pure_free_prod_rel3}
$[\twist_{\NNStrand\Str},\twist_{\NStrand\Strand}]=1$, 
$[\twistP_{\NNStrand\NPct},\twist_{\NStrand\Strand}]=1$, 
$[\twistP_{\NNStrand\NPct},\twistP_{\NStrand\Pc}]=1$, 
$[\twistC_{\NNStrand\nu},\twist_{\NStrand\Strand}]=1$, 
$[\twistC_{\NNStrand\nu},\twistP_{\NStrand\NPct}]=1$, 
$[\twistC_{\NNStrand\nu},\twistC_{\NStrand\mu}]=1$, 
\item 
\label{cor:pres_pure_free_prod_rel4}
$[\twist_{\NNStrand\NStrand}\twist_{\NNStrand\Strand}\twist_{\NNStrand\NStrand}^{-1},\twist_{\NStrand\Str}]=1$, 
$[\twist_{\NStrand\Strand}\twist_{\NStrand\Str}\twist_{\NStrand\Strand}^{-1},\twistP_{\Strand\NPct}]=1$, 
\new{$[\twist_{\NStrand\Strand}\twistP_{\NStrand\Pc}\twist_{\NStrand\Strand}^{-1},\twistP_{\Strand\NPct}]=1$,} 
\\
$[\twist_{\NStrand\Strand}\twist_{\NStrand\Str}\twist_{\NStrand\Strand}^{-1},\twistC_{\Strand\nu}]=1$, 
\new{$[\twist_{\NStrand\Strand}\twistC_{\NStrand\mu}\twist_{\NStrand\Strand}^{-1},\twistC_{\Strand\nu}]=1$,} 
\item 
\label{cor:pres_pure_free_prod_rel5}
$\twist_{\Strand\Str}\twist_{\NStrand\Strand}\twist_{\NStrand\Str}=\twist_{\NStrand\Str}\twist_{\Strand\Str}\twist_{\NStrand\Strand}=\twist_{\NStrand\Strand}\twist_{\NStrand\Str}\twist_{\Strand\Str}$, 
\\
$\twist_{\Strand\Str}\twistP_{\Strand\NPct}\twistP_{\Str\NPct}=\twistP_{\Str\NPct}\twist_{\Strand\Str}\twistP_{\Strand\NPct}=\twistP_{\Strand\NPct}\twistP_{\Str\NPct}\twist_{\Strand\Str}$, 
\\
$\twist_{\Strand\Str}\twistC_{\Strand\nu}\twistC_{\Str\nu}=\twistC_{\Str\nu}\twist_{\Strand\Str}\twistC_{\Strand\nu}=\twistC_{\Strand\nu}\twistC_{\Str\nu}\twist_{\Strand\Str}$. 
\end{enumerate}
\end{corollary}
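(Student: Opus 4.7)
The plan is to deduce this presentation as a direct consequence of Corollary \ref{cor:pres_PMap_free_prod} (presentation of $\PMapIdOrb{\TheStrand}{\Sigma_\freeprod(\ThePct)}$) together with the second assertion of Proposition \ref{prop:orb_br_grp_quot}, via a standard presentation-by-kernel argument. The substantive content is already contained in these two results, so the corollary amounts to matching up generators and relations.

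The first step is to observe that the restriction
\[
\ev\vert_{\PMapIdOrb{\TheStrand}{\Sigma_\freeprod(\ThePct)}}:\PMapIdOrb{\TheStrand}{\Sigma_\freeprod(\ThePct)}\longrightarrow\PZ_\TheStrand(\Sigma_\freeprod(\ThePct))
\]
sends the generators $\Twist_{\Strand\Str}$, $\TwistP_{\NStrand\NPct}$, $\TwistC_{\NStrand\nu}$ of Corollary \ref{cor:pres_PMap_free_prod} to the pure braids $\twist_{\Strand\Str}$, $\twistP_{\NStrand\NPct}$, $\twistC_{\NStrand\nu}$, respectively. Since $\ev$ is defined on the generators of Proposition \ref{prop:pres_map_kcp} by the assignments $H_\Strand\mapsto h_\Strand$, $\TwP{\NPct}\mapsto\twP{\NPct}$, $\TwC{\nu}\mapsto\twC{\nu}$, this identification is automatic from the conjugation formulas \eqref{eq:def_a_ji} and \eqref{eq:def_a_kc_a_kr} defining $\twist_{\Strand\Str}$, $\twistP_{\NStrand\NPct}$, $\twistC_{\NStrand\nu}$, which are parallel to the conjugation formulas defining $\Twist_{\Strand\Str}$, $\TwistP_{\NStrand\NPct}$, $\TwistC_{\NStrand\nu}$ (see the proof of \cite[Lemma 4.18]{Flechsig2023mcg} and Corollary \ref{cor:pres_PMap_free_prod}). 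Combined with Corollary \ref{cor:gen_set_PZ_n}, this shows that $\ev\vert_{\PMapIdOrb{\TheStrand}{\Sigma_\freeprod(\ThePct)}}$ is a surjective homomorphism.

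The second step is to invoke the second assertion of Proposition \ref{prop:orb_br_grp_quot}, which identifies the kernel of this surjection as the normal closure of $\{\TwistC_{\NStrand\nu}^{\TheOrder_\nu}\mid 1\leq\NStrand\leq\TheStrand,\,1\leq\nu\leq\TheCone\}$ in $\PMapIdOrb{\TheStrand}{\Sigma_\freeprod(\ThePct)}$. Applying the standard fact that a quotient $G/\langle\langle S\rangle\rangle$ of a group $G=\langle X\mid R\rangle$ admits the presentation $\langle X\mid R\cup S\rangle$, the presentation of $\PZ_\TheStrand(\Sigma_\freeprod(\ThePct))$ is obtained from the presentation of $\PMapIdOrb{\TheStrand}{\Sigma_\freeprod(\ThePct)}$ in Corollary \ref{cor:pres_PMap_free_prod} by appending the relations $\twistC_{\NStrand\nu}^{\TheOrder_\nu}=1$ (for $1\leq\NStrand\leq\TheStrand$, $1\leq\nu\leq\TheCone$), after renaming the capital-letter generators to the corresponding lowercase braid generators under $\ev$.

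The only remaining task is bookkeeping: verifying that relations \ref{cor:pres_pure_free_prod_rel2}--\ref{cor:pres_pure_free_prod_rel5} of the claimed presentation match, term by term, with relations \ref{cor:pres_PMap_free_prod_rel1}--\ref{cor:pres_PMap_free_prod_rel4} of Corollary \ref{cor:pres_PMap_free_prod} under the substitution of lowercase for capital letters, and that the added relation \ref{cor:pres_pure_free_prod_rel1} is precisely $\twistC_{\NStrand\nu}^{\TheOrder_\nu}=1$. This is routine inspection, with no further geometric or group-theoretic obstacle. The conceptual heavy lifting is entirely contained in Proposition \ref{prop:orb_br_grp_quot}, whose proof identified the orbifold Reidemeister moves with the relations $\twC{\nu}^{\TheOrder_\nu}=1$.
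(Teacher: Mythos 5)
Your proposal is correct and follows essentially the same route as the paper: the paper also obtains Corollary \ref{cor:pres_pure_free_prod} directly by combining the presentation of $\PMapIdOrb{\TheStrand}{\Sigma_\freeprod(\ThePct)}$ from Corollary \ref{cor:pres_PMap_free_prod} with the identification of $\ker(\ev\vert_{\PMapIdOrb{\TheStrand}{\Sigma_\freeprod(\ThePct)}})$ as the normal closure of the $\TwistC_{\NStrand\nu}^{\TheOrder_\nu}$ from Proposition \ref{prop:orb_br_grp_quot}, adding the relations $\twistC_{\NStrand\nu}^{\TheOrder_\nu}=1$ to the quotient presentation. Your verification that $\ev$ sends $\Twist_{\Strand\Str},\TwistP_{\NStrand\NPct},\TwistC_{\NStrand\nu}$ to $\twist_{\Strand\Str},\twistP_{\NStrand\NPct},\twistC_{\NStrand\nu}$ (hence surjects onto $\PZ_\TheStrand(\Sigma_\freeprod(\ThePct))$ by Corollary \ref{cor:gen_set_PZ_n}) is exactly the implicit step the paper relies on, e.g.\ in the proof of Proposition \ref{prop:orb_br_grp_quot}.
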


\begin{corollary}
\label{cor:pres_orb_braid_free_prod}
The orbifold braid group $\Z_\TheStrand(\Sigma_\freeprod(\ThePct))$ is presented by generators 
\[
h_1,...,h_{\TheStrand-1},\twP{1},...,\twP{\ThePct},\twC{1},...,\twC{\TheCone}
\]
and the following defining relations for $2\leq\Strand<\TheStrand$, $1\leq\Pc,\NPct\leq\ThePct$ with $\Pc<\NPct$ and $1\leq\mu,\nu\leq\TheCone$ with $\mu<\nu$: 
\begin{enumerate}
\item \label{cor:pres_orb_braid_free_prod_rel1} 
$\twC{\nu}^{\TheOrder_\nu}=1$, 
\item \label{cor:pres_orb_braid_free_prod_rel2} 
braid and commutator relations for the generators $h_1,...,h_{\TheStrand-1}$, 
\item \label{cor:pres_orb_braid_free_prod_rel3} 
$[\twP{\NPct},h_\Strand]=1$ and 
$[\twC{\nu},h_\Strand]=1$, 
\item \label{cor:pres_orb_braid_free_prod_rel4} 
$[h_1\twP{\NPct}h_1,\twP{\NPct}]=1$ \; and \; 
$[h_1\twC{\nu}h_1,\twC{\nu}]=1$, 
\item \label{cor:pres_orb_braid_free_prod_rel5}
$[\twP{\Pc},\twistP_{2\NPct}]=1$, $[\twC{\mu},\twistC_{2\nu}]=1$ \; and \; $[\twP{\NPct},\twistC_{2\nu}]=1$
\\
with \; $\twistP_{2\NPct}=h_1^{-1}\twP{\NPct}h_1$ \; and \; $\twistC_{2\nu}=h_1^{-1}\twC{\nu} h_1$. 
\end{enumerate}
\end{corollary}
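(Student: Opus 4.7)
The proof is a direct assembly of Proposition \ref{prop:pres_map_kcp} and Proposition \ref{prop:orb_br_grp_quot}. My plan is to realize $\Z_\TheStrand(\Sigma_\freeprod(\ThePct))$ as an explicit quotient of $\MapIdOrb{\TheStrand}{\Sigma_\freeprod(\ThePct)}$ via the evaluation map and then read off a presentation of the quotient from the presentation of the source.

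First, I would note that $\ev\colon \MapIdOrb{\TheStrand}{\Sigma_\freeprod(\ThePct)}\to \Z_\TheStrand(\Sigma_\freeprod(\ThePct))$ is surjective. Indeed, by Corollary \ref{cor:gen_set_Z_n}, the group $\Z_\TheStrand(\Sigma_\freeprod(\ThePct))$ is generated by the braids $h_\Strand, \twP{\NPct}$ and $\twC{\nu}$, and by construction of $\ev$ (see the discussion preceding Lemma \ref{lem:gens_sat_rels_free_prod}) these are precisely the images of the generators $H_\Strand, \TwP{\NPct}, \TwC{\nu}$ of $\MapIdOrb{\TheStrand}{\Sigma_\freeprod(\ThePct)}$. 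Hence the first isomorphism theorem yields
\[
\Z_\TheStrand(\Sigma_\freeprod(\ThePct))\;\cong\;\MapIdOrb{\TheStrand}{\Sigma_\freeprod(\ThePct)}\big/\ker(\ev).
\]

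Next, I would invoke Proposition \ref{prop:orb_br_grp_quot}, which identifies $\ker(\ev)$ as the normal closure of $\{\TwC{\nu}^{\TheOrder_\nu}\mid 1\leq\nu\leq\TheCone\}$. Under the correspondence of generators, this exactly corresponds to normally imposing the relations $\twC{\nu}^{\TheOrder_\nu}=1$ in the quotient. Consequently, a presentation of $\Z_\TheStrand(\Sigma_\freeprod(\ThePct))$ is obtained from the presentation of $\MapIdOrb{\TheStrand}{\Sigma_\freeprod(\ThePct)}$ in Proposition \ref{prop:pres_map_kcp} by adding the finite-order relations $\twC{\nu}^{\TheOrder_\nu}=1$ for $1\leq\nu\leq\TheCone$.

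Finally, I would simply match the generators and relations of Proposition \ref{prop:pres_map_kcp} to items \ref{cor:pres_orb_braid_free_prod_rel2}--\ref{cor:pres_orb_braid_free_prod_rel5} of the corollary and observe that the added relations form item \ref{cor:pres_orb_braid_free_prod_rel1}, closing the argument. Since both the key inputs are already established, there is essentially no obstacle; the only care needed is to verify that the assignment $H_\Strand\mapsto h_\Strand$, $\TwP{\NPct}\mapsto\twP{\NPct}$, $\TwC{\nu}\mapsto\twC{\nu}$ really identifies relations \ref{prop:pres_map_kcp_rel1}--\ref{prop:pres_map_kcp_rel4} of Proposition \ref{prop:pres_map_kcp} with relations \ref{cor:pres_orb_braid_free_prod_rel2}--\ref{cor:pres_orb_braid_free_prod_rel5} of the corollary (which is immediate from the verbatim matching of the relation words).
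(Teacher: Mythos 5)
Your argument is correct and is exactly the route the paper takes: the paper deduces this corollary by combining the presentation of $\MapIdOrb{\TheStrand}{\Sigma_\freeprod(\ThePct)}$ from Proposition \ref{prop:pres_map_kcp} with the identification of $\ker(\ev)$ as the normal closure of $\{\TwC{\nu}^{\TheOrder_\nu}\}$ in Proposition \ref{prop:orb_br_grp_quot}, adding the relations $\twC{\nu}^{\TheOrder_\nu}=1$ to obtain the quotient presentation. Your explicit verification of surjectivity of $\ev$ via Corollary \ref{cor:gen_set_Z_n} and the matching of generators is precisely the (implicit) content of the paper's deduction.
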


\new{This proves Theorem \ref{thm-intro:pres_orb_braid_free_prod}.} 

\subsection{An exact sequence of pure orbifold braid groups}
\label{subsec:ex_seq_pure_orb_braid}

It remains to deduce the main goal: 
an exact sequence of pure orbifold braid groups. For preparation, we briefly recall the classical situation with the short exact sequence  
\[
1\rightarrow\pi_1(D(\TheStrand-1),p_\TheStrand)\rightarrow\PB_\TheStrand\rightarrow\PB_{\TheStrand-1}\rightarrow1. 
\]
The first map embeds homotopy classes of loops in the punctured disk $D(\TheStrand-1)$ 
into $\PB_\TheStrand$ interpreting them as pure braids with $\TheStrand-1$ constant strands in positions of the punctures and the $\TheStrand$-th strand moving along the loops. The second map forgets the $\TheStrand$-th strand. Moreover, $\pi_1(D(\TheStrand-1),p_\TheStrand)$ is a free group with $\TheStrand-1$ generators. 

We want to establish a similar exact sequence 
\begin{equation}
\label{eq:ex_ser_pure_orb_braid_grps}
\piOrb\left(\Sigma_\freeprod(\TheStrand-1+\ThePct),p_\TheStrand\right)
\xrightarrow{\iotaPZ}\PZ_\TheStrand(\Sigma_\freeprod(\ThePct))\xrightarrow{\piPZ}\PZ_{\TheStrand-1}(\Sigma_\freeprod(\ThePct))\rightarrow1 
\end{equation}
for pure orbifold braid groups. The definition of the maps is analogous to the classical case: The map $\piPZ$ forgets the $\TheStrand$-th strand and the map $\iotaPZ$ interprets homotopy classes of $\freeprod$-loops with base point~$p_\TheStrand$ as pure braids in $\Sigma_\freeprod(\ThePct)$ with $\TheStrand-1$ constant strands in positions $p_1,...,p_{\TheStrand-1}$ 
and the $\TheStrand$-th strand moving along the $\freeprod$-loops. 
It will turn out that the map~$\iotaPZ$ has non-trivial kernel $\kernel_\TheStrand$. 

We begin by determining the orbifold fundamental group $\piOrb\left(\Sigma_\freeprod(\TheStrand-1+\ThePct)\right)$. 

\begin{lemma}
\label{lem:fundam_grp_Sigma-n-1_free_prod}
The group $\piOrb\left(\Sigma_\freeprod(\TheStrand-1+\ThePct)\right)$ is isomorphic to $\freegrp{\TheStrand-1+\ThePct}\ast\freeprod$. 
\end{lemma}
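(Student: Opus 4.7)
The plan is to exhibit an explicit generating set for $\piOrb(\Sigma_\freeprod(\TheStrand-1+\ThePct))$, verify the stated relations, and compare with $\freegrp{\TheStrand-1+\ThePct}\ast\freeprod$ via Corollary \ref{cor:fund_grp_orb_ses}. Fix a basepoint $x_0$ in the interior of the fundamental domain $\FD(\TheStrand-1+\ThePct)$, away from the punctures. For each $1\leq\alpha\leq\TheStrand-1+\ThePct$, let $x_\alpha$ be a continuous $\freeprod$-loop based at $x_0$ that encircles the $\alpha$-th puncture inside $\FD(\TheStrand-1+\ThePct)$ once; for each $1\leq\nu\leq\TheCone$, let $y_\nu=(1,c',\gamma_\nu,c'')$ be a $\freeprod$-loop travelling from $x_0$ to one of the two boundary arcs of $\FD$ adjacent to $c_\nu$, performing a $\freeprod$-leap by $\gamma_\nu$ onto the $\gamma_\nu$-translated arc (which, by the construction in Example \ref{ex:good_orb_free_prod}, lies in the same $\freeprod$-orbit), and returning to $x_0$. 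The same argument as in Remark \ref{rem:braid_fin_order} shows $y_\nu^{\TheOrder_\nu}=1$: the $\TheOrder_\nu$-th power has trivial total $\freeprod$-leap $\gamma_\nu^{\TheOrder_\nu}=1$ and, after suitable shifts, becomes a continuous loop winding $\TheOrder_\nu$ times around $c_\nu$ in the contractible surface $\Sigma$.

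Corollary \ref{cor:fund_grp_orb_ses} applied to the $\freeprod$-space $\Sigma(\TheStrand-1+\ThePct)$ (path-connected as the complement of a discrete set in $\Sigma$) gives a short exact sequence
\[
1\to\pi_1(\Sigma(\TheStrand-1+\ThePct))\to\piOrb(\Sigma_\freeprod(\TheStrand-1+\ThePct))\to\freeprod\to1.
\]
Because $\Sigma$ deformation retracts onto its Bass--Serre tree, removing the discrete $\freeprod$-invariant set $\freeprod\cdot\{p_1,\ldots,p_{\TheStrand-1},r_1,\ldots,r_\ThePct\}$ yields a space homotopy equivalent to a wedge of circles, one per removed point. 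Hence $\pi_1(\Sigma(\TheStrand-1+\ThePct))$ is free on $\{\gamma\cdot x_\alpha\mid\gamma\in\freeprod,\,1\leq\alpha\leq\TheStrand-1+\ThePct\}$. By Lemma \ref{lem:cont_path_homo}, each $\gamma\cdot x_\alpha$ is conjugate in $\piOrb(\Sigma_\freeprod(\TheStrand-1+\ThePct))$ to $x_\alpha$ by a word $w_\gamma$ in the $y_\nu$'s representing $\gamma$.

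Now let $G:=\langle x_\alpha,y_\nu\mid y_\nu^{\TheOrder_\nu}=1\rangle\cong\freegrp{\TheStrand-1+\ThePct}\ast\freeprod$. By Theorem \ref{thm:von_Dyck}, the matching assignments induce a surjective homomorphism $\varphi:G\to\piOrb(\Sigma_\freeprod(\TheStrand-1+\ThePct))$. The group $G$ itself fits into a short exact sequence $1\to N\to G\to\freeprod\to1$, where $N$ is the normal closure of $\{x_\alpha\}$; by a standard Bass--Serre/Kurosh argument, $N$ is free on $\{w\cdot x_\alpha\cdot w^{-1}\}$ as $w$ ranges over a coset transversal of $\freeprod$. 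This SES is compatible with the orbifold SES above via $\varphi$, and the identification of conjugates in the previous paragraph shows $\varphi$ carries a free basis of $N$ bijectively onto a free basis of $\pi_1(\Sigma(\TheStrand-1+\ThePct))$. The five lemma then forces $\varphi$ to be an isomorphism.

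The main obstacle is the topological analysis of $\Sigma(\TheStrand-1+\ThePct)$: establishing that it is homotopy equivalent to a wedge of circles indexed by the $\freeprod$-orbit of punctures, and verifying that the resulting free generators correspond to conjugates of the $x_\alpha$ by $y_\nu$-words under the inclusion, requires careful bookkeeping across the cone-point boundary arcs of the fundamental domain.
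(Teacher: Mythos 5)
Your route is genuinely different from the paper's (the paper applies the orbifold Seifert--van Kampen theorem \cite[Chapter III.G, 3.10(4)]{BridsonHaefliger2011} to a $\freeprod$-equivariant decomposition of $\Sigma(\TheStrand-1+\ThePct)$ into a disjoint orbit of punctured disks containing all punctures and an invariant neighborhood of the cone points equivalent to $\Sigma$, so that the free product appears immediately), and your skeleton --- the exact sequence of Corollary \ref{cor:fund_grp_orb_ses}, the retraction sequence $1\to N\to\freegrp{\TheStrand-1+\ThePct}\ast\freeprod\to\freeprod\to1$, and the short five lemma --- could be made to work. But the step you postpone as ``careful bookkeeping'' is exactly where the content of the lemma sits, and as written it is a genuine gap. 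You must prove that $\varphi$ carries the free basis $\{\gamma x_\alpha\gamma^{-1}\}_{\gamma\in\freeprod,\,\alpha}$ of $N$ to a free basis of $\pi_1(\Sigma(\TheStrand-1+\ThePct),x_0)$; what your argument actually gives is only that each image is a based loop winding once around the puncture in the $\gamma$-translate of $\FD$ corresponding to $\alpha$ (it is freely homotopic to the translated loop $\gamma\cdot x_\alpha$, which, note, is not itself an element of $\pi_1(\cdot,x_0)$, so ``free on $\{\gamma\cdot x_\alpha\}$'' is not yet meaningful). One loop per puncture, each encircling its puncture once, is \emph{not} automatically a basis: already in a twice-punctured disk with standard basis $x_1,x_2$, the loops $x_1$ and $(x_2x_1)x_2(x_2x_1)^{-1}$ each encircle one puncture once, yet under $x_1\mapsto(1\,2)$, $x_2\mapsto(1\,3)$ into $\Sym_3$ both map to $(1\,2)$, so they generate a proper subgroup of $\pi_1$. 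Hence whether your system is a basis depends on the specific conjugating words $w_\gamma$, i.e.\ on the connecting arcs they trace through the translates of $\FD$, and this has to be argued --- for instance by taking $w_\gamma$ to follow geodesics in the Bass--Serre tree, realizing all connecting arcs disjointly away from the basepoint, and exhibiting the union of the resulting loops as a spine of the infinitely punctured surface. Nothing in the proposal supplies this, so the ``five lemma forces $\varphi$ to be an isomorphism'' conclusion is not yet earned.

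Two smaller points. Surjectivity of $\varphi$ does not follow from Theorem \ref{thm:von_Dyck} (von Dyck only yields a homomorphism once $y_\nu^{\TheOrder_\nu}=1$ is checked); it is, however, a consequence of the short five lemma once the basis claim is established, so this is harmless. Also, the nullhomotopy witnessing $y_\nu^{\TheOrder_\nu}=1$ must avoid the punctures, so it should be performed inside a small disk around $\cp_\nu$ (which contains no punctures) rather than ``in the contractible surface $\Sigma$''; this is easily repaired. The comparison to keep in mind: the paper's equivariant Seifert--van Kampen decomposition sidesteps the infinite-rank basis bookkeeping entirely, which is precisely the part your approach still owes.
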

\newpage
\begin{proof}
We apply an orbifold version of the Seifert--van Kampen theorem \cite[Chapter III.G, 3.10(4)]{BridsonHaefliger2011}. Let $X_0$ be a punctured disk that satisfies the following conditions: 
\begin{itemize}
\item $X_0$ is open as a subset of $\Sigma(\TheStrand-1+\ThePct)$. 
\item $\freeprod(X_0)$ is a disjoint union of the $\freeprod$-translates of $X_0$. 
\item $X_0$ contains the punctures in positions $p_1,...,p_{\TheStrand-1}$ and $r_1,...,r_\ThePct$. 
\end{itemize}
Let $X:=\freeprod(X_0)$ and $Y$ a $\freeprod$-invariant open neighborhood of the cone points such that $Y\hookrightarrow\Sigma$ is a $\freeprod$-equivariant homotopy equivalence. 
Additionally, we assume that $X\cup Y=\Sigma(\TheStrand-1+\ThePct)$. The intersection $Z:=X\cap Y$ is a disjoint union of disks $\gamma(Z_0)$ for $\gamma\in\freeprod$ and each of these disks is open as a subset of $\Sigma(\TheStrand-1+\ThePct)$ and does not contain any punctures (see Figure \ref{fig:orb_fund_grp_Seifert-van_Kampen} for an example). 
\begin{figure}[H]
\import{Grafiken/braid_and_mcg/}{orb_fund_grp_Seifert-van_Kampen.pdf_tex}
\caption{Decomposition of $\Sigma(\TheStrand-1+\ThePct)$ into open subsets $X$ and $Y$.}
\label{fig:orb_fund_grp_Seifert-van_Kampen}
\end{figure}

In general, we can obtain $X$ and $Y$ as follows: Let $\FD(\TheStrand-1+\ThePct)$ embed into $\CC$ as described in Figure \ref{fig:embedding_fund_domain} and define 
\[
X_0:=\FD(\TheStrand-1+\ThePct)\cap\{x\in\CC\mid\text{Im}(x)<\varepsilon_+\} \quad \text{ and } \quad \tilde{Y}=\FD(\TheStrand-1+\ThePct)\cap\{x\in\CC\mid\text{Im}(x)>\varepsilon_-\}
\]
for $\varepsilon_+>\varepsilon_->0$ such that 
\[
\varepsilon_+<\min\{\text{Im}(x)\mid x\in \FD(\TheStrand-1+\ThePct)\cap\gamma(\FD(\TheStrand-1+\ThePct)) \text{ for some } \gamma\neq1\}. 
\]
See Figure \ref{fig:orb_fund_grp_Seifert-van_Kampen_formal}. Then $Y:=\freeprod(\tilde{Y})$ and $X=\freeprod(X_0)$ satisfy the above properties. 

\begin{figure}[H]
\import{Grafiken/braid_and_mcg/}{orb_fund_grp_Seifert-van_Kampen_formal.pdf_tex}
\caption{Subsets $X_0$ and $\tilde{Y}$ inside $\FD(\TheStrand-1+\ThePct)$.}
\label{fig:orb_fund_grp_Seifert-van_Kampen_formal}
\end{figure}

If we consider the orbifold fundamental group with respect to a base point in~$Z$, the Seifert--van Kampen theorem implies that the inclusion maps induce an isomorphism 
\\
\begin{minipage}{1.02\textwidth}
\begin{equation}
\label{lem:fundam_grp_Sigma-n-1_free_prod_eq1}
\piOrb(X_\freeprod)\ast\piOrb(Y_\freeprod)/\langle\langle \iota_X(\gamma)\iota_Y(\gamma)^{-1}\mid\gamma\in\piOrb(Z_\freeprod)\rangle\rangle\rightarrow\piOrb\left(\Sigma_\freeprod(\TheStrand-1+\ThePct)\right). 
\end{equation}
\end{minipage}
\\[7pt]
Here above, $\iota_X:\piOrb(Z_\freeprod)\hookrightarrow\piOrb(X_\freeprod)$ and $\iota_Y:\piOrb(Z_\freeprod)\hookrightarrow\piOrb(Y_\freeprod)$ denote the homomorphisms induced by the inclusion of $Z_\freeprod$ into $X_\freeprod$ and $Y_\freeprod$, respectively. 

Since the inclusion $Y\hookrightarrow\Sigma$ is a $\freeprod$-equivariant homotopy equivalence, the fundamental group of $Y_\freeprod$ equals $\piOrb(\Sigma_\freeprod)=\freeprod$, that was determined in Corollary \ref{cor:fund_grp_orb_ses}.  
Let us use $X_0$ and $Z_0$ to denote the path components of $X$ and $Z$ that contain the base point. In both cases, the subgroup of $\freeprod$ that leaves the path components invariant is trivial. \new{By Corollary \ref{cor:fund_grp_orb_ses},} the canonical maps $\pi_1(X_0)\rightarrow\piOrb(X_\freeprod)$ and $\pi_1(Z_0)\rightarrow\piOrb(Z_\freeprod)$ are isomorphisms. The subsurface $X_0$ is a disk with punctures in $p_1,...,p_{\TheStrand-1}$ and $r_1,...,r_\ThePct$. Hence, 
$\pi_1(X_0)\cong\freegrp{\TheStrand-1+\ThePct}$. The subsurface $Z_0$ is a disk without punctures, i.e.\ $\pi_1(Z_0)$ is trivial. Consequently, 
$\piOrb(X_\freeprod)\cong\freegrp{\TheStrand-1+\ThePct}$ and 
$\piOrb(Z_\freeprod)$ is trivial. By (\ref{lem:fundam_grp_Sigma-n-1_free_prod_eq1}), this implies that the group $\piOrb(\Sigma_\freeprod(\TheStrand-1+\ThePct)$ 
is isomorphic to the free product $\freegrp{\TheStrand-1+\ThePct}\ast\freeprod$. 
\end{proof}

In particular, the proof of Lemma \ref{lem:fundam_grp_Sigma-n-1_free_prod} shows that $\piOrb\left(\Sigma_\freeprod(\TheStrand-1+\ThePct)\right)$ is presented by generators 
\[
\twistn{\Strand}, \twistnP{\NPct} \; \text{ and } \; \twistnC{\nu} 
\]
with $1\leq\Strand<\TheStrand$, $,1\leq\NPct\leq\ThePct$ and $1\leq\nu\leq\TheCone$ represented by the $\freeprod$-loops in Figure~\ref{fig:gens_orb_fund_grp} and defining relations $\twistnC{\nu}^{\TheOrder_\nu}=1$ for $1\leq\nu\leq\TheStrand$. In the following, we will not distinguish between the representatives $\twistn{\Strand},\twistnP{\NPct}$ and $\twistnC{\nu}$ and the represented homotopy classes in $\piOrb\left(\Sigma_\freeprod(\TheStrand-1+\ThePct)\right)$. 
\begin{figure}[H]
\centerline{\import{Grafiken/braid_and_mcg/}{gens_orb_fund_grp.pdf_tex}}
\caption{The generators $\twistn{\Strand},\twistnP{\NPct}$ and $\twistnC{\nu}$ of $\piOrb(\Sigma_\freeprod(\TheStrand-1+\ThePct))$.}
\label{fig:gens_orb_fund_grp}
\end{figure}
\begin{intermediate}[The homomorphism $\iotaPZ$]
We define assignments $\iotaPZ$ by 
\begin{align*}
\twistn{\Strand} & \mapsto\twist_{\TheStrand\Strand} \; \text{ for } \; 1\leq\Strand<\TheStrand, 
\\
\twistnP{\NPct} & \mapsto \twistP_{\TheStrand\NPct} \; \text{ for } \; 1\leq\NPct<\ThePct \; \text{ and }
\\
\twistnC{\nu} & \mapsto\twistC_{\TheStrand\nu} \; \text{ for } \; 1\leq\nu<\TheCone. 
\end{align*}
Corollary \ref{cor:pres_pure_free_prod} yields that the assignments $\iotaPZ$ preserve the defining relations of the fundamental group $\piOrb\left(\Sigma_\freeprod(\TheStrand-1+\ThePct)\right)\cong\freegrp{\TheStrand-1+\ThePct}\ast\freeprod$. By Theorem \ref{thm:von_Dyck}, the assignments $\iotaPZ$ induce a homomorphism. 
\end{intermediate}

In the following, we will also use the notations $\twistn{\Strand}, \twistnP{\NPct}$ and $\twistnC{\nu}$ for their images under $\iotaPZ$ if we want to emphasize that an element can be represented by a braid where the only moving strand is the $\TheStrand$-th one. 

\begin{intermediate}[The homomorphism $\piPZ$]
We define assignments $\piPZ$ by 
\begin{align*}
\twist_{\Strand\Str} & \mapsto\twist_{\Strand\Str} \; \text{ for } \; 1\leq\Str<\Strand<\TheStrand, 
\\
\twistP_{\NStrand\NPct} & \mapsto\twistP_{\NStrand\NPct} \; \text{ for } \; 1\leq\NStrand<\TheStrand,1\leq\NPct\leq\ThePct \; \text{ and }
\\
\twistC_{\NStrand\nu} & \mapsto\twistC_{\NStrand\nu} \; \text{ for } \; 1\leq\NStrand<\TheStrand, 1\leq\nu\leq\TheCone 
\end{align*}
and the remaining generators $\twistn{\NStrand}, \twistnP{\NPct}$ and $\twistnC{\nu}$, which only move the $\TheStrand$-th strand, map to the identity under $\piPZ$. Once again, by Corollary \ref{cor:pres_pure_free_prod}, it is easy to verify that the assignments $\piPZ$ preserve the relations of $\PZ_\TheStrand(\Sigma_\freeprod(\ThePct))$. By Theorem \ref{thm:von_Dyck}, the assignments $\piPZ$ induce a homomorphism. 
\end{intermediate}

In particular, the definitions of $\iotaPZ$ and $\piPZ$ imply that $\piPZ\circ\iotaPZ$ is the trivial map. 

\begin{intermediate}[A right-inverse homomorphism $\sPZ$ of $\piPZ$]
\phantomsection
\label{def:right-inverse_sPZ}
A right inverse map of $\piPZ$ is induced by mapping the generators $\twist_{\Strand\Str},\twistP_{\NStrand\NPct}$ and $\twistC_{\NStrand\nu}$ of $\PZ_{\TheStrand-1}(\Sigma_\freeprod(\ThePct))$ to their homonyms in $\PZ_\TheStrand(\Sigma_\freeprod(\ThePct))$. Once more, using the presentation from Corollary \ref{cor:pres_pure_free_prod}, we can read off that this assignments preserve the defining relations of $\PZ_{\TheStrand-1}(\Sigma_\freeprod(\ThePct))$. By Theorem \ref{thm:von_Dyck}, this implies that these assignments induce a homomorphism from $\PZ_{\TheStrand-1}(\Sigma_\freeprod(\ThePct))$ to $\PZ_\TheStrand(\Sigma_\freeprod(\ThePct))$. On the level of generators, it is easy to see that this homomorphism yields a right-inverse of $\piPZ$. 
\end{intermediate}

\begin{intermediate}[Exactness]
Since $\piPZ$ is right invertible, it is surjective. As emphasized above $\textrm{im}(\iotaPZ)\subseteq\ker(\piPZ)$. \new{For the opposite inclusion,} let $z\in\ker(\piPZ)$. By Corollary~\ref{cor:gen_set_PZ_n}, $z$ decomposes into generators 
\[
\twist_{\Strand\Str}, \twistP_{\NStrand\NPct} \; \text{ and } \; \twistC_{\NStrand\nu}
\]
for $1\leq\Str,\Strand,\NStrand\leq\TheStrand,\Str<\Strand,1\leq\NPct\leq\ThePct$ and $1\leq\nu\leq\TheCone$. On the one hand, for $\Strand=\TheStrand$ and $\NStrand=\TheStrand$, we denote $\twist_{\TheStrand\Str}=\twistn{\Str}$, $\twistP_{\TheStrand\NPct}=\twistnP{\NPct}$ and $\twistC_{\TheStrand\nu}=\twistnC{\nu}$. On the other hand, for $1\leq\Strand,\NStrand<\TheStrand$, \eqref{eq:def_a_ji} and \eqref{eq:def_a_kc_a_kr} yield 
\begin{align*}
\twist_{\Strand\Str} & = h_{\Strand-1}^{-1}...h_{\Str+1}^{-1}h_\Str^2h_{\Str+1}...h_{\Strand-1}, 
\\
\twistP_{\NStrand\NPct} & = h_{\NStrand-1}^{-1}...h_1^{-1}\twP{\NPct}h_1...h_{\NStrand-1} \; \text{ and }
\\
\twistC_{\NStrand\nu} & = h_{\NStrand-1}^{-1}...h_1^{-1}\twC{\nu}h_1...h_{\NStrand-1}. 
\end{align*}
Since none of these products contains the generator $h_{\TheStrand-1}$, every $z\in\ker(\piPZ)$ decomposes into generators 
\[
\twistn{\NStrand},\twistnP{\NPct},\twistnC{\nu} \; \text{ and } \; h_\Strand,\twP{\NPct},\twC{\nu}
\]
with $1\leq\Strand,\NStrand<\TheStrand,\Strand\neq\TheStrand-2,1\leq\NPct\leq\ThePct$ and $1\leq\nu\leq\TheCone$. \new{Using that $\MapIdOrb{\TheStrand}{\Sigma_\freeprod(\ThePct)}$ projects onto $\Z_\TheStrand(\Sigma_\freeprod(\ThePct))$, the relations from \cite[Lemma 4.21]{Flechsig2023mcg} also hold for the corresponding elements in $\Z_\TheStrand(\Sigma_\freeprod(\ThePct))$.} This allows us to split $z$ into subwords 
\begin{align*}
& W_1(\twistn{1},...,\twistn{\TheStrand-1},\twistnP{1},...,\twistnP{\ThePct},\twistnC{1},...,\twistnC{\TheCone}) \; \text{ and } 
\\
& W_2(h_1,...,h_{\TheStrand-2},\twP{1},...,\twP{\ThePct},\twC{1},...,\twC{\TheCone}) 
\end{align*}
with $z=W_1\cdot W_2$. Here, the motion of the $\TheStrand$-th strand is separated into $W_1$, the braid $W_2$ only moves the first $\TheStrand-1$ strands. Consequently, $W_1$ maps trivially under the map $\piPZ$ that forgets the $\TheStrand$-th strand. Since $z$ is from $\ker(\piPZ)$, this implies that $W_2$ is also a pure braid and 
$\piPZ(W_2)=1$ in $\PZ_{\TheStrand-1}(\Sigma_\freeprod(\ThePct))$. Further, $W_2$ is contained in the subgroup 
\[
\left\langle\twist_{\Strand\Str},\twistP_{\NStrand\NPct},\twistC_{\NStrand\nu} \; \text{ for } \; \begin{matrix}
1\leq\Str,\Strand,\NStrand<\TheStrand, \; \Str<\Strand,
\\
1\leq\NPct\leq\ThePct \text{ and } 1\leq\nu\leq\ThePct
\end{matrix}
\right\rangle\leq\PZ_\TheStrand(\Sigma_\freeprod(\ThePct)).
\]
If we restrict $\piPZ$ to this subgroup, we obtain an isomorphism; the section $\sPZ$ constructed above yields an inverse. Hence, $W_2=1$ in $\PZ_\TheStrand(\Sigma_\freeprod(\ThePct))$, i.e.\ $z=W_1$ is contained in $\textrm{im}(\iotaPZ)$. 
This implies that the sequence from (\ref{eq:ex_ser_pure_orb_braid_grps}) is exact and the homomorphism $\piPZ$ has a section. 
\end{intermediate}

Let us denote $\kernel=\kernel_\TheStrand:=\ker(\iotaPZ)$. Then Definition~\ref{def:semidir_prod} yields: 

\begin{corollary}
\label{cor:pure_orb_braid_semidir_prod}
The group $\PZ_\TheStrand(\Sigma_\freeprod(\ThePct))$ is a semidirect product 
\[
((\freegrp{\TheStrand-1+\ThePct}\ast\freeprod)/\kernel)\rtimes\PZ_{\TheStrand-1}(\Sigma_\freeprod(\ThePct)). 
\]
\end{corollary}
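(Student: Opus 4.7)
The proof is essentially a bookkeeping exercise that assembles the pieces already established in the preceding paragraphs. The plan is to observe that the exact sequence \eqref{eq:ex_ser_pure_orb_braid_grps} together with the first isomorphism theorem yields a short exact sequence
\[
1\rightarrow\piOrb\!\left(\Sigma_\freeprod(\TheStrand-1+\ThePct)\right)/\kernel\xrightarrow{\bar{\iotaPZ}}\PZ_\TheStrand(\Sigma_\freeprod(\ThePct))\xrightarrow{\piPZ}\PZ_{\TheStrand-1}(\Sigma_\freeprod(\ThePct))\rightarrow1,
\]
and then invoke Definition \ref{def:semidir_prod} using the section $\sPZ$ already constructed on page \pageref{def:right-inverse_sPZ}.

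More precisely, I would proceed as follows. First, by the exactness argument preceding the corollary, $\piPZ$ is surjective and $\ker(\piPZ)=\operatorname{im}(\iotaPZ)$. Factoring $\iotaPZ$ through the quotient $\piOrb(\Sigma_\freeprod(\TheStrand-1+\ThePct))/\kernel$ (where $\kernel=\ker\iotaPZ$ by definition) produces an injective map $\bar{\iotaPZ}$ whose image is exactly $\ker(\piPZ)$, yielding the short exact sequence displayed above. Second, applying Lemma \ref{lem:fundam_grp_Sigma-n-1_free_prod} to identify $\piOrb(\Sigma_\freeprod(\TheStrand-1+\ThePct))$ with $\freegrp{\TheStrand-1+\ThePct}\ast\freeprod$ rewrites the kernel term as $(\freegrp{\TheStrand-1+\ThePct}\ast\freeprod)/\kernel$.

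Finally, the homomorphism $\sPZ:\PZ_{\TheStrand-1}(\Sigma_\freeprod(\ThePct))\to\PZ_\TheStrand(\Sigma_\freeprod(\ThePct))$ introduced in the subsection \emph{A right-inverse homomorphism $\sPZ$ of $\piPZ$} is a section of $\piPZ$ since $\piPZ\circ\sPZ$ fixes each of the generators $\twist_{\Strand\Str},\twistP_{\NStrand\NPct},\twistC_{\NStrand\nu}$ of $\PZ_{\TheStrand-1}(\Sigma_\freeprod(\ThePct))$ listed in Corollary \ref{cor:gen_set_PZ_n} and therefore equals the identity. With a section in hand, Definition \ref{def:semidir_prod} applies directly and identifies $\PZ_\TheStrand(\Sigma_\freeprod(\ThePct))$ with the semidirect product $\left((\freegrp{\TheStrand-1+\ThePct}\ast\freeprod)/\kernel\right)\rtimes\PZ_{\TheStrand-1}(\Sigma_\freeprod(\ThePct))$.

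There is no real obstacle here since every ingredient has been prepared: exactness was verified above, the identification of the orbifold fundamental group is Lemma \ref{lem:fundam_grp_Sigma-n-1_free_prod}, and the existence of the section was already verified by checking that the defining relations of $\PZ_{\TheStrand-1}(\Sigma_\freeprod(\ThePct))$ from Corollary \ref{cor:pres_pure_free_prod} are preserved by the assignments. The only mildly subtle point worth flagging in the writeup is that the kernel $\kernel$ is not yet described explicitly at this stage; the explicit normal-closure description promised in Theorem \ref{thm-intro:kernel_ex_seq} is deferred to the subsequent steps of Section \ref{subsec:ex_seq_pure_orb_braid}, so the corollary as stated is purely a structural statement and does not depend on that description.
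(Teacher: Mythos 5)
Your proposal is correct and follows exactly the route the paper takes: the corollary is stated there as an immediate consequence of the exactness argument for \eqref{eq:ex_ser_pure_orb_braid_grps}, the section $\sPZ$ constructed just before it, the identification $\piOrb\left(\Sigma_\freeprod(\TheStrand-1+\ThePct)\right)\cong\freegrp{\TheStrand-1+\ThePct}\ast\freeprod$ from Lemma \ref{lem:fundam_grp_Sigma-n-1_free_prod}, and Definition \ref{def:semidir_prod}. Your observation that the explicit description of $\kernel$ is not needed at this stage is also consistent with the paper, which defers that computation to Steps \ref{it:det_ker_1} and \ref{it:det_ker_2}.
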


It remains to calculate the kernel $\kernel$. For this purpose, we determine a presentation of $(\freegrp{\TheStrand-1+\ThePct}\ast\freeprod)/\kernel$.  The proof is divided in the following two steps. 
\setlist[enumerate,1]{label=\textup{\arabic*.}, ref=\arabic*}
\begin{enumerate}
\item \label{it:det_ker_1} 
Find a relation that holds in $(\freegrp{\TheStrand-1+\ThePct}\ast\freeprod)/\kernel$ but not in $\freegrp{\TheStrand-1+\ThePct}\ast\freeprod$, i.e.\ the kernel $\kernel$ is non-trivial.  
\item \label{it:det_ker_2}
Construct relations that suffice to reformulate the presentation of the pure orbifold braid group $\PZ_\TheStrand(\Sigma_\freeprod(\ThePct))$ from Corollary \ref{cor:pres_pure_free_prod} such that it satisfies the conditions from Lemma \ref{lem:semidir_prod_pres_it2}. Then the latter presentation describes the semidirect product structure of $\PZ_\TheStrand(\Sigma_\freeprod(\ThePct))$ and we can read off a presentation of $(\freegrp{\TheStrand-1+\ThePct}\ast\freeprod)/\kernel$. In particular, this determines $\kernel$. 
\end{enumerate}
\setlist[enumerate,1]{label=\textup{(\arabic*)}, ref=\thetheorem(\arabic*)}

\subsection*{Step \ref{it:det_ker_1}. The kernel $\kernel$ is non-trivial}
Let us consider the alphabet $X=\{\twistn{1},...,\twistn{\TheStrand-1},\twistnP{1},...,\twistnP{\ThePct},\twistnC{1},...,\twistnC{\TheCone}\}$. Recall that our goal in Step \ref{it:det_ker_2} will be to establish a set of relations $R$ such that the assignments 
\begin{align*}
X & \rightarrow(\freegrp{\TheStrand-1+\ThePct}\ast\freeprod)/\kernel, 
\\
\twistn{\Strand} & \mapsto \twistn{\Strand}\kernel, 
\\
\twistnP{\NPct} & \mapsto \twistnP{\NPct}\kernel \; \text{ and } 
\\
\twistnC{\nu} & \mapsto \twistnC{\nu}\kernel 
\end{align*}
induce an isomorphism $\langle X\mid R\rangle\rightarrow(\freegrp{\TheStrand-1+\ThePct}\ast\freeprod)/\kernel$. Obviously, the set $R$ contains the relations $\twistnC{\nu}^{\TheOrder_\nu}=1$ for $1\leq\nu\leq\TheCone$ coming from the relations in $\freegrp{\TheStrand-1+\ThePct}\ast\freeprod$. The goal in Step \ref{it:det_ker_1} is to observe that the set $R$ contains further relations. Therefore, we recall that $(\freegrp{\TheStrand-1+\ThePct}\ast\freeprod)/\kernel$ embeds into $\PZ_\TheStrand(\Sigma_\freeprod(\ThePct))$ via an embedding induced by $\iotaPZ$. We start observing additional relations in $\PZ_\TheStrand(\Sigma_\freeprod(\ThePct))$. 

Later on it will be important that these additional relations follow from the conjugation relations summarized in \ref{prop:pure_br_semidir_pres_rel6_conj}. In particular, \ref{prop:pure_br_semidir_pres_rel6_conj} contains the following relations: 
\begin{align*}
\label{eq:conj_cp_1}
\twistC_{\NStrand\nu}\twist_{\TheStrand\NStrand}\twistC_{\NStrand\nu}^{-1} & \mystackrel{\ref{prop:pure_br_semidir_pres_rel6_conj_n}}=\twistC_{\TheStrand\nu}^{-1}\twist_{\TheStrand\NStrand}\twistC_{\TheStrand\nu}\stackrel{\vee}=\twistC_{\TheStrand\nu}^{-1}(\twist_{\TheStrand\NStrand}^{-1}\twist_{\TheStrand\NStrand})\twist_{\TheStrand\NStrand}\twistC_{\TheStrand\nu},  \numbereq
\\
\label{eq:conj_cp_2}
\twistC_{\NStrand\nu}\twistC_{\TheStrand\nu}\twistC_{\NStrand\nu}^{-1} & \mystackrel{\ref{prop:pure_br_semidir_pres_rel6_conj_d'}}=\twistC_{\TheStrand\nu}^{-1}\twist_{\TheStrand\NStrand}^{-1}\twistC_{\TheStrand\nu}\twist_{\TheStrand\NStrand}\twistC_{\TheStrand\nu}, \numbereq 
\\
\label{eq:conj_cp_3}
\twistC_{\NStrand\nu}\twist_{\TheStrand\Strand}\twistC_{\NStrand\nu}^{-1} & \mystackrel{\ref{prop:pure_br_semidir_pres_rel6_conj_h}}=\twistC_{\TheStrand\nu}^{-1}\twist_{\TheStrand\NStrand}^{-1}\twistC_{\TheStrand\nu}\twist_{\TheStrand\NStrand}\twist_{\TheStrand\Strand}\twist_{\TheStrand\NStrand}^{-1}\twistC_{\TheStrand\nu}^{-1}\twist_{\TheStrand\NStrand} \twistC_{\TheStrand\nu}, \numbereq
\\
\label{eq:conj_cp_4}
\twistC_{\NStrand\nu}\twistP_{\TheStrand\NPct}\twistC_{\NStrand\nu}^{-1} & \mystackrel{\ref{prop:pure_br_semidir_pres_rel6_conj_y}}=\twistC_{\TheStrand\nu}^{-1}\twist_{\TheStrand\NStrand}^{-1}\twistC_{\TheStrand\nu}\twist_{\TheStrand\NStrand}\twistP_{\TheStrand\NPct}\twist_{\TheStrand\NStrand}^{-1}\twistC_{\TheStrand\nu}^{-1}\twist_{\TheStrand\NStrand}\twistC_{\TheStrand\nu}, \numbereq 
\\
\label{eq:conj_cp_5}
\twistC_{\NStrand\nu}\twistC_{\TheStrand\mu}\twistC_{\NStrand\nu}^{-1} & \mystackrel{\ref{prop:pure_br_semidir_pres_rel6_conj_f'}}=\twistC_{\TheStrand\nu}^{-1}\twist_{\TheStrand\NStrand}^{-1}\twistC_{\TheStrand\nu}\twist_{\TheStrand\NStrand}\twistC_{\TheStrand\mu}\twist_{\TheStrand\NStrand}^{-1}\twistC_{\TheStrand\nu}^{-1}\twist_{\TheStrand\NStrand}\twistC_{\TheStrand\nu}. \numbereq 
\end{align*}
These relations are the $\ev$-images of relations of the form in \ref{lem:PMap_gens_sat_rels_it1d}, \ref{lem:PMap_gens_sat_rels_it1f} in Lemma~\ref{lem:PMap_gens_sat_rels_it1} and  \ref{lem:PMap_gens_sat_rels_it3c}, \ref{lem:PMap_gens_sat_rels_it3e} and \ref{lem:PMap_gens_sat_rels_it3f} in Lemma \ref{lem:PMap_gens_sat_rels_it3}, respectively. Since $\ev$ induces a homomorphism from $\PMapIdOrb{\TheStrand}{\Sigma_\freeprod(\ThePct)}$ to $\PZ_\TheStrand(\Sigma_\freeprod(\ThePct))$, the relations (\ref{eq:conj_cp_1}) to (\ref{eq:conj_cp_5}) are satisfied in $\PZ_\TheStrand(\Sigma_\freeprod(\ThePct))$. 

\begin{lemma}
\label{lem:it_conj}
Given the group $\PZ_\TheStrand(\Sigma_\freeprod(\ThePct))$, the generators satisfy the following relations for each $z\in\NN$: 
\begin{align*}
\twistC_{\NStrand\nu}^z\twist_{\TheStrand\NStrand}\twistC_{\NStrand\nu}^{-z} & =(\twistC_{\TheStrand\nu}^{-1}\twist_{\TheStrand\NStrand}^{-1})^z \twist_{\TheStrand\NStrand}(\twist_{\TheStrand\NStrand}\twistC_{\TheStrand\nu})^z, 
\\
\twistC_{\NStrand\nu}^z\twistC_{\TheStrand\nu}\twistC_{\NStrand\nu}^{-z} & =(\twistC_{\TheStrand\nu}^{-1}\twist_{\TheStrand\NStrand}^{-1})^z\twistC_{\TheStrand\nu}(\twist_{\TheStrand\NStrand}\twistC_{\TheStrand\nu})^z, 
\\
\twistC_{\NStrand\nu}^z\twist_{\TheStrand\Strand}\twistC_{\NStrand\nu}^{-z} & =(\twistC_{\TheStrand\nu}^{-1}\twist_{\TheStrand\NStrand}^{-1})^z(\twistC_{\TheStrand\nu}\twist_{\TheStrand\NStrand})^z\twist_{\TheStrand\Strand}(\twist_{\TheStrand\NStrand}^{-1}\twistC_{\TheStrand\nu}^{-1})^z(\twist_{\TheStrand\NStrand} \twistC_{\TheStrand\nu})^z, 
\\
\twistC_{\NStrand\nu}^z\twistP_{\TheStrand\NPct}\twistC_{\NStrand\nu}^{-z} & =(\twistC_{\TheStrand\nu}^{-1}\twist_{\TheStrand\NStrand}^{-1})^z(\twistC_{\TheStrand\nu}\twist_{\TheStrand\NStrand})^z\twistP_{\TheStrand\NPct}(\twist_{\TheStrand\NStrand}^{-1}\twistC_{\TheStrand\nu}^{-1})^z(\twist_{\TheStrand\NStrand} \twistC_{\TheStrand\nu})^z, 
\\
\twistC_{\NStrand\nu}^z\twistC_{\TheStrand\mu}\twistC_{\NStrand\nu}^{-z} & =(\twistC_{\TheStrand\nu}^{-1}\twist_{\TheStrand\NStrand}^{-1})^z(\twistC_{\TheStrand\nu}\twist_{\TheStrand\NStrand})^z\twistC_{\TheStrand\mu}(\twist_{\TheStrand\NStrand}^{-1}\twistC_{\TheStrand\nu}^{-1})^z(\twist_{\TheStrand\NStrand}\twistC_{\TheStrand\nu})^z. 
\end{align*}
\end{lemma}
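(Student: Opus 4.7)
The argument would be induction on $z$, with base case $z=1$. For each of the five identities the $z=1$ case collapses, after the obvious cancellation such as $\twist_{\TheStrand\NStrand}^{-1}\twist_{\TheStrand\NStrand}\twist_{\TheStrand\NStrand}=\twist_{\TheStrand\NStrand}$, to precisely one of the relations (\ref{eq:conj_cp_1})--(\ref{eq:conj_cp_5}) already available in $\PZ_\TheStrand(\Sigma_\freeprod(\ThePct))$.

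To make the inductive step manageable I would introduce the abbreviations $\alpha:=\twistC_{\TheStrand\nu}^{-1}\twist_{\TheStrand\NStrand}^{-1}$ and $\beta:=\twistC_{\TheStrand\nu}\twist_{\TheStrand\NStrand}$. Since $\alpha^z=(\twistC_{\TheStrand\nu}^{-1}\twist_{\TheStrand\NStrand}^{-1})^z$ and $\alpha^{-z}=(\twist_{\TheStrand\NStrand}\twistC_{\TheStrand\nu})^z$, the first two claims read $\twistC_{\NStrand\nu}^z x \twistC_{\NStrand\nu}^{-z}=\alpha^z x \alpha^{-z}$ for $x\in\{\twist_{\TheStrand\NStrand},\twistC_{\TheStrand\nu}\}$, while the last three read $\twistC_{\NStrand\nu}^z y \twistC_{\NStrand\nu}^{-z}=\alpha^z\beta^z y \beta^{-z}\alpha^{-z}$ for $y\in\{\twist_{\TheStrand\Strand},\twistP_{\TheStrand\NPct},\twistC_{\TheStrand\mu}\}$.

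The engine of the argument will be the commutation $\twistC_{\NStrand\nu}\alpha=\alpha\twistC_{\NStrand\nu}$, which I would establish first. Writing $\twistC_{\NStrand\nu}\alpha\twistC_{\NStrand\nu}^{-1}=(\twistC_{\NStrand\nu}\twistC_{\TheStrand\nu}^{-1}\twistC_{\NStrand\nu}^{-1})(\twistC_{\NStrand\nu}\twist_{\TheStrand\NStrand}^{-1}\twistC_{\NStrand\nu}^{-1})$ and substituting the inverses of (\ref{eq:conj_cp_1}) and (\ref{eq:conj_cp_2}), one sees the resulting word telescope to $\alpha$. A parallel short computation, using (\ref{eq:conj_cp_1}) and (\ref{eq:conj_cp_2}) directly, gives $\twistC_{\NStrand\nu}\beta\twistC_{\NStrand\nu}^{-1}=\alpha\beta\alpha^{-1}$ and hence $\twistC_{\NStrand\nu}\beta^z\twistC_{\NStrand\nu}^{-1}=\alpha\beta^z\alpha^{-1}$ for every $z$.

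With these two facts in hand the induction is formal. For relations $1$ and $2$ one writes
\[\twistC_{\NStrand\nu}^{z+1} x \twistC_{\NStrand\nu}^{-(z+1)} = \twistC_{\NStrand\nu}(\alpha^z x \alpha^{-z})\twistC_{\NStrand\nu}^{-1} = \alpha^z(\twistC_{\NStrand\nu} x \twistC_{\NStrand\nu}^{-1})\alpha^{-z} = \alpha^{z+1} x \alpha^{-(z+1)},\]
using the $z=1$ base case in the last step. For relations $3$--$5$ the same distribution combined with $\twistC_{\NStrand\nu}\alpha^z\twistC_{\NStrand\nu}^{-1}=\alpha^z$ and $\twistC_{\NStrand\nu}\beta^z\twistC_{\NStrand\nu}^{-1}=\alpha\beta^z\alpha^{-1}$ rearranges $\twistC_{\NStrand\nu}^{z+1} y \twistC_{\NStrand\nu}^{-(z+1)}$ into $\alpha^{z+1}\beta^z\alpha^{-1}(\alpha\beta y\beta^{-1}\alpha^{-1})\alpha\beta^{-z}\alpha^{-(z+1)}$, which collapses to $\alpha^{z+1}\beta^{z+1} y \beta^{-(z+1)}\alpha^{-(z+1)}$. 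The single technical obstacle is the commutation $\twistC_{\NStrand\nu}\alpha=\alpha\twistC_{\NStrand\nu}$; it requires a careful telescoping in the base relations, but once it is available, all five inductions are a matter of bookkeeping.
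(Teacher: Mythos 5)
Your proposal is correct and follows essentially the same route as the paper: induction on $z$ with base cases (\ref{eq:conj_cp_1})--(\ref{eq:conj_cp_5}), where the inductive step amounts to moving $\twistC_{\NStrand\nu}$ past the blocks $(\twistC_{\TheStrand\nu}^{-1}\twist_{\TheStrand\NStrand}^{-1})^{z}$ and $(\twist_{\TheStrand\NStrand}\twistC_{\TheStrand\nu})^{z}$ by means of (\ref{eq:conj_cp_1}) and (\ref{eq:conj_cp_2}). Your up-front commutation facts for $\alpha=\twistC_{\TheStrand\nu}^{-1}\twist_{\TheStrand\NStrand}^{-1}$ and $\beta=\twistC_{\TheStrand\nu}\twist_{\TheStrand\NStrand}$ (namely $\twistC_{\NStrand\nu}\alpha=\alpha\twistC_{\NStrand\nu}$ and $\twistC_{\NStrand\nu}\beta\twistC_{\NStrand\nu}^{-1}=\alpha\beta\alpha^{-1}$, both of which check out) are exactly what the paper accomplishes inline by inserting $\twistC_{\NStrand\nu}^{-1}\twistC_{\NStrand\nu}$ and applying those two relations, so the difference is only one of packaging.
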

\begin{proof}
Relation (\ref{eq:conj_cp_1}) serves as the base case for an induction to show 
\begin{equation}
\label{lem:it_conj_eq_1_z=1}
\twistC_{\NStrand\nu}^z \twist_{\TheStrand\NStrand}\twistC_{\NStrand\nu}^{-z}=(\twistC_{\TheStrand\nu}^{-1}\twist_{\TheStrand\NStrand}^{-1})^z \twist_{\TheStrand\NStrand}(\twist_{\TheStrand\NStrand}\twistC_{\TheStrand\nu})^z 
\end{equation}
for all $z\in\NN$. Assuming the claim for $z-1$, we obtain: 
\begin{align*}
& \twistC_{\NStrand\nu}^z \twist_{\TheStrand\NStrand}\twistC_{\NStrand\nu}^{-z} 
\\
\mystackrel{}= & \twistC_{\NStrand\nu}\textcolor{\col}{\twistC_{\NStrand\nu}^{z-1} \twist_{\TheStrand\NStrand}\twistC_{\NStrand\nu}^{-(z-1)}}\twistC_{\NStrand\nu}^{-1}
\\
\mystackrel{\text{i.h.}}= & \twistC_{\NStrand\nu}(\twistC_{\TheStrand\nu}^{-1}\twist_{\TheStrand\NStrand}^{-1})^{z-1}\twist_{\TheStrand\NStrand}(\twist_{\TheStrand\NStrand}\twistC_{\TheStrand\nu})^{z-1}\twistC_{\NStrand\nu}^{-1} 
\\
\mystackrel{$\vee$}= & \textcolor{\col}{\twistC_{\NStrand\nu}(\twistC_{\TheStrand\nu}^{-1}\twist_{\TheStrand\NStrand}^{-1})^{z-1}(\twistC_{\NStrand\nu}^{-1}}\twistC_{\NStrand\nu})\twist_{\TheStrand\NStrand}(\twistC_{\NStrand\nu}^{-1}\textcolor{\col}{\twistC_{\NStrand\nu})(\twist_{\TheStrand\NStrand}\twistC_{\TheStrand\nu})^{z-1}\twistC_{\NStrand\nu}^{-1}}
\\
\mystackrel{(\ref{eq:conj_cp_1},\ref{eq:conj_cp_2})}= & (\twistC_{\TheStrand\nu}^{-1}\twist_{\TheStrand\NStrand}^{-1})^{z-1}\textcolor{\col}{\twistC_{\NStrand\nu}\twist_{\TheStrand\NStrand}\twistC_{\NStrand\nu}^{-1}}(\twist_{\TheStrand\NStrand}\twistC_{\TheStrand\nu})^{z-1} 
\\
\mystackrel{(\ref{eq:conj_cp_1})}= & (\twistC_{\TheStrand\nu}^{-1}\twist_{\TheStrand\NStrand}^{-1})^z\twist_{\TheStrand\NStrand}(\twist_{\TheStrand\NStrand}\twistC_{\TheStrand\nu})^z. 
\end{align*}
By induction on $z$, this implies that the first equation 
holds for every $z\in\NN$. 

Similarly, we obtain $\twistC_{\NStrand\nu}^z\twistC_{\TheStrand\nu}\twistC_{\NStrand\nu}^{-z}=(\twistC_{\TheStrand\nu}^{-1}\twist_{\TheStrand\NStrand}^{-1})^z\twistC_{\TheStrand\nu}(\twist_{\TheStrand\NStrand}\twistC_{\TheStrand\nu})^z$ for $z=1$ from equation~(\ref{eq:conj_cp_2}) and the induction step follows as for the first equation. 

For $z=1$, the third relation follows from (\ref{eq:conj_cp_3}). Assuming it for $z-1$, we may deduce 
\begin{align*}
\twistC_{\NStrand\nu}^z\twist_{\TheStrand\Strand}\twistC_{\NStrand\nu}^{-z} \mystackrel{}= & \twistC_{\NStrand\nu}\textcolor{\col}{\twistC_{\NStrand\nu}^{z-1}\twist_{\TheStrand\Strand}\twistC_{\NStrand\nu}^{-(z-1)}}\twistC_{\NStrand\nu}^{-1}
\\
\mystackrel{\text{i.h.}}= & \twistC_{\NStrand\nu}(\twistC_{\TheStrand\nu}^{-1}\twist_{\TheStrand\NStrand}^{-1})^{z-1}(\twistC_{\TheStrand\nu}\twist_{\TheStrand\NStrand})^{z-1}\twist_{\TheStrand\Strand}(\twist_{\TheStrand\NStrand}^{-1}\twistC_{\TheStrand\nu}^{-1})^{z-1}(\twist_{\TheStrand\NStrand}\twistC_{\TheStrand\nu})^{z-1}\twistC_{\NStrand\nu}^{-1} 
\\
\mystackrel{$\vee$}= & \textcolor{\col}{\twistC_{\NStrand\nu}(\twistC_{\TheStrand\nu}^{-1}\twist_{\TheStrand\NStrand}^{-1})^{z-1}(\twistC_{\NStrand\nu}^{-1}}\twistC_{\NStrand\nu})(\twistC_{\TheStrand\nu}\twist_{\TheStrand\NStrand})^{z-1}\twist_{\TheStrand\Strand}(\twist_{\TheStrand\NStrand}^{-1}\twistC_{\TheStrand\nu}^{-1})^{z-1}(\twistC_{\NStrand\nu}^{-1}\textcolor{\col}{\twistC_{\NStrand\nu})(\twist_{\TheStrand\NStrand}\twistC_{\TheStrand\nu})^{z-1}\twistC_{\NStrand\nu}^{-1}} 
\\
\mystackrel{(\ref{eq:conj_cp_1},\ref{eq:conj_cp_2})}= & (\twistC_{\TheStrand\nu}^{-1}\twist_{\TheStrand\NStrand}^{-1})^{z-1}\twistC_{\NStrand\nu}(\twistC_{\TheStrand\nu}\twist_{\TheStrand\NStrand})^{z-1}\twist_{\TheStrand\Strand}(\twist_{\TheStrand\NStrand}^{-1}\twistC_{\TheStrand\nu}^{-1})^{z-1}\twistC_{\NStrand\nu}^{-1}(\twist_{\TheStrand\NStrand}\twistC_{\TheStrand\nu})^{z-1} 
\\
\mystackrel{$\vee$}= & (\twistC_{\TheStrand\nu}^{-1}\twist_{\TheStrand\NStrand}^{-1})^{z-1}\textcolor{\col}{\twistC_{\NStrand\nu}(\twistC_{\TheStrand\nu}\twist_{\TheStrand\NStrand})^{z-1}(\twistC_{\NStrand\nu}^{-1}}\twistC_{\NStrand\nu})\twist_{\TheStrand\Strand}(\twistC_{\NStrand\nu}^{-1}\textcolor{\col}{\twistC_{\NStrand\nu})(\twist_{\TheStrand\NStrand}^{-1}\twistC_{\TheStrand\nu}^{-1})^{z-1}\twistC_{\NStrand\nu}^{-1}}(\twist_{\TheStrand\NStrand}\twistC_{\TheStrand\nu})^{z-1} 
\\
\mystackrel{(\ref{eq:conj_cp_1},\ref{eq:conj_cp_2})}= & (\twistC_{\TheStrand\nu}^{-1}\twist_{\TheStrand\NStrand}^{-1})^{z-1}\twistC_{\TheStrand\nu}^{-1}\twist_{\TheStrand\NStrand}^{-1}(\twistC_{\TheStrand\nu}\twist_{\TheStrand\NStrand})^{z-1}\twist_{\TheStrand\NStrand}\twistC_{\TheStrand\nu}\textcolor{\col}{\twistC_{\NStrand\nu}\twist_{\TheStrand\Strand}}
\\
& \textcolor{\col}{\twistC_{\NStrand\nu}^{-1}}\twistC_{\TheStrand\nu}^{-1}\twist_{\TheStrand\NStrand}^{-1}(\twist_{\TheStrand\NStrand}^{-1}\twistC_{\TheStrand\nu}^{-1})^{z-1}\twist_{\TheStrand\NStrand}\twistC_{\TheStrand\nu}(\twist_{\TheStrand\NStrand}\twistC_{\TheStrand\nu})^{z-1}
\\
\mystackrel{(\ref{eq:conj_cp_3})}= & (\twistC_{\TheStrand\nu}^{-1}\twist_{\TheStrand\NStrand}^{-1})^{z-1}\twistC_{\TheStrand\nu}^{-1}\twist_{\TheStrand\NStrand}^{-1}(\twistC_{\TheStrand\nu}\twist_{\TheStrand\NStrand})^{z-1}\textcolor{\short}{\twist_{\TheStrand\NStrand}\twistC_{\TheStrand\nu}(\twistC_{\TheStrand\nu}^{-1}\twist_{\TheStrand\NStrand}^{-1}}\twistC_{\TheStrand\nu}\twist_{\TheStrand\NStrand}\twist_{\TheStrand\Strand}\twist_{\TheStrand\NStrand}^{-1}\twistC_{\TheStrand\nu}^{-1}\textcolor{\short}{\twist_{\TheStrand\NStrand}\twistC_{\TheStrand\nu})}
\\
& \textcolor{\short}{\twistC_{\TheStrand\nu}^{-1}\twist_{\TheStrand\NStrand}^{-1}}(\twist_{\TheStrand\NStrand}^{-1}\twistC_{\TheStrand\nu}^{-1})^{z-1}\twist_{\TheStrand\NStrand}\twistC_{\TheStrand\nu}(\twist_{\TheStrand\NStrand}\twistC_{\TheStrand\nu})^{z-1}
\\
\mystackrel{}= & (\twistC_{\TheStrand\nu}^{-1}\twist_{\TheStrand\NStrand}^{-1})^z(\twistC_{\TheStrand\nu}\twist_{\TheStrand\NStrand})^z\twist_{\TheStrand\Strand}(\twist_{\TheStrand\NStrand}^{-1}\twistC_{\TheStrand\nu}^{-1})^z(\twist_{\TheStrand\NStrand} \twistC_{\TheStrand\nu})^z. 
\end{align*}
Hence, the third relation from the claim follows for every $z\in\NN$. 

The remaining relations $\twistC_{\NStrand\nu}^z\twistP_{\TheStrand\NPct}\twistC_{\NStrand\nu}^{-z}=(\twistC_{\TheStrand\nu}^{-1}\twist_{\TheStrand\NStrand}^{-1})^z(\twistC_{\TheStrand\nu}\twist_{\TheStrand\NStrand})^z\twistP_{\TheStrand\NPct}(\twist_{\TheStrand\NStrand}^{-1}\twistC_{\TheStrand\nu}^{-1})^z(\twist_{\TheStrand\NStrand} \twistC_{\TheStrand\nu})^z$ and $\twistC_{\NStrand\nu}^z\twistC_{\TheStrand\mu}\twistC_{\NStrand\nu}^{-z}=(\twistC_{\TheStrand\nu}^{-1}\twist_{\TheStrand\NStrand}^{-1})^z(\twistC_{\TheStrand\nu}\twist_{\TheStrand\NStrand})^z\twistC_{\TheStrand\mu}(\twist_{\TheStrand\NStrand}^{-1}\twistC_{\TheStrand\nu}^{-1})^z(\twist_{\TheStrand\NStrand}\twistC_{\TheStrand\nu})^z$ are given for $z=1$ in (\ref{eq:conj_cp_4}) and (\ref{eq:conj_cp_5}). The induction step follows as for the third relation.  
\end{proof}

If we further recall from relation \ref{cor:pres_pure_free_prod_rel1} that the $\TheOrder_\nu$-th power of $\twistC_{\NStrand\nu}$ is trivial, the first relation from the previous lemma yields 
\begin{equation}
\label{eq:rel_ker}
\twist_{\TheStrand\NStrand}\stackrel{\ref{cor:pres_pure_free_prod_rel1}}=\twistC_{\NStrand\nu}^{\TheOrder_\nu}\twist_{\TheStrand\NStrand}\twistC_{\NStrand\nu}^{-\TheOrder_\nu}=(\twistC_{\TheStrand\nu}^{-1}\textcolor{\short}{\twist_{\TheStrand\NStrand}^{-1}})^{\TheOrder_\nu}\textcolor{\short}{\twist_{\TheStrand\NStrand}}(\twist_{\TheStrand\NStrand}\twistC_{\TheStrand\nu})^{\TheOrder_\nu}=(\twistC_{\TheStrand\nu}^{-1}\twist_{\TheStrand\NStrand}^{-1})^{\TheOrder_\nu-1}\twistC_{\TheStrand\nu}^{-1}(\twist_{\TheStrand\NStrand}\twistC_{\TheStrand\nu})^{\TheOrder_\nu} 
\end{equation}
for each $1\leq\NStrand<\TheStrand$ and $1\leq\nu\leq\TheCone$. By left multiplication with $\twistC_{\TheStrand\nu}(\twist_{\TheStrand\NStrand}\twistC_{\TheStrand\nu})^{\TheOrder_\nu-1}$, this is equivalent to 
\begin{equation}
\label{eq:non-trivial_ker}
(\twistC_{\TheStrand\nu}\twist_{\TheStrand\NStrand})^{\TheOrder_\nu}=(\twist_{\TheStrand\NStrand}\twistC_{\TheStrand\nu})^{\TheOrder_\nu}. 
\end{equation}
Using that $\iotaPZ$ induces a monomorphism from $(\freegrp{\TheStrand-1+\ThePct}\ast\freeprod)/\kernel$ into $\PZ_\TheStrand(\Sigma(\ThePct))$, the corresponding relations $(\twistnC{\nu}\kernel\twistn{\NStrand}\kernel)^{\TheOrder_\nu}=(\twistn{\NStrand}\kernel\twistnC{\nu}\kernel)^{\TheOrder_\nu}$ for $1\leq\NStrand<\TheStrand$ and $1\leq\nu\leq\TheCone$ hold in $(\freegrp{\TheStrand-1+\ThePct}\ast\freeprod)/\kernel$. Hence, the relations $(\twistnC{\nu}\twistn{\NStrand})^{\TheOrder_\nu}=(\twistn{\NStrand}\twistnC{\nu})^{\TheOrder_\nu}$ are contained in $R$ for each $\NStrand$ and $\nu$. Clearly, the corresponding reduced words do not coincide in the free product $\freegrp{\TheStrand-1+\ThePct}\ast\freeprod$, i.e.\ the above relations lead to non-trivial elements in the kernel $\kernel$. 

\subsection*{Step \ref{it:det_ker_2}. Determining the kernel $\kernel$}

To find a presentation of the group $\PZ_\TheStrand(\Sigma_\freeprod(\ThePct))$ that satisfies the conditions from Lemma~\ref{lem:semidir_prod_pres_it2}, we need to observe further relations that hold in $(\freegrp{\TheStrand-1+\ThePct}\ast\freeprod)/\kernel$. For this purpose, we introduce \textit{partial conjugations}:  

\begin{definition}[Partial conjugation]
\label{def:partial_conj}
Let us consider the alphabet $X$ from above and the free group $\freegrp{}(X)$. For $1\leq\Str,\NStrand\leq\TheStrand, \Str<\NStrand, 1\leq\Pct\leq\ThePct$ and $1\leq\nu\leq\TheCone$, let 
\[
pc_{\twist_{\NStrand\Str}}, pc_{\twistP_{\NStrand\Pct}} \; \text{ and } \; pc_{\twist_{\NStrand\nu}}:\freegrp{}(X)\rightarrow\freegrp{}(X)
\]
be the endomorphisms that replace each letter according to Table \ref{tab:partial_conj} by the words in the second, third or forth column, respectively. These endomorphisms are called \textit{partial conjugations}. 

\begin{table}[H]
{\small
\renewcommand{\arraystretch}{1.25}
\centerline{
\begin{tabular}{c|c|c|c}
 & $pc_{\twist_{\NStrand\Str}}$ & $pc_{\twistP_{\NStrand\Pct}}$ & $pc_{\twistC_{\NStrand\nu}}$
\\ \hline\hline
$\twistn{\NNStrand}$ & $\twistn{\NNStrand}$ & $\twistn{\NNStrand}$ & $\twistn{\NNStrand}$
\\ \hline
$\twistn{\NStrand}$ & $\twistn{\Str}^{-1}\twistn{\NStrand} \twistn{\Str}$ & $\twistnP{\Pct}^{-1}\twistn{\NStrand} \twistnP{\Pct}$ & $\twistnC{\nu}^{-1}\twistn{\NStrand} \twistnC{\nu}$
\\ \hline
$\twistn{\Strand}$ & $\twistn{\Str}^{-1}\twistn{\NStrand}^{-1}\twistn{\Str} \twistn{\NStrand} \twistn{\Strand} \twistn{\NStrand}^{-1}\twistn{\Str}^{-1}\twistn{\NStrand} \twistn{\Str}$ & $\twistnP{\Pct}^{-1}\twistn{\NStrand}^{-1}\twistnP{\Pct}\twistn{\NStrand} \twistn{\Strand} \twistn{\NStrand}^{-1}\twistnP{\Pct}^{-1}\twistn{\NStrand} \twistnP{\Pct}$ & $\twistnC{\nu}^{-1}\twistn{\NStrand}^{-1}\twistnC{\nu}\twistn{\NStrand} \twistn{\Strand} \twistn{\NStrand}^{-1}\twistnC{\nu}^{-1}\twistn{\NStrand} \twistnC{\nu}$
\\ \hline
$\twistn{\Str}$ & $\twistn{\Str}^{-1}\twistn{\NStrand}^{-1}\twistn{\Str} \twistn{\NStrand} \twistn{\Str}$ & - & -
\\ \hline
$\twistn{\Strr}$ & $\twistn{\Strr}$ & - & -
\\ \hline
$\twistnP{\Pc}$ & $\twistnP{\Pc}$ & $\twistnP{\Pct}^{-1}\twistn{\NStrand}^{-1}\twistnP{\Pct}\twistn{\NStrand} \twistnP{\Pc} \twistn{\NStrand}^{-1}\twistnP{\Pct}^{-1}\twistn{\NStrand} \twistnP{\Pct}$ & $\twistnC{\nu}^{-1}\twistn{\NStrand}^{-1}\twistnC{\nu}\twistn{\NStrand} \twistnP{\Pc} \twistn{\NStrand}^{-1}\twistnC{\nu}^{-1}\twistn{\NStrand} \twistnC{\nu}$
\\ \hline
$\twistnP{\Pct}$ & - & $\twistnP{\Pct}^{-1}\twistn{\NStrand}^{-1}\twistnP{\Pct}\twistn{\NStrand} \twistnP{\Pct}$ & -
\\ \hline
$\twistnP{\NPct}$ & - & $\twistnP{\NPct}$ & -
\\ \hline
$\twistnC{\mu}$ & $\twistnC{\mu}$ & $\twistnC{\mu}$ & $\twistnC{\nu}^{-1}\twistn{\NStrand}^{-1}\twistnC{\nu}\twistn{\NStrand} \twistnC{\mu}\twistn{\NStrand}^{-1}\twistnC{\nu}^{-1}\twistn{\NStrand} \twistnC{\nu}$
\\ \hline
$\twistnC{\nu}$ & - & - & $\twistnC{\nu}^{-1}\twistn{\NStrand}^{-1}\twistnC{\nu}\twistn{\NStrand} \twistnC{\nu}$
\\ \hline
$\twistnC{\omicron}$ & - & - & $\twistnC{\omicron}$
\end{tabular}
}
}
{\footnotesize with indices $1\leq\Strr<\Str<\Strand<\NStrand<\NNStrand<\TheStrand$, $1\leq\Pc<\Pct<\NPct\leq\ThePct$ and $1\leq\mu<\nu<\omicron\leq\TheCone$.}
\caption{Definition of the partial conjugations.}
\label{tab:partial_conj}
\end{table}

Given a word $W$ in the above alphabet, we can apply a sequence of various of the partial conjugations one after another. We call the resulting word $W'$ a \textit{partial conjugate} of $W$. For any set~$S$ of words in the alphabet $X$, let $\text{PC}(S)$ denote the \textit{set of their partial conjugates}. 
\end{definition}

For each $a\in\{\twist_{\NStrand\Str},\twistP_{\NStrand\Pct},\twistC_{\NStrand\nu}\}$, the assignments in Table \ref{tab:partial_conj} are chosen such that the partial conjugation $pc_a$ describes the conjugation with $a$ on the subgroup 
\[
(\freegrp{\TheStrand-1+\ThePct}\ast\freeprod)/\kernel\leq\PZ_\TheStrand(\Sigma_\freeprod(\ThePct))
\]
on the level of words in the alphabet $X$. More precisely, for each letter $x\in X$, the words $pc_a(x)$ satisfy $axa^{-1}=pc_a(x)$ in $\PZ_\TheStrand(\Sigma_\freeprod(\ThePct))$. 

\begin{observation}
\label{obs:partial_conj}
Since each partial conjugation maps $\twistnC{\nu}$ to a conjugate, these maps preserve the defining relations $\twistnC{\nu}^{\TheOrder_\nu}=1$ of $\freegrp{\TheStrand-1+\ThePct}\ast\freeprod$, i.e.\  these maps induce endomorphisms 
of $\freegrp{\TheStrand-1+\ThePct}\ast\freeprod$ that we also denote by $pc_a$ with $a=\twist_{\NStrand\Str},\twistP_{\NStrand\Pct}$ or $\twistC_{\NStrand\nu}$. 
By the definition of the partial conjugates, the following diagram commutes: 
\begin{center}
\begin{tikzcd}[column sep=60pt]
\freegrp{}(X) \arrow[d,"\pi"] \arrow[r,"pc_a"]& \freegrp{}(X) \arrow[d,"\pi"] & 
\\
\freegrp{\TheStrand-1+\ThePct}\ast\freeprod \arrow[d,"\iotaPZ"] \arrow[r,"pc_a"] & \freegrp{\TheStrand-1+\ThePct}\ast\freeprod \arrow[d,"\iotaPZ"] & 
\\
(\freegrp{\TheStrand-1+\ThePct}\ast\freeprod)/\kernel\arrow[r,"conj_a"] & (\freegrp{\TheStrand-1+\ThePct}\ast\freeprod)/\kernel & \hspace*{-65pt} \leq\PZ_\TheStrand(\Sigma_\freeprod(\ThePct)). 
\end{tikzcd}
\end{center}
Here above, $\pi$ denotes the projection induced by the identity on letters and $conj_a$ is the automorphism of $\PZ_\TheStrand(\Sigma_\freeprod(\ThePct))$ induced by conjugation with $a$. By definition, $\ker(\iotaPZ\circ\pi)=\langle\langle R\rangle\rangle$. The commutativity of the diagram implies 
\[
r\in\ker(\iotaPZ\circ\pi)=\langle\langle R\rangle\rangle\Rightarrow pc_a(r)\in\ker(\iotaPZ\circ\pi)=\langle\langle R\rangle\rangle, 
\]
i.e.\ if $r=1$ is a relation in $(\freegrp{\TheStrand-1+\ThePct}\ast\freeprod)/\kernel$, then the relation $pc_a(r)=1$ also holds in $(\freegrp{\TheStrand-1+\ThePct}\ast\freeprod)/\kernel$. Consequently, the relation $W'=1$ is contained in $\langle\langle R\rangle\rangle$ for each $W'$ in 
\[
\textup{PC}(\{(\twistn{\NStrand}\twistnC{\nu})^{\TheOrder_\nu}(\twistn{\NStrand}^{-1}\twistnC{\nu}^{-1})^{\TheOrder_\nu}\mid1\leq\NStrand<\TheStrand \text{ and } 1\leq\nu\leq\TheCone\}). 
\]
\end{observation}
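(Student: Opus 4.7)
The plan is to separate the observation into its two logical halves—first that each $pc_a$ descends to an endomorphism of $\freegrp{\TheStrand-1+\ThePct}\ast\freeprod$, and then that the resulting square with $conj_a$ commutes—and to derive the final assertion about $\textup{PC}(\cdots)\subseteq\langle\langle R\rangle\rangle$ as a formal consequence of these two facts combined with equation \eqref{eq:non-trivial_ker}.

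For the first half, I would note that $\freegrp{\TheStrand-1+\ThePct}\ast\freeprod$ is presented on $X$ by the relations $\twistnC{\nu}^{\TheOrder_\nu}=1$ for $1\leq\nu\leq\TheCone$, so by von Dyck (Theorem \ref{thm:von_Dyck}) it suffices to check that each $pc_a$ sends every $\twistnC{\nu}^{\TheOrder_\nu}$ to the identity in $\freegrp{\TheStrand-1+\ThePct}\ast\freeprod$. Inspection of the $\twistnC{\nu}$-rows of Table \ref{tab:partial_conj} reveals that every entry has the form $w^{-1}\twistnC{\nu}w$ for some word $w$ in $X$; hence $pc_a(\twistnC{\nu}^{\TheOrder_\nu})=w^{-1}\twistnC{\nu}^{\TheOrder_\nu}w$, which equals $1$ in $\freegrp{\TheStrand-1+\ThePct}\ast\freeprod$.

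For the second half, the commutativity of the upper square of the diagram is immediate since $\pi$ and $pc_a$ are both defined letter-by-letter. The commutativity of the lower square amounts to verifying, for every $x\in X$ and every $a\in\{\twist_{\NStrand\Str},\twistP_{\NStrand\Pct},\twistC_{\NStrand\nu}\}$, the identity $a\cdot\iotaPZ(x)\cdot a^{-1}=\iotaPZ(pc_a(x))$ inside $\PZ_\TheStrand(\Sigma_\freeprod(\ThePct))$. The entries $pc_a(x)=x$ in the table correspond exactly to commutator relations among the generators of $\PZ_\TheStrand(\Sigma_\freeprod(\ThePct))$ listed in \ref{cor:pres_pure_free_prod_rel2} and \ref{cor:pres_pure_free_prod_rel3} of Corollary \ref{cor:pres_pure_free_prod}. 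The non-trivial entries match, with $\Strand=\TheStrand$ played by the moving $\TheStrand$-th strand, the conjugation formulas of Lemma \ref{lem:PMap_gens_sat_rels}(\ref{lem:PMap_gens_sat_rels_it1}) and \ref{lem:PMap_gens_sat_rels}(\ref{lem:PMap_gens_sat_rels_it3}) in $\PMapIdOrb{\TheStrand}{\Sigma_\freeprod(\ThePct)}$, transported to the braid group via the evaluation homomorphism of Corollary \ref{cor:ev_maps_homo} using $\ev(\Twist_{\TheStrand\Strand})=\twist_{\TheStrand\Strand}$, $\ev(\TwistP_{\TheStrand\NPct})=\twistP_{\TheStrand\NPct}$ and $\ev(\TwistC_{\TheStrand\nu})=\twistC_{\TheStrand\nu}$.

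With both halves in hand, the implication in Observation~\ref{obs:partial_conj} follows: if $r\in\ker(\iotaPZ\circ\pi)$, then $\iotaPZ\bigl(\pi(pc_a(r))\bigr)=conj_a\bigl(\iotaPZ(\pi(r))\bigr)=conj_a(1)=1$, so $pc_a(r)\in\ker(\iotaPZ\circ\pi)=\langle\langle R\rangle\rangle$; iterating over compositions of partial conjugations, every partial conjugate of an element of $\langle\langle R\rangle\rangle$ remains in $\langle\langle R\rangle\rangle$. For the concrete relators, equation \eqref{eq:non-trivial_ker} states $(\twistC_{\TheStrand\nu}\twist_{\TheStrand\NStrand})^{\TheOrder_\nu}=(\twist_{\TheStrand\NStrand}\twistC_{\TheStrand\nu})^{\TheOrder_\nu}$ in $\PZ_\TheStrand(\Sigma_\freeprod(\ThePct))$, which rewrites as $\iotaPZ\bigl((\twistn{\NStrand}\twistnC{\nu})^{\TheOrder_\nu}(\twistn{\NStrand}^{-1}\twistnC{\nu}^{-1})^{\TheOrder_\nu}\bigr)=1$; so these relators lie in $\langle\langle R\rangle\rangle$, and by the iteration above so does every word in $\textup{PC}\bigl(\{(\twistn{\NStrand}\twistnC{\nu})^{\TheOrder_\nu}(\twistn{\NStrand}^{-1}\twistnC{\nu}^{-1})^{\TheOrder_\nu}\}\bigr)$. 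The principal obstacle is the line-by-line matching of Table \ref{tab:partial_conj} against Lemma \ref{lem:PMap_gens_sat_rels}; this is routine but lengthy, and care is needed with index conventions (which roles the various $\Str,\Strand,\NStrand$ play and how Lemma \ref{lem:PMap_gens_sat_rels}'s $\TheStrand$ translates via $\ev$ to our setting).
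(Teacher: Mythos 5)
Your proposal is correct and follows essentially the same route as the paper: descend $pc_a$ to $\freegrp{\TheStrand-1+\ThePct}\ast\freeprod$ via von Dyck using the conjugate form of the $\twistnC{\nu}$-entries, verify $\iotaPZ(pc_a(x))=a\,\iotaPZ(x)\,a^{-1}$ (which the paper asserts by construction of Table \ref{tab:partial_conj} and which you trace back, as the paper does elsewhere, to Lemma \ref{lem:PMap_gens_sat_rels} via $\ev$), and then chase the diagram and iterate, using \eqref{eq:non-trivial_ker} to place the generating relators in $\langle\langle R\rangle\rangle$. The only difference is one of explicitness, not of substance.
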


Adding these relations to $R$, we obtain the following presentation for $\PZ_\TheStrand(\Sigma_\freeprod(\ThePct))$: 


\begin{proposition}
\label{prop:pure_br_semidir_pres}
The pure orbifold braid group $\PZ_\TheStrand(\Sigma_\freeprod(\ThePct))$ is generated by 
\[
\twist_{\Strand\Str},\twistP_{\NStrand\NPct},\twistC_{\NStrand\nu} \; \text{ and } \; \twistn{\NStrand},\twistnP{\NPct},\twistnC{\nu}
\]
for $1\leq\Str,\Strand,\NStrand<\TheStrand$ with $\Str<\Strand$, $1\leq\NPct\leq\ThePct$ and $1\leq\nu\leq\TheCone$, and the following defining relations: 
\begin{enumerate}[label={\textup{(R)}},ref={\textup{\thetheorem(R)}}]
\item 
\label{prop:pure_br_semidir_pres_rel7_pi_1}
$\twistnC{\nu}^{\TheOrder_\nu}=1$ for $1\leq\nu\leq\TheCone$ and 
\\
$W'=1$ for $W'\in\textup{PC}(\{(\twistn{\NStrand} \twistnC{\nu})^{\TheOrder_\nu}(\twistn{\NStrand}^{-1}\twistnC{\nu}^{-1})^{\TheOrder_\nu}\mid1\leq\NStrand<\TheStrand \text{ and } 1\leq\nu\leq\TheCone\})$. 
\end{enumerate}
For $1\leq\Str,\Strand,\NStrand,\NNStrand<\TheStrand$ with $\Str<\Strand<\NStrand<\NNStrand$,\; $1\leq\Pc,\NPct<\ThePct$ with $\Pc<\NPct$ and $1\leq\mu,\nu\leq\TheCone$ with $\mu<\nu$, the following relations hold: 
\begin{enumerate}[label={\textup{(S\arabic*)}},ref={\textup{\thetheorem(S\arabic*)}}]
\item 
\label{prop:pure_br_semidir_pres_rel1}
$\twistC_{\NStrand\nu}^{\TheOrder_\nu}=1$, 
\item 
\label{prop:pure_br_semidir_pres_rel2}
$[\twist_{\Strand\Str},\twist_{\NNStrand\NStrand}]=1$, 
$[\twistP_{\Strand\NPct},\twist_{\NNStrand\NStrand}]=1$, 
$[\twistC_{\Strand\nu},\twist_{\NNStrand\NStrand}]=1$,  
\item 
\label{prop:pure_br_semidir_pres_rel3}
$[\twist_{\NNStrand\Str},\twist_{\NStrand\Strand}]=1$,  
$[\twistP_{\NNStrand\NPct},\twist_{\NStrand\Strand}]=1$, 
$[\twistP_{\NNStrand\NPct},\twistP_{\NStrand\Pc}]=1$,  
$[\twistC_{\NNStrand\nu},\twist_{\NStrand\Strand}]=1$, 
$[\twistC_{\NNStrand\nu},\twistP_{\NStrand\NPct}]=1$,  
$[\twistC_{\NNStrand\nu},\twistC_{\NStrand\mu}]=1$, 
\item 
\label{prop:pure_br_semidir_pres_rel4}
$[\twist_{\NNStrand\NStrand}\twist_{\NNStrand\Strand}\twist_{\NNStrand\NStrand}^{-1},\twist_{\NStrand\Str}]=1$, 
$[\twist_{\NStrand\Strand}\twist_{\NStrand\Str}\twist_{\NStrand\Strand}^{-1},\twistP_{\Strand\NPct}]=1$, 
$[\twist_{\NStrand\Strand}\twistP_{\NStrand\Pc}\twist_{\NStrand\Strand}^{-1},\twistP_{\Strand\NPct}]=1$, 
\\
$[\twist_{\NStrand\Strand}\twist_{\NStrand\Str}\twist_{\NStrand\Strand}^{-1},\twistC_{\Strand\nu}]=1$, 
$[\twist_{\NStrand\Strand}\twistC_{\NStrand\mu}\twist_{\NStrand\Strand}^{-1},\twistC_{\Strand\nu}]=1$, 
$[\twist_{\NStrand\Strand}\twistP_{\NStrand\NPct}\twist_{\NStrand\Strand}^{-1},\twistC_{\Strand\nu}]=1$, 
\item 
\label{prop:pure_br_semidir_pres_rel5}
$\twist_{\Strand\Str}\twist_{\NStrand\Strand}\twist_{\NStrand\Str}=\twist_{\NStrand\Str}\twist_{\Strand\Str}\twist_{\NStrand\Strand}=\twist_{\NStrand\Strand}\twist_{\NStrand\Str}\twist_{\Strand\Str}$, 
\\
$\twist_{\Strand\Str}\twistP_{\Strand\NPct}\twistP_{\Str\NPct}=\twistP_{\Str\NPct}\twist_{\Strand\Str}\twistP_{\Strand\NPct}=\twistP_{\Strand\NPct}\twistP_{\Str\NPct}\twist_{\Strand\Str}$, 
\\
$\twist_{\Strand\Str}\twistC_{\Strand\nu}\twistC_{\Str\nu}=\twistC_{\Str\nu}\twist_{\Strand\Str}\twistC_{\Strand\nu}=\twistC_{\Strand\nu}\twistC_{\Str\nu}\twist_{\Strand\Str}$. 
\end{enumerate}
For $1\leq\Strr,\Str,\Strand,\NStrand,\NNStrand\leq\TheStrand$ with $\Strr<\Str<\Strand<\NStrand<\NNStrand$,\; $1\leq\Pc,\Pct,\NPct<\ThePct$ with $\Pc<\Pct<\NPct$ and $1\leq\mu,\nu,\omicron\leq\TheCone$ with $\mu<\nu<\omicron$, the following relations hold: 
\begin{enumerate}[label={\textup{(C)}},ref={\textup{\thetheorem(C)}}]
\item 
\label{prop:pure_br_semidir_pres_rel6_conj} 
\begin{enumerate}
\setlength{\itemsep}{-10pt}
\item \label{prop:pure_br_semidir_pres_rel6_conj_a}
$\twist_{\NNStrand\NStrand}\twistn{\Strand} \twist_{\NNStrand\NStrand}^{-1}=\twist_{\NNStrand\NStrand}^{-1}\twistn{\Strand}\twist_{\NNStrand\NStrand}=\twistn{\Strand}$, 
\\
\item \label{prop:pure_br_semidir_pres_rel6_conj_b}
$\twist_{\NNStrand\Strand}\twistn{\Strand} \twist_{\NNStrand\Strand}^{-1}=\twistn{\Strand}^{-1}\twistn{\NNStrand}^{-1}\twistn{\Strand} \twistn{\NNStrand} \twistn{\Strand}$, 
\\
\item \label{prop:pure_br_semidir_pres_rel6_conj_c}
$\twist_{\NNStrand\Strand}^{-1}\twistn{\Strand} \twist_{\NNStrand\Strand}=\twistn{\NNStrand} \twistn{\Strand} \twistn{\NNStrand}^{-1}$, 
\\
\item \label{prop:pure_br_semidir_pres_rel6_conj_d}
$\twist_{\NNStrand\Str}\twistn{\Strand} \twist_{\NNStrand\Str}^{-1}=\twistn{\Str}^{-1}\twistn{\NNStrand}^{-1}\twistn{\Str} \twistn{\NNStrand} \twistn{\Strand} \twistn{\NNStrand}^{-1}\twistn{\Str}^{-1}\twistn{\NNStrand} \twistn{\Str}, 
$
\\
\item \label{prop:pure_br_semidir_pres_rel6_conj_e}
$\twist_{\NNStrand\Str}^{-1}\twistn{\Strand} \twist_{\NNStrand\Str}=\twistn{\NNStrand} \twistn{\Str} \twistn{\NNStrand}^{-1}\twistn{\Str}^{-1}\twistn{\Strand} \twistn{\Str} \twistn{\NNStrand} \twistn{\Str}^{-1}\twistn{\NNStrand}^{-1}$, 
\\
\item \label{prop:pure_br_semidir_pres_rel6_conj_f}
$\twistP_{\NNStrand\NPct}\twistn{\Strand} \twistP_{\NNStrand\NPct}^{-1}=\twistnP{\NPct}^{-1}\twistn{\NNStrand}^{-1}\twistnP{\NPct}\twistn{\NNStrand} \twistn{\Strand} \twistn{\NNStrand}^{-1}\twistnP{\NPct}^{-1}\twistn{\NNStrand} \twistnP{\NPct}, 
$
\\
\item \label{prop:pure_br_semidir_pres_rel6_conj_g}
$\twistP_{\NNStrand\NPct}^{-1}\twistn{\Strand} \twistP_{\NNStrand\NPct}=\twistn{\NNStrand} \twistnP{\NPct}\twistn{\NNStrand}^{-1}\twistnP{\NPct}^{-1}\twistn{\Strand} \twistnP{\NPct}\twistn{\NNStrand} \twistnP{\NPct}^{-1}\twistn{\NNStrand}^{-1}$, 
\\
\item \label{prop:pure_br_semidir_pres_rel6_conj_h}
$\twistC_{\NNStrand\nu}\twistn{\Strand} \twistC_{\NNStrand\nu}^{-1}=\twistnC{\nu}^{-1}\twistn{\NNStrand}^{-1}\twistnC{\nu}\twistn{\NNStrand} \twistn{\Strand} \twistn{\NNStrand}^{-1}\twistnC{\nu}^{-1}\twistn{\NNStrand} \twistnC{\nu}, 
$
\\
\item \label{prop:pure_br_semidir_pres_rel6_conj_i}
$\twistC_{\NNStrand\nu}^{-1}\twistn{\Strand} \twistC_{\NNStrand\nu}=\twistn{\NNStrand} \twistnC{\nu}\twistn{\NNStrand}^{-1}\twistnC{\nu}^{-1}\twistn{\Strand} \twistnC{\nu}\twistn{\NNStrand} \twistnC{\nu}^{-1}\twistn{\NNStrand}^{-1}$, 
\\
\item \label{prop:pure_br_semidir_pres_rel6_conj_j}
$\twist_{\Strand\Str}\twistn{\Strand} \twist_{\Strand\Str}^{-1}=\twistn{\Str}^{-1}\twistn{\Strand} \twistn{\Str}$, 
\\
\item \label{prop:pure_br_semidir_pres_rel6_conj_k}
$\twist_{\Strand\Str}^{-1}\twistn{\Strand} \twist_{\Strand\Str}=\twistn{\Strand} \twistn{\Str} \twistn{\Strand} \twistn{\Str}^{-1}\twistn{\Strand}^{-1}$, 
\\
\item \label{prop:pure_br_semidir_pres_rel6_conj_l}
$\twistP_{\Strand\NPct}\twistn{\Strand} \twistP_{\Strand\NPct}^{-1}=\twistnP{\NPct}^{-1}\twistn{\Strand} \twistnP{\NPct}$, 
\\
\item \label{prop:pure_br_semidir_pres_rel6_conj_m}
$\twistP_{\Strand\NPct}^{-1}\twistn{\Strand} \twistP_{\Strand\NPct}=\twistn{\Strand} \twistnP{\NPct}\twistn{\Strand} \twistnP{\NPct}^{-1}\twistn{\Strand}^{-1}$, 
\\
\item \label{prop:pure_br_semidir_pres_rel6_conj_n}
$\twistC_{\Strand\nu}\twistn{\Strand} \twistC_{\Strand\nu}^{-1}=\twistnC{\nu}^{-1}\twistn{\Strand} \twistnC{\nu}$, 
\\
\item \label{prop:pure_br_semidir_pres_rel6_conj_o}
$\twistC_{\Strand\nu}^{-1}\twistn{\Strand} \twistC_{\Strand\nu}=\twistn{\Strand} \twistnC{\nu}\twistn{\Strand} \twistnC{\nu}^{-1}\twistn{\Strand}^{-1}$, 
\\
\item \label{prop:pure_br_semidir_pres_rel6_conj_p}
$\twist_{\Str\Strr}\twistn{\Strand} \twist_{\Str\Strr}^{-1}=\twist_{\Str\Strr}^{-1}\twistn{\Strand} \twist_{\Str\Strr}=\twistn{\Strand}$, 
\\
\item \label{prop:pure_br_semidir_pres_rel6_conj_q}
$\twistP_{\Str\NPct}\twistn{\Strand} \twistP_{\Str\NPct}^{-1}=\twistP_{\Str\NPct}^{-1}\twistn{\Strand} \twistP_{\Str\NPct}=\twistn{\Strand}$, 
\\
\item \label{prop:pure_br_semidir_pres_rel6_conj_r}
$\twistC_{\Str\nu}\twistn{\Strand} \twistC_{\Str\nu}^{-1}=\twistC_{\Str\nu}^{-1}\twistn{\Strand} \twistC_{\Str\nu}=\twistn{\Strand}$, 
\\
\item \label{prop:pure_br_semidir_pres_rel6_conj_s}
$\twist_{\Strand\Str}\twistnP{\NPct}\twist_{\Strand\Str}^{-1}=\twist_{\Strand\Str}^{-1}\twistnP{\NPct}\twist_{\Strand\Str}=\twistnP{\NPct}$, 
\\
\item \label{prop:pure_br_semidir_pres_rel6_conj_t}
$\twistP_{\Strand\Pc}\twistnP{\Pct}\twistP_{\Strand\Pc}^{-1}=\twistP_{\Strand\Pc}^{-1}\twistnP{\Pct}\twistP_{\Strand\Pc}=\twistnP{\Pct}$, 
\\
\item \label{prop:pure_br_semidir_pres_rel6_conj_u}
$\twistP_{\Strand\NPct}\twistnP{\NPct}\twistP_{\Strand\NPct}^{-1}=\twistnP{\NPct}^{-1}\twistn{\Strand}^{-1}\twistnP{\NPct}\twistn{\Strand} \twistnP{\NPct}$, 
\\
\item \label{prop:pure_br_semidir_pres_rel6_conj_v}
$\twistP_{\Strand\NPct}^{-1}\twistnP{\NPct}\twistP_{\Strand\NPct}=\twistn{\Strand} \twistnP{\NPct}\twistn{\Strand}^{-1}$, 
\\
\item \label{prop:pure_br_semidir_pres_rel6_conj_w}
$\twistP_{\Strand\NPct}\twistnP{\Pct}\twistP_{\Strand\NPct}^{-1}=\twistnP{\NPct}^{-1}\twistn{\Strand}^{-1}\twistnP{\NPct}\twistn{\Strand} \twistnP{\Pct}\twistn{\Strand}^{-1}\twistnP{\NPct}^{-1}\twistn{\Strand} \twistnP{\NPct}$, 
\\
\item \label{prop:pure_br_semidir_pres_rel6_conj_x}
$\twistP_{\Strand\NPct}^{-1}\twistnP{\Pct}\twistP_{\Strand\NPct}=\twistn{\Strand} \twistnP{\NPct}\twistn{\Strand}^{-1}\twistnP{\NPct}^{-1}\twistnP{\Pct}\twistnP{\NPct}\twistn{\Strand} \twistnP{\NPct}^{-1}\twistn{\Strand}^{-1}$, 
\\
\item \label{prop:pure_br_semidir_pres_rel6_conj_y}
$\twistC_{\Strand\nu}\twistnP{\NPct}\twistC_{\Strand\nu}^{-1}=\twistnC{\nu}^{-1}\twistn{\Strand}^{-1}\twistnC{\nu}\twistn{\Strand} \twistnP{\NPct}\twistn{\Strand}^{-1}\twistnC{\nu}^{-1}\twistn{\Strand} \twistnC{\nu}$, 
\\
\item \label{prop:pure_br_semidir_pres_rel6_conj_z}
$\twistC_{\Strand\nu}^{-1}\twistnP{\NPct}\twistC_{\Strand\nu}=\twistn{\Strand} \twistnC{\nu}\twistn{\Strand}^{-1}\twistnC{\nu}^{-1}\twistnP{\NPct}\twistnC{\nu}\twistn{\Strand} \twistnC{\nu}^{-1}\twistn{\Strand}^{-1}$, 
\end{enumerate}
\setlist[enumerate,2]{label=\textup{\alph*')}, ref=\alph*')}
\begin{enumerate}
\setlength{\itemsep}{-10pt}
\item \label{prop:pure_br_semidir_pres_rel6_conj_a'}
$\twist_{\Strand\Str}\twistnC{\nu}\twist_{\Strand\Str}^{-1}=\twist_{\Strand\Str}^{-1}\twistnC{\nu}\twist_{\Strand\Str}=\twistnC{\nu}$, 
\\
\item \label{prop:pure_br_semidir_pres_rel6_conj_b'}
$\twistP_{\Strand\NPct}\twistnC{\nu}\twistP_{\Strand\NPct}^{-1}=\twistP_{\Strand\NPct}^{-1}\twistnC{\nu}\twistP_{\Strand\NPct}=\twistnC{\nu}$, 
\\
\item \label{prop:pure_br_semidir_pres_rel6_conj_c'}
$\twistC_{\Strand\mu}\twistnC{\nu}\twistC_{\Strand\mu}^{-1}=\twistC_{\Strand\mu}^{-1}\twistnC{\nu}\twistC_{\Strand\mu}=\twistnC{\nu}$, 
\\
\item \label{prop:pure_br_semidir_pres_rel6_conj_d'}
$\twistC_{\Strand\nu}\twistnC{\nu}\twistC_{\Strand\nu}^{-1}=\twistnC{\nu}^{-1}\twistn{\Strand}^{-1}\twistnC{\nu}\twistn{\Strand} \twistnC{\nu}$, 
\\
\item \label{prop:pure_br_semidir_pres_rel6_conj_e'}
$\twistC_{\Strand\nu}^{-1}\twistnC{\nu}\twistC_{\Strand\nu}=\twistn{\Strand} \twistnC{\nu}\twistn{\Strand}^{-1}$, 
\\
\item \label{prop:pure_br_semidir_pres_rel6_conj_f'}
$\twistC_{\Strand\omicron}\twistnC{\nu}\twistC_{\Strand\omicron}^{-1}=\twistnC{\omicron}^{-1}\twistn{\Strand}^{-1}\twistnC{\omicron}\twistn{\Strand} \twistnC{\nu}\twistn{\Strand}^{-1}\twistnC{\omicron}^{-1}\twistn{\Strand} \twistnC{\omicron}$, 
\\
\item \label{prop:pure_br_semidir_pres_rel6_conj_g'}
$\twistC_{\Strand\omicron}^{-1}\twistnC{\nu}\twistC_{\Strand\omicron}=\twistn{\Strand} \twistnC{\omicron}\twistn{\Strand}^{-1}\twistnC{\omicron}^{-1}\twistnC{\nu}\twistnC{\omicron}\twistn{\Strand} \twistnC{\omicron}^{-1}\twistn{\Strand}^{-1}$. 
\end{enumerate}
\end{enumerate}
\end{proposition}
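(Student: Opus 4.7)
The plan is to derive this presentation directly from Corollary \ref{cor:pres_pure_free_prod} by relabeling the top-index generators and reorganizing the relations. Under the identification $\twist_{\TheStrand\NStrand}\leftrightarrow\twistn{\NStrand}$, $\twistP_{\TheStrand\NPct}\leftrightarrow\twistnP{\NPct}$, $\twistC_{\TheStrand\nu}\leftrightarrow\twistnC{\nu}$, the generating set of Corollary \ref{cor:pres_pure_free_prod} coincides with the one listed in the proposition. The relations of Corollary \ref{cor:pres_pure_free_prod} then fall into three classes: those involving only lower-index generators, those involving only top-index generators, and mixed ones. I will show that these three classes account precisely for (S1)--(S5), the finite-order part of (R), and (C) respectively, and that the additional partial-conjugate relations in (R) are consequences of the rest.

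For the easy part, relations (S1)--(S5) are exactly the subset of \ref{cor:pres_pure_free_prod}\eqref{cor:pres_pure_free_prod_rel2}--\eqref{cor:pres_pure_free_prod_rel5} with all indices strictly smaller than $\TheStrand$, and the relation $\twistnC{\nu}^{\TheOrder_\nu}=1$ is the instance of \ref{cor:pres_pure_free_prod}\eqref{cor:pres_pure_free_prod_rel1} for $\NStrand=\TheStrand$. Likewise, \ref{prop:pure_br_semidir_pres_rel1} is the instance of \ref{cor:pres_pure_free_prod}\eqref{cor:pres_pure_free_prod_rel1} for $\NStrand<\TheStrand$. For (C), every relation in Corollary \ref{cor:pres_pure_free_prod} involving at least one top-index generator is a commutator of the form $[\alpha,\beta]=1$ with $\alpha$ a lower-index generator (or a conjugate of one by lower-index generators) and $\beta$ a top-index generator. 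Since $\ev$ is a homomorphism (Corollary \ref{cor:ev_maps_homo}), the conjugation-form identities of Lemma \ref{lem:PMap_gens_sat_rels} descend to the corresponding relations in $\PZ_\TheStrand(\Sigma_\freeprod(\ThePct))$; the resulting expressions $\alpha\beta\alpha^{-1}=\phi_\alpha(\beta)$ are exactly the relations catalogued in \ref{prop:pure_br_semidir_pres_rel6_conj_a}--\ref{prop:pure_br_semidir_pres_rel6_conj_g'}. Each commutator relation in Corollary \ref{cor:pres_pure_free_prod} is equivalent to the corresponding conjugation relation, so this step is a mechanical repackaging.

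The main obstacle is verifying the additional (R) relations, namely $W'=1$ for $W'\in\textup{PC}(\{(\twistn{\NStrand}\twistnC{\nu})^{\TheOrder_\nu}(\twistn{\NStrand}^{-1}\twistnC{\nu}^{-1})^{\TheOrder_\nu}\})$. For the base word with no partial conjugation applied, I would invoke Lemma \ref{lem:it_conj} with $z=\TheOrder_\nu$; combined with $\twistC_{\NStrand\nu}^{\TheOrder_\nu}=1$ (relation \ref{prop:pure_br_semidir_pres_rel1}) the computation in equation \eqref{eq:rel_ker} simplifies to \eqref{eq:non-trivial_ker}, which rearranges to $(\twistn{\NStrand}\twistnC{\nu})^{\TheOrder_\nu}(\twistn{\NStrand}^{-1}\twistnC{\nu}^{-1})^{\TheOrder_\nu}=1$ in $\PZ_\TheStrand(\Sigma_\freeprod(\ThePct))$. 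For an arbitrary partial conjugate $W'$, the key point (Observation \ref{obs:partial_conj}) is that each partial conjugation $pc_a$ represents, on the level of words, true conjugation by $a$ inside $\PZ_\TheStrand(\Sigma_\freeprod(\ThePct))$; since conjugation by any element sends relations to relations, every $W'$ in the partial-conjugate set is a consequence of the base relation already established.

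Putting these pieces together shows that every relation listed in the proposition holds in $\PZ_\TheStrand(\Sigma_\freeprod(\ThePct))$ as a consequence of Corollary \ref{cor:pres_pure_free_prod}, and conversely that every relation of Corollary \ref{cor:pres_pure_free_prod} is captured by (S1)--(S5), the finite-order part of (R), and (C). Hence the two presentations define the same group, proving the proposition.
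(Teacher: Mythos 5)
Your overall route is the paper's route: identify $\twistn{\NStrand},\twistnP{\NPct},\twistnC{\nu}$ with $\twist_{\TheStrand\NStrand},\twistP_{\TheStrand\NPct},\twistC_{\TheStrand\nu}$, check that the listed relations hold in $\PZ_\TheStrand(\Sigma_\freeprod(\ThePct))$ (the conjugation relations \ref{prop:pure_br_semidir_pres_rel6_conj} as $\ev$-images of the relations in Lemma \ref{lem:PMap_gens_sat_rels}, the base word of \ref{prop:pure_br_semidir_pres_rel7_pi_1} via Lemma \ref{lem:it_conj} together with \eqref{eq:rel_ker} and \eqref{eq:non-trivial_ker}, and its partial conjugates via Observation \ref{obs:partial_conj}), and then argue conversely that the relations of Corollary \ref{cor:pres_pure_free_prod} are consequences of the new relation set, so that both von Dyck homomorphisms exist and the presentations agree.

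The gap is in the converse direction, which you dismiss as a ``mechanical repackaging'' in which each old relation is equivalent to ``the corresponding conjugation relation.'' That claim is false as stated. The instances of \ref{cor:pres_pure_free_prod_rel4} with $\NStrand=\TheStrand$ become commutators of words such as $\twistn{\Strand}\twistn{\Str}\twistn{\Strand}^{-1}$ (built from the \emph{top-index} letters, not from lower-index generators) with a lower-index generator, and the instances of \ref{cor:pres_pure_free_prod_rel5} with a top index are triple relations, not commutators at all; none of these coincides with a single item of \ref{prop:pure_br_semidir_pres_rel6_conj}. Deriving them requires combining pairs of conjugation relations and inserting cancelling factors — e.g.\ \ref{prop:pure_br_semidir_pres_rel6_conj_j} with \ref{prop:pure_br_semidir_pres_rel6_conj_b} or \ref{prop:pure_br_semidir_pres_rel6_conj_d}, \ref{prop:pure_br_semidir_pres_rel6_conj_l} with \ref{prop:pure_br_semidir_pres_rel6_conj_f} or \ref{prop:pure_br_semidir_pres_rel6_conj_w}, \ref{prop:pure_br_semidir_pres_rel6_conj_n} with \ref{prop:pure_br_semidir_pres_rel6_conj_h}, \ref{prop:pure_br_semidir_pres_rel6_conj_d'} or \ref{prop:pure_br_semidir_pres_rel6_conj_f'} — which is precisely the block of explicit computations that makes up the second half of the paper's proof. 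These verifications are routine, so your strategy does not fail, but as written this half of the argument is asserted rather than proved, and the asserted one-to-one correspondence between old relations and (C)-relations does not exist.
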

\setlist[enumerate,2]{label=\textup{\alph*)}, ref=\alph*)}
\begin{proof}
Recall that in Corollary \ref{cor:pres_pure_free_prod} we have found a presentation for $\PZ_\TheStrand(\Sigma_\freeprod(\ThePct))$. It remains to check that the group given by the above presentation is isomorphic. Therefore, we consider assignments $\map$ from the above generating set to the generating set from Corollary \ref{cor:pres_pure_free_prod}. 
For $1\leq\Str,\Strand,\NStrand<\TheStrand$ with $\Str<\Strand$, $1\leq\NPct\leq\ThePct$ and $1\leq\nu\leq\TheCone$, let $\map$ be defined by the following assignments: 
\begin{align*}
\twist_{\Strand\Str} & \mapsto \twist_{\Strand\Str}, & \twistn{\Strand} & \mapsto \twist_{\TheStrand\Strand}, 
\\
\twistP_{\NStrand\NPct} & \mapsto \twistP_{\NStrand\NPct}, \text{ and } & \twistnP{\NPct} & \mapsto  \twistP_{\TheStrand\NPct}, 
\\
\twistC_{\NStrand\nu} & \mapsto \twistC_{\NStrand\nu} &  \twistnC{\nu} & \mapsto \twistC_{\TheStrand\nu}. 
\end{align*}
We prove that $\map$ and its inverse induce homomorphisms. 

\begin{intermediate}[The map $\map$ induces a homomorphism]
The relations \ref{prop:pure_br_semidir_pres_rel1}-\ref{prop:pure_br_semidir_pres_rel5} are the relations of a subgroup $\PZ_{\TheStrand-1}(\Sigma_\freeprod(\ThePct))$. Hence, they are covered by relations \ref{cor:pres_pure_free_prod_rel1}-\ref{cor:pres_pure_free_prod_rel5}. 

Moreover, it follows from the proof of Corollary \ref{cor:pres_PMap_free_prod} that $\map$ preserves the relations from \ref{prop:pure_br_semidir_pres_rel6_conj}: based on Lemma \ref{lem:PMap_gens_sat_rels} this Corollary 
shows 
that the analogs of the relations from \ref{prop:pure_br_semidir_pres_rel6_conj} follow from the relations in the presentation of the pure subgroup $\PMapIdOrb{\TheStrand}{\Sigma_\freeprod(\ThePct)}$ from Corollary~\ref{cor:pres_PMap_free_prod}. Since the evaluation map $\ev$ restricts to a homomorphism from $\PMapIdOrb{\TheStrand}{\Sigma_\freeprod(\ThePct)}$ to $\PZ_\TheStrand(\Sigma_\freeprod(\ThePct))$, this implies that the analogous relations follow from the relations in the presentation of $\PZ_\TheStrand(\Sigma_\freeprod(\ThePct))$ from Corollary \ref{cor:pres_pure_free_prod}. 

For each $1\leq\nu\leq\TheCone$, the relation $\twistnC{\nu}^{\TheOrder_\nu}=1$ from \ref{prop:pure_br_semidir_pres_rel7_pi_1} maps to $\twistC_{\TheStrand\nu}^{\TheOrder_\nu}=1$ which is covered by relation \ref{cor:pres_pure_free_prod_rel1}. 
By Lemma \ref{lem:it_conj} and equation (\ref{eq:non-trivial_ker}), the relations from Corollary~\ref{cor:pres_pure_free_prod} also imply the relation $(\twist_{\TheStrand\NStrand}\twistC_{\TheStrand\nu})^{\TheOrder_\nu}=(\twistC_{\TheStrand\nu}\twist_{\TheStrand\NStrand})^{\TheOrder_\nu}$, i.e.\  the above assignments $\rho$ respect the relations $(\twistn{\NStrand} \twistnC{\nu})^{\TheOrder_\nu}(\twistn{\NStrand}^{-1}\twistnC{\nu}^{-1})^{\TheOrder_\nu}=1$ for each $\Strand$ and $\nu$. Furthermore, the partial conjugates of $(\twistn{\NStrand} \twistnC{\nu})^{\TheOrder_\nu}(\twistn{\NStrand}^{-1}\twistnC{\nu}^{-1})^{\TheOrder_\nu}$ map to $\PZ_\TheStrand(\Sigma_\freeprod(\ThePct))$-conjugates of $(\twist_{\TheStrand\NStrand}\twistC_{\TheStrand\nu})^{\TheOrder_\nu}(\twist_{\TheStrand\NStrand}^{-1}\twistC_{\TheStrand\nu}^{-1})^{\TheOrder_\nu}$. Hence, $\map$ also respects the relation $W'=1$ for each partial conjugate $W'$ of $(\twist_{\TheStrand\NStrand}\twistC_{\TheStrand\nu})^{\TheOrder_\nu}(\twist_{\TheStrand\NStrand}^{-1}\twistC_{\TheStrand\nu}^{-1})^{\TheOrder_\nu}$. By Theorem \ref{thm:von_Dyck}, this proves that $\rho$ induces a homomorphism. 
\end{intermediate}

\begin{intermediate}[The inverse map $\map^{-1}$ induces a homomorphism]
The relations \ref{cor:pres_pure_free_prod_rel1}-\ref{cor:pres_pure_free_prod_rel5} with every index $<\TheStrand$ map exactly to the relations \ref{prop:pure_br_semidir_pres_rel1}-\ref{prop:pure_br_semidir_pres_rel5}. 
Moreover, $\twistC_{\TheStrand\nu}^{\TheOrder_\nu}=1$ maps to $\twistnC{\nu}^{\TheOrder_\nu}=1$ from relation \ref{prop:pure_br_semidir_pres_rel7_pi_1}. That $\map^{-1}$ respects the remaining relations follows from relation \ref{prop:pure_br_semidir_pres_rel6_conj}.   For this observation, we give the $\map^{-1}$-image of the relations involving an index $\TheStrand$ and show how they fit into the conjugation relations from \ref{prop:pure_br_semidir_pres_rel6_conj}: 
\begin{align*}
[\twist_{\Strand\Str},\twistn{\NStrand}]=1 & \Leftrightarrow \twist_{\Strand\Str}\twistn{\NStrand} \twist_{\Strand\Str}^{-1}\stackrel{\ref{prop:pure_br_semidir_pres_rel6_conj_p}}=\twistn{\NStrand}, 
\\
[\twistP_{\Strand\NPct},\twistn{\NStrand}]=1 & \Leftrightarrow \twistP_{\Strand\NPct}\twistn{\NStrand} \twistP_{\Strand\NPct}^{-1}\stackrel{\ref{prop:pure_br_semidir_pres_rel6_conj_q}}=\twistn{\NStrand}, 
\\
[\twistC_{\Strand\nu},\twistn{\NStrand}]=1 & \Leftrightarrow \twistC_{\Strand\nu}\twistn{\NStrand} \twistC_{\Strand\nu}^{-1}\stackrel{\ref{prop:pure_br_semidir_pres_rel6_conj_r}}=\twistn{\NStrand}, 
\\
[\twistn{\Str},\twist_{\NStrand\Strand}]=1 & \Leftrightarrow \twist_{\NStrand\Strand}\twistn{\Str} \twist_{\NStrand\Strand}^{-1}\stackrel{\ref{prop:pure_br_semidir_pres_rel6_conj_a}}=\twistn{\Str}, 
\\
[\twistnP{\NPct},\twist_{\NStrand\Strand}]=1 & \Leftrightarrow \twist_{\NStrand\Strand}\twistnP{\NPct}\twist_{\NStrand\Strand}^{-1}\stackrel{\ref{prop:pure_br_semidir_pres_rel6_conj_s}}=\twistnP{\NPct}, 
\\
[\twistnP{\NPct},\twistP_{\NStrand\Pc}]=1 & \Leftrightarrow \twistP_{\NStrand\Pc}\twistnP{\NPct}\twistP_{\NStrand\Pc}^{-1}\stackrel{\ref{prop:pure_br_semidir_pres_rel6_conj_t}}=\twistnP{\NPct}, 
\\
[\twistnC{\nu},\twist_{\NStrand\Strand}]=1 & \Leftrightarrow \twist_{\NStrand\NStrand}\twistnC{\nu}\twist_{\NStrand\Strand}^{-1}\stackrel{\ref{prop:pure_br_semidir_pres_rel6_conj_a'}}=\twistnC{\nu}, 
\\
[\twistnC{\nu},\twistP_{\NStrand\NPct}]=1 & \Leftrightarrow \twistP_{\NStrand\NPct}\twistnC{\nu}\twistP_{\NStrand\NPct}^{-1}\stackrel{\ref{prop:pure_br_semidir_pres_rel6_conj_b'}}=\twistnC{\nu}, 
\\
[\twistC_{\Strand\mu},\twistnC{\nu}]=1 & \Leftrightarrow \twistC_{\Strand\mu}\twistnC{\nu}\twistC_{\Strand\mu}^{-1}\stackrel{\ref{prop:pure_br_semidir_pres_rel6_conj_c'}}=\twistnC{\nu}, 
\\
\twist_{\Strand\Str}\twistn{\Strand} \twistn{\Str}=\twistn{\Str} \twist_{\Strand\Str}\twistn{\Strand} & \Leftrightarrow \twist_{\Strand\Str}^{-1}\twistn{\Str} \twist_{\Strand\Str}\stackrel{\ref{prop:pure_br_semidir_pres_rel6_conj_c}}=\twistn{\Strand} \twistn{\Str} \twistn{\Strand}^{-1}, 
\\
\twist_{\Strand\Str}\twistn{\Strand} \twistn{\Str}=\twistn{\Strand} \twistn{\Str} \twist_{\Strand\Str} & \Leftrightarrow \twistn{\Strand} \twistn{\Str}\stackrel{\vee}=(\textcolor{\col}{\twistn{\Str}^{-1}\twistn{\Strand}\twistn{\Str}}\textcolor{\colo}{\twistn{\Str}^{-1}\twistn{\Strand}^{-1}\twistn{\Str})\twistn{\Strand}\twistn{\Str}}\stackrel{\ref{prop:pure_br_semidir_pres_rel6_conj_j},\ref{prop:pure_br_semidir_pres_rel6_conj_b}}=\twist_{\Strand\Str}\twistn{\Strand} \twistn{\Str} \twist_{\Strand\Str}^{-1}, 
\\
\twistnP{\NPct}\twistP_{\Str\NPct}\twistn{\Str}=\twistP_{\Str\NPct}\twistn{\Str} \twistnP{\NPct} & \Leftrightarrow \twistP_{\Str\NPct}^{-1}\twistnP{\NPct}\twistP_{\Str\NPct}\stackrel{\ref{prop:pure_br_semidir_pres_rel6_conj_v}}=\twistn{\Str} \twistnP{\NPct}\twistn{\Str}^{-1}, 
\\
\twistn{\Str} \twistnP{\NPct}\twistP_{\Str\NPct}=\twistnP{\NPct}\twistP_{\Str\NPct}\twistn{\Str} & \Leftrightarrow \twistnP{\NPct}^{-1}\twistn{\Str} \twistnP{\NPct}\stackrel{\ref{prop:pure_br_semidir_pres_rel6_conj_l}}=\twistP_{\Str\NPct}\twistn{\Str} \twistP_{\Str\NPct}^{-1}, 
\\
\twistn{\Str} \twistnC{\nu}\twistC_{\Str\nu}=\twistC_{\Str\nu}\twistn{\Str} \twistnC{\nu} & \Leftrightarrow \twistn{\Str} \twistnC{\nu}\stackrel{\vee}=(\textcolor{\col}{\twistnC{\nu}^{-1}\twistn{\Str} \twistnC{\nu}}\textcolor{\colo}{\twistnC{\nu}^{-1}\twistn{\Str}^{-1}\twistnC{\nu})\twistn{\Str} \twistnC{\nu}}\stackrel{\ref{prop:pure_br_semidir_pres_rel6_conj_n},\ref{prop:pure_br_semidir_pres_rel6_conj_d'}}=\twistC_{\Str\nu}\twistn{\Str} \twistnC{\nu}\twistC_{\Str\nu}^{-1}, 
\\
\twistn{\Str} \twistnC{\nu}\twistC_{\Str\nu}=\twistnC{\nu}\twistC_{\Str\nu}\twistn{\Str} & \Leftrightarrow \twistC_{\Str\nu}\twistn{\Str} \twistC_{\Str\nu}^{-1}\stackrel{\ref{prop:pure_br_semidir_pres_rel6_conj_n}}=\twistnC{\nu}^{-1}\twistn{\Str} \twistnC{\nu}, 
\end{align*}
\begin{align*}
[\twistn{\NStrand} \twistn{\Strand} \twistn{\NStrand}^{-1},\twist_{\NStrand\Str}]=1 & \Leftrightarrow \twistn{\NStrand} \twistn{\Strand} \twistn{\NStrand}^{-1} & \mystackrel{$\vee$}= & (\textcolor{\col}{\twistn{\Str}^{-1}\twistn{\NStrand} \twistn{\Str}} \textcolor{\colo}{\twistn{\Str}^{-1}\twistn{\NStrand}^{-1}\twistn{\Str})\twistn{\NStrand} \twistn{\Strand} \twistn{\NStrand}^{-1}(\twistn{\Str}^{-1}\twistn{\NStrand} \twistn{\Str}}\textcolor{\col}{\twistn{\Str}^{-1}\twistn{\NStrand}^{-1}\twistn{\Str}})
\\
&  & \mystackrel{\ref{prop:pure_br_semidir_pres_rel6_conj_j},\ref{prop:pure_br_semidir_pres_rel6_conj_d}}= & \twist_{\NStrand\Str}\twistn{\NStrand} \twistn{\Strand} \twistn{\NStrand}^{-1}\twist_{\NStrand\Str}^{-1}, 
\\
[\twistn{\Strand} \twistn{\Str} \twistn{\Strand}^{-1},\twistP_{\Strand\NPct}]=1 & \Leftrightarrow \twistn{\Strand} \twistn{\Str} \twistn{\Strand}^{-1} & \mystackrel{$\vee$}= & (\textcolor{\col}{\twistnP{\NPct}^{-1}\twistn{\Strand} \twistnP{\NPct}}\textcolor{\colo}{\twistnP{\NPct}^{-1}\twistn{\Strand}^{-1}\twistnP{\NPct})\twistn{\Strand} \twistn{\Str} \twistn{\Strand}^{-1}(\twistnP{\NPct}^{-1}\twistn{\Strand} \twistnP{\NPct}}\textcolor{\col}{\twistnP{\NPct}^{-1}\twistn{\Strand}^{-1}\twistnP{\NPct}})
\\
&  & \mystackrel{\ref{prop:pure_br_semidir_pres_rel6_conj_l},\ref{prop:pure_br_semidir_pres_rel6_conj_f}}= & \twistP_{\Strand\NPct}\twistn{\Strand} \twistn{\Str} \twistn{\Strand}^{-1}\twistP_{\Strand\NPct}^{-1}, 
\\
[\twistn{\Strand} \twistnP{\Pc}\twistn{\Strand}^{-1},\twistP_{\Strand\NPct}]=1 & \Leftrightarrow \twistn{\Strand} \twistnP{\Pc}\twistn{\Strand}^{-1} & \mystackrel{$\vee$}= & (\textcolor{\col}{\twistnP{\NPct}^{-1}\twistn{\Strand}\twistnP{\NPct}}\textcolor{\colo}{\twistnP{\NPct}^{-1}\twistn{\Strand}^{-1}\twistnP{\NPct})\twistn{\Strand}\twistnP{\Pc}\twistn{\Strand}^{-1}(\twistnP{\NPct}^{-1}\twistn{\Strand}\twistnP{\NPct}}\textcolor{\col}{\twistnP{\NPct}^{-1}\twistn{\Strand}^{-1}\twistnP{\NPct}})
\\
&  & \mystackrel{\ref{prop:pure_br_semidir_pres_rel6_conj_l},\ref{prop:pure_br_semidir_pres_rel6_conj_w}}= & \twistP_{\Strand\NPct}\twistn{\Strand} \twistnP{\Pc}\twistn{\Strand}^{-1}\twistP_{\Strand\NPct}^{-1},  
\\
[\twistn{\Strand} \twistn{\Str} \twistn{\Strand}^{-1},\twistC_{\Strand\nu}]=1 & \Leftrightarrow \twistn{\Strand} \twistn{\Str} \twistn{\Strand}^{-1} & \mystackrel{$\vee$}= & (\textcolor{\col}{\twistnC{\nu}^{-1}\twistn{\Strand}\twistnC{\nu}}\textcolor{\colo}{\twistnC{\nu}^{-1}\twistn{\Strand}^{-1}\twistnC{\nu})\twistn{\Strand}\twistn{\Str}\twistn{\Strand}^{-1}(\twistnC{\nu}^{-1}\twistn{\Strand}\twistnC{\nu}}\textcolor{\col}{\twistnC{\nu}^{-1}\twistn{\Strand}^{-1}\twistnC{\nu}})
\\
&  & \mystackrel{\ref{prop:pure_br_semidir_pres_rel6_conj_n},\ref{prop:pure_br_semidir_pres_rel6_conj_h}}= & \twistC_{\Strand\nu}\twistn{\Strand} \twistn{\Str} \twistn{\Strand}^{-1}\twistC_{\Strand\nu}^{-1}, 
\\
[\twistn{\Strand} \twistnC{\mu}\twistn{\Strand}^{-1},\twistC_{\Strand\nu}]=1 & \Leftrightarrow \twistn{\Strand} \twistnC{\mu} \twistn{\Strand}^{-1} & \mystackrel{$\vee$}= & (\textcolor{\col}{\twistnC{\nu}^{-1}\twistn{\Strand}\twistnC{\nu}}\textcolor{\colo}{\twistnC{\nu}^{-1}\twistn{\Strand}^{-1}\twistnC{\nu})\twistn{\Strand}\twistnC{\mu}\twistn{\Strand}^{-1}(\twistnC{\nu}^{-1}\twistn{\Strand}\twistnC{\nu}}\textcolor{\col}{\twistnC{\nu}^{-1}\twistn{\Strand}^{-1}\twistnC{\nu}})
\\
&  & \mystackrel{\ref{prop:pure_br_semidir_pres_rel6_conj_n},\ref{prop:pure_br_semidir_pres_rel6_conj_f'}}= & \twistC_{\Strand\nu}\twistn{\Strand} \twistnC{\mu}\twistn{\Strand}^{-1}\twistC_{\Strand\nu}^{-1}. 
\end{align*}
\end{intermediate}
This shows that $\map^{-1}$ induces an inverse homomorphism and $\PZ_\TheStrand(\Sigma_\freeprod(\ThePct))$ has the above presentation. 
\end{proof}

The proof of Proposition \ref{prop:pure_br_semidir_pres} in particular shows that the relations $W'=1$ for 
\[
W'\in\text{PC}(\{(\twistn{\NStrand}\twistnC{\nu})^{\TheOrder_\nu}(\twistn{\NStrand}^{-1}\twistnC{\nu}^{-1})^{\TheOrder_\nu}\mid1\leq\NStrand<\TheStrand \text{ and } 1\leq\nu\leq\TheCone\})
\]
and some relations from \ref{prop:pure_br_semidir_pres_rel6_conj} are not required to obtain a presentation of $\PZ_\TheStrand(\Sigma_\freeprod(\ThePct))$. But these relations are the key to prove the following: 

\begin{lemma}
\label{lem:semidir_prod_pres_PZ_n}
The presentation of $\PZ_\TheStrand(\Sigma_\freeprod(\ThePct))$ from Proposition \textup{\ref{prop:pure_br_semidir_pres}} satisfies the conditions from Lemma~\textup{\ref{lem:semidir_prod_pres_it2}}, i.e.\ $\PZ_\TheStrand(\Sigma_\freeprod(\ThePct))=\langle X\mid R\rangle\rtimes\PZ_{\TheStrand-1}(\Sigma_\freeprod(\ThePct))$ with 
\[
X=\{\twistn{1},...,\twistn{\TheStrand-1},\twistnP{1},...,\twistnP{\ThePct},\twistnC{1},...,\twistnC{\TheCone}\}
\]
and $R$ contains the relations from \textup{\ref{prop:pure_br_semidir_pres_rel7_pi_1}}.  
\end{lemma}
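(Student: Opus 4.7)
I will verify the two conditions of Remark \ref{rem:semidir_prod_pres} for the presentation in Proposition \ref{prop:pure_br_semidir_pres}. Setting
\[
X = \{\twistn{\NStrand}, \twistnP{\NPct}, \twistnC{\nu}\} \quad \text{and} \quad Y = \{\twist_{\Strand\Str}, \twistP_{\NStrand\NPct}, \twistC_{\NStrand\nu}\},
\]
the relations partition into: $R$, consisting of \ref{prop:pure_br_semidir_pres_rel7_pi_1} (all words in $X$); $S$, consisting of \ref{prop:pure_br_semidir_pres_rel1}--\ref{prop:pure_br_semidir_pres_rel5} (all words in $Y$, and verbatim the presentation of $\PZ_{\TheStrand-1}(\Sigma_\freeprod(\ThePct))$ from Corollary \ref{cor:pres_pure_free_prod}); and $C$, the conjugation relations \ref{prop:pure_br_semidir_pres_rel6_conj}, each of the form $y^{\pm 1} x y^{\mp 1} = pc_{y^{\pm 1}}(x)$ with right-hand side a word in $X$ as in Table \ref{tab:partial_conj}. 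Thus the presentation already has the shape required by Lemma \ref{lem:semidir_prod_pres_it2}, and setting $\phi_y := pc_y$ it remains only to verify the two conditions of Remark \ref{rem:semidir_prod_pres}.

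For Step \ref{rem:semidir_prod_pres_step1}, I must show that every relator $w \in S$ satisfies $\phi_w = \id$ on $\langle X \mid R \rangle$. The commutator and triangle relations \ref{prop:pure_br_semidir_pres_rel2}--\ref{prop:pure_br_semidir_pres_rel5} coincide verbatim with the defining relations of $\PMapIdOrb{\TheStrand-1}{\Sigma_\freeprod(\ThePct)}$ appearing in the presentation of $\PMapIdOrb{\TheStrand}{\Sigma_\freeprod(\ThePct)}$ from Corollary \ref{cor:pres_PMap_free_prod}. Since that corollary is itself obtained from Lemma \ref{lem:semidir_prod_pres} applied to the Birman exact sequence of Corollary \ref{cor:pure_orb_mcg_ses}, with the identical partial-conjugation formulas acting on the free normal subgroup $\freegrp{\TheStrand-1+\ThePct+\TheCone} = F(X)$, the compositions $\phi_w$ for these $w$ already reduce to the identity on $F(X)$, and thus descend to the identity on $\langle X \mid R \rangle$. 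The only relation of $S$ not present on the $\PMap$ side is the torsion relation \ref{prop:pure_br_semidir_pres_rel1}, namely $\twistC_{\NStrand\nu}^{\TheOrder_\nu} = 1$, and this is the critical case that forces the relations of $R$. Iterating the partial-conjugation formulas via Lemma \ref{lem:it_conj} yields explicit expressions for $\phi_{\twistC_{\NStrand\nu}}^{\TheOrder_\nu}(x)$ for each $x \in X$; invoking the torsion relation $\twistnC{\nu}^{\TheOrder_\nu} = 1$ together with partial conjugates of the core relator $(\twistn{\NStrand}\twistnC{\nu})^{\TheOrder_\nu}(\twistn{\NStrand}^{-1}\twistnC{\nu}^{-1})^{\TheOrder_\nu}$, exactly as in equation (\ref{eq:non-trivial_ker}) read inside $\langle X \mid R \rangle$, reduces the defect $\phi_{\twistC_{\NStrand\nu}}^{\TheOrder_\nu}(x) \cdot x^{-1}$ to a product of relators of $R$.

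For Step \ref{rem:semidir_prod_pres_step2} I must show each $\phi_y$ carries every relator of $R$ to the trivial element. Inspection of Table \ref{tab:partial_conj} reveals that $\phi_y(\twistnC{\nu})$ is always an $F(X)$-conjugate of $\twistnC{\nu}$, so $\phi_y(\twistnC{\nu}^{\TheOrder_\nu})$ is a conjugate of $\twistnC{\nu}^{\TheOrder_\nu}$ and hence trivial in $\langle X \mid R \rangle$. For the remaining relators of $R$, the set $\textup{PC}(\{(\twistn{\NStrand}\twistnC{\nu})^{\TheOrder_\nu}(\twistn{\NStrand}^{-1}\twistnC{\nu}^{-1})^{\TheOrder_\nu}\})$ is by definition closed under every partial conjugation, so $\phi_y$ permutes it; this is essentially Observation \ref{obs:partial_conj}. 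Invertibility of $\phi_y$ then follows from Step \ref{rem:semidir_prod_pres_step1} applied to the relation $y y^{-1} = 1$. The main technical obstacle is the torsion case of Step \ref{rem:semidir_prod_pres_step1}: for each $x \in X$ one must expand the $\TheOrder_\nu$-fold iterate of $pc_{\twistC_{\NStrand\nu}}$, telescope the resulting word, and identify the surviving defect as a product of partial conjugates of the core relator. Once both steps are verified, Lemma \ref{lem:semidir_prod_pres} delivers the claimed semidirect product decomposition $\PZ_\TheStrand(\Sigma_\freeprod(\ThePct)) = \langle X \mid R \rangle \rtimes \PZ_{\TheStrand-1}(\Sigma_\freeprod(\ThePct))$.
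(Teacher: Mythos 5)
Your proposal is correct and follows essentially the same route as the paper's proof: the same splitting into $R$, $S$ and the conjugation relations \ref{prop:pure_br_semidir_pres_rel6_conj}, the transfer of the relations \ref{prop:pure_br_semidir_pres_rel2}--\ref{prop:pure_br_semidir_pres_rel5} from the semidirect product structure of $\PMapIdOrb{\TheStrand}{\Sigma_\freeprod(\ThePct)}$ via $\ev$, the treatment of the torsion relation \ref{prop:pure_br_semidir_pres_rel1} through Lemma \ref{lem:it_conj} and the partial-conjugate relators in $R$, and Observation \ref{obs:partial_conj} for Step \ref{rem:semidir_prod_pres_step2}. No substantive deviation or gap.
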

\begin{proof}
As described above the normal subgroup is presented by $\langle X\mid R\rangle$. The quotient is generated by 
\[
\twist_{\Strand\Str},\twistP_{\NStrand\NPct} \; \text{ and } \; \twistC_{\NStrand\nu}
\]
for $1\leq\Str,\Strand,\NStrand<\TheStrand,\Str<\Strand$, $1\leq\NPct\leq\ThePct$ and $1\leq\nu\leq\TheCone$ with defining relations from \ref{prop:pure_br_semidir_pres_rel1}-\ref{prop:pure_br_semidir_pres_rel5}, i.e.\ the presentation of $\PZ_{\TheStrand-1}(\Sigma_\freeprod(\ThePct))$ from Corollary~\ref{cor:pres_pure_free_prod}. Let $\langle Y\mid S\rangle$ denote this presentation of $\PZ_{\TheStrand-1}(\Sigma_\freeprod(\ThePct))$. Further, the presentation contains the relations in \ref{prop:pure_br_semidir_pres_rel6_conj} which are of the form $axa^{-1}=\phi_a(x)$ with 
\begin{align*}
x\in & \{\twistn{\NStrand},\twistnP{\NPct},\twistnC{\nu}\mid1\leq\NStrand<\TheStrand,1\leq\NPct\leq\ThePct,1\leq\nu\leq\TheCone\}\text{ and }
\\
a\in & \{\twist_{\Strand\Str},\twistP_{\NStrand\NPct},\twistC_{\NStrand\nu}\mid1\leq\Str,\Strand,\NStrand<\TheStrand, \Str<\Strand, 1\leq\NPct\leq\TheCone, 1\leq\nu\leq\TheCone\} & 
\end{align*} 
and $\phi_a(x)=pc_a(x)$ is a word in the alphabet $X$. It remains to check that this presentation satisfies the conditions from Lemma \ref{lem:semidir_prod_pres_it2}. That is, the assignments $x\mapsto\phi_a(x)$ induce an automorphism $\phi_a\in\Aut(\langle X\mid R\rangle)$ 
and the assignments $\phi:a\mapsto\phi_a$ induce a homomorphism $\PZ_{\TheStrand-1}(\Sigma_\freeprod(\ThePct))\rightarrow\Aut(\langle X\mid R\rangle)$. We follow the Steps \ref{rem:semidir_prod_pres_step1} and \ref{rem:semidir_prod_pres_step2} described in Remark \ref{rem:semidir_prod_pres}. 

\begin{step}
\label{lem:semidir_prod_pres_PZ_n_step1}
The assignments $\phi:a\mapsto\phi_a$ induce a homomorphism. 
\end{step}

This step requires to check that $\phi$ preserves the relations from \ref{prop:pure_br_semidir_pres_rel1}-\ref{prop:pure_br_semidir_pres_rel5}. 

For the relation $\twistC_{\NStrand\nu}^{\TheOrder_\nu}=1$ from \ref{prop:pure_br_semidir_pres_rel1}, we need to verify that $\phi_{\twistC_{\NStrand\nu}}^{\TheOrder_\nu}$ induces a trivial action on every generator. 
\new{On the basis of the relations from \ref{prop:pure_br_semidir_pres_rel6_conj},} Lemma~\ref{lem:it_conj} implies 
\begin{align*}
\twistC_{\NStrand\nu}^{\TheOrder_\nu}\twistn{\NNStrand} \twistC_{\NStrand\nu}^{-\TheOrder_\nu} & =\twistn{\NNStrand}, 
\\
\twistC_{\NStrand\nu}^{\TheOrder_\nu}\twistn{\NStrand} \twistC_{\NStrand\nu}^{-\TheOrder_\nu} & =(\twistnC{\nu}^{-1}\twistn{\NStrand}^{-1})^{\TheOrder_\nu}\twistn{\NStrand}(\twistn{\NStrand} \twistnC{\nu})^{\TheOrder_\nu}, 
\\
\twistC_{\NStrand\nu}^{\TheOrder_\nu}\twistn{\Strand} \twistC_{\NStrand\nu}^{-\TheOrder_\nu} & =(\twistnC{\nu}^{-1}\twistn{\NStrand}^{-1})^{\TheOrder_\nu}(\twistnC{\nu}\twistn{\NStrand})^{\TheOrder_\nu}\twistn{\Strand}(\twistn{\NStrand}^{-1}\twistnC{\nu}^{-1})^{\TheOrder_\nu}(\twistn{\NStrand} \twistnC{\nu})^{\TheOrder_\nu}, 
\\
\twistC_{\NStrand\nu}^{\TheOrder_\nu}\twistnP{\NPct}\twistC_{\NStrand\nu}^{-\TheOrder_\nu} & =(\twistnC{\nu}^{-1}\twistn{\NStrand}^{-1})^{\TheOrder_\nu}(\twistnC{\nu}\twistn{\NStrand})^{\TheOrder_\nu}\twistnP{\NPct}(\twistn{\NStrand}^{-1}\twistnC{\nu}^{-1})^{\TheOrder_\nu}(\twistn{\NStrand} \twistnC{\nu})^{\TheOrder_\nu}, 
\\
\twistC_{\NStrand\nu}^{\TheOrder_\nu}\twistnC{\mu}\twistC_{\NStrand\nu}^{-\TheOrder_\nu} & =(\twistnC{\nu}^{-1}\twistn{\NStrand}^{-1})^{\TheOrder_\nu}(\twistnC{\nu}\twistn{\NStrand})^{\TheOrder_\nu}\twistnC{\mu}(\twistn{\NStrand}^{-1}\twistnC{\nu}^{-1})^{\TheOrder_\nu}(\twistn{\NStrand} \twistnC{\nu})^{\TheOrder_\nu}, 
\\
\twistC_{\NStrand\nu}^{\TheOrder_\nu}\twistnC{\nu}\twistC_{\NStrand\nu}^{-\TheOrder_\nu} & =(\twistnC{\nu}^{-1}\twistn{\NStrand}^{-1})^{\TheOrder_\nu}\twistnC{\nu}(\twistn{\NStrand} \twistnC{\nu})^{\TheOrder_\nu}, 
\\
\twistC_{\NStrand\nu}^{\TheOrder_\nu}\twistnC{\omicron}\twistC_{\NStrand\nu}^{-\TheOrder_\nu} & =\twistnC{\omicron}. 
\end{align*}
The relations $(\twistnC{\nu}\twistn{\NStrand})^{\TheOrder_\nu}(\twistnC{\nu}^{-1}\twistn{\NStrand}^{-1})^{\TheOrder_\nu}=1$ for $1\leq\NStrand\leq\TheStrand$ and $1\leq\nu\leq\TheCone$ from \ref{prop:pure_br_semidir_pres_rel7_pi_1} imply that 
$\phi_{\twistC_{\NStrand\nu}}^{\TheOrder_\nu}(x)=x$ for  each $x\in X$. 

Further, it remains to check that $\phi$ preserves the relations \ref{prop:pure_br_semidir_pres_rel2}-\ref{prop:pure_br_semidir_pres_rel5} of $\PZ_{\TheStrand-1}(\Sigma_\freeprod(\ThePct))$. Therefore, we recall 
that the evaluation map $\ev$ for each $\TheStrand\in\NN$ induces a homomorphism 
\[
\PMapIdOrb{\TheStrand}{\Sigma_\freeprod(\ThePct)}\rightarrow\PZ_\TheStrand(\Sigma_\freeprod(\ThePct)). 
\]
Hence, the evaluation map $\ev$ preserves the relations from $\PMapIdOrb{\TheStrand}{\Sigma_\freeprod(\ThePct)}$. 
In particular, we emphasize that the relations \ref{prop:pure_br_semidir_pres_rel2}-\ref{prop:pure_br_semidir_pres_rel5} of $\PZ_{\TheStrand-1}(\Sigma_\freeprod(\ThePct))$ have corresponding relations in $\PMapIdOrb{\TheStrand-1}{\Sigma_\freeprod(\ThePct)}$ (see \ref{cor:pres_PMap_free_prod_rel1}-\ref{cor:pres_PMap_free_prod_rel4}). 

Now we recall from Corollary \ref{cor:pure_orb_mcg_ses} that $\PMapIdOrb{\TheStrand}{\Sigma_\freeprod(\ThePct)}$ is a semidirect product $\freegrp{\TheStrand-1+\ThePct+\TheCone}\rtimes\PMapIdOrb{\TheStrand-1}{\Sigma_\freeprod(\ThePct)}$. Hence, Lemma \ref{lem:semidir_prod_pres_it2} in particular implies that the conjugation relations from Lemma \ref{lem:PMap_gens_sat_rels} induce assignments $\psi:A\mapsto\psi_A$ that yield a homomorphism $\psi:\PMapIdOrb{\TheStrand-1}{\Sigma_\freeprod(\ThePct)}\rightarrow\Aut(\freegrp{\TheStrand-1+\ThePct+\TheCone})$. 
That is, the assignments $\psi:A\mapsto\psi_A$ preserve the defining relations of $\PMapIdOrb{\TheStrand-1}{\Sigma_\freeprod(\ThePct)}$ from Corollary \ref{cor:pres_PMap_free_prod}. 

Since $\ev:\PMapIdOrb{\TheStrand}{\Sigma_\freeprod(\ThePct)}\rightarrow\PZ_\TheStrand(\Sigma_\freeprod(\ThePct))$ is a homomorphism, the assignments of $\ev$ in particular preserve the relations from Lemma~\ref{lem:PMap_gens_sat_rels}. 
The $\ev$-images of the relations from Lemma \ref{lem:PMap_gens_sat_rels} are the relations from \ref{prop:pure_br_semidir_pres_rel6_conj}. Thus, the assignments $\phi:a\mapsto\phi_a$ induced by the relations in \ref{prop:pure_br_semidir_pres_rel6_conj} correspond to the assignments $\psi:A\mapsto\psi_A$ induced by Lemma \ref{lem:PMap_gens_sat_rels}. 

Finally, we obtain that the assignments $\phi$ correspond to the assignments $\psi$ and $\psi$ preserves the relations that correspond to \ref{prop:pure_br_semidir_pres_rel2}-\ref{prop:pure_br_semidir_pres_rel5}. As a consequence, the fact that $(\freegrp{\TheStrand-1+\ThePct}\ast\freeprod)/\kernel$ is a quotient of $\freegrp{\TheStrand-1+\ThePct+\TheCone}$ allows us to deduce that $\phi$ preserves the relations \ref{prop:pure_br_semidir_pres_rel2}-\ref{prop:pure_br_semidir_pres_rel5}. This finishes the proof of Step \ref{lem:semidir_prod_pres_PZ_n_step1} and shows that the assignments $\phi:a\mapsto\phi_a$ induce a homomorphism. 

\begin{step}
\label{lem:semidir_prod_pres_PZ_n_step2}
The assignments $\phi_a:x\mapsto\phi_a(x)$ induce an automorphism in $\Aut(\langle X\mid R\rangle)$. 
\end{step}


\new{To observe that $\phi_a$ preserves the relations $\twistnC{\nu}^{\TheOrder_\nu}=1$,} we recall that the conjugation relations from \ref{prop:pure_br_semidir_pres_rel6_conj} identify each $\PZ_{\TheStrand-1}(\Sigma_\freeprod(\ThePct))$-conjugate of $\twistnC{\nu}$ with an $\langle X\mid R\rangle$-conjugate of $\twistnC{\nu}$. This in particular implies that $\phi_a(\twistnC{\nu}^{\TheOrder_\nu})=\phi_a(\twistnC{\nu})^{\TheOrder_\nu}=1$. 

Further, recall that the map $\phi_a$ 
describes the conjugations $conj_a$ 
on the level of words in the alphabet $X$, i.e.\ this map coincides with the partial conjugation $pc_a$. 
Hence, Observation \ref{obs:partial_conj} shows that the set of partial conjugates 
\[
\text{PC}(\{(\twistn{\NStrand}\twistnC{\nu})^{\TheOrder_\nu}(\twistn{\NStrand}^{-1}\twistnC{\nu}^{-1})^{\TheOrder_\nu}\mid1\leq\NStrand<\TheStrand \text{ and } 1\leq\nu\leq\TheCone\}) 
\]
is invariant under $\phi_a$. 
This implies, if $W'$ is a word from this set of partial conjugates, the relation $W'=1$ maps to $pc_a(W')=1$ 
which is also covered by the relations in $R$. 
Hence, $\phi_a$ induces an $\langle X\mid R\rangle$
-automorphism for every $a\in\PZ_{\TheStrand-1}(\Sigma_\freeprod(\ThePct))$, which was claimed in Step \ref{lem:semidir_prod_pres_PZ_n_step2}. 

Finally, the presentation from Proposition \ref{prop:pure_br_semidir_pres} satisfies the conditions from \linebreak Lemma~\ref{lem:semidir_prod_pres_it2}. Thus, we obtain that $\PZ_\TheStrand(\Sigma_\freeprod(\ThePct))$ is a semidirect product 
\[
\langle X\mid R\rangle\rtimes\PZ_{\TheStrand-1}(\Sigma_\freeprod(\ThePct)). 
\]
This proves the claim. 
\end{proof}

Lemma \ref{lem:semidir_prod_pres_PZ_n} also allows us to determine $\kernel$. 

\begin{proposition}
The kernel $\kernel=\kernel_\TheStrand$ of $\iotaPZ$ is given by the normal closure of 
\[
\textup{PC}(\{(\twistn{\NStrand} \twistnC{\nu})^{\TheOrder_\nu}(\twistn{\NStrand}^{-1}\twistnC{\nu}^{-1})^{\TheOrder_\nu}\mid1\leq\NStrand<\TheStrand \; \text{ and } \; 1\leq\nu\leq\TheCone\}) 
\]
inside $\piOrb\left(\Sigma_\freeprod(\TheStrand-1+\ThePct)\right)\cong\freegrp{\TheStrand-1+\ThePct}\ast\freeprod$. 
\end{proposition}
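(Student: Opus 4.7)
The plan is to combine Observation \ref{obs:partial_conj} (which gives the easy inclusion) with Lemma \ref{lem:semidir_prod_pres_PZ_n} (which does all the heavy lifting for the hard inclusion), exploiting the fact that by the exactness established in Subsection \ref{subsec:ex_seq_pure_orb_braid}, the image of $\iotaPZ$ is precisely the kernel of $\piPZ$, which is the normal subgroup of the semidirect product decomposition.

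For the inclusion $\langle\langle\textup{PC}(S)\rangle\rangle\subseteq\kernel$, where $S=\{(\twistn{\NStrand}\twistnC{\nu})^{\TheOrder_\nu}(\twistn{\NStrand}^{-1}\twistnC{\nu}^{-1})^{\TheOrder_\nu}\}$, I would first observe that equation \eqref{eq:non-trivial_ker} shows $(\twistC_{\TheStrand\nu}\twist_{\TheStrand\NStrand})^{\TheOrder_\nu}=(\twist_{\TheStrand\NStrand}\twistC_{\TheStrand\nu})^{\TheOrder_\nu}$ in $\PZ_\TheStrand(\Sigma_\freeprod(\ThePct))$, so the elements of $S$ lie in $\kernel$. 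Then Observation \ref{obs:partial_conj} extends this to the whole set of partial conjugates: since $pc_a$ realizes conjugation by $a$ inside the subgroup $(\freegrp{\TheStrand-1+\ThePct}\ast\freeprod)/\kernel$, applying a partial conjugation to an element of $\kernel$ yields another element of $\kernel$. Normality of $\kernel$ then gives $\langle\langle\textup{PC}(S)\rangle\rangle\subseteq\kernel$.

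For the opposite inclusion, I would leverage Lemma \ref{lem:semidir_prod_pres_PZ_n}, which establishes that
\[
\PZ_\TheStrand(\Sigma_\freeprod(\ThePct))=\langle X\mid R\rangle\rtimes\PZ_{\TheStrand-1}(\Sigma_\freeprod(\ThePct))
\]
with $X=\{\twistn{1},\dots,\twistn{\TheStrand-1},\twistnP{1},\dots,\twistnP{\ThePct},\twistnC{1},\dots,\twistnC{\TheCone}\}$ and $R$ consisting of the relations $\twistnC{\nu}^{\TheOrder_\nu}=1$ together with $W'=1$ for $W'\in\textup{PC}(S)$. The normal subgroup of this decomposition is $\ker(\piPZ)=\textup{im}(\iotaPZ)$, which by the first isomorphism theorem is isomorphic to $\piOrb(\Sigma_\freeprod(\TheStrand-1+\ThePct))/\kernel\cong(\freegrp{\TheStrand-1+\ThePct}\ast\freeprod)/\kernel$. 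Since $\freegrp{\TheStrand-1+\ThePct}\ast\freeprod$ admits the presentation $\langle X\mid\twistnC{\nu}^{\TheOrder_\nu}=1,\ 1\leq\nu\leq\TheCone\rangle$ by Lemma \ref{lem:fundam_grp_Sigma-n-1_free_prod}, and $\langle X\mid R\rangle$ adds precisely the additional relations coming from $\textup{PC}(S)$, a presentation comparison yields
\[
(\freegrp{\TheStrand-1+\ThePct}\ast\freeprod)/\kernel\cong(\freegrp{\TheStrand-1+\ThePct}\ast\freeprod)/\langle\langle\textup{PC}(S)\rangle\rangle,
\]
where the isomorphism is induced by the identity on the generating set $X$. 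This forces the two normal subgroups to coincide, completing the proof.

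The main obstacle is not encountered here, as it has already been surmounted in the proof of Lemma \ref{lem:semidir_prod_pres_PZ_n}; the present proposition is essentially a reformulation of that semidirect product decomposition. The conceptual difficulty lay earlier in (a) identifying the non-trivial relation \eqref{eq:non-trivial_ker} forced in $\PZ_\TheStrand(\Sigma_\freeprod(\ThePct))$ by the iterated conjugation computation in Lemma \ref{lem:it_conj}, and (b) verifying, via the two-step strategy from Remark \ref{rem:semidir_prod_pres}, that no further relations beyond the partial conjugates of $(\twistn{\NStrand}\twistnC{\nu})^{\TheOrder_\nu}(\twistn{\NStrand}^{-1}\twistnC{\nu}^{-1})^{\TheOrder_\nu}$ are needed.
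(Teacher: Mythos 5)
Your proof is correct and follows essentially the paper's own route: you combine the exactness of the sequence (so that $\ker(\piPZ)=\im(\iotaPZ)\cong(\freegrp{\TheStrand-1+\ThePct}\ast\freeprod)/\kernel$) with the semidirect product decomposition of Lemma \ref{lem:semidir_prod_pres_PZ_n}, and identify the two presentations of this common normal subgroup via the identity on the generating set $X$, which forces $\kernel=\langle\langle\textup{PC}(S)\rangle\rangle$. Your separate verification of the inclusion $\langle\langle\textup{PC}(S)\rangle\rangle\subseteq\kernel$ via \eqref{eq:non-trivial_ker} and Observation \ref{obs:partial_conj} is fine but redundant, since the presentation comparison already yields both inclusions.
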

\begin{proof}
By Corollary \ref{cor:pure_orb_braid_semidir_prod}, the group $\PZ_\TheStrand(\Sigma_\freeprod(\ThePct))$ is a semidirect product 
\[
((\freegrp{\TheStrand-1+\ThePct}\ast\freeprod)/\kernel)\rtimes\PZ_{\TheStrand-1}(\Sigma_\freeprod(\ThePct)). 
\]
Further, Lemma \ref{lem:semidir_prod_pres_PZ_n} implies that $\PZ_\TheStrand(\Sigma_\freeprod(\ThePct))$ decomposes as a semidirect product with normal subgroup $\langle X\mid R\rangle$ with 
\[
X=\{\twistn{1},...,\twistn{\TheStrand-1},\twistnP{1},...,\twistnP{\ThePct},\twistnC{1},...,\twistnC{\TheCone}\} 
\]
and defining relations $R$ from Proposition \ref{prop:pure_br_semidir_pres_rel7_pi_1}. 
This implies that the subgroup of $\PZ_\TheStrand(\Sigma_\freeprod(\ThePct))$ generated by $\twistn{1},...,\twistn{\TheStrand-1},\twistnP{1},...,\twistnP{\ThePct}$ and $\twistnC{1},...,\twistnC{\TheCone}$ is isomorphic to both $(\freegrp{\TheStrand-1+\ThePct}\ast\freeprod)/\kernel$ and the group with presentation $\langle X\mid R\rangle$, i.e.\ $(\freegrp{\TheStrand-1+\ThePct}\ast\freeprod)/\kernel$ has the presentation $\langle X\mid R\rangle$. In particular, we obtain that the kernel $\kernel$ is the normal closure of the set 
\[
\text{PC}(\{(\twistn{\NStrand}\twistnC{\nu})^{\TheOrder_\nu}(\twistn{\NStrand}^{-1}\twistnC{\nu}^{-1})^{\TheOrder_\nu}\mid1\leq\NStrand<\TheStrand \text{ and } 1\leq\nu\leq\TheCone\}) 
\]
inside $\freegrp{\TheStrand-1+\ThePct}\ast\freeprod$. 
\end{proof}

\newpage

\new{Finally,} we have proven Theorem \ref{thm-intro:kernel_ex_seq}: 

\begin{corollary}
\label{cor:pure_orb_braid_semidir_prod_with_K}
The pure braid group $\PZ_\TheStrand(\Sigma_\freeprod(\ThePct))$ fits into the following exact sequence of pure orbifold braid groups 
\[
1\rightarrow\kernel_\TheStrand\rightarrow\piOrb\left(\Sigma_\freeprod(\TheStrand-1+\ThePct)\right)\xrightarrow{\iotaPZ}\PZ_\TheStrand(\Sigma_\freeprod(\ThePct))\xrightarrow{\piPZ}\PZ_{\TheStrand-1}(\Sigma_\freeprod(\ThePct))\rightarrow1 
\]
with \new{$\piOrb\left(\Sigma_\freeprod(\TheStrand-1+\ThePct)\right)\cong\freegrp{\TheStrand-1+\ThePct}\ast\freeprod$ and} 
\begin{equation}
\label{eq:ker_n}
\kernel_\TheStrand=\langle\langle\textup{PC}(\{(\twistn{\NStrand} \twistnC{\nu})^{\TheOrder_\nu}(\twistn{\NStrand}^{-1}\twistnC{\nu}^{-1})^{\TheOrder_\nu}\mid 1\leq\NStrand<\TheStrand,1\leq\nu\leq\TheCone\})\rangle\rangle_{\new{\freegrp{\TheStrand-1+\ThePct}\ast\freeprod}}. 
\end{equation}
Moreover, the homomorphism $\piPZ$ has a right inverse homomorphism $\sPZ$, i.e.\ the group $\PZ_\TheStrand(\Sigma_\freeprod(\ThePct))$ is a semidirect product $((\freegrp{\TheStrand-1+\ThePct}\ast\freeprod)/\kernel_\TheStrand)\rtimes\PZ_{\TheStrand-1}(\Sigma_\freeprod(\ThePct))$. 
\end{corollary}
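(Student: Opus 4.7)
The plan is to assemble the corollary from the pieces already proved earlier in the section, since essentially all the work has been done. First, I would recall the exact sequence
\[
\piOrb\left(\Sigma_\freeprod(\TheStrand-1+\ThePct)\right)\xrightarrow{\iotaPZ}\PZ_\TheStrand(\Sigma_\freeprod(\ThePct))\xrightarrow{\piPZ}\PZ_{\TheStrand-1}(\Sigma_\freeprod(\ThePct))\rightarrow1
\]
established in the ``Exactness'' paragraph on page following the construction of $\iotaPZ$ and $\piPZ$; this gives surjectivity of $\piPZ$, the inclusion $\mathrm{im}(\iotaPZ)\subseteq\ker(\piPZ)$ from the defining assignments, and the reverse inclusion via the word decomposition argument using Corollary \ref{cor:gen_set_PZ_n}. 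The identification $\piOrb(\Sigma_\freeprod(\TheStrand-1+\ThePct))\cong\freegrp{\TheStrand-1+\ThePct}\ast\freeprod$ is Lemma~\ref{lem:fundam_grp_Sigma-n-1_free_prod}. Placing $\kernel_\TheStrand:=\ker(\iotaPZ)$ on the left then upgrades the above to a four-term exact sequence
\[
1\rightarrow\kernel_\TheStrand\rightarrow\piOrb\left(\Sigma_\freeprod(\TheStrand-1+\ThePct)\right)\xrightarrow{\iotaPZ}\PZ_\TheStrand(\Sigma_\freeprod(\ThePct))\xrightarrow{\piPZ}\PZ_{\TheStrand-1}(\Sigma_\freeprod(\ThePct))\rightarrow1.
\]

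Next, I would invoke the right-inverse homomorphism $\sPZ$ constructed in the paragraph labeled ``A right-inverse homomorphism $\sPZ$ of $\piPZ$''. Since $\piPZ\circ\sPZ=\mathrm{id}$, Definition \ref{def:semidir_prod} yields that $\PZ_\TheStrand(\Sigma_\freeprod(\ThePct))$ is a semidirect product of $\ker(\piPZ)=\mathrm{im}(\iotaPZ)\cong(\freegrp{\TheStrand-1+\ThePct}\ast\freeprod)/\kernel_\TheStrand$ with $\PZ_{\TheStrand-1}(\Sigma_\freeprod(\ThePct))$.

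Finally, to identify $\kernel_\TheStrand$ explicitly, I would appeal to the preceding proposition, which computes the kernel as the normal closure in $\freegrp{\TheStrand-1+\ThePct}\ast\freeprod$ of the set $\textup{PC}(\{(\twistn{\NStrand}\twistnC{\nu})^{\TheOrder_\nu}(\twistn{\NStrand}^{-1}\twistnC{\nu}^{-1})^{\TheOrder_\nu}\mid 1\leq\NStrand<\TheStrand,1\leq\nu\leq\TheCone\})$. That proposition in turn relied on Lemma \ref{lem:semidir_prod_pres_PZ_n} to certify that the proposed generators and relations from Proposition \ref{prop:pure_br_semidir_pres} really present the normal subgroup of the semidirect product. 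Combining these three ingredients yields the full statement of the corollary.

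The corollary itself presents no obstacle beyond bookkeeping: the technical content is entirely in (i) the verification of exactness, which uses the relations $\twist_{\Strand\Str}=h_{\Strand-1}^{-1}\cdots h_{\Str+1}^{-1}h_\Str^2h_{\Str+1}\cdots h_{\Strand-1}$ (and analogous formulas for $\twistP_{\NStrand\NPct},\twistC_{\NStrand\nu}$) to split a word in $\ker(\piPZ)$ into a part $W_1$ moving only the $\TheStrand$-th strand and a part $W_2$ living in the section's image, and (ii) the determination of $\kernel_\TheStrand$, whose main ingredient is the identity $(\twistnC{\nu}\twistn{\NStrand})^{\TheOrder_\nu}=(\twistn{\NStrand}\twistnC{\nu})^{\TheOrder_\nu}$ derived via Lemma \ref{lem:it_conj} from the conjugation relations \ref{prop:pure_br_semidir_pres_rel6_conj} and the finite-order relation \ref{cor:pres_pure_free_prod_rel1}. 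Both steps are already carried out above, so the corollary follows by citation.
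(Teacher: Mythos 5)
Your proposal matches the paper: the corollary is indeed stated there without a separate proof, as the assembly of the exactness argument and section $\sPZ$ from Section \ref{subsec:ex_seq_pure_orb_braid}, the identification $\piOrb(\Sigma_\freeprod(\TheStrand-1+\ThePct))\cong\freegrp{\TheStrand-1+\ThePct}\ast\freeprod$ from Lemma \ref{lem:fundam_grp_Sigma-n-1_free_prod}, and the preceding proposition computing $\kernel_\TheStrand$ via Lemma \ref{lem:semidir_prod_pres_PZ_n}. Your citation-based argument is correct and follows essentially the same route.
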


Corollary \ref{cor:pure_orb_braid_semidir_prod_with_K} corrects Theorem 2.14 in \cite{Roushon2021}. This raises the natural question if the consequences described in \cite{Roushon2021} still hold. In particular, Roushon deduced that 
\begin{itemize}
\item $\PZ_\TheStrand(\Sigma_\freeprod(\ThePct))$ and consequently the affine Artin groups of types $\tilde{A}_\TheStrand, \tilde{B}_\TheStrand,\tilde{C}_\TheStrand$ and $\tilde{D}_\TheStrand$, and the braid groups of finite complex type $G(de,e,r)$ for $d,r\geq2$ are virtually poly-free, see \cite[Theorem 2.19]{Roushon2021}.
\item $\Z_\TheStrand(\Sigma_\freeprod(\ThePct))$ and consequently the affine Artin groups of type $\tilde{D}_\TheStrand$ satisfy the Farrell--Jones isomorphism conjecture, see \cite[Theorem 2.20]{Roushon2021}. 
\end{itemize}

\enew{By \cite{Allcock2002}, the Artin group of type $\tilde{A}_\TheStrand$ embeds into the Artin group of type $B_{\TheStrand+1}$. The same article shows that the Artin group of type $\tilde{C}_\TheStrand$ embeds into the Artin group of type $A_\TheStrand$. For these spherical Artin groups it is known that they are virtually poly-free, see \cite{Brieskorn1973}. Since the property of being virtually poly-free passes to subgroups, this settles the questions for Artin groups of type $\tilde{A}_\TheStrand$ and $\tilde{C}_\TheStrand$. Since the complex braid groups of type $G(de,e,r)$ by \cite[Proposition 4.1]{CorranLeeLee2015} are isomorphic to a subgroup of $A(B_r)$, they are also virtually poly-free. Moreover, to the best of the author's knowledge, it remains open for the Artin groups of type $\tilde{B}_\TheStrand$ and $\tilde{D}_\TheStrand$ whether they are virtually poly-free. 

For the Artin group of type $\tilde{D}_\TheStrand$, it remains open whether they satisfy the Farrell--Jones isomorphism conjecture. The question whether the Artin group of type $\tilde{B}_\TheStrand$ satisfies the Farrell--Jones isomorphism conjecture is treated in \cite{Roushon2021a}. But, as we will point out below, Proposition~4.1 in that article is not correct. Proposition~4.1 seems to be essential for proving \cite[Theorems 3.2 and 3.3]{Roushon2021a} which yield that the Artin group of type $\tilde{B}_\TheStrand$ satisfies the Farrell--Jones isomorphism conjecture. To the best of the author's knowledge, this question is also open.} 

\subsection*{Fixed strands and punctures}

At this point, a remark about fixed strands and punctures is in order. Instead of considering the group $\Z_\TheStrand(\Sigma_\freeprod(\ThePct))$ of braids in 
an orbifold with punctures in $\freeprod(\{r_1,...,r_\ThePct\})$, we also could have considered a subgroup $\Z_{\TheStrand+\ThePct}^{\text{fix}(\ThePct)}(\Sigma_\freeprod)$ of $\Z_{\TheStrand+\ThePct}(\Sigma_\freeprod)$ where $\ThePct$ strands do not move. More precisely, each element in the subgroup $\Z_{\TheStrand+\ThePct}^{\text{fix}(\ThePct)}(\Sigma_\freeprod)$ is represented by a braid that ends in the points $r_1,...,r_\ThePct,p_1,...,p_\TheStrand$ such that the strands that end in positions $r_1,...,r_\ThePct$ are constant. 

However, using similar observations as above, it turns out that the group $\Z_{\TheStrand+\ThePct}^{\text{fix}(\ThePct)}(\Sigma_\freeprod)$ differs from $\Z_\TheStrand(\Sigma_\freeprod(\ThePct))$. On the level of braid diagrams this is reflected by the following observation: 
\begin{figure}[H]
\centerline{\import{Grafiken/braid_and_mcg/}{add_relation_wc.pdf_tex}}
\caption{A relation in $\Z_{\TheStrand+\ThePct}^{\text{fix}(\ThePct)}(\Sigma_\freeprod)$.}
\label{fig:add_relation}
\end{figure}
In the group $\Z_{\TheStrand+\ThePct}^{\text{fix}(\ThePct)}(\Sigma_\freeprod)$ the relation described in Figure \ref{fig:add_relation} holds. In contrast, the group $\Z_\TheStrand(\Sigma_\freeprod(\ThePct))$ does not allow the transformation described in Figure~\ref{fig:add_relation}. Since we require that the strand that ends in position $r_\NPct$ is fixed by every representative, the braid in the middle of Figure \ref{fig:add_relation} is not contained in $\Z_\TheStrand(\Sigma_\freeprod(\ThePct))$. 

To make this precise, let us consider the pure subgroup $\PZ_{\TheStrand+\ThePct}^{\text{fix}(\ThePct)}(\Sigma_\freeprod)$ of $\Z_{\TheStrand+\ThePct}^{\text{fix}(\ThePct)}(\Sigma_\freeprod)$. By the same arguments as in Corollary \ref{cor:gen_set_PZ_n}, we obtain a generating set for this group. To underline the similarities with $\PZ_\TheStrand(\Sigma_\freeprod(\ThePct))$ let us denote the generating set of this group by braids 
\[
\twist_{\Strand\Str},\twistP_{\NStrand\NPct} \; \text{ and } \; \twistC_{\NStrand\nu} 
\]
for $1\leq\Str,\Strand,\NStrand\leq\TheStrand$ with $\Str<\Strand$, \; $1\leq\NPct\leq\ThePct$ and $1\leq\nu\leq\TheCone$ with braid diagrams as in Figures \ref{fig:a_ji} and \ref{fig:a_kc_nu_a_kr_lambda}. Now let $\omega:\Z_\TheStrand(\Sigma_\freeprod(\ThePct))\rightarrow\Z_{\TheStrand+\ThePct}^{\text{fix}(\ThePct)}(\Sigma_\freeprod)\leq\Z_{\TheStrand+\ThePct}(\Sigma_\freeprod)$ be the homomorphism that is induced by sending the punctures in positions $r_1,...,r_\ThePct$ to fixed strands. Concerning this homomorphism, the relation described in Figure~\ref{fig:add_relation} implies: 

\begin{proposition}
\label{prop:emb_orb_braid_grps_not_inj}
\new{For $\ThePct\geq1$,} the homomorphism $\omega$ is not injective.  
\end{proposition}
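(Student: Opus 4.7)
The plan is to exhibit an explicit non-trivial element $g\in\PZ_\TheStrand(\Sigma_\freeprod(\ThePct))$ with $\omega(g)=1$. Assuming $\TheCone\geq 1$ (for $\TheCone=0$ the map $\omega$ is an isomorphism), fix indices $1\leq\NPct\leq\ThePct$ and $1\leq\nu\leq\TheCone$, and set
\[
g := (\twistC_{\TheStrand\nu}\,\twistP_{\TheStrand\NPct})^{\TheOrder_\nu}\,(\twistP_{\TheStrand\NPct}\,\twistC_{\TheStrand\nu})^{-\TheOrder_\nu}\in\PZ_\TheStrand(\Sigma_\freeprod(\ThePct)).
\]
First I would observe $\omega(g)=1$: under $\omega$ the puncture $r_\NPct$ is replaced by a fixed strand, so $\omega(\twistP_{\TheStrand\NPct})$ becomes a strand-to-strand pure braid $\twist_{k,j}$ inside the ambient group $\PZ_{\TheStrand+\ThePct}(\Sigma_\freeprod)$ (with $\omega(\twistC_{\TheStrand\nu})$ still a cone-point twist $\twistC_{k\nu}$). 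The identity $(\twistC_{k\nu}\twist_{k,j})^{\TheOrder_\nu}=(\twist_{k,j}\twistC_{k\nu})^{\TheOrder_\nu}$, i.e.\ equation~\eqref{eq:non-trivial_ker} pictured in Figure~\ref{fig:add_relation}, then forces $\omega(g)=1$.

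For the converse, by Corollary~\ref{cor:pure_orb_braid_semidir_prod_with_K} we may write $g=\iotaPZ(\tilde g)$ with
\[
\tilde g := (\twistnC{\nu}\twistnP{\NPct})^{\TheOrder_\nu}\,(\twistnP{\NPct}\twistnC{\nu})^{-\TheOrder_\nu}\in\piOrb\bigl(\Sigma_\freeprod(\TheStrand-1+\ThePct)\bigr)\cong\freegrp{\TheStrand-1+\ThePct}\ast\freeprod,
\]
so it suffices to show $\tilde g\notin\kernel_\TheStrand$. I would detect this via the homomorphism
\[
\phi\colon\freegrp{\TheStrand-1+\ThePct}\ast\freeprod\longrightarrow\ZZ\ast\cyc{\TheOrder_\nu}
\]
sending $\twistnP{\NPct}\mapsto s$ and $\twistnC{\nu}\mapsto t$ to free generators of the two factors, and every other generator $\twistn{\NStrand}$, $\twistnP{\NPct'}$ with $\NPct'\neq\NPct$, and $\twistnC{\nu'}$ with $\nu'\neq\nu$ to $1$. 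This respects every relation $\twistnC{\nu'}^{\TheOrder_{\nu'}}=1$, so $\phi$ is well defined.

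The main step is to check $\phi(\kernel_\TheStrand)=\{1\}$. A direct inspection of Table~\ref{tab:partial_conj} yields $\phi\circ pc_a=\phi$ for every partial conjugator $a$: the uniform observation is that in each non-trivial formula for $pc_a(x)$, once the letters $\twistn{\NStrand}$ are collapsed to $1$ by $\phi$, the remaining letters pair into inverses that cancel, leaving exactly $\phi(x)$. Hence the generators $(\twistn{\NStrand}\twistnC{\nu'})^{\TheOrder_{\nu'}}(\twistn{\NStrand}^{-1}\twistnC{\nu'}^{-1})^{\TheOrder_{\nu'}}$ of $\kernel_\TheStrand$ and all their partial conjugates map to~$1$ (using $\phi(\twistn{\NStrand})=1$ and $\phi(\twistnC{\nu'})^{\TheOrder_{\nu'}}=1$), so $\phi$ vanishes on the whole normal closure. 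Finally $\phi(\tilde g)=(ts)^{\TheOrder_\nu}(st)^{-\TheOrder_\nu}$ is a reduced alternating word of length $4\TheOrder_\nu$ in $\ZZ\ast\cyc{\TheOrder_\nu}$, hence non-trivial for $\TheOrder_\nu\geq 2$. Thus $\tilde g\notin\kernel_\TheStrand$, $g\neq 1$, and $\omega$ has non-trivial kernel. The main obstacle is the combinatorial verification that $\phi$ is invariant under every partial conjugation in Table~\ref{tab:partial_conj}; the slick reason it works uniformly is that every non-trivial entry has $\twistn{\NStrand}$-letters appearing symmetrically, so killing them collapses the formula.
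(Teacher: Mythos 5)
Your proposal is correct and follows essentially the same route as the paper: your detection homomorphism $\phi$ is exactly the paper's $q_{\Pc,\omicron}$, your element $g$ is the relation \eqref{eq:add_rel} that the paper shows holds in $\PZ_{\TheStrand+\ThePct}^{\text{fix}(\ThePct)}(\Sigma_\freeprod)$ but not in $\PZ_\TheStrand(\Sigma_\freeprod(\ThePct))$, and your triviality argument for $\omega(g)$ (a former puncture becomes a strand, so \eqref{eq:non-trivial_ker} applies) is the same mechanism the paper packages via the restricted exact sequence \eqref{eq:ex_seq_PZ_fix} and the comparison $\kernel_\TheStrand\subsetneq\kernel_{\TheStrand+\ThePct}$. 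The only difference is organizational, exhibiting the explicit kernel element rather than comparing the two semidirect product decompositions.
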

\begin{proof}
For the proof, we consider the restriction 
\[
\omega\vert_{\PZ_\TheStrand(\Sigma_\freeprod(\ThePct))}:\PZ_\TheStrand(\Sigma_\freeprod(\ThePct))\rightarrow\PZ_{\TheStrand+\ThePct}^{\text{fix}(\ThePct)}(\Sigma_\freeprod)\leq\PZ_{\TheStrand+\ThePct}(\Sigma_\freeprod). 
\]
We will prove that this restriction is not injective. For this purpose, we use the following exact sequence obtained from Corollary \ref{cor:pure_orb_braid_semidir_prod_with_K}: 
\[
1\rightarrow\kernel_{\TheStrand+\ThePct}\rightarrow\freegrp{\TheStrand-1+\ThePct}\ast\freeprod\xrightarrow{\iotaPZ}\PZ_{\TheStrand+\ThePct}(\Sigma_\freeprod)\xrightarrow{\piPZ}\PZ_{\TheStrand-1+\ThePct}(\Sigma_\freeprod)\rightarrow1 
\]
with 
\begin{equation}
\label{eq:ker_n+ThePct}
\kernel_{\TheStrand+\ThePct}=\left\langle\left\langle\text{PC}\left(\left\lbrace\begin{matrix}
(\twistn{\NStrand} \twistnC{\nu})^{\TheOrder_\nu}(\twistn{\NStrand}^{-1}\twistnC{\nu}^{-1})^{\TheOrder_\nu},
\\
(\twistnP{\NPct} \twistnC{\nu})^{\TheOrder_\nu}(\twistnP{\NPct}^{-1}\twistnC{\nu}^{-1})^{\TheOrder_\nu}
\end{matrix}
\biggm\vert \begin{matrix}
1\leq\NStrand<\TheStrand,1\leq\nu\leq\TheCone
\\
1\leq\NPct\leq\ThePct,1\leq\nu\leq\TheCone
\end{matrix}
\right\rbrace\right)\right\rangle\right\rangle_{\freegrp{\TheStrand-1+\ThePct}\ast\freeprod}. 
\end{equation}
Restricting this exact sequence to the subgroup $\PZ_{\TheStrand+\ThePct}^{\text{fix}(\ThePct)}(\Sigma_\freeprod)$, we further obtain: 
\begin{equation}
\label{eq:ex_seq_PZ_fix}
1\rightarrow\kernel_{\TheStrand+\ThePct}\rightarrow\freegrp{\TheStrand-1+\ThePct}\ast\freeprod\xrightarrow{\iotaPZ}\PZ_{\TheStrand+\ThePct}^{\text{fix}(\ThePct)}(\Sigma_\freeprod)\xrightarrow{\piPZ}\PZ_{\TheStrand-1+\ThePct}^{\text{fix}(\ThePct)}(\Sigma_\freeprod)\rightarrow1. 
\end{equation}

Restricting the homomorphism $\mathrm{s}_{\PZ_{\TheStrand+\ThePct}}:\PZ_{\TheStrand-1+\ThePct}(\Sigma_\freeprod)\rightarrow\PZ_{\TheStrand+\ThePct}(\Sigma_\freeprod)$ as defined on page \pageref{def:right-inverse_sPZ}, 
yields a section of the homomorphism $\PZ_{\TheStrand+\ThePct}^{\text{fix}(\ThePct)}(\Sigma_\freeprod)\rightarrow\PZ_{\TheStrand-1+\ThePct}^{\text{fix}(\ThePct)}(\Sigma_\freeprod)$. Thus, the group $\PZ_{\TheStrand+\ThePct}^{\text{fix}(\ThePct)}(\Sigma_\freeprod)$ is the semidirect product 
\[
(\freegrp{\TheStrand-1+\ThePct}\ast\freeprod)/\kernel_{\TheStrand+\ThePct}\rtimes\PZ_{\TheStrand-1+\ThePct}^{\text{fix}(\ThePct)}(\Sigma_\freeprod). 
\]
In comparison, the group $\PZ_\TheStrand(\Sigma_\freeprod(\ThePct))$ is the semidirect product 
\[
(\freegrp{\TheStrand-1+\ThePct}\ast\freeprod)/\kernel_\TheStrand\rtimes\PZ_{\TheStrand-1}(\Sigma_\freeprod(\ThePct)). 
\]
To deduce that the two groups do not coincide, it remains to check the following: 
\begin{claim*}
$\kernel_\TheStrand\subsetneq\kernel_{\TheStrand+\ThePct}$. 
\end{claim*}

On the one hand, we have $\kernel_\TheStrand$ from \eqref{eq:ker_n} and $\kernel_{\TheStrand+\ThePct}$ as in \eqref{eq:ker_n+ThePct}. This directly implies that $\kernel_\TheStrand$ is contained in $\kernel_{\TheStrand+\ThePct}$. 

On the other hand, let us fix $1\leq\Pc\leq\ThePct$ and $1\leq\omicron\leq\TheCone$ and let $\ZZ\ast\cyc{\TheOrder_{\omicron}}$ be the free product with $\ZZ=\langle\twistnP{\Pc}\rangle$ and $\cyc{\TheOrder_\omicron}=\langle\twistnC{\omicron}\rangle$. The following assignments induce a homomorphism: 
\begin{align*}
q_{\Pc,\omicron}:\freegrp{\TheStrand-1+\ThePct}\ast\freeprod & \rightarrow\ZZ\ast\cyc{\TheOrder_\nu},
\\
\twistnP{\Pc} & \mapsto \twistnP{\Pc},
\\
\twistnC{\omicron} & \mapsto \twistnC{\omicron},
\\
x & \mapsto 1 \quad \text{ for the remaining generators.} 
\end{align*}
Under $q_{\Pc,\omicron}$ the element $(\twistnP{\Pc}\twistnC{\omicron})^{\TheOrder_\omicron}(\twistnP{\Pc}^{-1}\twistnC{\omicron}^{-1})^{\TheOrder_\omicron}$ maps to a non-trivial normal form in $\ZZ\ast\cyc{\TheOrder_\nu}$, i.e\ $(\twistnP{\Pc}\twistnC{\omicron})^{\TheOrder_\omicron}(\twistnP{\Pc}^{-1}\twistnC{\omicron}^{-1})^{\TheOrder_\omicron}\not\in\ker(q_{\Pc,\omicron})$. 
Moreover, for each 
$1\leq\NStrand<\TheStrand$ and \linebreak $1\leq\nu\leq\TheCone$, 
the element $(\twistn{\NStrand}\twistnC{\nu})^{\TheOrder_\nu}(\twistn{\NStrand}^{-1}\twistnC{\nu}^{-1})^{\TheOrder_\nu}$ maps trivially under $q_{\Pc,\omicron}$. Consequently, 
\[
\text{PC}(\{(\twistn{\NStrand}\twistnC{\nu})^{\TheOrder_\nu}(\twistn{\NStrand}^{-1}\twistnC{\nu}^{-1})^{\TheOrder_\mu}\mid 1\leq\NStrand<\TheStrand,1\leq\nu\leq\ThePct\})\subseteq\ker(q_{\Pc,\omicron})
\]
and thus $\kernel_\TheStrand\subseteq\ker(q_{\Pc,\omicron})$. This implies that $\kernel_{\TheStrand+\ThePct}\neq\kernel_\TheStrand$ and finally $\kernel_{\TheStrand+\ThePct}\supsetneq\kernel_\TheStrand$. 

Hence, the group $\PZ_{\TheStrand+\ThePct}^{\text{fix}(\ThePct)}(\Sigma_\freeprod)$ in comparison to $\PZ_\TheStrand(\Sigma_\freeprod(\ThePct))$ satisfies the additional relations 
\begin{equation}
\label{eq:add_rel}
(\twistP_{\TheStrand\NPct}\twistC_{\TheStrand\nu})^{\TheOrder_\nu}=(\twistC_{\TheStrand\nu}\twistP_{\TheStrand\NPct})^{\TheOrder_\nu}
\end{equation}
for $1\leq\NPct\leq\ThePct$ and $1\leq\nu\leq\TheCone$. This implies that $\omega\vert_{\PZ_\TheStrand(\Sigma_\freeprod(\ThePct))}$ is not injective for $\ThePct\geq1$. In particular, the homomorphism $\omega$ is not injective in this case. 
\end{proof}

This proves that Proposition 4.1 from \cite{Roushon2021a} is not correct. Comparing $\Z_{\TheStrand+\ThePct}^{\text{fix}(\ThePct)}(\Sigma_\freeprod)$ to $\Z_\TheStrand(\Sigma_\freeprod(\ThePct))$, \eqref{eq:add_rel} boils down to additional relations $(\twP{\NPct}\twC{\nu})^{\TheOrder_\nu}=(\twC{\nu}\twP{\NPct})^{\TheOrder_\nu}$ for $1\leq\NPct\leq\ThePct$ and $1\leq\nu\leq\TheCone$. 

\subsection*{The kernel $\kernel_\TheStrand$ for $\TheStrand\leq2$}

Under additional assumptions, 
we can give a more compact description for the normal generating set of the kernel $\kernel_\TheStrand$ if $\TheStrand\leq2$. 

For $\TheStrand=1$, no non-trivial partial conjugations in $\PZ_1(\Sigma_\freeprod(\ThePct))$ exist. Hence, the map 
\[
\piOrb\left(\Sigma_\freeprod(\ThePct)\right)\rightarrow\PZ_1(\Sigma_\freeprod(\ThePct))
\]
is an isomorphism. In particular, the kernel $\kernel_\TheStrand$ is trivial for $\TheStrand=1$. In this case, both groups are isomorphic to $\new{\freegrp{\ThePct}\ast\freeprod}$.
%

For $\TheStrand=2$, $\freeprod=\cycm$ and $\ThePct=0$, the map 
\[
\piOrb\left(D_{\cycm}(1)\right)\rightarrow\PZ_2(D_{\cycm})
\]
by Corollary \ref{cor:pure_orb_braid_semidir_prod_with_K} has kernel $\kernel_2=\langle\langle \text{PC}((\twistn{1}\twistnsC)^\TheOrder(\twistn{1}^{-1}\twistnsC^{-1})^\TheOrder\rangle\rangle$. In this case, the only partial conjugation is induced by $\twistC_{1\twistnsC}$ that maps 
\[
\begin{matrix}\twistnsC & \mapsto & \twistnsC^{-1}\twistn{1}^{-1}\twistnsC \twistn{1}\twistnsC & \text{ and } 
\\
\twistn{1} & \mapsto & \twistnsC^{-1}\twistn{1}\twistnsC, & \end{matrix}
\]
i.e.\ the conjugation by $\twistnsC^{-1}\twistn{1}^{-1}$. Hence,  
the kernel $\kernel_2$ is normally generated by $(\twistn{1}\twistnsC)^\TheOrder(\twistn{1}^{-1}\twistnsC^{-1})^\TheOrder$. In particular, the exact sequence for $\PZ_2^{\text{fix}(1)}(D_{\cycm})$ from \eqref{eq:ex_seq_PZ_fix} yields 
\[
1\rightarrow\langle\langle(\twistn{1}\twistnsC)^\TheOrder(\twistn{1}^{-1}\twistnsC^{-1})^\TheOrder\rangle\rangle\rightarrow\ZZ\ast\cyc{\TheOrder}\xrightarrow{\iotaPZ}\PZ_2^{\text{fix}(1)}(D_{\cycm})\xrightarrow{\piPZ}\underbrace{\PZ_1^{\text{fix}(1)}(D_{\cycm})}_{=1}\rightarrow1,  
\]
\new{i.e.\ $\Z_2^{\text{fix}(1)}(D_{\cycm})=\PZ_2^{\text{fix}(1)}(D_{\cycm})=\langle\twistn{1},z\mid z^\TheOrder=1,(\twistn{1}z)^\TheOrder=(z\twistn{1})^\TheOrder\rangle\neq\ZZ\ast\cycm$.} 

\bibliographystyle{plain}
\bibliography{paper_braid}

\end{document}